\theoremstyle{plain}
\newtheorem{thm}{Theorem}[section]
\newtheorem{prop}[thm]{Proposition}
\newtheorem{cor}[thm]{Corollary}
\newtheorem{lem}[thm]{Lemma}
\theoremstyle{definition}
\newtheorem{defn}[thm]{Definition}
\theoremstyle{remark}
\title{Deformation spaces of Kleinian surface groups are not locally connected}
\author{Aaron D. Magid\footnote{The author was partially supported by NSF grants DMS-0902764, 
DMS-0554239, 
DMS-0602191.
}}
\date{21 March 2010}
\begin{document}
\maketitle  

\begin{abstract}
For any closed surface $S$ of genus $g \geq 2$, we show that the deformation space of marked hyperbolic 3-manifolds homotopy equivalent to $S$, $AH(S \times I)$, is not locally connected. This proves a conjecture of Bromberg who recently proved that the space of Kleinian punctured torus groups is not locally connected.  Playing an essential role in our proof is a new version of the filling theorem that is based on the theory of cone-manifold deformations developed by Hodgson, Kerckhoff, and Bromberg. 
\end{abstract}

\section{Introduction} \label{Introduction}

Given a compact, orientable $3$-manifold $N$, let $AH(N)$ denote the set of marked hyperbolic $3$-manifolds homotopy equivalent to $N$, equipped with the algebraic topology.  The interior of $AH(N)$, as a subset of the $PSL(2, \mathbb{C})$-character variety, is well-understood; however, little is known about the topology of entire deformation space and its dependence on the topology of $N$.  We show that there are points where $AH(N)$ fails to be locally connected when $N$ is homotopy equivalent to a closed surface. 

The work of Ahlfors \cite{Ahlfors}, Bers \cite{Bers}, Kra \cite{Kra}, Marden \cite{Marden open}, Maskit \cite{Maskit}, Sullivan \cite{Sullivan}, and Thurston \cite{Th2} shows that the components of the interior of $AH(N)$ are in one-to-one correspondence with the marked homeomorphism types of compact $3$-manifolds homotopy equivalent to $N$.  Using the theory of quasiconformal deformations and the measurable Riemann mapping theorem, each of these components can be parameterized by analytic information.  

Unfortunately, our understanding of the interior of $AH(N)$ does not extend to the entire space.  When the boundary of $N$ is incompressible, Anderson, Canary, and McCullough \cite{ACM} characterized when two components of the interior of $AH(N)$ have intersecting closures.  They called this phenomenon bumping.  For any genus $g \geq 2$ surface $S$, McMullen \cite{McMullen} showed that the interior of $AH(S \times I)$ self-bumps.  This means that there is a point $\rho \in AH(S \times I)$ such that whenever $U$ is a sufficiently small neighborhood of $\rho$, the intersection of $U$ and the interior of $AH(S \times I)$ is disconnected.  Bromberg and Holt \cite{BH} generalized this result by showing that whenever $N$ contains a primitive, essential annulus that is not homotopic into a torus boundary component of $N$ then the interior of $AH(N)$ self-bumps. 

Recent work by Agol \cite{Agol}, Calegari and Gabai \cite{CG}, Brock, Canary, and Minsky \cite{BCM}, and many others has led to a classification of hyperbolic manifolds up to isometry.  The existence of bumping and self-bumping points shows that the invariants used in this classification do not vary continuously at certain points on the boundary of the deformation space (see also \cite{Brock}).  Thus, further study of the local topology of $AH(N)$ near these points is necessary in order to fully understand these spaces of hyperbolic manifolds.    

Bromberg \cite{B3} recently showed that the space of Kleinian punctured torus groups is not locally connected.  The points where this deformation space fails to be locally connected are self-bumping points, but he also showed that the space is locally connected at other self-bumping points.  This indicates that bumping may be considerably more complicated than we previously thought.  He also conjectured that $AH(S \times I)$ would fail to be locally connected for any surface $S$, although his arguments in the punctured torus case made essential use of Minsky's \cite{Minsky} classification of punctured torus groups.  The results in \cite{Minsky} that Bromberg uses do not generalize to higher genus surfaces. 

The following theorem proves Bromberg's conjecture. 
\begin{thm} \label{main theorem}  Let $S$ be a closed surface of genus $g \geq 2$. Then $AH(S \times I)$ is not locally connected. 
\end{thm}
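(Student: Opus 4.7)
The plan is to follow the general strategy of Bromberg's punctured-torus theorem, replacing his appeal to Minsky's classification by the cone-manifold filling theorem advertised in the abstract. The goal is to exhibit a single point $\rho_\infty \in AH(S \times I)$ together with, for every sufficiently small neighborhood $U$ of $\rho_\infty$, two representations in $U$ that cannot be joined by a connected subset of $U \cap AH(S \times I)$. For the base point I would take $\rho_\infty$ to be geometrically finite with exactly one rank-one cusp corresponding to a simple closed curve $\alpha \subset S$; the other end is a prescribed quasi-Fuchsian end. Such points lie on the frontier of the interior of $AH(S \times I)$ and are self-bumping points in the sense of McMullen and Bromberg--Holt, so they are the natural candidates for local disconnectedness.

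The next step is to produce two sequences $\rho_n, \sigma_n \to \rho_\infty$ that cannot be connected by small paths. The new filling theorem should provide, for each large integer $n$, a geometrically finite $\rho_n \in AH(S \times I)$ whose conformal boundary is a small controlled perturbation of that of $\rho_\infty$ and whose complex translation length $\mathcal{L}_n(\alpha)$ is short, with the imaginary part of $\mathcal{L}_n$ (the \emph{twist}) determined by the Dehn-surgery-type parameter $n$. By choosing two sequences of filling parameters with the same modulus but differing in argument by a fixed angle bounded away from $0$ modulo $2\pi$, I obtain two sequences $\rho_n, \sigma_n \to \rho_\infty$, both in the interior of $AH(S \times I)$, which agree in translation length but differ in twist.

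To promote this to disconnection, I would use the imaginary part of the complex length of $\alpha$ as a continuous function on a neighborhood of $\rho_\infty$ in $AH(S \times I)$, well-defined whenever $\alpha$ is sufficiently short. Combined with the uniqueness part of the filling theorem, this function takes only the discrete collection of values $\{\operatorname{Im} \mathcal{L}_n(\alpha)\}$ on representations in a small neighborhood of $\rho_\infty$ whose $\alpha$-length is small. A continuous path from $\rho_n$ to $\sigma_n$ staying in a small neighborhood would force the twist to vary continuously through values that are forbidden by the filling theorem, contradicting the discreteness and ruling out such a path.

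The hard part will be the filling theorem itself: one must extend the Hodgson--Kerckhoff--Bromberg cone-deformation theory from the closed and finite-volume settings into the geometrically infinite deformation space of a higher-genus surface group, and, crucially, obtain a \emph{uniqueness} statement strong enough to pin down the twist parameter of any nearby representation with a short $\alpha$-geodesic. A secondary obstacle is the algebraic-versus-geometric topology subtlety: small algebraic perturbations can a priori change which curve is identified as $\alpha$, so one needs a careful drilling argument that tracks $\alpha$ consistently across all nearby representations before the twist invariant above is even well-defined. Once these are in place, the local disconnectedness argument should proceed as outlined.
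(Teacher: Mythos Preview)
Your proposal has the right flavor but contains a genuine gap at its core, and the setup differs from the paper's in a way that matters.

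The central error is the claim that the imaginary part of $\mathcal{L}(\alpha)$ ``takes only the discrete collection of values $\{\operatorname{Im}\mathcal{L}_n(\alpha)\}$'' on nearby representations with short $\alpha$. The complex length is a continuous function on the character variety, and as you vary the conformal boundary of a quasi-Fuchsian group near $\rho_\infty$ the twist of $\alpha$ sweeps through an open set of values, not a discrete one. So the twist alone cannot separate components. In the paper the separation does \emph{not} come from the complex length; it comes from the structure of a model space $\mathcal{A}\subset MP_0(N,P)\times\hat{\mathbb{C}}^{3g-3}$, where the $w_1$-coordinate (a Teichm\"uller parameter for a rank-$2$ cusp in the drilled manifold, not the twist of $\alpha$) is forced to lie in disconnected ``islands'' $R+2n$ by Bromberg's box lemma for the punctured-torus Maskit slice. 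The filling theorem then enters in the opposite direction from what you describe: its estimates (parts~$(ii)$ and~$(v)$) show that if two representations in $AH(N)$ have the \emph{same} complex length of $\gamma_1$, their filling parameters must be within $O(\kappa/|w_1|)$ of each other; for large $n$ this is smaller than the uniform gap $\delta$ between islands, so the closures $\overline{\Phi(C_n)}$ and $\overline{\Phi(U\setminus C_n)}$ cannot meet. The Density Theorem then upgrades this from $MP(N)\cup MP_0(N,P)$ to $AH(N)$.

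This also explains why the paper pinches a full pants decomposition rather than a single curve. With all $3g-3$ curves parabolic, one component of the conformal boundary is a union of rigid thrice-punctured spheres, and the drilled manifold admits a projection $\Pi:\mathcal{A}\to\mathcal{A}_{1,1}$ onto Bromberg's punctured-torus model, so his box lemma (the only known source of the island structure) can be imported directly. With a single cusp on a higher-genus surface, the analogous slice is a high-dimensional object for which no such lemma is available; you would have to prove one from scratch, and that is essentially the content of the problem. Your ``uniqueness'' hope for the filling theorem is also off target: what is needed and proved is a two-sided quantitative estimate on both $l$ and $\theta$ in terms of the normalized length $L^2$ and normalized twist $A^2$ of the cusp, sharp enough to invert approximately (Lemma~\ref{lengths close implies w close}).
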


Using Bromberg's description of the space of punctured torus groups, one can show that many relative deformation spaces fail to be locally connected \cite{Magid}, although Theorem \ref{main theorem} provides the first examples of non-relative deformation spaces that fail to be locally connected. Non-relative means that no parabolicity condition is specified. See Section \ref{background deformation theory} for a discussion of pared manifolds and deformation spaces.

In our proof that $AH(S \times I)$ fails to be locally connected for any higher-genus surface $S$ we frequently refer to many of the arguments in \cite{B3}.  In particular, the results in Section \ref{local homeomorphism section} rely on and/or generalize the results of Section 3 of \cite{B3}.  However, since some of Minsky's results in \cite{Minsky} do not extend to higher genus surfaces, we make a significant departure from Bromberg's methods in Section \ref{closure not locally connected section}.  In this section, we make use of an improved version of the filling theorem, the key technical result of this paper.   

Before stating our version of the filling theorem, we will set up some of the notation.  Given a geometrically finite hyperbolic manifold $\hat{M}$ with a rank-$2$ cusp, the filling theorem provides sufficient conditions for one to ``Dehn-fill'' the cusp. That is, if $\hat{M}$ is homeomorphic to the interior of a compact manifold $\hat{N}$ with a torus boundary component corresponding to the cusp of $\hat{M}$, and $N$ is a Dehn-filling of $\hat{N}$, then the filling theorem provides conditions for one to construct a hyperbolic manifold $M$ homeomorphic to the interior of $N$ with the same conformal boundary as $\hat{M}$.   Assuming the hypotheses of the theorem are satisfied, one obtains a relationship between the metrics on $\hat{M}$ and $M$.  

 Suppose $T$ is a rank-$2$ cusp in $\hat{M}$ and $\beta$ is the slope in $T$ along which we are filling.  Let $L$ be the normalized length of $\beta$ in $T$, and let $A^2$ be the reciprocal of the normalized twist of the cusp.  Although we relegate the actual definitions of the normalized length and the normalized twist to Section \ref{filling theorem section}, we now describe these quantities with respect to a particular normalization of the cusp (the normalization that we will use throughout Sections \ref{local homeomorphism section} and \ref{closure not locally connected section}).  Suppose the rank-$2$ cusp $T$ of $\hat{M}$ is generated by parabolics $\begin{pmatrix} 1 & 2 \\ 0 & 1 \end{pmatrix}$ and $\begin{pmatrix} 1 & w \\ 0 & 1 \end{pmatrix}$, and that $\beta$ corresponds to $\begin{pmatrix} 1 & w \\ 0 & 1 \end{pmatrix}$.  If $Im(w)>0$ and $\frac{|w|^2}{2|Re(w)|} > 2$,  then $L^2$ and $A^2$ are given by:
 $$L^2 = \frac{|w|^2}{2Im(w)} \quad \text{and} \quad  A^2 = \frac{|w|^2}{2Re(w)}.$$

 For any curve $\gamma \subset M$, let $B \in PSL(2, \mathbb{C})$ denote the corresponding isometry in $\pi_1(M)$. The complex length of $\gamma$ is the value of $\mathcal{L} = l + i\theta$ such that $tr^2(B) =4\cosh^{2}\left(\frac{\mathcal{L}}{2} \right)$, $l \geq 0$, and $\theta \in (-\pi, \pi]$.  For a geodesic $\gamma$, the real part $l$ gives the length of $\gamma$ in $M$ which is the distance that $B$ translates along its axis.  The imaginary part $\theta$ is the amount $B$ rotates about its axis.

Let $\epsilon_3$ denote the Margulis constant for $\mathbb{H}^3$. If $\gamma$ is the core curve of the solid filling torus in $M$.   Then for any $\epsilon_3 \geq \epsilon > 0$, let $\mathbb{T}_\epsilon(T)$ (resp. $\mathbb{T}_\epsilon(\gamma)$) denote the $\epsilon$-Margulis tube about $T$ (resp. $\gamma$).  

\begin{thm} \label{filling}
Let $J>1$ and $\epsilon_3 \geq \epsilon > 0$.  There is some $K \geq 8(2\pi)^2$ such that the following holds:  suppose $\hat{M}$ is a geometrically finite hyperbolic $3$-manifold with no rank-$1$ cusps, $T$ is a rank-$2$ cusp in $\hat{M}$, and $\beta$ is a slope on $T$ such that the normalized length of $\beta$ is at least $K$ (i.e., $L^2 \geq K^2$), then   \\
$(i)$ the $\beta$-filling of $\hat{M}$, which we call $M$, exists;  \\
$(ii)$  the real part of the complex length $\mathcal{L} = l+i\theta$ of the core curve of the filling torus $\gamma$ in $M$ is approximately $\frac{2\pi}{L^2}$ with error bounded by
\[
\left| l - \frac{2\pi}{L^2} \right| \leq \frac{8(2\pi)^3}{L^4 - 16(2\pi)^4};
\] 
\\
$(iii)$  in particular, the length of $\gamma$ is bounded above by $\frac{2\pi}{L^2 - 4(2\pi)^2}$; \\
$(iv)$  there exists a $J$-biLipschitz diffeomorphism 
\[
\phi : \hat{M} - \mathbb{T}_\epsilon (T) \to M - \mathbb{T}_\epsilon (\gamma).
\]
$(v)$  If, in addition to $L^2 \geq K^2$, we have $|A^2| \geq 3$, then the imaginary part of the complex length $\mathcal{L} = l+i\theta$ of the core curve of the filling torus $\gamma$ in $M$ (chosen so $\theta \in (-\pi, \pi]$) is approximately $\frac{2\pi}{A^2}$ with error bounded by
\[
 \left| \theta - \frac{2\pi}{A^2} \right| \leq \frac{5(2\pi)^3}{(L^2 - 4(2\pi)^2)^2}.
\] 
\end{thm}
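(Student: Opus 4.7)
The plan is to adapt the cone-manifold deformation approach of Hodgson--Kerckhoff and Bromberg. I would open the rank-$2$ cusp $T$ of $\hat{M}$ into a one-parameter family of geometrically finite cone manifolds $M_\alpha$ parametrized by the cone angle $\alpha \in [0, 2\pi]$, with $M_0 = \hat{M}$ (the cusp) and $M_{2\pi} = M$ (the desired filling); the conformal boundary is held fixed equal to that of $\hat{M}$, and the singular locus represents the filling slope $\beta$. Part (i), existence, then reduces to extending this family over the full interval $[0, 2\pi]$. By the usual openness/closedness argument, this amounts to a uniform lower bound on the tube radius about the cone singularity throughout the deformation, and the hypothesis $L^2 \geq K^2$ enters precisely to guarantee such a bound via the effective Hodgson--Kerckhoff tube radius estimate.

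For parts (ii) and (v), I would write the complex length $\mathcal{L}(\alpha) = l(\alpha) + i\theta(\alpha)$ of the singular locus as the integral of its derivative along the deformation. The Hodgson--Kerckhoff--Bromberg machinery identifies this derivative with the holonomy derivative of a harmonic representative of the infinitesimal cone-angle deformation that vanishes on the conformal boundary. At the cusp end ($\alpha = 0$), the leading-order behavior of this derivative is governed by the cusp geometry: in the natural parameter $t = \alpha^2$, the real part is controlled by the normalized length $L$ and the imaginary part by the twist encoded in $A^2$, so that integrating from the cusp to $\alpha = 2\pi$ produces the leading terms $2\pi/L^2$ and $2\pi/A^2$ claimed in (ii) and (v). The error estimates come from a second-variation bound: the norm of the harmonic deformation outside the tube decays quantitatively in terms of the tube radius, which is itself controlled by $L$, and integrating the resulting bound against $\alpha^2$ from $0$ to $(2\pi)^2$ gives errors of the shape $1/L^4$ and $1/(L^2 - 4(2\pi)^2)^2$ with explicit constants.

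Part (iii) is a short algebraic consequence of (ii) obtained by combining the leading term with its error bound and simplifying. For (iv), I would construct the biLipschitz diffeomorphism $\phi$ as the time-$2\pi$ flow of the deformation vector field associated to the harmonic representative above, restricted to the complement of the $\epsilon$-Margulis tube. The Hodgson--Kerckhoff pointwise decay estimates bound the norm and first derivatives of this vector field away from the tube, and integrating those bounds along the deformation shows that the resulting map has biLipschitz constant which can be made smaller than any prescribed $J>1$ by choosing $K$ sufficiently large depending on $J$ and $\epsilon$.

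The main obstacle I expect is producing estimates sharp enough to yield the explicit numerical constants in (ii) and (v) (for example the coefficient $8(2\pi)^3$) rather than unspecified qualitative constants. This requires carefully tracking constants through the Hodgson--Kerckhoff--Bromberg deformation estimates in terms of the cusp invariants $L$ and $A^2$ rather than the length of the cone core, which is the main improvement over the previous form of the filling theorem and is needed for the application in Section \ref{closure not locally connected section}. A secondary subtlety is part (v): the imaginary part of the complex length forces one to work with the full complex infinitesimal deformation, correctly identify the twisting contribution $1/A^2$ at the cusp, and invoke the extra hypothesis $|A^2| \geq 3$ to keep the corresponding error term finite and controlled.
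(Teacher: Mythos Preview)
Your overall strategy matches the paper's: set up the cone-manifold family $M_\alpha$ with $t=\alpha^2$, use the Hodgson--Kerckhoff--Bromberg derivative estimates on the complex length, maintain a uniform tube radius bound from the hypothesis on $L^2$, and integrate. Parts (i)--(iii) are handled exactly this way in the paper.

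There are two points where you diverge. For part (iv), you propose to build the biLipschitz map directly by integrating the harmonic deformation vector field. The paper instead takes a shortcut: once (i) and (iii) give that $M$ exists and $\gamma$ is short, it simply invokes the Brock--Bromberg drilling theorem (Theorem~\ref{drilling}) applied to $M$ to obtain $\phi$. Your route is essentially how the drilling theorem itself is proved, so it would work, but it is considerably more labor than citing the black box; the paper's choice also makes clear that $K$ depends on $J$ and $\epsilon$ only through the drilling theorem's $l_0$.

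For part (v), you slightly misidentify the role of the hypothesis $|A^2|\geq 3$. The error term $\frac{5(2\pi)^3}{(L^2-4(2\pi)^2)^2}$ is controlled entirely by $L^2$, not by $A^2$; the bound on $dv/d\alpha$ (where $v=\theta/\alpha$) never sees $A^2$. The condition $|A^2|>2$ is needed first so that there is a \emph{unique} shortest longitude $\lambda$ on the cusp torus, giving a well-defined initial value $\lim_{t\to 0} v(\lambda,t)=1/A^2$; and then $|A^2|\geq 3$ is used only at the very end to guarantee $|\theta(\lambda,(2\pi)^2)| \leq \frac{2\pi}{3}+\frac{5}{16(2\pi)}<\pi$, so that $\theta(\gamma)$ agrees with $\theta(\lambda,(2\pi)^2)$ under the normalization $\theta\in(-\pi,\pi]$. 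Without this, the estimate on $\theta(\lambda,(2\pi)^2)$ would still hold but might not translate to the stated estimate on $\theta(\gamma)$.
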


The proof of Theorem \ref{filling} is contained in Section \ref{filling theorem section}.   Although our version may be stated differently, parts $(i)-(iii)$ can be found in the work of Hodgson and Kerckhoff \cite{HK1, HK2} on cone-manifold deformations which was generalized to geometrically finite manifolds by Bromberg \cite{B1, B2}.  Part $(iv)$ follows from the drilling theorem of Brock and Bromberg \cite{BB}.   The most original part of this version of the filling theorem is the estimate in part $(v)$, although its proof also relies on the Hodgson-Kerckhoff cone-manifold technology.  Some of the background cone-manifold deformation theory and a summary of the work of Bromberg, Hodgson, and Kerckhoff on cone-manifolds can be found in Section \ref{background cone}.

We now outline the proof of Theorem \ref{main theorem}.  We begin by parameterizing a 
 subset of $AH(S \times I)$.  If $P \subset S \times \{1\}$ is a pants decomposition, then $MP(S \times I, P)$ denotes the subset of the boundary of $AH(S \times I)$ consisting of the marked hyperbolic 3-manifolds that are homeomorphic to the interior of $S \times I$, are geometrically finite, have a rank-$1$ cusp associated to each component of $P$, and contain no other cusps (see Section \ref{background deformation theory} for this notation).  
 
 We define a subset $\mathcal{A} \subset MP(S \times I, P) \times \hat{\mathbb{C}}^{3g-3}$ and a map 
\[
\Phi: \mathcal{A} \to AH(S \times I), 
\] and we show there is some $\sigma_0 \in MP(S \times I, P)$ and a neighborhood $U$ of $(\sigma_0, \infty, \ldots, \infty)$ in $\mathcal{A}$ such that $\Phi(\sigma_0, \infty, \ldots, \infty) = \sigma_0$ and $\Phi \vert_U : U \to \Phi(U)$ is a homeomorphism.  The map $\Phi$ can roughly be described as follows. Let $d = 3g-3$.  If $(\sigma, \infty, \ldots, \infty) \in \mathcal{A}$ then we define $\Phi(\sigma, \infty, \ldots, \infty) = \sigma$.  If $(\sigma, w_1, \ldots, w_d) \in \mathcal{A}$ for some $(w_1, \ldots, w_d) \in \mathbb{C}^d$, we use the $w$-coordinates to define a marked hyperbolic $3$-manifold with $d$ rank-$2$ cusps.  To each rank-$2$ cusp, one can associate a conformal structure on a torus, and $w_i$ acts as a Teichm\"uller parameter for the $i$th cusp.  We then use the filling theorem (Theorem \ref{filling}) to fill in these cusps and obtain a marked hyperbolic $3$-manifold in the interior of $AH(S \times I)$.  
We define $\mathcal{A}$ to exclude points in $MP(S \times I, P) \times \hat{\mathbb{C}}^{3g-3}$  where some, but not all, of the $w$-coordinates are $\infty$.

This parameterization of the subset $\Phi(U) \subset AH(S\times I)$ is a straightforward generalization of the results in Section 3 of Bromberg \cite{B3}.  We set up the necessary background in Section \ref{background deformation theory} and describe the parameterization in Section \ref{local homeomorphism section}.  This parameterization is an application of parts $(i)-(iv)$ of Theorem \ref{filling} and Corollary \ref{multiple fillings}, which is a generalization of the filling theorem for multiple cusps.  

In Section \ref{A not loc conn section}, we use results of Section 4 of Bromberg \cite{B3} to show that $\mathcal{A}$ is not locally connected at $(\sigma_0 ,\infty, \ldots, \infty)$.  Moreover, we find that in some neighborhood $U$ of $(\sigma_0 ,\infty, \ldots, \infty)$ in $\mathcal{A}$, there exists $\delta>0$ and subsets $C_n \subset U$ accumulating at $(\sigma_0 ,\infty, \ldots, \infty)$ such that for any $(\sigma, w_1, \ldots, w_d) \in \overline{C_n}$ and any $(\sigma', w_1', \ldots, w_d') \in \overline{U-C_n}$, we have $|w_1 - w_1'| > \delta$ for all $n$ (see Lemma \ref{lowerboundw}).  Heuristically, we think of the sets $C_n$ as being components of $U$ that are bounded apart from the rest of $U$ by a lower bound that is independent of $n$.  In actuality, these sets are likely collections of components.  

Finally, in Section \ref{closure not locally connected section}, we show that $AH(S\times I)$ is not locally connected at $\sigma_0$.  By Lemma \ref{lowerboundw}, there is a lower bound on the distance between the first $w$-coordinate (\textit{i.e.}, the first coordinate of the $\hat{\mathbb{C}}^{3g-d}$ factor of $\mathcal{A}$) of a point in $\overline{C_n}$ and the first $w$-coordinate of a point in $\overline{U-C_n}$. We then use the filling theorem to estimate the complex length of a curve in $\Phi(\sigma, w_1, \ldots, w_d) \in AH(S \times I)$ based on $(w_1, w_2, \ldots, w_d)$.  The control on the $w_1$-coordinate from Lemma \ref{lowerboundw} and the quality of the estimates in the filling theorem show that for all but finitely many $n$, $\overline{\Phi(C_n)}$ and $\overline{\Phi(U-C_n)}$ must be disjoint.  Hence, $\overline{\Phi(U)}$ has infinitely many components that accumulate at $\sigma_0$.  It follows from the Density Theorem (Theorem \ref{density theorem}) that $\overline{\Phi(U)}$ contains a neighborhood of $\sigma_0$ in $AH(S\times I)$; hence, $AH(S\times I)$ is not locally connected at $\sigma_0$.


These results originally formed the content of my Ph.D. Thesis at the University of Michigan \cite{MagidThesis}.  I owe my advisor, Dick Canary, many thanks for introducing me to this problem, teaching me about deformation spaces and bumping, and helping me to correct errors and to clarify my work.  Initial progress on this work began during the Teichm\"uller Theory and Kleinian Groups program at MSRI during the Fall of 2007, and I'd like to thank Juan Souto, Steve Kerckhoff, and Ken Bromberg for their direction and many helpful conversations.  The results in this paper generalize several different result of Ken Bromberg and he has been very helpful in explaining his work, as well as some of the background and intuition underlying it.


\section{Background Deformation Space Theory} \label{background deformation theory}

In this section, we recall the definition of a pared $3$-manifold $(N,P)$ and define the relative deformation space $AH(N,P)$.  This is a space of hyperbolic $3$-manifolds homotopy equivalent to $N$ with cusps associated to annuli and tori in $P$. We will review the Ahlfors-Bers parameterization that describes the interior of $AH(N,P)$ and set up some of the notation that will be used later.  For more information about pared manifolds and deformation spaces, see Chapters 5 and 7 of \cite{CM} respectively.  For a survey of the Density Theorem and bumponomics, see \cite{survey}.   

\subsection{Kleinian Groups and Pared Manifolds} \label{KM combination}

Before turning to deformation spaces, we begin with a brief review of Kleinian group theory in order to set up some notation.  A Kleinian group is a discrete subgroup of $PSL(2, \mathbb{C}) \cong Isom^+(\mathbb{H}^3)$.  We will assume that all of our Kleinian groups are finitely generated, torsion-free, and not virtually abelian.  The action of a Kleinian group $\Gamma$ on $\mathbb{H}^3$ extends to an action on $\partial \mathbb{H}^3 \cong \hat{\mathbb{C}}$ by M\"obius transformations.  The domain of discontinuity $\Omega(\Gamma)$ is the largest open $\Gamma$-invariant subset of $\hat{\mathbb{C}}$ on which the action of $\Gamma$ is properly discontinuous.  The quotient $\Omega(\Gamma)/\Gamma$ is called the conformal boundary of $M_\Gamma = \mathbb{H}^3/\Gamma$.  The limit set, $\Lambda(\Gamma)$, is the complement of $\Omega(\Gamma)$ in $\hat{\mathbb{C}}$, and the convex core of $M_\Gamma$ is the quotient of the convex hull (in $\mathbb{H}^3$) of the limit set by the group action.

\textbf{Pared Manifolds}

A pared 3-manifold is a pair $(N,P)$ where $N$ is a compact, oriented, hyperbolizable 3-manifold that is not a 3-ball, and $P \subset \partial N$ is a disjoint collection of incompressible annuli and tori satisfying the following properties: \vspace{-0.2in}
\begin{enumerate}
\item $P$ contains all of the tori in $\partial N$, and 
\item every $\pi_1$-injective map $(S^1 \times I, S^1 \times \partial I) \to (N,P)$ is homotopic, as a map of pairs, into $P$.
\end{enumerate} \vspace{-0.2in}

To avoid some degenerate cases in the statements that follow, we will assume throughout this paper that $\pi_1(N)$ is not virtually abelian.  This will ensure that any Kleinian group isomorphic to $\pi_1(N)$ is non-elementary.

\textbf{Geometrically Finite Kleinian Groups}

A hyperbolic $3$-manifold $M_\Gamma = \mathbb{H}^3/\Gamma$ is geometrically finite if and only if the union of $M_\Gamma$ with its conformal boundary, $(\mathbb{H}^3 \cup \Omega(\Gamma))/\Gamma$, is homeomorphic to $N-P$ for some pared $3$-manifold $(N,P)$.  A Kleinian group $\Gamma$ is geometrically finite if and only if the corresponding $3$-manifold $M_\Gamma$ is geometrically finite. There are many other equivalent notions of geometric finiteness.  See Bowditch \cite{Bowditch} for a more complete discussion.

\textbf{Thick-Thin Decomposition}

Let $M$ be a hyperbolic manifold.  For any $\epsilon > 0$, we define the $\epsilon$-thin part of $M$ to be the set of points $x \in M$ where the injectivity radius is at most $\epsilon$ and denote this set by $M^{\leq \epsilon}$. 
By the Margulis lemma, there is some constant $\epsilon_3$ (depending only on the dimension) such that for any $\epsilon_3 \geq \epsilon>0$, the $\epsilon$-thin part of $M$ consists of a disjoint union of metric collar neighborhoods of short geodesics and cusps.  We use the notation $\mathbb{T}_\epsilon (\gamma)$ to denote the component of $M^{\leq \epsilon}$ associated to a geodesic $\gamma$ and $\mathbb{T}_\epsilon(T)$ to denote the Margulis $\epsilon$-thin region associated to a rank-$2$ cusp $T$.  We let $\mathbb{T}_\epsilon^{par}$ denote the union of the Margulis $\epsilon$-thin regions associated to parabolics (\textit{i.e.}, the rank-$1$ and rank-$2$ cusps).

\textbf{Klein-Maskit Combination} 

Let $H$ be a subgroup of $\Gamma$.  A subset $B \subset \hat{\mathbb{C}}$ is precisely invariant under $H$ in $\Gamma$ if  \vspace{-0.2in}
\begin{enumerate}
\item for all $h \in H$, $h(B) = B$, and 
\item for all $\gamma \in \Gamma - H$, $\gamma (B) \cap B = \emptyset$.  \end{enumerate} \vspace{-0.2in}

For example, if $H$ is the infinite cyclic group generated by $\begin{pmatrix} 1 & 2 \\ 0 & 1 \end{pmatrix}$ and $\Gamma$ is a geometrically finite group containing $H$ with a rank-$1$ cusp corresponding to $H$ (\textit{i.e.}, the largest abelian subgroup of $G$ containing $H$ is $H$), then there is some $R$ such that the two sets
\[
 B_R^+ = \{ z \in \mathbb{C} \; : \; Im(z) > R \} \; \text{and} \; B_R^- = \{ z \in \mathbb{C} \; : \; Im(z) < -R \}
 \] are precisely invariant under $H$ in $\Gamma$ (\textit{e.g.}, see p. 125 of \cite{Marden}).  

Precisely invariant sets are useful for constructing Kleinian groups via a process known as Klein-Maskit combination.  We will use statements similar to those in \cite{AM}, but one should also refer to \cite{Maskit original, Maskit paper, Maskit paper 2}.  

Suppose $G_1, G_2$ are two geometrically finite Kleinian groups with $G_1 \cap G_2 = H$.  Here, $H$ could be any subgroup, but we will only be interested in the case that $H$ is the infinite cyclic parabolic subgroup of the previous example.  If there is a Jordan curve $c$ bounding two open discs $B_1, B_2$ in $\hat{\mathbb{C}}$ such that $B_i$ is precisely invariant under $H$ in $G_i$, then the group $G$ generated by $G_1$ and $G_2$ is geometrically finite and isomorphic to the amalgamated free product $G_1 *_H G_2$.  In this case, we say that the group $G$ is obtained from $G_1$ and $G_2$ by type I Klein-Maskit combination along the subgroup $H$. 

We now describe type II Klein-Maskit combination.  Let $G$ be a geometrically finite Kleinian group containing $H$.  Let $f \in PSL(2, \mathbb{C})$ such that $fHf^{-1} \subset G$. Suppose there is a Jordan curve $c$ bounding a disc $B \subset \hat{\mathbb{C}}$ such that 
\vspace{-0.2in}
\begin{enumerate}
\item $B$ is precisely invariant for $H$ in $G$, 
\item $\hat{\mathbb{C}} - f(\overline{B})$ is precisely invariant for $f H f^{-1} \subset G$, and
\item  $gB \cap \left( \hat{\mathbb{C}} - f(\overline{B}) \right) = \emptyset$ for all $g \in G$.
\end{enumerate} \vspace{-0.2in}
Then the group $\Gamma$ generated by $G$ and $f$ is geometrically finite and isomorphic to the HNN extension $G *_{\left< f \right>}$.  

Again, while type II Klein-Maskit combination can be applied in a more general setting, consider a geometrically finite group $G$ containing $H$ as above, and consider $f = \begin{pmatrix} 1 & w \\ 0 & 1 \end{pmatrix}$. Note that $f H f^{-1}= H$.  There is some $R$ such that $B_R^-$ and $B_R^+$ are precisely invariant under $H$ in $G$.  Moreover, we can assume that for all $g \in G$, $g B_{R}^- \cap B_R^+ = \emptyset$.  Then if $Im(w) \geq 2R$, the group $G *_{\left< f \right>}$ is geometrically finite. 

\subsection{Deformation Spaces}

We define the relative representation variety 
\[
\mathcal{R}(N,P) = Hom_P (\pi_1(N), PSL(2, \mathbb{C}))
\] to be the set of representations $\rho: \pi_1(N) \to PSL(2, \mathbb{C})$ such that $\rho(g)$ is parabolic or the identity whenever $g \in \pi_1(P)$.  We then define the relative character variety $X(N,P)$ to be the Mumford quotient of the relative representation variety
\[
X(N,P) = \mathcal{R}(N,P)//PSL(2, \mathbb{C}).
\]
Although the Mumford quotient is defined algebraically, non-radical points in the character variety can be identified with conjugacy classes of representations (\textit{i.e.}, points in the topological quotient $Hom_P (\pi_1(N), PSL(2, \mathbb{C}))/PSL(2, \mathbb{C})$)   
(see p. 62 of \cite{Kap}).  
Since we have assumed $\pi_1(N)$ is not virtually abelian, $\rho(\pi_1(N))$ is non-elementary (and therefore non-radical) for any discrete, faithful representation $\rho$. For these representations, 
we will make no distinction between conjugacy classes of representations and points in $X(N,P)$.  See also Section 1 of \cite{CS}.

Let $AH(N,P)$ denote the subset of $X(N,P)$ consisting of the conjugacy classes of representations that are discrete and faithful. Thus $AH(N,P)$ inherits a topology from the character variety known as the algebraic topology.  Results of Chuckrow \cite{Chuckrow} and J\o rgensen \cite{Jorgensen} show that $AH(N,P)$ is a closed subset of $X(N,P)$ with respect to this topology.  Since $\pi_1(N)$ is not virtually abelian, a neighborhood of $AH(N,P)$ is a smooth complex manifold, and the topology on $AH(N,P)$ is the same as the topology when considered as a subset of the topological quotient of $Hom_P (\pi_1(N), PSL(2, \mathbb{C}))$ by $PSL(2, \mathbb{C})$ acting by conjugation (Chapter 4 of \cite{Kap}).  

The space $AH(N,P)$ is a deformation space of hyperbolic $3$-manifolds in the following sense.  Given $\rho \in AH(N,P)$, the image group $\rho(\pi_1(M))$ defines a hyperbolic manifold $M_\rho = \mathbb{H}^3/\rho(\pi_1(N))$. Moreover (since $N$ is aspherical) the representation determines a homotopy equivalence $f_\rho: N \to M_\rho$, defined up to homotopy.  So points in $AH(N,P)$ can be identified with equivalence classes of marked hyperbolic $3$-manifolds $(M,f)$ where $f: N \to M$ is a homotopy equivalence such that $f(P)$ is homotopic into the cusps of $M$.  Two pairs $(M_1, f_1)$ and $(M_2, f_2)$ correspond to the same point of $AH(N,P)$ if there is an orientation preserving isometry $g: M_1 \to M_2$ such that $f_2 \simeq g \circ f_1$. 

We say that $\rho \in AH(N,P)$ is minimally parabolic if $\rho(g)$ is parabolic if and only if $g \in \pi_1(P)$.  A representation $\rho \in AH(N,P)$ is geometrically finite if $\rho(\pi_1(N))$ is a geometrically finite subgroup of $PSL(2, \mathbb{C})$. Results of Marden \cite{Marden open} and Sullivan \cite{Sullivan} show that when $\partial N - P \neq \emptyset$, the interior of $AH(N,P)$ consists of precisely the conjugacy classes of representations that are both geometrically finite and minimally parabolic, and we denote this set by $MP(N,P)$.  We now describe what is known as the Ahlfors-Bers parameterization of each of the components of $MP(N,P)$ in the case that $\partial N - P$ is incompressible.  See Chapter 7 of \cite{CM} for a more complete description of this parameterization including when $\partial(N,P)$ is compressible.

To enumerate the components of $MP(N,P)$, we first define $A(N,P)$ to be the set of marked pared homeomorphism types.  More precisely, $A(N,P)$ is the following set of equivalence classes:
\begin{align*}
A(N,P) = \{ [(N',P'), &h] \; : \; (N',P') \;  \text{is a compact, oriented, pared 3-manifold},  \\ 
& h:(N,P) \to (N',P') \;  \text{is a pared homotopy equivalence} \}/\sim
\end{align*} where $[(N_1, P_1), h_1] \sim [(N_2, P_2), h_2]$ if there exists an orientation preserving pared homeomorphism $j : (N_1, P_1) \to (N_2, P_2)$ such that $j \circ h_1$ is pared homotopic to $h_2$.    

Recall that we can identify $\rho \in AH(N,P)$ with a marked hyperbolic 3-manifold $(M_\rho, f_\rho)$.  Any 3-manifold with finitely generated fundamental group admits a compact core \cite{Scott}.  A relative compact core $C$ of $M_\rho$ is a compact core for $M_\rho - \mathbb{T}_\epsilon^{par}$ such that $\partial C$ meets every non-compact component of the boundary of $M_\rho - \mathbb{T}_\epsilon^{par}$ in an incompressible annulus and contains every toroidal boundary component of $M_\rho - \mathbb{T}_\epsilon^{par}$. The existence of a such a core is given in \cite{KuSh, McCullough}.  This definition naturally imparts a pared structure on any relative compact core whose paring locus consists of the tori and annuli that intersect $\partial \mathbb{T}_\epsilon^{par}$.  When $\rho$ is geometrically finite, we can construct a relative compact core $C$ by intersecting the convex core of $M_\rho$ with $M_\rho - \mathbb{T}_\epsilon^{par}$.  We will refer to this as \emph{the} relative compact core of $M_\rho$.  If $\rho \in MP(N,P)$ then the marking $f_\rho$ is homotopic to a pared homotopy equivalence from $(N,P)$ to the relative compact core of $M_\rho$.  So we can define a map $F: MP(N,P) \to A(N,P)$ by sending $(M_\rho, f_\rho)$ to the relative compact core of $M_\rho$ (still marked by $f_\rho$).  The map $F$ establishes a bijection between the components of $MP(N,P)$ and the elements of the set $A(N,P)$.  That is, $F(\rho_1) = F(\rho_2)$ if and only if $\rho_1$ and $\rho_2$ are in the same component of $MP(N,P)$.  

Let $B$ be the component of $MP(N,P)$ determined by $F^{-1}([(N',P'), h])$.  For $\rho \in B$, we have that $M_\rho$ is geometrically finite and minimally parabolic, and $f_\rho \circ h^{-1}$ is homotopic to a pared homeomorphism from $(N', P')$ to the relative compact core of $M_\rho$.  Using $f_\rho \circ h^{-1}$, we can mark each component of the conformal boundary of $M_\rho$ with a component of $\partial N' - P'$.  This gives us a map 
\[
\mathcal{AB} : B \to \mathcal{T}(\partial N' - P')
\] where $\mathcal{T}(\partial N' - P')$ denotes the Teichm\"uller space of $\partial N' - P'$.  Recall that the Teichm\"uller space of a disconnected surface is the product of the Teichm\"uller spaces of its components.  

\begin{thm}[Ahlfors \cite{Ahlfors}, Bers \cite{Bers}, Kra \cite{Kra}, Marden \cite{Marden open}, Maskit \cite{Maskit}, Sullivan \cite{Sullivan}, and Thurston \cite{Th2}]  When $\partial N - P$ is incompressible, the map $\mathcal{AB}$ is a homeomorphism on each component of $MP(N,P)$.  
\end{thm}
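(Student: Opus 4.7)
The plan is to construct an explicit inverse to $\mathcal{AB}$ on a fixed component $B$ using quasiconformal deformation theory and then establish injectivity using Sullivan rigidity. Fix a basepoint $\rho_0 \in B$ with relative compact core pared-homeomorphic to $(N',P')$, and let $\Gamma_0 = \rho_0(\pi_1(N))$. Since $\rho_0$ is geometrically finite and minimally parabolic, $(\mathbb{H}^3 \cup \Omega(\Gamma_0))/\Gamma_0$ is homeomorphic to $N' - P'$, and the marking $f_{\rho_0}$ identifies $\Omega(\Gamma_0)/\Gamma_0$ conformally with a specific point $X_0 \in \mathcal{T}(\partial N' - P')$.

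For surjectivity, given any $X \in \mathcal{T}(\partial N' - P')$ represented relative to $X_0$ by a Beltrami differential $\mu$ on $\Omega(\Gamma_0)/\Gamma_0$ with $\|\mu\|_\infty < 1$, pull $\mu$ back to a $\Gamma_0$-invariant Beltrami differential $\tilde\mu$ on $\Omega(\Gamma_0)$ and extend by zero across the limit set to a measurable $(-1,1)$-differential on $\hat{\mathbb{C}}$. The measurable Riemann mapping theorem produces a quasiconformal map $\phi_\mu : \hat{\mathbb{C}} \to \hat{\mathbb{C}}$ with Beltrami coefficient $\tilde\mu$, normalized to fix three points, and then $\rho_\mu(g) := \phi_\mu \circ \rho_0(g) \circ \phi_\mu^{-1}$ lies in $PSL(2,\mathbb{C})$. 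A routine check shows $\rho_\mu$ is discrete, faithful, geometrically finite, and minimally parabolic (parabolic elements are preserved by qc conjugation because parabolics are exactly the non-identity elements with a unique fixed point, a property invariant under homeomorphism of $\hat{\mathbb{C}}$), and that $\phi_\mu$ descends to a pared homeomorphism of the compactified quotients; hence $\rho_\mu \in B$ with $\mathcal{AB}(\rho_\mu) = X$.

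For injectivity, suppose $\rho_1, \rho_2 \in B$ satisfy $\mathcal{AB}(\rho_1) = \mathcal{AB}(\rho_2)$. The equality of marked conformal boundaries provides a conformal isomorphism between $\Omega(\rho_1)/\rho_1$ and $\Omega(\rho_2)/\rho_2$ covered, via the chosen markings, by a $(\rho_1, \rho_2)$-equivariant conformal bijection $\Omega(\rho_1) \to \Omega(\rho_2)$. By Sullivan rigidity, because the $\Gamma_i$ are geometrically finite the limit sets either equal $\hat{\mathbb{C}}$ or have two-dimensional Lebesgue measure zero; combining this with the fact that the equivariant map already extends quasiconformally across the limit set (using the fixed topological type), the extension has Beltrami coefficient zero a.e. on $\hat{\mathbb{C}}$, so it is the restriction of a M\"obius transformation that conjugates $\rho_1$ to $\rho_2$. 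Thus they represent the same point of $X(N,P)$.

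Finally, continuity of $\mathcal{AB}$ follows from the fact that on $MP(N,P)$, algebraic convergence of minimally parabolic geometrically finite representations forces convergence of conformal boundaries (a consequence of Bers' embedding and stability of geometric finiteness in this setting), and continuity of $\mathcal{AB}^{-1}$ follows from continuous dependence of the solution of the Beltrami equation on its coefficient in $L^\infty$. The hardest step I expect is injectivity: verifying that the boundary-equivariant conformal map actually extends to a qc map on $\hat{\mathbb{C}}$ (so that Sullivan's theorem applies) requires one to use the identification of pared homotopy types with pared homeomorphism types on the component $B$ together with the incompressibility of $\partial N - P$; this is where the hypothesis on the boundary is essential.
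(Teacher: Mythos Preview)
The paper does not prove this theorem; it is stated as background and attributed to the listed authors, with a pointer to Chapter~7 of \cite{CM} for a fuller treatment. So there is no ``paper's own proof'' to compare against. Your outline is the standard one and is essentially correct: surjectivity via the measurable Riemann mapping theorem applied to an equivariant Beltrami coefficient extended by zero over the limit set, injectivity via measure-zero of geometrically finite limit sets (Ahlfors) together with quasiconformal rigidity (Sullivan), and continuity from holomorphic dependence of solutions of the Beltrami equation.

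One clarification on the step you flag as hardest. The quasiconformal extension across the limit set does not come from incompressibility; it comes from Marden's isomorphism theorem \cite{Marden open}: two minimally parabolic geometrically finite representations in the same component of $MP(N,P)$ are quasiconformally conjugate, so you already have a global qc map $\psi$, and the equal conformal boundary data lets you correct $\psi$ on $\Omega$ to be conformal there. Incompressibility enters earlier and for a different reason: it guarantees that a conformal isomorphism of the quotient boundaries compatible with the markings lifts to a \emph{unique} $(\rho_1,\rho_2)$-equivariant map on $\Omega$, because homotopy classes of self-homeomorphisms of $N'$ are detected by their restriction to $\partial N' - P'$. In the compressible case this fails, and $\mathcal{AB}$ is only a covering map onto its image (modulo a group of boundary-trivial mapping classes), which is exactly why the paper singles out the incompressible hypothesis.
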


Throughout the rest of this paper, we will be primarily concerned with the case $N = S \times I$ where $S$ is a closed surface of genus at least two.  In this case, the previous theorem is known as Bers' simultaneous uniformization \cite{Bers2}.  The interior of $AH(N)$ (in this case $P = \emptyset$) is $MP(N)$ and is connected.  The Ahlfors-Bers map defines a homeomorphism 
\[
\mathcal{AB} : MP(N) \to \mathcal{T}(S) \times \mathcal{T}(S).
\]
Although we will continue to use the term minimally parabolic when $N = S \times I$, representations in $MP(N)$ contain no parabolics.  

Generally the Ahlfors-Bers parameterization does not extend over $AH(N,P)$; however, the recent resolution the Bers-Sullivan-Thurston Density Conjecture guarantees that every representation in $AH(N,P)$ can be expressed as the algebraic limit of geometrically finite and minimally parabolic representations.  We refer to this as the Density Theorem.  In the case that $(N,P) =  (S \times I, \emptyset)$, Brock, Canary, and Minsky obtained this result as Corollary 10.1 of the Ending Lamination Theorem \cite{BCM2}, using results of Ohshika \cite{Ohshika2} and Thurston \cite{Th4}.  

\begin{thm}[Brock-Canary-Minsky \cite{BCM2}] \label{density theorem} The closure of $MP(S \times I,\emptyset)$, as a subset of the character variety $X(N,P)$, is $AH(S\times I, \emptyset)$.  
\end{thm}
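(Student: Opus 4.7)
The plan is to approximate an arbitrary $\rho \in AH(S \times I)$ by quasi-Fuchsian representations by first approximating its end invariants in $\mathcal{T}(S)$, and then identifying the algebraic limit of the resulting Bers-uniformized sequence with $\rho$ via the Ending Lamination Theorem. By tameness (Agol, Calegari--Gabai), the manifold $M_\rho$ is homeomorphic to the interior of $S \times I$ (possibly with rank-$1$ cusps drilled out along simple closed curves on $S$), so $M_\rho$ has two geometric ends, one on each side. To each end I would associate its end invariant: a point in $\mathcal{T}(S)$ if the end is geometrically finite, or an ending lamination on $S$ (possibly together with a parabolic locus) if the end is simply degenerate, using Thurston's and Bonahon's tameness/lamination theory.

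Next, I would produce the approximating sequence. For each $n$, choose Teichm\"uller parameters $X_n^\pm \in \mathcal{T}(S)$ whose projective classes in Thurston's compactification $\overline{\mathcal{T}(S)}$ converge to the specified end invariants on each side (if an end is geometrically finite I just take the given conformal structure on that side; if it is degenerate I pick any sequence in $\mathcal{T}(S)$ projectively converging to the ending lamination, which is possible because $\mathcal{PML}(S)$ is the Thurston boundary of $\mathcal{T}(S)$; the parabolic locus is dealt with by pinching curves). Bers simultaneous uniformization provides quasi-Fuchsian representations $\rho_n \in MP(S \times I, \emptyset)$ realizing $(X_n^+, X_n^-)$. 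By Chuckrow--J\o rgensen, $AH(S \times I)$ is closed in the character variety, so after passing to a subsequence $\rho_n \to \rho_\infty \in AH(S \times I)$.

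To conclude, I would identify $\rho_\infty$ with $\rho$. The Ending Lamination Theorem of Brock--Canary--Minsky states that a point of $AH(S \times I)$ is determined by the homeomorphism type of the underlying manifold together with the end invariants and the parabolic locus. So it suffices to show that $M_{\rho_\infty}$ and $M_\rho$ share the same end invariants. This is where I would invoke continuity of end invariants under algebraic limits: Thurston's double-limit theorem guarantees that the limit of Bers slices with prescribed ending behavior realizes the prescribed ending laminations, and Ohshika's results on geometrically finite approximation of geometrically infinite ends ensure that the parabolic loci and geometrically finite pieces pass correctly to the limit. Since every subsequential algebraic limit of $\{\rho_n\}$ must then coincide with $\rho$, the full sequence converges to $\rho$, proving density.

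The main obstacle is precisely this continuity-of-end-invariants step. A priori the subsequential limit $\rho_\infty$ could be a geometrically wild hyperbolic structure on $S \times I$ whose ending laminations or parabolic curves differ from those we prescribed; ruling this out requires the combined machinery of Thurston's double-limit theorem, the covering theorem, Ohshika's limit theorems, and Bonahon's tameness-based control of ends, with the Ending Lamination Theorem itself then serving as the final rigidity statement that converts matching end invariants into equality of representations. In effect, the proof proposal is to assemble these results in the order above; once they are in place, density of $MP(S \times I, \emptyset)$ in $AH(S \times I, \emptyset)$ falls out as a direct corollary.
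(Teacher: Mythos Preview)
The paper does not give its own proof of this theorem: it is quoted as a known result, with the sentence ``Brock, Canary, and Minsky obtained this result as Corollary 10.1 of the Ending Lamination Theorem \cite{BCM2}, using results of Ohshika \cite{Ohshika2} and Thurston \cite{Th4}.'' So there is no in-paper argument to compare against; what you have written is a sketch of the actual proof from the literature, and its overall architecture---approximate the end invariants by Teichm\"uller data, uniformize via Bers, pass to a limit, and invoke the Ending Lamination Theorem to identify that limit---is the correct one and matches the references the paper cites.

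One genuine logical slip to flag: you write that ``by Chuckrow--J\o rgensen, $AH(S \times I)$ is closed in the character variety, so after passing to a subsequence $\rho_n \to \rho_\infty$.'' Closedness alone does not produce a convergent subsequence; you first need precompactness of $\{\rho_n\}$ in the character variety. That is exactly what Thurston's double-limit theorem \cite{Th4} supplies (when the two sequences $X_n^\pm$ converge to binding laminations, or more generally to end data that together fill $S$), and you do invoke it later, but it belongs before the Chuckrow--J\o rgensen step, not after. In the same vein, the phrase ``continuity of end invariants under algebraic limits'' is not literally true---end invariants are famously discontinuous---so the work of Ohshika and others is better described as showing that the \emph{prescribed} invariants are realized by the limit, not that invariants vary continuously. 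With those two clarifications your outline is sound.
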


The more general result that $AH(N,P) = \overline{MP(N,P)}$ for any pared manifold $(N,P)$ follows from work of Brock-Bromberg \cite{BB}, Brock-Canary-Minsky \cite{BCM}, Bromberg \cite{Br4}, Bromberg-Souto \cite{BS}, Kim-Lecuire-Ohshika \cite{KLO}, Kleinedam-Souto \cite{KlSo}, Lecuire \cite{Lecuire}, Namazi-Souto \cite{NS}, Ohshika \cite{Ohshika}, and Thurston \cite{Th1}.   See \cite{survey} for a more complete discussion of the Density Theorem.

\section{Background Cone-Manifold Deformation Theory} \label{background cone}

\subsection{Geometrically Finite Cone-Manifolds}

Let $N$ be a compact $3$-manifold.  A hyperbolic cone-metric on the interior of $N$ with singular locus consisting of a link $\Sigma \subset int(N)$ is an incomplete hyperbolic metric (constant sectional curvature equal to $-1$) on the interior of $N-\Sigma$ whose metric completion determines a singular metric on $int(N)$ with singularities along $\Sigma$.  The link is totally geodesic, and in cylindrical coordinates around a component of $\Sigma$, the metric has the form
\[
dr^2 + \sinh^2(r) d\theta^2 + \cosh^2(r) dz^2
\] where $\theta$ is measured modulo $\alpha>0$.  We require $\alpha$ to be constant on each connected component of $\Sigma$, and we say $\alpha$ is the cone angle about that component of the singular locus.  See Section 1 of \cite{HK1} or Section 4 of \cite{B1} for more details.  When the cone angle on each component of $\Sigma$ is $\alpha = 2\pi$, this is equivalent to having a complete hyperbolic metric on the interior of $N$ (\textit{i.e.}, in the above definition, we require the metric on $int(N-\Sigma)$ to be complete in every end of $int(N-\Sigma)$ not associated to a component of $\Sigma$).   From now on, we will only consider cone-manifolds whose singular locus is connected. 

Let $M_\alpha$ be a hyperbolic cone-manifold homeomorphic to the interior of $N$ with cone angle $\alpha$ about $\Sigma$. We now define what it means for $M_\alpha$ to be a geometrically finite hyperbolic cone-manifold (see also Section 3 of \cite{B1}).  To do so, we first define a geometrically finite end.  Let $S$ be a closed surface of genus at least two, and let $Y = S \times [0,\infty)$ be a hyperbolic manifold with boundary $S \times \{0\}$.  That is, there is a smooth immersion $D: \tilde{Y} \to \mathbb{H}^3$ and representation $\rho: \pi_1(S) \to PSL(2, \mathbb{C})$ such that for all $x \in \tilde{Y}$ and $\gamma \in \pi_1(S)$,  $D(\gamma x) = \rho(\gamma) D(x)$. We say $D$ is the developing map for $Y$ and $\rho$ is the holonomy map.  We say $Y$ is a geometrically finite end if $D$ can be extended to a local homeomorphism $\tilde{S} \times [0, \infty] \to \mathbb{H}^3 \cup \hat{\mathbb{C}}$ such that $D(\tilde{S}\times \{\infty\}) \subset \hat{\mathbb{C}}$.  In this case, $S \times \{\infty\}$ inherits a conformal structure from the charts defined into $\hat{\mathbb{C}}$.  In fact, $Y$ has a projective structure at infinity since $PSL(2, \mathbb{C})$ acts by M\"obius transformations, although we will only use the fact that the transition maps are conformal.

Given a hyperbolic cone-manifold $M_\alpha$ with cone singularity $\Sigma$, we note that $M_\alpha - \Sigma$ has a (possibly incomplete) hyperbolic metric with no cone singularities.  Although one could consider hyperbolic cone-manifolds in greater generality, we have defined our cone-manifolds $M_\alpha$ to be homeomorphic to the interior of $N$ and hence topologically tame.   The ends of $M_\alpha - \Sigma$ (\textit{i.e.}, the complement of a compact core) are of three types (see p. 160 of \cite{B1}). There will be one end homeomorphic to $T^2 \times [0, \infty)$ associated to $\Sigma$, some number of ends associated to the rank-$2$ cusps of $M_\alpha$, also homeomorphic to $T^2 \times [0, \infty)$, and some number of ends homeomorphic to $S_i \times [0, \infty)$ associated to the higher genus surfaces $S_i$ in the boundary of the compact core.  We say $M_\alpha$ is geometrically finite if each of the ends not associated to a rank-$2$ cusp or to $\Sigma$ is geometrically finite.  We will not be considering hyperbolic cone-manifolds with rank-$1$ cusps.

We want to provide a meaningful way of interpreting a hyperbolic manifold with a rank-$2$ cusp as a hyperbolic cone-manifold with cone angle $\alpha = 0$ about the cone-singularity.  The convergence results below will allow us to do this more formally.  See also Section 3 of \cite{HK2} and Section 6 of \cite{B2}.

\begin{defn} A sequence of metric spaces with basepoints $\{(X_i, x_i)\}$ converges to $(X_\infty,x_\infty)$ geometrically if, for each $R>0$, $K>1$, there exists an open neighborhood $U_\infty$ of the
radius $R$ neighborhood of $x_\infty$ in $X_\infty$ and some $i_0$ such that for all $i > i_0$, there is a map $f_i : (U_\infty, x_\infty) \to (X_i, x_i)$ that is a $K$-biLipschitz diffeomorphism onto its image.   
\end{defn}

We say that a sequence $X_i \to X_\infty$ geometrically if there exist basepoints such that $(X_i, x_i) \to (X_\infty, x_\infty)$ geometrically.  For a more detailed discussion of geometric convergence in Kleinian group theory, see Chapter E of \cite{BP}, Chapter I of \cite{CEG}, or Chapter 8 of \cite{Kap}.  

The following is Theorem 6.11 of \cite{B2}, although a finite volume analogue was proven in Section 3 of \cite{HK2}.

\begin{thm}[Bromberg \cite{B2}] \label{geometric limit}
Let $\{ M_\alpha \}$ be a family of geometrically finite hyperbolic cone-manifolds defined for $\alpha \in (0, \alpha_0)$, with fixed conformal boundary, $\alpha_0 \leq 2\pi$, and suppose there is an embedded tubular neighborhood about the cone-singularity of radius $\geq \sinh^{-1}(\sqrt{2})$ in $M_\alpha$ for all $\alpha \in (0, \alpha_0)$.  Then 
\begin{enumerate}
\item as $\alpha \to 0$, the manifolds $M_\alpha$ converge geometrically to a complete hyperbolic manifold $M_0$ homeomorphic to the interior of $N-\Sigma$ with a rank-$2$ cusp in the end associated to $\Sigma$ and the same conformal boundary as $M_\alpha$. \\
\item as $\alpha \to \alpha_0$, the manifolds $M_\alpha$ converge geometrically to a hyperbolic cone-manifold $M_{\alpha_0}$ with cone angle $\alpha_0$ along $\Sigma$ and the same conformal boundary components as $M_\alpha$. 
\end{enumerate}
\end{thm}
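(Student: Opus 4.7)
The plan is to combine the Hodgson-Kerckhoff-Bromberg cone-manifold deformation theory, which yields uniform bilipschitz control of $M_\alpha$ off the tube about $\Sigma$, with the explicit model of the hyperbolic cone metric near $\Sigma$, to produce and identify the geometric limits in each of the two cases.

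First I would invoke the local parameterization of Hodgson-Kerckhoff \cite{HK1, HK2}, extended to the geometrically finite setting by Bromberg \cite{B1, B2}, to realize $\{M_\alpha\}$ as a smooth one-parameter family of hyperbolic cone-metrics on a fixed smooth manifold (off the singular locus) with fixed conformal boundary. The key analytic input is that when the embedded tube about $\Sigma$ has radius at least $\sinh^{-1}(\sqrt{2})$, the infinitesimal change $\partial g/\partial\alpha$ of the hyperbolic metric is represented off the tube by a harmonic form whose pointwise norm is bounded by its $L^2$ norm on the tube complement, and that $L^2$ norm is controlled uniformly in terms of the tube geometry. Integrating this bound over any compact subinterval $J \subset (0,\alpha_0)$ yields uniform bilipschitz estimates between $M_\alpha$ and $M_{\alpha'}$ for $\alpha,\alpha' \in J$ on the complement of a fixed tubular neighborhood of $\Sigma$; on the tube itself the metric $dr^2 + \sinh^2(r)\,d\theta^2 + \cosh^2(r)\,dz^2$ varies continuously in $\alpha$ by inspection.

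With these estimates in hand, for part (2) I would basepoint each $M_\alpha$ on the boundary of the $\sinh^{-1}(\sqrt{2})$-tube; uniform bilipschitz control off the tube, continuity of the cone metric (with $\theta$ taken mod $\alpha$) on the tube, and continuity of the developing map at infinity combine via Arzel\`a--Ascoli to produce a pointed geometric limit $M_{\alpha_0}$ of cone angle $\alpha_0$ with the prescribed conformal boundary. For part (1) the exterior-of-the-tube argument is identical and produces a limit $M_0^{\mathrm{out}}$; on the tube, as $\alpha \to 0$ the holonomy of a meridian of $\Sigma$ is a rotation by angle $\alpha$ and so tends to the identity, while the holonomy of a longitude stays bounded away from the identity, forcing the tube to degenerate to a rank-$2$ cusp whose horoball metric is the limit of the cone metrics on compact subsets away from $\Sigma$. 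Gluing this cusp end onto $M_0^{\mathrm{out}}$ across $\partial(\mathrm{tube})$ yields the complete hyperbolic manifold $M_0$ homeomorphic to $\mathrm{int}(N-\Sigma)$ with a rank-$2$ cusp in place of $\Sigma$.

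The main obstacle is pushing the deformation estimate uniformly up to the endpoints $\alpha = 0$ and $\alpha = \alpha_0$: the Hodgson-Kerckhoff pointwise bound degenerates precisely when the tube radius drops below $\sinh^{-1}(\sqrt{2})$, so the standing hypothesis is essential, and one must verify that the resulting bound on $\partial g/\partial\alpha$ is integrable on all of $(0,\alpha_0)$ rather than merely on compact subintervals. In case (1) there is the additional subtlety of showing that the meridian holonomies converge to a genuine parabolic element (and not merely to the identity, which a priori would permit other degenerations), which I would verify from the explicit cone model and J{\o}rgensen's inequality applied to the cusp subgroup generated by meridian and longitude in the limit. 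Once these uniform estimates and this identification are secured, the remaining steps --- passing to the pointed geometric limit and recognizing the conformal boundary and topology of the limit --- are formal.
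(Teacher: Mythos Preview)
The paper does not give its own proof of this statement: it is quoted verbatim as Theorem 6.11 of \cite{B2} (with the finite-volume analogue credited to Section 3 of \cite{HK2}), and no argument is supplied here. So there is no in-paper proof to compare your proposal against.

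That said, your sketch is a reasonable outline of the strategy actually carried out in \cite{B2}: one uses the $L^2$ bounds on the harmonic deformation (valid under the tube-radius hypothesis) to get uniform bilipschitz control off a tube about $\Sigma$, and then analyzes the tube itself via the explicit model metric to identify the limit as $\alpha\to 0$ (rank-$2$ cusp) or $\alpha\to\alpha_0$ (cone-manifold of angle $\alpha_0$). The obstacles you flag --- integrability of the estimate up to the endpoints, and correctly identifying the cusp structure in the $\alpha\to 0$ limit --- are exactly the technical points that \cite{B2} has to address, and they are nontrivial; in particular, the passage from $L^2$ control to pointwise bilipschitz control and the treatment of the projective structure at infinity both require real work there. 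Your proposal is a fair high-level summary, but a complete proof would require reproducing substantial portions of \cite{B2}, which is why the present paper simply cites it.
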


This theorem serves two purposes.  First, we can interpret a manifold with a rank-$2$ cusp as a limit of a family of cone-manifolds. Second, a one-parameter family of cone-manifolds $M_\alpha$ with fixed conformal boundary, defined for some interval $[0,\alpha_0)$, can be extended to a one-parameter family defined over $[0, \alpha_0]$.

\subsection{Deformations of Cone-Manifolds}

Let $X$ denote the interior of $N -\Sigma$ and suppose $M_t$ is a one-parameter family of hyperbolic cone-manifolds homeomorphic to the interior of $N$ with singular locus $\Sigma$. By restricting the metric on $M_t$ to $X$, we obtain a one-parameter family of hyperbolic metrics on $X$. Up to precomposition by isotopies of $X$ and postcomposition by isometries of $\mathbb{H}^3$ this determines a one-parameter family of developing maps 
\[
D_t : \tilde{X} \to \mathbb{H}^3.
\]  We will assume $M_t$ is smooth in the sense that $D_t$ is a smooth one-parameter family of diffeomorphisms.  Recall that $D_t$ being a developing map means that for each $t$ there is a corresponding holonomy representation $\rho_t : \pi_1(X) \to PSL(2, \mathbb{C})$ such that for any $x \in \tilde{X}$ and $\beta \in \pi_1(X)$, $D_t(\beta x) = \rho_t(\beta)D_t(x)$ 

At each fixed $t$, we obtain a vector field $v$ on $\tilde{X}$ defined by setting $v(x)$ to be the pullback (via $D_t$) of the tangent vector to the path $t \mapsto D_t(x)$.   Let $\tilde{E} = \tilde{X} \times sl_2(\mathbb{C})$ be the flat bundle of Killing fields on $\tilde{X}$. Recall that a Killing field is a vector field whose associated flow $\phi_t: \tilde{X} \to \tilde{X}$ is an isometry for all sufficiently small $t$.  Killing fields on $\mathbb{H}^3$ are parameterized by $sl_2(\mathbb{C})$ by taking the derivative $\left. \frac{d}{dt}\right|_{t=0} \phi_t$.  In general, unless the deformation is trivial, $v$ will not be a Killing field on $\tilde{X}$; however, we can associate to $v$ the Killing field, or equivalently the section $s_v : \tilde{X} \to \tilde{E}$ of the bundle $\tilde{E}$, that best approximates $v$ at $x$. To define $s_v$ we first need to analyze the natural complex structure $\tilde{E}$ inherits from $sl_2(\mathbb{C})$. 

At each point $x \in \tilde{X}$, the fiber of $\tilde{E}$ decomposes as a direct sum $\tilde{\mathcal{P}} \oplus \tilde{\mathcal{K}}$ where $\tilde{\mathcal{P}}$ consists of the infinitesimal isometries which are pure translations and $\tilde{\mathcal{K}}$ consists of the infinitesimal isometries that are pure rotations at $x$.  (This decomposition is defined to be orthogonal with respect to the metric we are going to put on the fibers of $\tilde{E}$.  Note this decomposition is not preserved by the flat connection on $\tilde{E}$. See p. 13-19 of \cite{HK1}.) The sub-bundle $\tilde{\mathcal{P}}$ is naturally identified with $T \tilde{X}$, and using the complex structure of $\tilde{E}$ one sees that $\tilde{\mathcal{K}} = i \tilde{\mathcal{P}}$, so we can decompose $\tilde{E} \cong T\tilde{X} + i T\tilde{X}$.  With this notation, we can define the canonical lift of the vector field $v$ to be the section  $s_v: \tilde{X} \to \tilde{E}$ given by 
\[
s_v (x)= v(x) - i curl(v)(x)
\] Here we are using twice the usual curl, which is normally defined by $curl(v) = \frac{1}{2} (*\hat{d}\hat{v})$ where $\hat{d}$ is exterior differentiation and $\hat{v}$ is the $1$-form dual to $v$.   Under the identification of Killing fields with $sl_2(\mathbb{C})$, this curl operator on vector fields acts like multiplication by $i$ on sections of $\tilde{E}$. Hence, $s_v$ is the section whose real part agrees with $v$ and such that the real part of $curl(s)$ agrees with $curl(v)$.  See \cite{HK1} or \cite{B1} for more details.

\subsection{$E$-valued Differential Forms}

Now we want to view the canonical lift $s_v$ as an $\tilde{E}$-valued $0$-form and obtain a $1$-form via exterior differentiation. Recall, an $\tilde{E}$-valued $k$-form (on $\tilde{X}$) is a section of the bundle $\wedge^k T\tilde{X}^* \otimes \tilde{E} \to \tilde{X}$, and the exterior derivative 
$
d: \wedge^k T\tilde{X}^* \otimes \tilde{E}  \to \wedge^{k+1} T\tilde{X}^* \otimes \tilde{E}
$ is defined using the flat connection on $\tilde{E}$.  Since a form $\omega \in \wedge^k T\tilde{X}^* \otimes \tilde{E}$ can be thought of as having values in $\tilde{E}$,  the complex structure of $\tilde{E}$ gives us real and imaginary parts $D$ and $T$ of the exterior derivative $d$.  That is, if $d\omega = \eta_1 + i \eta_2$ then $D \omega = \eta_1$ and $T\omega = \eta_2$.

Define $E$ to be the quotient of $\tilde{E}$ by $\pi_1(X)$ where $\pi_1(X)$ acts on $\tilde{X}$ by covering transformations and on $sl_2(\mathbb{C})$ by the adjoint representation.  This gives $E \to X$ the structure of a flat bundle, and the action of $\pi_1(X)$ also preserves the complex structure of $\tilde{E}$.  We define $E$-valued $k$-forms similarly to $\tilde{E}$-valued forms, and using the exterior derivative, we can define the cohomology groups $H^k(X;E)$ to be the closed forms modulo the exact forms.  We will use the notation $\Omega^k(X;E)$ to denote the set of closed forms.

If $s_v$ is the canonical lift of a vector field $v$ as defined above, then $ds_v$ is an equivariant closed $1$-form and thus descends to an element $\omega \in \Omega^1(X;E)$.  Moreover, the cohomology class in $H^1(X;E)$ defined by $\omega$ is independent of the choice of developing maps $D_t$.     This is proven in Section 2 of \cite{HK1} (see also Section 2 of \cite{B1}).  As Hodgson and Kerckhoff explain on p. 375 of \cite{HK2}, altering $D_t$ by postcomposing isometries of $\mathbb{H}^3$ has no effect on $\omega$ and precomposing isotopies of $\tilde{X}$ only has the effect of changing $\omega$ by an exact form. Thus the cohomology class determined by $\omega$ is well-defined by the one-parameter family of metrics on $X$.

Conversely, given a cohomology class $\omega_{t_0}$, we can describe the infinitesimal change in the metric on $X$ in the following sense.  Given any $\gamma \in \pi_1(X)$, we can compute 
\begin{align} \label{integrate}
\int_\gamma \omega_{t_0} = \left. \frac{d}{dt} \rho_t(\gamma)\rho_0(\gamma)^{-1} \right|_{t = t_0}.
\end{align}  We first choose a closed $1$-form representing the cohomology class $\omega_{t_0}$, also denoted $\omega_{t_0}$, and integrate the form along $\gamma$.  The integral only depends on the homotopy class of $\gamma$ and gives us an element of $sl_2(\mathbb{C})$.  See p. 12-13 of \cite{HK1}. 

\subsection{Harmonic Forms}

We now define an $L^2$-norm on the set of closed $E$-valued $k$-forms, which will allow us to pick a nice closed form to represent each cohomology class.   This will be the Hodge representative we define below.  

  Given $x \in \tilde{X}$, define an inner product on the fiber of $\tilde{E}$ over $x$, identified with $sl_2(\mathbb{C})$ by 
  \[
  \left< v, w \right>_x = \left< v(x), w(x) \right>_x + \left< iv(x), iw(x) \right>_x
  \] where both of the inner products on the right are the inner product on $T_x \tilde{X}$.  Recall $v, w, iv, iw \in sl_2(\mathbb{C})$ correspond to Killing fields in $T\tilde{X}$ and $v(x), w(x), iv(x), iw(x)$ are the vectors in $T_x \tilde{X}$ determined by these Killings fields. One can check that this inner product descends to an inner product on the fibers of $E$.

Using this inner product, we have an isomorphism $\sharp : E \to E^*$.  Recall that if $\omega$ is an $E$-valued $k$-form, then $\omega = \alpha \otimes s$ for some real $k$-form $\alpha$, so we define $\sharp(\omega) = \alpha \otimes \sharp(s)$.   The Hodge star operator can be defined for $E$-valued forms by $*\omega = *\alpha \otimes s$.  Then define an inner product on $E$-valued $k$-forms by
 \[
 (\omega_1, \omega_2) = \int_X  \omega_1 \wedge (\sharp * \omega_2) .
 \] 
The square of the norm of $\omega$ is $(\omega, \omega)$, and we say $\omega$ is in $L^2$ if $(\omega, \omega)$ is finite.

\textit{Remark.}  This is equivalent to defining $(\omega_1, \omega_2) = \int_X \left< \omega_1, \omega_2 \right>_x$ where the pointwise inner product of two $E$-valued forms $\omega_1 = \alpha_1 \otimes s_1$ and $\omega_2 = \alpha_2 \otimes s_2$ is the product of $\left< s_1 , s_2 \right>_x$ defined above and the standard inner product on real-valued forms $\left< \alpha_1, \alpha_2 \right>_x$.  
 
Define adjoints $\delta$, $D^*$, and $T^*$ to $d, D, T$ by 
\begin{align*}
 \delta &=   (-1)^{n(k+1) + 1} * (D-T) * \\
 D^* &= (-1)^{n(k+1) + 1} * D * \\
T^* &= (-1)^{n(k+1) + 1} * T *.
\end{align*}
One can check that if $\omega_1$ is a $k+1$-form and $\omega_2$ is a $k$-form then $(\delta \omega_1 , \omega_2) = (\omega_1, d\omega_2)$, whenever the inner product is finite.  See p. 13-14 of \cite{Bromberg Thesis} or Section 1 of \cite{HK1} for a broader development of this theory.

A $k$-form is closed if $d\omega = 0$ and co-closed when $\delta \omega =0$.  A form $\omega$ is harmonic of $\Delta \omega = (d \delta + \delta d) \omega = 0$.  When $X$ is a closed manifold, the Hodge theorem says that any cohomology class in $H^k(X; E)$ can be represented by a closed, co-closed (and hence harmonic) form.  While we are not dealing with closed manifolds, we will be able to use similar results of Hodgson, Kerckhoff, and Bromberg to find nice representatives for the cohomology classes in which we are interested.  See Section \ref{hodge subsection} below.

\subsection{Standard Form} \label{standard subsection}

Hodgson and Kerckhoff calculated the effects of two particular $E$-valued $1$-forms $\omega_m$ and $\omega_l$ in a neighborhood of $\Sigma$ (p. 31-33, \cite{HK1}).  We now review the definitions of $\omega_m$ and $\omega_l$.  Using these forms, some of the results in Section 3 of \cite{HK1} will allow us to put an arbitrary $E$-valued $1$-form into a standard form within a cohomology class of $H^1(X;E)$. 

  Let $M_\alpha$ be a cone-manifold, and let $U$ be a metric collar neighborhood of $\Sigma$ in $M_\alpha$.  We give $U$ the cylindrical coordinates $(r, \theta, z)$ where $r$ is the distance from $\Sigma$.  Recall $X = M_\alpha - \Sigma$, so $U \cap X$ is the set of points in $U$ with $r>0$.  Using the complex structure of $E$, the real and imaginary parts of an $E$-valued $1$-form can be identified with $TX$-valued $1$-forms, or in other words, sections of $Hom(TX, TX)$.  Thus we will define $\omega_m$ and $\omega_l$ as complex valued sections of $Hom(TX, TX)$.  With respect to the basis $\left(\frac{\partial}{\partial r},  \frac{1}{\sinh(r)} \frac{\partial }{\partial\theta}, \frac{1}{\cosh(r)}\frac{\partial}{\partial z} \right)$ on $TX$, we can define the forms $\omega_m$ and $\omega_l$ at any point $(r, \theta, z)$ of $U \cap X$ by the following matrices
 \[
\omega_m = \begin{pmatrix} \frac{-1}{\cosh^2(r) \sinh^2(r)} & 0 & 0 \\ 0 & \frac{1}{\sinh^{2}(r)} & \frac{-i}{\cosh(r) \sinh(r)} \\ 0  &  \frac{-i}{\cosh(r) \sinh(r)} &  \frac{-1}{\cosh^2(r)} \end{pmatrix}
\]
\[
\omega_l =  \begin{pmatrix} \frac{-1}{\cosh^2(r)} & 0 & 0 \\ 0 & -1 & \frac{-i\sinh(r)}{\cosh(r)} \\ 0  &  \frac{-i\sinh(r)}{\cosh(r)} &  \frac{\cosh^2(r) + 1}{\cosh^2(r)} \end{pmatrix}.
\]
If $\alpha \to 0$, the neighborhood $U$ limits to a rank-$2$ cusp.  We define $\omega_m$ and $\omega_l$ in a rank-$2$ cusp as limits of the $1$-forms defined above as $r \to \infty$, although to make this precise, we would need to describe how the coordinates on $U$ limit to coordinates on a rank-$2$ cusp (see Section 3.6 of \cite{MagidThesis} for this description).

\begin{defn} A closed $E$-valued $1$-form $\omega$ is \textit{in standard form} if there is a neighborhood $U_1$ of the singular locus and neighborhoods $U_2, \ldots, U_n$ of each rank-$2$ cusps such that in $U_i$, $\omega$ equals a complex linear combination of $\omega_m$ and $\omega_l$.  
\end{defn}

Note that the complex coefficients of $\omega_m$ and $\omega_l$ will generally be different for each $U_i$.  The following lemma (Lemma 3.3 of \cite{HK1}) shows that every cohomology class can put into standard form.

\begin{lem}[Hodgson-Kerckhoff \cite{HK1}]  \label{standard}
Given any closed $E$-valued $1$-form $\phi$, there is a cohomologous form $\omega_0$ that is in standard form. \end{lem}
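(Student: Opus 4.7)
The plan is a local-to-global argument: trivialize the ``noisy'' part of $\phi$ in each relevant end using explicit representatives, then patch back together using bump functions so that the cohomology class is preserved. Each neighborhood $U_i$ (whether the collar of $\Sigma$ or a rank-$2$ cusp) deformation retracts onto a flat torus $T^2$, so computing $H^1(U_i; E)$ reduces to the group cohomology $H^1(\mathbb{Z}^2; sl_2(\mathbb{C}))$, where $\mathbb{Z}^2$ acts on $sl_2(\mathbb{C})$ by the adjoint representation of the meridian and longitude holonomies.

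First, I would verify that $\dim_{\mathbb{C}} H^1(U_i; E) = 2$. Since the Euler characteristic $\chi(T^2) = 0$, we have $\dim H^1 = \dim H^0 + \dim H^2$. In the cone-singularity case the meridian holonomy is conjugate to a rotation by $\alpha$ and the longitude to a hyperbolic translation along the same axis; the common centralizer in $sl_2(\mathbb{C})$ is one-complex-dimensional, giving $\dim H^0 = \dim H^2 = 1$ and hence $\dim H^1 = 2$. The rank-$2$ cusp case is analogous, with meridian and longitude both parabolic translations fixing a common point at infinity.

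Second, I would show that $\omega_m$ and $\omega_l$ represent a basis of $H^1(U_i; E)$. By equation (\ref{integrate}), the periods $\int_\gamma \omega_m$ and $\int_\gamma \omega_l$ along the meridian and longitude $\gamma$ measure the infinitesimal changes in the corresponding holonomies. As computed on p.~31--33 of \cite{HK1}, $\omega_m$ produces a non-trivial change in the meridian holonomy (equivalently, in the cone angle along $\Sigma$) and a trivial change in the longitude holonomy, while $\omega_l$ does the opposite. Hence the period map $H^1(U_i; E) \to sl_2(\mathbb{C}) \oplus sl_2(\mathbb{C})$ is injective and sends $[\omega_m]$ and $[\omega_l]$ into linearly independent directions, so they span the two-complex-dimensional space $H^1(U_i; E)$.

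Third, I would finish by a standard bump function argument. Given a closed form $\phi$, on each $U_i$ write $\phi|_{U_i} = a_i\omega_m + b_i\omega_l + d\eta_i$ for constants $a_i, b_i \in \mathbb{C}$ and an $E$-valued $0$-form $\eta_i$ on $U_i$. Choose smaller neighborhoods $U_i'$ with $\overline{U_i'} \subset U_i$ and bump functions $\chi_i: X \to [0,1]$ equal to $1$ on $U_i'$ and supported in $U_i$. Then
\[
\omega_0 = \phi - \sum_i d(\chi_i \eta_i)
\]
is globally cohomologous to $\phi$ and on each $U_i'$ reduces to $a_i \omega_m + b_i \omega_l$, so $\omega_0$ is in standard form. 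The principal obstacle is the local cohomology computation underlying the first two steps, particularly checking that the explicit periods of $\omega_m$ and $\omega_l$ exhaust the space of infinitesimal holonomy variations around the meridian and longitude; once that is verified, the patching in the third step is routine.
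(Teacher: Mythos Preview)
The paper itself gives no proof of this lemma, only the citation to Lemma~3.3 of \cite{HK1}; your outline is essentially the argument found there, and the overall strategy---compute $H^1(U_i;E)\cong H^1(\mathbb{Z}^2;sl_2(\mathbb{C}))$, check it is two-dimensional, exhibit $\omega_m,\omega_l$ as a basis, then patch with bump functions---is correct.

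One correction to your second step: $\omega_m$ does \emph{not} have trivial effect on the longitude holonomy. From the derivative formula~\eqref{derivative} (which records the computation on pp.~31--33 of \cite{HK1}), the $\omega_m$-contribution to $d\mathcal{L}/dt$ for any peripheral curve is proportional to $\mathcal{L}$ itself, and this is nonzero for a longitude. What \emph{is} true, and what suffices for linear independence, is that $\omega_l$ has zero period on the meridian (its contribution is proportional to the real length $l$, which vanishes there) while $\omega_m$ does not. Hence if $a\omega_m+b\omega_l$ were exact, the meridian period would force $a=0$, and then the longitude period would force $b=0$. Your conclusion stands; only the description of the periods needs this adjustment.
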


Note that the form $\omega_0$ is not unique since there is no control over $\omega_0$ outside the union of the neighborhoods $U_i$.  Nevertheless, we will use a form $\omega_0$ that is in standard form to represent our cohomology class when we want to understand the infinitesimal deformation near the cone singularities and near the cusps.  

\subsection{Hodge Forms} \label{hodge subsection}

In the previous section, we showed that for any cohomology class in $H^1(X;E)$, there is a closed $1$-form representing that class that is in standard form in some neighborhood of $\Sigma$ and some neighborhood of each rank-$2$ cusp.  The following Hodge theorem for cone-manifolds shows that there is a harmonic representative as well, and that its difference from the standard form is bounded. 

\begin{defn} A $1$-form $\omega \in \Omega^1(X;E)$ is a Hodge form if $\omega$ is closed, co-closed, and locally $\omega$ can be expressed as $ds$ where $s$ is the canonical lift of a divergence-free, harmonic vector field. 
\end{defn}

Before stating Theorem 4.3 of \cite{B1} which generalizes Theorem 2.7 of \cite{HK1}, we need to define what it means for a $1$-form to be conformal at infinity.   By Lemma 3.2 of \cite{B1}, there is an isomorphism $\Pi_* : H^1 (X;E) \to H^1(\partial_c X; E_\infty)$ where $\partial_c X$ is the conformal boundary of $X$ and $E_\infty$ is the bundle of germs of projective vector fields on $\partial_c X$. A cohomology class $\omega_\infty \in H^1(\partial_c X; E_\infty)$ is conformal if it can be expressed as $ds_\infty$ where $s_\infty$ is the canonical lift of an automorphic, conformal vector field on $\partial_c X$.  A cohomology class $\omega \in H^1 (X;E)$ is conformal at infinity if $\Pi_* (\omega)$ is conformal.  

We will only be concerned with $1$-forms on $X$ that arise from one-parameter deformations of hyperbolic cone manifolds $M_t$ whose conformal boundary is fixed throughout the deformation.  These $1$-forms will be conformal at infinity with respect to the definition given above. 

\begin{thm}[Bromberg \cite{B1}, Hodgson-Kerckhoff \cite{HK1}]   \label{hodge}
Let $M$ be a geometrically finite hyperbolic cone-manifold, and let $\omega_0$ be an $E$-valued $1$-form  on $X = M - \Sigma$ that is conformal at infinity and in standard form in a neighborhood $U$ of $\Sigma$.  Then there exists a unique Hodge form $\omega$ such that the following holds:\\
(1) $\omega$ is cohomologous to $\omega_0$, \\
(2) there exists an $L^2$ section $s$ of $E$ such that $ds = \omega_0 - \omega$ \\
(3) $\omega_0 - \omega$ has finite $L^2$ norm on the complement of $U$. 
\end{thm}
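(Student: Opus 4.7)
The plan is to use the ansatz $\omega = \omega_0 - ds$ for some $L^2$ section $s$ of $E$. This automatically forces $\omega$ to be closed and cohomologous to $\omega_0$, so what remains is to arrange $s$ so that $\omega$ is co-closed (and, in fact, a Hodge form in the strong sense of the definition). Co-closedness of $\omega$ translates to the elliptic equation $\delta d s = \delta \omega_0$ on $X$, and the three conclusions of the theorem become: solvability of this equation with $s \in L^2$, finite $L^2$-norm of $ds$ off $U$, and uniqueness of the resulting $\omega$.

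First I would check that $\delta \omega_0$ lies in $L^2(X;E)$. This is where the two hypotheses on $\omega_0$ enter. Using the explicit matrices for $\omega_m$ and $\omega_l$ near $\Sigma$ (and their analogues in the rank-$2$ cusps), a direct computation of $\delta$ applied to a complex-linear combination of $\omega_m,\omega_l$ shows that the divergent contributions cancel: standard form is designed precisely so that $\delta \omega_0$ is $L^2$ near the singular locus and in the cusp ends. In the geometrically finite ends, the conformal-at-infinity condition together with the Poisson-type extension of Bromberg [B1, Sec.~3] provides a representative whose derivative decays in $L^2$, so $\delta \omega_0$ is $L^2$ there too. With $\delta \omega_0 \in L^2$ in hand, I would solve $\delta d s = \delta \omega_0$ by the variational problem of minimizing $\|\omega_0 - ds\|_{L^2(X\setminus U)}^2$ plus a controlled boundary term; existence of a minimizer $s \in L^2$ follows from Lax--Milgram once one has a Poincaré-type inequality $\|s\|_{L^2} \leq C \|ds\|_{L^2}$ on each end. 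This inequality is the main analytic input: it rules out nonzero $L^2$ parallel sections of $E$, using that the holonomy has no invariant vector on any end (the group is non-elementary and the ends have infinite volume in the appropriate sense), and for the cusp and singular-tube ends it uses exponential decay of the $(1,0)$-eigensections of the transverse Laplacian away from finitely many low modes, which the standard-form hypothesis kills.

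Having produced $s \in L^2$, I would set $\omega = \omega_0 - ds$. Elliptic regularity on $\Delta = d\delta + \delta d$ applied to $\omega$ (which is closed by construction and co-closed by the PDE) gives smoothness. Conclusions (1) and (2) are immediate from the construction, and (3) follows from the $L^2$ bound $\|ds\|_{L^2(X\setminus U)} \leq C\|\delta \omega_0\|_{L^2(X\setminus U)}$ built into the variational estimate. To promote the harmonic representative to a genuine Hodge form I would verify the canonical-lift structure locally: a closed and co-closed $E$-valued $1$-form on a hyperbolic $3$-manifold is locally $ds_v$ where $v$ is divergence-free and harmonic, which is a pointwise computation using $T\tilde{X}\oplus i T\tilde{X}\cong \tilde{E}$ and the Weitzenböck identity on vector fields. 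Uniqueness follows because any two candidates differ by a form $d(s_1 - s_2)$ that is simultaneously closed, co-closed, and $L^2$-exact, and integration by parts on exhaustions of $X$ (with boundary terms controlled by the $L^2$ hypotheses on $s_1 - s_2$) forces it to vanish. The main obstacle I anticipate is the integration by parts in Step~2 and in uniqueness: $X$ has three qualitatively different types of ends (the singular tube around $\Sigma$, the rank-$2$ cusps, and the geometrically finite ends), and each needs a tailored decay argument to justify discarding boundary terms on an exhaustion—it is precisely for this purpose that the standard-form hypothesis and the conformal-at-infinity hypothesis are built into the theorem.
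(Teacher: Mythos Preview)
The paper does not give its own proof of this theorem: it is stated as a background result, attributed to Bromberg \cite{B1} (Theorem 4.3 there) generalizing Hodgson--Kerckhoff \cite{HK1} (Theorem 2.7), and is invoked without argument. So there is no in-paper proof to compare against.

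That said, your sketch is broadly in the right spirit and follows the strategy one finds in the cited sources: reduce to solving $\delta d s = \delta\omega_0$ with $s\in L^2$, establish the necessary Poincar\'e-type inequality to rule out $L^2$ parallel sections, and control boundary terms separately on each type of end (singular tube, rank-$2$ cusps, geometrically finite ends). The place where your outline is thinnest is exactly where the real work lies in \cite{HK1} and \cite{B1}: the justification of integration by parts on an exhaustion of $X$. In the actual proofs this is not handled by a single Poincar\'e inequality but by a rather delicate Fourier decomposition of sections on the tube around $\Sigma$ (into modes indexed by the torus directions), together with explicit ODE analysis in the radial variable to show which modes are $L^2$ and which are not; the standard-form hypothesis is used to kill precisely the non-$L^2$ modes. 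Similarly, on the geometrically finite ends Bromberg's argument uses specific decay estimates coming from the projective structure at infinity rather than a generic variational bound. Your proposal acknowledges this as ``the main obstacle,'' which is accurate, but you should be aware that filling it in is the substance of the cited papers rather than a routine step.
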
  

Using their analysis of Hodge forms and infinitesimal deformations, Bromberg generalized the local rigidity results of Hodgson and Kerckhoff in \cite{HK1} to show that the (possibly incomplete) hyperbolic metric on the interior of a cone-manifold $M_\alpha$ is completely determined by the cone angle $\alpha$ and the conformal boundary components associated to each of the geometrically finite ends of $M_\alpha$.  See Theorem 5.8 of \cite{B1} which is restated as Theorem 1.1 of \cite{B2}.  We will use the following consequence of Bromberg's result.

\begin{thm}[Bromberg \cite{B1}] \label{open}  Let $M_{\alpha_0}$ be a geometrically finite hyperbolic cone-manifold with cone angle $\alpha_0 \in [0, 2\pi]$ about the cone singularity $\Sigma$.  Suppose there is an embedded tubular neighborhood about $\Sigma$ in $M_{\alpha_0}$ of radius $\geq \sinh^{-1}(\sqrt{2})$.  Then there exists an open neighborhood $W$ of $\alpha_0$ in $[0, 2\pi]$ such that the one-parameter family $M_\alpha$, defined by varying the cone angle and keeping the conformal boundary of $M_{\alpha_0}$ fixed, is defined for all $\alpha \in W$. 
\end{thm}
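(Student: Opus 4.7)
The plan is to prove the statement by applying the inverse function theorem to an appropriate smooth map on the deformation space of cone-manifolds. Let $\mathcal{D}$ denote the space of marked geometrically finite hyperbolic cone-manifolds with singular locus $\Sigma$ and the same topological type as $M_{\alpha_0}$. Near $M_{\alpha_0}$, $\mathcal{D}$ carries the structure of a smooth manifold (via its embedding into the appropriate character variety), and tangent vectors at $M_{\alpha_0}$ correspond naturally to cohomology classes in $H^1(X;E)$, where $X = M_{\alpha_0} - \Sigma$, through the correspondence between smooth one-parameter families of cone-manifolds and closed $E$-valued $1$-forms described earlier in this section. Consider the smooth map $\Psi: \mathcal{D} \to [0,2\pi] \times \mathcal{T}(\partial_c X)$ that records the cone angle and the conformal boundary. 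Its derivative at $M_{\alpha_0}$ assigns to a class $[\omega] \in H^1(X;E)$ the pair $(\dot\alpha, \dot\sigma)$ describing the infinitesimal changes in cone angle and conformal boundary.

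The central step is to show that $d\Psi_{M_{\alpha_0}}$ is an isomorphism. Given $[\omega] \in H^1(X;E)$, first choose a representative in standard form near $\Sigma$ (and near each rank-$2$ cusp) using Lemma \ref{standard}, then replace it by the Hodge representative provided by Theorem \ref{hodge}. The tube-radius assumption $\geq \sinh^{-1}(\sqrt{2})$ is precisely the hypothesis of Theorem \ref{hodge} and is used essentially here. The standard-form coefficient of $\omega_l$ near $\Sigma$ determines $\dot\alpha$, while the boundary behavior of $\omega$ records $\dot\sigma$. That $[\omega]$ is uniquely recovered from $(\dot\alpha, \dot\sigma)$ is the content of Bromberg's local rigidity theorem for geometrically finite cone-manifolds (Theorem 5.8 of \cite{B1}, restated as Theorem 1.1 of \cite{B2}). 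A dimension count then upgrades injectivity of $d\Psi$ to bijectivity.

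With $d\Psi_{M_{\alpha_0}}$ an isomorphism, the inverse function theorem yields a smooth local inverse $\Psi^{-1}$ defined on an open neighborhood of $(\alpha_0, \sigma_0)$ in the target, where $\sigma_0$ denotes the conformal boundary of $M_{\alpha_0}$. Restricting $\Psi^{-1}$ to the slice $\{(\alpha, \sigma_0) : \alpha \in [0,2\pi]\}$ produces an open neighborhood $W \subset [0,2\pi]$ of $\alpha_0$ together with the desired one-parameter family $M_\alpha = \Psi^{-1}(\alpha, \sigma_0)$. The main obstacle is precisely the infinitesimal isomorphism claim: soft arguments give only injectivity (local rigidity), while surjectivity requires the full Hodge-theoretic machinery developed in Sections \ref{standard subsection} and \ref{hodge subsection} together with the rigidity analysis of \cite{B1}. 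The tube-radius hypothesis enters at this step in an essential way, since without it Theorem \ref{hodge} and the ensuing rigidity calculation are not available.
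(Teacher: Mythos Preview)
The paper does not prove this theorem; it is cited from Bromberg \cite{B1} (Theorem 5.8, restated as Theorem 1.1 of \cite{B2}) as a consequence of the local rigidity result for geometrically finite cone-manifolds. Your outline follows the standard architecture of that proof --- local rigidity plus an inverse function theorem argument on the character variety --- so the overall strategy is sound.

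That said, there are two factual inaccuracies worth correcting. First, the infinitesimal change in cone angle is encoded by the coefficient of $\omega_m$, not $\omega_l$: the meridian has complex length $i\alpha$, and as the paper notes in the proof of Proposition \ref{derivative_estimates}, the parameterization $t=\alpha^2$ fixes the $\omega_m$-coefficient to be $-\tfrac{1}{4\alpha^2}$ (see equation \eqref{omega zero}). The $\omega_l$-coefficient governs the change in the complex length of the longitude. Second, the tube-radius hypothesis $R \ge \sinh^{-1}(\sqrt{2})$ is \emph{not} a hypothesis of Theorem \ref{hodge} as stated here; the Hodge representative exists regardless. The tube-radius bound enters later, in the rigidity analysis: one must show that a deformation with $\dot\alpha = 0$ and trivial conformal-boundary variation is cohomologically trivial, and this requires the boundary-term positivity estimates (of the type appearing in Proposition \ref{derivative_estimates}) which only hold when the tube is sufficiently large. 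So the hypothesis feeds into the injectivity of $d\Psi$, not into the existence of the Hodge form.

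Finally, your ``dimension count'' is doing real work and deserves more than a sentence. Surjectivity of $d\Psi$ requires knowing that $\dim H^1(X;E)$ equals $1 + \dim_{\mathbb R}\mathcal{T}(\partial_c X)$, which in \cite{HK1,B1} is established via the long exact sequence of the pair $(\overline{X},\partial\overline{X})$ together with a half-dimension argument for the image of $H^1(\overline{X},\partial\overline{X};E)$ in $H^1(\partial\overline{X};E)$. Without this, injectivity alone does not give the local inverse you need.
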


\subsection{Complex Length} \label{complex length subsection}

 Let $X$ be the interior of $N - \Sigma$ and $\rho: \pi_1(X) \to PSL(2, \mathbb{C})$.  If $\gamma \in \pi_1(X)$, then the complex length of $\gamma$, denoted $\mathcal{L} = l+i\theta$ or $\mathcal{L}(\rho(\gamma))$, is defined by the formula 
\[
tr^2(\rho(\gamma)) = 4 \cosh^2 \left( \frac{\mathcal{L}}{2} \right) 
\]  
and the normalizations $l \geq 0$ and $\theta \in (-\pi, \pi]$.  If $\rho(\gamma)$ is a loxodromic element, then $l$ is the length of the geodesic representative of $\gamma$ in $X$ (equivalently, the translation length of $\rho(\gamma)$ along its axis in $\mathbb{H}^3$), and $\theta$ gives the amount $\rho(\gamma)$ twists along its axis. If $\rho(\gamma)$ is parabolic, then the complex length is zero.  

If $M_\alpha$ is a cone-manifold homeomorphic to the interior of $N$ with cone singularity $\Sigma$, and $U$ is a tubular neighborhood of the cone-singularity, then $\partial U$ has a well-defined meridian $\beta$.  This is the homotopy class of a curve on $\partial U$ that bounds a disk (with a cone-point) in $M_\alpha$.  When the cone angle is $\alpha \in (0, 2\pi)$, then the meridian will be sent to an elliptic element that rotates by $\alpha$ about its axis.  In this case, we say the meridian has (purely imaginary) complex length $i \alpha$.  In our situation, when $\alpha= 0$ we will have $\rho(\beta)$ be parabolic, but when $\alpha = 2\pi$, $\rho(\beta)$ will be the identity.   

If $\alpha \in (0, 2\pi]$ and $U$ is a metric collar neighborhood, then the torus $\partial U$ inherits a Euclidean metric as a subset of $M_\alpha$, so we can pick the shortest longitude $\lambda$ on $\partial U$ (by a longitude, we mean any curve that intersects the meridian once) and normalize the complex length of $\lambda$ to be $l + i\theta$ for some $\theta \in \left(-\frac{\alpha}{2}, \frac{\alpha}{2} \right]$.  Then any other longitude will have complex length $l +i\theta + im \alpha$ for some $m \in \mathbb{Z}$.  The only time the choice of $\lambda$ is not well-defined is when there are two shortest longitudes on $\partial U$ in which case we pick one and assign it the complex length $l + i \frac{\alpha}{2}$ by convention.  
We say that the complex length of the cone axis $\Sigma$ is the complex length of any longitude since these are all homotopic to $\Sigma$. Since the complex length of $\Sigma$ is only well-defined up to the addition of multiples of $i\alpha$, we work with the complex length of a particular longitude instead.

\section{The Drilling and Filling Theorems} \label{filling theorem section}

\subsection{Drilling
}
If $M$ is a geometrically finite hyperbolic manifold and $\gamma$ is a disjoint collection of simple closed geodesics in $M$, then Kojima showed, using an argument he attributes to Kerckhoff, that $M-\gamma$ admits a unique complete hyperbolic metric such that the natural inclusion of $(M-\gamma) \subset M$ extends to a conformal map between the conformal boundaries of $M-\gamma$ and $M$ (see Thoerem 1.2.1 of \cite{Kojima}).  We call this process drilling (\textit{i.e.}, finding a new metric on $M-\gamma$).  If the curves in $\gamma$ are sufficiently short, the Brock-Bromberg drilling theorem bounds the difference between the original metric on the complement of $\gamma$ in $M$ and the new complete metric on $M-\gamma$ \cite{BB}. That is, they show the metrics are close (in a biLipschitz sense) on the complement of a neighborhood of the drilled curves.  Their work makes use of the cone-manifold machinery developed by Hodgson, Kerckhoff, and Bromberg \cite{HK1, HK2, B1, B2} that we outlined in the previous section.

 More precisely, if $M$ is a geometrically finite hyperbolic manifold without rank-$1$ cusps, and $\gamma_1, \ldots, \gamma_n$ is a disjoint collection of simple closed curves in $M$, let $\hat{M}$ denote the geometrically finite hyperbolic manifold homeomorphic that is homeomorphic $M- \cup \gamma_i$ and has the same conformal boundary components.  We say $\hat{M}$ is the $\cup \gamma_i$-drilling of $M$. The following is Theorem 6.2 of \cite{BB}. 

\begin{thm}[Brock-Bromberg \cite{BB}] \label{drilling} Given any $J > 1$, $\epsilon_3 \geq \epsilon>0$, there exists some $l_0 > 0$ such that the following holds:  if $M$ is a geometrically finite manifold with no rank-$1$ cusps, and $\gamma_1, \ldots, \gamma_n$ is a collection of geodesics in $M$ with 
\[
\sum_{i=1}^n l(\gamma_i) < l_0,
\] then there exists a $J$-biLipschitz diffeomorphism
\[
\phi : \hat{M} - \cup_{i=1}^n \mathbb{T}_\epsilon (T_i) \to M - \cup_{i=1}^n \mathbb{T}_\epsilon (\gamma_i)
\]   where $\hat{M}$ is the $\cup \gamma_i$-drilling of $M$, and $T_i$ is the cusp corresponding to the drilling of $\gamma_i$.  
\end{thm}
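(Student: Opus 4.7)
The strategy I would follow is the one that (following Hodgson–Kerckhoff) Brock and Bromberg in fact use: interpolate between $M$ and $\hat{M}$ through a one-parameter family of geometrically finite hyperbolic cone-manifolds with fixed conformal boundary and cone angle varying along $\cup \gamma_i$. For simplicity take $n=1$ and write $\gamma = \gamma_1$; the general case is analogous with the estimates summed over the components. The manifold $M = M_{2\pi}$ is a cone-manifold of cone angle $2\pi$ along $\gamma$ (i.e., a smooth hyperbolic manifold), and I would like to deform the cone angle $\alpha$ downward from $2\pi$ to $0$, keeping the conformal boundary fixed. By Theorem \ref{geometric limit} the $\alpha \to 0$ limit is a geometrically finite hyperbolic manifold homeomorphic to the interior of $N - \gamma$ with the same conformal boundary as $M$ and a new rank-$2$ cusp in place of $\gamma$; by uniqueness (Kojima) this must be $\hat{M}$.

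First I would establish that this family $\{M_\alpha\}_{\alpha \in [0,2\pi]}$ actually exists. The openness statement Theorem \ref{open} says the family persists on a neighborhood of any $\alpha_0$ as long as one maintains an embedded tube of radius at least $\sinh^{-1}(\sqrt{2})$ about $\Sigma = \gamma$. So I need an a priori tube-radius estimate: if $\sum l(\gamma_i)$ is sufficiently small (this is where $l_0$ enters), then the Hodgson–Kerckhoff tube-radius bound provides the required embedded tube at every $\alpha \in [0,2\pi]$. Combined with the closedness half of Theorem \ref{geometric limit}, this is a standard continuity–compactness argument promoting openness to a family defined on all of $[0, 2\pi]$.

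Next I would control the metric change along this family. For each $\alpha$ the infinitesimal deformation is represented by a cohomology class in $H^1(X;E)$; by Lemma \ref{standard} choose a standard-form representative, and by the Hodge theorem for cone-manifolds (Theorem \ref{hodge}) replace it by its unique Hodge representative $\omega_\alpha$ which is conformal at infinity. The key estimates of Hodgson–Kerckhoff and Bromberg bound the $L^2$ norm of $\omega_\alpha$ on the complement of the tube in terms of the cone-angle derivative and of $l(\gamma)$, and then upgrade this $L^2$ bound to a pointwise bound on the $\epsilon$-thick part using the mean-value inequality for harmonic forms in a ball of definite radius. Integrating the pointwise bound in $\alpha$ from $0$ to $2\pi$ yields a $J_0$-biLipschitz diffeomorphism
\[
\phi_0 : \hat{M} - \mathbb{T}_{\epsilon_0}(T) \to M - \mathbb{T}_{\epsilon_0}(\gamma)
\]
for some universal $\epsilon_0 > 0$, with $J_0 \to 1$ as $l(\gamma) \to 0$. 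Finally, to get the biLipschitz map on the complement of an arbitrary Margulis tube $\mathbb{T}_\epsilon$ rather than just the fixed $\mathbb{T}_{\epsilon_0}$, I would patch $\phi_0$ with a standard biLipschitz model map on the collar between $\partial\mathbb{T}_\epsilon$ and $\partial\mathbb{T}_{\epsilon_0}$ — both sides are nearly the same Euclidean collar when $l(\gamma)$ is small, so the patching constant is again close to $1$. Shrinking $l_0$ further if necessary makes the combined constant at most $J$.

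The main obstacle is the third step: controlling $\|\omega_\alpha\|_{L^2}$ on the complement of the tube uniformly in $\alpha$, and then converting this to pointwise control. The $L^2$ norm of an $E$-valued harmonic $1$-form blows up near $\Sigma$ as $\alpha \to 0$, so one cannot simply integrate on all of $X$; the argument has to exploit the standard-form decomposition in the collar neighborhood $U$ of $\Sigma$ to show that the bad part of $\omega_\alpha$ is absorbed by a multiple of $\omega_m$ (carrying the change of meridian holonomy), leaving a genuinely $L^2$ remainder on $X - U$ whose norm is bounded by $l(\gamma)$ times a constant depending only on the tube radius. This boundary-term analysis, coupled with a uniform tube-radius lower bound from Hodgson–Kerckhoff, is the technical heart of the drilling theorem and is what forces the smallness hypothesis $\sum l(\gamma_i) < l_0$.
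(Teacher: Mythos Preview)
The paper does not give its own proof of this statement: Theorem~\ref{drilling} is simply quoted as Theorem~6.2 of Brock--Bromberg \cite{BB}, with no argument supplied here. So there is no ``paper's own proof'' to compare against.

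That said, your outline is an accurate sketch of the strategy Brock and Bromberg actually use in \cite{BB}: deform the cone angle along $\gamma$ from $2\pi$ down to $0$ with fixed conformal boundary, use the tube-radius estimates (Hodgson--Kerckhoff, extended by Bromberg to the geometrically finite setting) together with Theorems~\ref{open} and \ref{geometric limit} to keep the family defined on all of $[0,2\pi]$, represent the infinitesimal deformation by its Hodge form, bound its $L^2$ norm outside the tube via the boundary-term analysis, upgrade to pointwise bounds on the thick part using a mean-value inequality, and integrate in $\alpha$. The only point I would flag is that the passage from a fixed $\epsilon_0$ to an arbitrary $\epsilon \le \epsilon_3$ and the ``level-preserving on cusps'' refinement (see the Remark after Theorem~\ref{drilling}) require a bit more care than a single patching sentence suggests; in \cite{BB} this is the content of their Lemma~6.17 and Theorem~6.12, not just a throwaway collar argument.
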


\textit{Remark.}  In the drilling theorem, Brock and Bromberg also conclude that the map $\phi$ in the drilling theorem is level-preserving on cusps in the following sense. Suppose $T_{\hat{M}}$ is a rank-$2$ cusp in $\hat{M} - \cup_{i=1}^n \mathbb{T}_\epsilon (T_i)$.  Then the drilling map $\phi$ sends $T_{\hat{M}}$ to a cusp $T_M$ in $M$ in such a way that for any $0< \epsilon' \leq \epsilon$, $\phi( \partial \mathbb{T}_{\epsilon'} (T_{\hat{M}})) = \partial \mathbb{T}_{\epsilon'} (T_{M})$. See Theorem 6.12 of \cite{BB} (in particular, see Lemma 6.17 of \cite{BB} which was used to prove 6.12).

\subsection{Filling}

The filling theorem provides an inverse construction.  A geometrically finite hyperbolic manifold with a rank-$2$ cusp is homeomorphic to the interior of a compact manifold with a torus boundary component.  One can Dehn-fill this compactification along any boundary slope and attempt to hyperbolize the interior of the filled manifold.  Under certain conditions on the boundary slope, the filling theorem provides a way of doing this hyperbolic Dehn-filling while preserving the conformal boundary components.  Moreover, we obtain estimates on the complex length of the core curve of the filling torus in the new metric.

Now let $\hat{M}$ be a geometrically finite hyperbolic manifold with $n$ rank-$2$ cusps.  We want to describe a way of filling in these cusps to obtain a hyperbolic manifold $M$ with the same conformal boundary but no rank-$2$ cusps.  Although methods developed by Hodgson, Kerckhoff, and Purcell \cite{Purcell} can be used to fill multiple cusps simultaneously, this introduces some unnecessary complications. We will proceed by describing the filling theorem for one cusp, which up to renumbering we can assume is the first cusp.  Then we derive the multiple cusp case by filling one cusp at a time (see Corollary \ref{multiple fillings}).
  
Let $\hat{N}$ be a compact $3$-manifold with interior homeomorphic to $\hat{M}$.  On the first torus boundary component of $\hat{N}$ fix a slope $\beta$.  Let $N$ be the manifold obtained by Dehn-filling $\hat{N}$ along $\beta$.  If possible, we hyperbolize the interior of $N$ to obtain a hyperbolic manifold $M$ with the same conformal boundary as $\hat{M}$ and one fewer cusp.  If it exists, we call $M$ the $\beta$-filling of $\hat{M}$.  Let $\gamma$ be the geodesic representative in $M$ of the core curve of the solid torus used to Dehn-fill $\hat{N}$.  

The following theorem gives sufficient conditions for the $\beta$-filling of $\hat{M}$ to exist, and when these conditions are satisfied, gives information about the complex length of the geodesic $\gamma$ in $M$.  Before stating the theorem, we define the normalized length and normalized twist of the slope $\beta$ used in the filling; henceforth called the meridian.

The boundary of the cusp $T$ we are filling, $\partial \mathbb{T}_{\epsilon_3}(T)$, inherits a Euclidean metric.  Let $\mu$ be a geodesic in the homotopy class of $\beta$ on this flat torus. Let $m$ denote the length of $\mu$. Fix a point $x \in \mu$, and let $\nu$ be a geodesic ray perpendicular to $\mu$ at $x$.  Let $y$ denote the next point on $\nu$ where $\nu$ meets $\mu$ (after $x$).  Orient $\mu$ so that if $\vec{\mu}$ and $\vec{\nu}$ denote the tangent vectors to $\mu$ and $\nu$ at $x$ then $\vec{\mu} \times \vec{\nu}$ points into the cusp $T$.  Define $b$ to be the value in $\left(-\frac{m}{2}, \frac{m}{2}\right]$ such that $|b|$ is the distance between $x$ and $y$, and after orienting $\mu$ as described, the sign of $b$ gives the orientation of the shortest path beginning at $x$ and ending at $y$ realizing this distance. See Figure \ref{torus4}.  When $y$ is exactly half-way around $\mu$ from $x$ then there are two shortest paths.  In this case, we choose the positively oriented one so $b = \frac{m}{2}$. The value $b$ is the twist associated to the flat structure on $\partial \mathbb{T}_{\epsilon_3}(T)$ with meridian $\beta$.

\begin{figure}[htbp]
\begin{center}
\psfrag{y}[][]{\small $\nu$}
\psfrag{b}[][]{\small $b$}
\psfrag{x}[][]{\small $\lambda$}
\psfrag{h}[][]{\small $\mu$}
\psfrag{z}[][]{\small $x$}
\psfrag{f}[][]{\small $y$}
\includegraphics[width=2in]{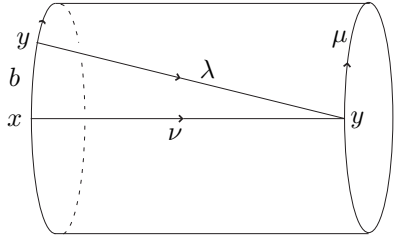}
\caption[Torus]{\label{torus4} A flat torus obtained by identifying the boundary circles of a cylinder with a twist of $b$.  In this picture, $b$ is positive.}
\end{center}
\end{figure}

\begin{defn} 
The normalized length $L$ of $\beta$ is the length of $\mu$ divided by the square root of the area of $\partial \mathbb{T}_{\epsilon_3}(T)$:
\[
L = \frac{m}{\sqrt{Area(\partial \mathbb{T}_{\epsilon_3}(T))}}.
\]
\end{defn}   

\begin{defn} \label{defn_norm_twist} The normalized twist of the flat metric on $\partial \mathbb{T}_{\epsilon_3}(T)$ with meridian $\beta$ is the ratio $\frac{b}{m}$.  We let $A^2 = \frac{m}{b}$ denote the reciprocal of the normalized twist.  
\end{defn}

\bigskip \noindent \textit{Remark.}  The normalized length and normalized twist are well-defined invariants of the cusp $T$ with meridian $\beta$.  That is, they depend only on the point in $\mathcal{T}(T^2)/\mathbb{Z}$ determined by the flat metric on $\partial \mathbb{T}_{\epsilon_3}(T)$. Here we consider the quotient of the Teichm\"uller space of the torus by the subgroup generated by Dehn twists about $\beta$. In particular, we could define the normalized length or twist using any cross section of the cusp $\partial \mathbb{T}_{\epsilon}(T)$, where $\epsilon_3 \geq \epsilon>0$. 

Also note that in spite of the ``square" notation, the quantity $A^2 \in (-2, 2]$ could be negative, and we make no use of any quantity $A$ in this paper.  We use this notation to emphasize that $A^2$ is a counterpart to $L^2$ in the following sense.  

Assume $A^2 \neq 2$ so there is a unique shortest longitude $\lambda$ on $T^2 \times \{0\}$.  
If we conjugate $\pi_1(\hat{M})$ in $PSL(2, \mathbb{C})$ so that the isometry corresponding to $\lambda$ is $\begin{pmatrix} 1 & 2 \\ 0 & 1 \end{pmatrix}$, and the isometry corresponding to the meridian $\beta$ is $\begin{pmatrix} 1 & w \\ 0 & 1 \end{pmatrix}$ for some $w \in \mathbb{C}$ with $Im(w) > 0$, then 
\[
L^2 = \frac{|w|^2}{2Im(w)} \quad \text{and} \quad A^2 = \frac{|w|^2}{2Re(w)}.
\]

Now that we have defined the quantities $L^2$ and $A^2$, we are ready to state the filling theorem.

\noindent \textbf{Theorem \ref{filling}.} \emph{Let $J>1$ and $\epsilon_3 \geq \epsilon > 0$.  There is some $K \geq 8(2\pi)^2$ such that the following holds:  suppose $\hat{M}$ is a geometrically finite hyperbolic $3$-manifold with no rank-$1$ cusps, $T$ is a rank-$2$ cusp in $\hat{M}$, and $\beta$ is a slope on $T$ such that the normalized length of $\beta$ is at least $K$ (i.e., $L^2 \geq K^2$), then   \\
$(i)$ the $\beta$-filling of $\hat{M}$, which we call $M$, exists;  \\
$(ii)$  the real part of the complex length $\mathcal{L} = l+i\theta$ of the core curve of the filling torus $\gamma$ in $M$ is approximately $\frac{2\pi}{L^2}$ with error bounded by
\[
\left| l - \frac{2\pi}{L^2} \right| \leq \frac{8(2\pi)^3}{L^4 - 16(2\pi)^4};
\] 
\\
$(iii)$  in particular, the length of $\gamma$ is bounded above by $\frac{2\pi}{L^2 - 4(2\pi)^2}$; \\
$(iv)$  there exists a $J$-biLipschitz diffeomorphism 
\[
\phi : \hat{M} - \mathbb{T}_\epsilon (T) \to M - \mathbb{T}_\epsilon (\gamma).
\]
$(v)$  If, in addition to $L^2 \geq K^2$, we have $|A^2| \geq 3$, then the imaginary part of the complex length $\mathcal{L} = l+i\theta$ of the core curve of the filling torus $\gamma$ in $M$ (chosen so $\theta \in (-\pi, \pi]$) is approximately $\frac{2\pi}{A^2}$ with error bounded by
\[
 \left| \theta - \frac{2\pi}{A^2} \right| \leq \frac{5(2\pi)^3}{(L^2 - 4(2\pi)^2)^2}.
\] 
}

\bigskip

\noindent \textbf{Outline of the Proof and Prior Results}
\bigskip

As the proof is rather technical and spans multiple subsections, we begin with an outline.  Many of the results of the filling theorem follow directly from results of Brock, Bromberg, Hodgson, and Kerckhoff, so first we delineate these contributions. Part $(i)$ was shown by Bromberg in \cite{B2}.  This was a  generalization of the work of Hodgson and Kerckhoff in \cite{HK2} to geometrically finite manifolds.  We intend to revisit these arguments in order to reprove $(i)$ and draw the conclusions stated in $(ii)$ and $(v)$.  Part $(iii)$ follows from $(ii)$, although both are proven in Lemma \ref{length lemma}. Part $(iv)$ follows from parts $(i)$, $(iii)$ and the drilling theorem. That is, once we show that the filling exists and $\gamma$ is sufficiently short in $M$, we can then apply the drilling theorem to $M$ to recover $\hat{M}$ and get the $J$-biLipschitz map from the drilling theorem.
  
We view the geometrically finite manifold $\hat{M}$ with $n$ cusps as a geometrically finite hyperbolic cone-manifold with $n-1$ rank-$2$ cusps and cone singularity $\Sigma = \gamma$ with cone angle $\alpha = 0$ about $\Sigma$. Part $(i)$ amounts to showing that we can increase the cone angle from $0$ to $2\pi$.  We will use the parameterization $t = \alpha^2$.  
By Theorem \ref{open} the one-parameter family is defined in some interval $[0, t')$.  At any $t \in [0, t')$, Proposition \ref{derivative_estimates} provides estimates on the derivative of the complex length of any peripheral curve. The bounds here depend on the radius of an embedded tube about the cone-singularity.   Provided $L^2$ is sufficiently large, Lemma \ref{radius lemma} shows that for all $t \in [0, t')$, the radius remains bounded below; therefore, we can extend the one-parameter family to $M_{t'}$ using Theorem \ref{geometric limit}, and in Lemma \ref{radius lemma 2} we show that if $L^2 \geq 8(2\pi)^2$ then we can continue the deformation to $t=(2\pi)^2$.  Having defined the one-parameter family $M_t$ for all $t \in [0, (2\pi)^2]$, we can integrate the estimates from Proposition \ref{derivative_estimates} to prove Lemma \ref{length lemma}.  This gives us the estimate on the real part of the length of $\gamma$ we claimed in part $(ii)$.  

To get the estimate on the imaginary part of the complex length of $\gamma$ in $M$ that we claimed in part $(v)$, we consider a longitude on $\partial U_1$ homotopic to $\gamma$.  Actually, it is more convenient to work with a metric collar neighborhood $V_t$  of $\gamma$ in $M_t$.  The torus $\partial V_t$ inherits a flat metric and we can define the twist $b(t)$ and the normalized twist $\frac{b(t)}{m(t)}$ associated to $\partial V_t$ in the same way that we defined the twist $b$ and normalized twist $\frac{b}{m}$ for $\partial V_0$ (the boundary of the cusp we are filling in $\hat{M}$).   Since $|A^2| > 2$, there is a unique shortest longitude $\lambda$ on $\partial V_0$. For any $t$ when $\lambda$ is shortest, the imaginary part of the complex length which we  denote by $\theta(\lambda, t)$ is well-defined as a real number (see Section \ref{complex length subsection}). In Lemma \ref{boundv}, we define the quantity $v(\lambda', t) = \frac{\theta(\lambda', t)}{\alpha}$ for any longitude $\lambda'$ and bound $\frac{dv}{d\alpha}$ at any fixed time $t$.  In Lemma \ref{longitude}, we show $\lim_{t\to 0} v(\lambda, t) = \frac{1}{A^2}$.  In Lemmas \ref{theta lemma} and \ref{part 5} we use this limiting value of $v(\lambda, t)$, the bound on the derivative of $v$ found in Lemma \ref{boundv}, and the hypothesis $|A^2| \geq 3$ to show that $\lambda$ remains the shortest longitude on $\partial V_t$ for all $t \in [0, (2\pi)^2]$ and obtain the estimate on $\theta =\theta(\lambda, 2\pi)$ in part $(v)$ of Theorem \ref{filling}.

\subsection{Existence of the Deformation and Derivative Estimates}

We now begin the proof of Theorem \ref{filling}.  In this section, our goal is to show that the one-parameter family $M_t$, defined by setting $M_0 = \hat{M}$ and increasing the cone angle with with the parameterization $t = \alpha^2$, can be defined for all $t \in [0, (2\pi)^2]$. In Lemma \ref{radius lemma 2}, we show that if $L^2 \geq 8(2\pi)^2$, then we can do this.

\begin{proof}  Let $M_0 = \hat{M}$, and define a one-parameter family of hyperbolic cone-manifolds $M_t$ by increasing the cone angle about $\gamma$ according to the parameterization $t = \alpha^2$ and keeping the conformal boundary fixed.  This is well defined for some interval $[0, t')$ by the local rigidity results of Bromberg (see Theorem 5.8 of \cite{B1}, restated as Theorem \ref{open} in the previous section).  At any $t\in[0,t')$ we can estimate the derivative of the complex length of any curve on the boundary of a neighborhood of the cone singularity. 

\begin{prop} \label{derivative_estimates}  Suppose the one-parameter family of cone manifolds $M_t$ has been defined for $t \in [0, t')$, and suppose there is an embedded tube $U_1$ of radius $R$ about $\gamma$ in $M_t$. If
 $\mathcal{L} = l+ i\theta$ denotes the complex length of any curve on $\partial U_1$, then the derivative of $\mathcal{L}$ is given by 
 \begin{align} \label{derivative}
 \frac{d\mathcal{L}}{dt} = \frac{-1}{4\alpha^2} (-2\mathcal{L}) + (x+iy) (2l)
 \end{align} and therefore 
 \begin{align} \label{derivatives}
 \frac{dl}{d\alpha} = \frac{l}{\alpha}(1 + 4\alpha^2 x) \quad \text{and} \quad \frac{d\theta}{d\alpha} = \frac{\theta}{\alpha} + 4\alpha y l 
 \end{align} where $x$ and $y$ satisfy
 \begin{align} \label{boundx}
\frac{-1}{\sinh^2(R)} \left( \frac{2 \sinh^2 (R) + 1}{2 \sinh^2(R) + 3 } \right) \leq 4 \alpha^2 x \leq \frac{1}{\sinh^2(R)}
\end{align} and 
\begin{align} \label{boundy}
|4\alpha^2 y|  \leq \frac{2}{ \sinh^2(R)}\frac{\cosh^2(R)}{(2\cosh^2(R) + 1)}.
\end{align}
\end{prop}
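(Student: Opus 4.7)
The plan is to represent the infinitesimal deformation of $M_t$ by a cohomology class in $H^1(X;E)$ and compute both sides of \eqref{derivative} by integrating a convenient representative around curves on $\partial U_1$. By Lemma \ref{standard}, we choose a representative $\omega_0$ in standard form on a collar of $\Sigma$, so that $\omega_0 = a\omega_m + b\omega_l$ there for complex scalars $a=a(t)$ and $b=b(t)$. By Theorem \ref{hodge}, the unique Hodge form $\omega$ cohomologous to $\omega_0$ satisfies $\omega_0 - \omega = ds$ for some $L^2$ section $s$. Since exact forms integrate to zero over closed curves on $\partial U_1$, the period of $\omega$ along any such curve agrees with the period of $a\omega_m + b\omega_l$, computed in the explicit cylindrical coordinates of Section \ref{standard subsection}.

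The coefficient $a$ is pinned down by the meridian $\mu$. Since $M_t$ has cone angle $\alpha = \sqrt{t}$, the meridian has complex length $i\alpha$, so equation \eqref{integrate} gives $\frac{d\mathcal{L}(\mu)}{dt} = \frac{i}{2\alpha}$. Using the Hodgson--Kerckhoff computations of the periods of $\omega_m$ and $\omega_l$ on meridians and longitudes (Section 3 of \cite{HK1}), this meridian constraint forces $a = -\tfrac{1}{4\alpha^2}$. The same period computations show that for any peripheral curve on $\partial U_1$ with complex length $\mathcal{L} = l+i\theta$, the period of $\omega_m$ along the curve equals $-2\mathcal{L}$ and the period of $\omega_l$ equals $2l$. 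Summing the two contributions and writing $b = x+iy$ produces \eqref{derivative}.

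The bounds \eqref{boundx} and \eqref{boundy} on $4\alpha^2 x$ and $4\alpha^2 y$ constitute the technical heart of the proposition. The coefficient $b$ records the deviation of the Hodge representative from the purely singular form $-\tfrac{1}{4\alpha^2}\omega_m$, and by the variational principle underlying Theorem \ref{hodge} this deviation is controlled by the $L^2$ norm of the correction $ds$ on the tube $U_1$. Combining the explicit pointwise expressions for $\omega_m$ and $\omega_l$ in the cylindrical coordinates $(r,\theta,z)$ of Section \ref{standard subsection} with the sharp $L^2$ estimates for Hodge forms on an embedded tube of radius $R$ derived by Hodgson--Kerckhoff in Sections 2--3 of \cite{HK1} and extended to the geometrically finite setting by Bromberg in \cite{B1} yields the stated inequalities. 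The main obstacle is tracking the precise $\sinh(R)$ and $\cosh(R)$ constants through this variational argument.

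Finally, \eqref{derivatives} follows from \eqref{derivative} by algebra: writing $\mathcal{L} = l+i\theta$, using $\frac{d}{d\alpha} = 2\alpha\,\frac{d}{dt}$, and taking the real and imaginary parts of $2\alpha$ times \eqref{derivative} yields $\frac{dl}{d\alpha} = \frac{l}{\alpha}(1+4\alpha^2 x)$ and $\frac{d\theta}{d\alpha} = \frac{\theta}{\alpha} + 4\alpha y l$, as claimed.
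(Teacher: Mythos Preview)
Your derivation of \eqref{derivative} and \eqref{derivatives} matches the paper's: put the infinitesimal deformation in standard form $\omega_0 = a\,\omega_m + (x+iy)\,\omega_l$ near $\Sigma$, determine $a=-\tfrac{1}{4\alpha^2}$ from the meridian condition $\mathcal{L}(\mu)=i\alpha$, and read off \eqref{derivative} from the Hodgson--Kerckhoff period computations; \eqref{derivatives} is then straight algebra via $t=\alpha^2$.

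The gap is in your account of the bounds \eqref{boundx} and \eqref{boundy}. You describe the argument as a local one on the tube: ``the deviation is controlled by the $L^2$ norm of the correction $ds$ on the tube $U_1$.'' That is not how the bound on $(x,y)$ is obtained, and no local $L^2$ estimate on $U_1$ alone will produce it. The argument is global. Writing the Hodge representative as $\omega=\eta+i\!*\!D\eta$ and applying the Weitzenb\"ock-type boundary formula $\int_N\|\omega\|^2=\int_{\partial N}*D\eta\wedge\eta$ on the complement $N$ of the tube, the rank-$2$ cusps, and the geometrically finite ends, one must show that the boundary contributions at \emph{every other} piece of $\partial N$ are nonpositive. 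This uses (a) Bromberg's result that the integral over surfaces pushed out the geometrically finite ends tends to zero, (b) the decomposition $b_i(\eta,\eta)=b_i(\eta_0,\eta_0)+b_i(\eta_c,\eta_c)$ with $b_i(\eta_c,\eta_c)\le 0$ from \cite{HK2}, and (c) the fact that at each rank-$2$ cusp $b_i(\eta_0,\eta_0)\le 0$ (Purcell \cite{Purcell}). Only after assembling these does positivity of the global $L^2$ norm force $b_1(\eta_0,\eta_0)\ge 0$. One then computes $b_1(\eta_0,\eta_0)/\mathrm{area}(\partial V)=a_R(x^2+y^2)+b_Rx+c_R$ explicitly in the tube coordinates, and the inequality $a_R(x^2+y^2)+b_Rx+c_R\ge 0$ is a disc condition on $(x,y)$ from which \eqref{boundx} and \eqref{boundy} are read off.

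Your proposal does not mention the other boundary components at all, and the phrase ``controlled by the $L^2$ norm of the correction $ds$'' suggests a different (and incorrect) mechanism. Without the global sign analysis there is no way to conclude $b_1(\eta_0,\eta_0)\ge 0$, and hence no quadratic constraint on $(x,y)$; this is the missing idea.
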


\begin{proof}  Let $X = M_t - \Sigma$. To each time $t$ for which the one-parameter deformation has been defined we can associate a class in $H^1(X;E)$ that is conformal at infinity.   We can represent this cohomology class in two ways.  First, by Lemma \ref{standard}, we can choose $\omega_0$ to be in standard form in a neighborhood $U_1$ of the singular locus and in each of the rank-$2$ cusps $U_i$ $(i = 2, \ldots, n)$ of $X$.  By our choice of parameterization $t = \alpha^2$, in a neighborhood $U_1$ we must have 
\begin{align} \label{omega zero}
\omega_0 = \frac{-1}{4\alpha^2} \omega_m + (x+iy) \omega_l
\end{align}  for some constants $x$ and $y$ since the derivative of the complex length of the meridian is determined by the coefficient of $\omega_m$ and the complex length of the meridian is $i\sqrt{t}$ at any time $t$  (see equation (4) of \cite{HK2}).  By integrating $\omega_m$ and $\omega_l$, Hodgson and Kerckhoff compute the effect of $\omega_m$ and $\omega_l$ on the infinitesimal change in the holonomy of any path on $\partial U_1$ on p. 32-33 of \cite{HK1} (see also Lemma 2.1 of \cite{HK2}).  Thus (\ref{derivative}) follows from (\ref{omega zero}) by their work. 

 From (\ref{derivative}), the real part gives us $\frac{dl}{dt}$ and the imaginary part gives us $\frac{d\theta}{dt}$.  Given the parameterization $t = \alpha^2$, the formulas for $\frac{dl}{d\alpha}$ and $\frac{d\theta}{d\alpha}$ given in (\ref{derivatives}) follow.

Now we want to derive the bounds on $x$ and $y$.  By Theorem \ref{hodge}, we can find a Hodge form $\omega$ in the same cohomology class as $\omega_0$ such that $\omega_c := \omega - \omega_0$ has finite $L^2$-norm outside $U_1$. Lemma 3.4 of \cite{HK1} shows that $\omega_c$ does not effect the holonomy of any of the peripheral elements.  By Proposition 2.6 of \cite{HK1}, we can write $\omega$ as  
 \[
\omega = \eta + i * D \eta
\] such that $D^* \eta = D*D\eta + \eta = 0$, and both $\eta$ and $D\eta$ are symmetric and traceless $TX$-valued forms. 

Since $\omega = \omega_0 + \omega_c$, we can decompose the real part of $\omega$ as $\eta = \eta_0 + \eta_c$ where $\eta_c$ does not effect the holonomy of the peripheral elements.  

We can find a smoothly embedded convex surface $S$ cutting off the geometrically finite ends of $X$ such that (after possibly shrinking the $U_i$ neighborhoods of the cusps) $U_i$ and $S$ are all pairwise disjoint.  Note that such a surface exists by Theorem 4.3 of \cite{B1} and may have multiple components (one for each end). 

By Lemma 2.3 of \cite{HK2}, we have that for any compact submanifold $N \subset X$ with $\partial N$ oriented with an inward pointing normal
\[
\int_N \| \omega \|^2 = \int_{\partial N}  * D \eta \wedge \eta.
\]  

Letting $N$ be the complement of the union $\cup_{i=1}^n U_i$ and the ends cut off by $S$, we can decompose the boundary integral:
\[
\int_N \| \omega \|^2 - \int_{S} * D \eta \wedge \eta =  \sum_{i=1}^n \int_{\partial U_i}* D \eta \wedge \eta 
\]  Let $b_i(\alpha, \beta) = \int_{\partial U_i}* D \alpha \wedge \beta$.

Lemma 2.5 of \cite{HK2} says that $b_i (\eta, \eta) = b_i(\eta_0, \eta_0) + b_i (\eta_c, \eta_c)$ and Lemma 2.6 of \cite{HK2} says that $b_i(\eta_c, \eta_c)$ is non-positive.  So we have 
\[
\int_N \| \omega \|^2 - \int_S * D \eta \wedge \eta  \leq   \sum_{i=1}^n b_i(\eta_0, \eta_0).
\]
Since this holds for any compact submanifold $N$, we can apply this to a family of compact submanifolds obtained by shrinking the neighborhood of the geometrically finite ends (but keeping the torus boundary components of $N$ the same as before).  That is, let $S_T$ be the surface obtained by taking a parallel copy of $S$ a distance $T$ further out the ends.  When we do this, $\int_{S_T}  * D \eta \wedge \eta  \to 0$ as $T \to \infty$, while the other term on the left, $\int_{N_T} \| \omega \|^2$, can only get larger (Theorem 4.6 of \cite{B1}).  So we must have
\[
\sum_{i=1}^n b_i(\eta_0, \eta_0) \geq 0
\] with equality if and only if the deformation is trivial (see eq. (16) of \cite{Purcell}).

We also note that Lemmas 3.1 and 3.2 in \cite{Purcell} imply that for $i \geq 2$,
\[
-b_i (\eta_0, \eta_0)= 2 | \zeta_i'(t)|^2 Area(\partial U_i) 
\] where $\zeta_i(t)$ is the path in the Teichm\"uller space (using the Teichm\"uller metric) of $\partial U_i$ throughout the deformation.  
  In particular $b_i(\eta_0, \eta_0) \leq 0$ for all $i\geq 2$.  It follows that $b_1(\eta_0, \eta_0) \geq 0$.  We now are in a position to follow the calculations on p. 382-384 of \cite{HK2} identically (for more steps in the calculations see \cite{HK2} or Chapter 4 of \cite{MagidThesis}).  

In order to bound $x$ and $y$, we begin by computing $b_1(\eta_0, \eta_0)$ in terms of $x$ and $y$ and some constants that only depend on $R$ and $\alpha$ (within this proposition, $R$ and $\alpha$ are fixed so we refer to $a_R, b_R, c_R$ as constants).  This is done on p. 382-383 of \cite{HK2}.  
\[
\frac{b_1(\eta_0, \eta_0)}{area(\partial V)}= a_R (x^2 +y^2) + b_R x + c_R
\] where 
\begin{align*}
a_R &= -\tanh(R) \frac{2\cosh^2(R) + 1}{\cosh^2(R)}  \\
b_R &= \frac{\tanh(R)}{2\alpha^2 \cosh^2(R) \sinh^2(R)}    \\
c_R &=  \frac{\tanh(R) + \tanh^3(R)}{16\alpha^4 \sinh^4(R)}.
\end{align*}

Using the fact that $b_1 (\eta_0, \eta_0) \geq 0$, we get
\[
\left( x+ \frac{b_R}{2a_R} \right)^2 + y^2 \leq \frac{b_R^2-4a_Rc_R}{4a_R^2}. 
\]


Since both of the terms $\left( x^2 + \frac{b_R}{2a_R} \right)^2$ and $y^2$ are positive, we get the inequalities \[
\left(x - \frac{1}{(4\alpha^2 \sinh^2(R))(2\cosh^2(R) + 1)} \right)^2 \leq \left(\frac{\cosh^2(R)}{(2\alpha^2 \sinh^2(R))(2\cosh^2(R)+1)}\right)^2
\] and \[
y^2 \leq\left(\frac{\cosh^2(R)}{(2\alpha^2 \sinh^2(R))(2\cosh^2(R)+1)}\right)^2.
\]
The inequalities (\ref{boundx}) and (\ref{boundy}) follow immediately.  This completes the proof which gives us a generalized version of Theorem 2.7 in \cite{HK2} for geometrically finite manifolds with rank-$2$ cusps.  Moreover, we obtain an additional bound on $y$ which will be useful later.
\end{proof}

Because the hypotheses in Proposition \ref{derivative_estimates} require the existence of an embedded tube $U_1$ of radius $R$ about $\gamma$, we now show there is some interval on which there is a lower bound to the size of such a tube.  From now on we will use $R_t$ to denote the maximum radius of an embedded tubular neighborhood about $\gamma$ in $M_t$, and let $V_t$ denote this $R_t$-neighborhood of $\gamma$. Note that this replaces the neighborhood $U_1$ we were using earlier because we are now interested in the parameter $t$ and no longer care about the other neighborhoods $U_i$, $i \geq 2$. Although not necessarily the optimal lower bound, it will be convenient to show $R_t \geq \sinh^{-1}(\sqrt{2})$.  In particular, this will allow us to invoke Theorem 1.2 of \cite{B2} in the proof of Lemma \ref{radius lemma 2}.  (See the comments preceding Theorem 3.5 on p. 796 of \cite{B2}.)

\begin{lem} \label{radius lemma} Suppose $M_t$ is defined for some interval $[0,t') \subset [0, (2\pi)^2]$, and let $R_t$ denote the maximal radius such that if $V_t$ is an $R_t$-neighborhood of $\Sigma$ in $M_t$ then $V_t$ is embedded.  Suppose $L^2 \geq 8(2\pi)^2$ where $L$ is the normalized length of the meridian of $\partial V_0$.  Then $R_t > \sinh^{-1}(\sqrt{2})$ for all $t \in [0, t')$.
\end{lem}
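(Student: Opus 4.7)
The plan is to argue by contradiction. Since $M_0$ has a rank-$2$ cusp along $\Sigma$, Theorem \ref{geometric limit} gives $R_t \to \infty$ as $t \to 0^+$, and $R_t$ depends continuously on $t$ for $t \in (0, t')$. So if the conclusion fails, there is a smallest $t_0 \in (0, t')$ with $R_{t_0} = \sinh^{-1}(\sqrt{2})$, and on $[0, t_0]$ the hypothesis $\sinh^2(R_t) \geq 2$ of Proposition \ref{derivative_estimates} is satisfied, producing the pointwise bound $|4\alpha^2 x| \leq \tfrac{1}{2}$ together with the analogous bound $|4\alpha^2 y| \leq \tfrac{3}{7}$ obtained from (\ref{boundy}) by maximizing over $\sinh^2 R \geq 2$. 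The goal is to derive a contradiction with $L^2 \geq 8(2\pi)^2$ by showing that in fact $\tanh(R_{t_0}) > \sqrt{2/3}$.

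The key geometric identity is that the flat torus $\partial V_t$ has Euclidean meridian length $\alpha \sinh(R_t)$ and area $\alpha\, l(t)\, \sinh(R_t)\cosh(R_t)$, so the normalized meridian length on $\partial V_t$ equals
\[
L^2(t)\ =\ \frac{\alpha\,\tanh(R_t)}{l(t)}.
\]
This is a Euclidean invariant of $\partial V_t$, and $L^2(t) \to L^2$ as $t \to 0^+$ because $\partial V_t$ Hausdorff-approaches a horotorus of $M_0$ by the geometric convergence of Theorem \ref{geometric limit}; the same convergence forces $l(\alpha)/\alpha \to 1/L^2$. Rewriting the identity as $\tanh(R_t) = L^2(t) \cdot l(t)/\alpha$, the contradiction reduces to showing that this product strictly exceeds $\sqrt{2/3}$ at $t = t_0$.

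I would obtain this by integrating the ODEs (\ref{derivatives}) over $[0, t_0]$ to control how far each factor drifts from its cusp limit. The bound $|4\alpha^2 x| \leq \tfrac{1}{2}$ gives $d(\log l)/d(\log\alpha) \in [\tfrac{1}{2}, \tfrac{3}{2}]$, and integrating from an interior point $\alpha_\epsilon > 0$ up to $\alpha_0$ and matching to the cusp asymptote $l(\alpha) \sim \alpha/L^2$ as $\alpha_\epsilon \to 0^+$ produces an estimate of the form $l(\alpha_0)/\alpha_0 \geq (1 - C\alpha_0^2/L^2)/L^2$ for an explicit constant $C$; a parallel argument for the evolution of $L^2(t)$ using both (\ref{boundx}) and (\ref{boundy}) gives $L^2(t_0) \geq L^2(1 - C'\alpha_0^2/L^2)$. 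Multiplying these two, using $\alpha_0^2 \leq (2\pi)^2$, and substituting $L^2 \geq 8(2\pi)^2$, forces $\tanh(R_{t_0}) \geq 1 - (C+C')/8 + O(1/L^2)$, which will strictly exceed $\sqrt{2/3} \approx 0.816$ once the constants $C, C'$ are tracked carefully. The factor of $8$ in $L^2 \geq 8(2\pi)^2$ is precisely the slack needed to absorb the combined integration losses over the full interval $[0, (2\pi)^2]$.

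The main obstacle will be carrying out these integrations cleanly in the presence of the regular singular point at $\alpha = 0$, where the standard-form coefficient $-1/(4\alpha^2)$ of $\omega_m$ blows up and the ODE $dl/d\alpha = (l/\alpha)(1 + 4\alpha^2 x)$ becomes singular. A naive Gr\"onwall estimate on $(0, \alpha_0]$ with initial data at an interior $\alpha_\epsilon > 0$ degenerates as $\alpha_\epsilon \to 0^+$, giving only a divergent range for $l(\alpha_0)$, so the argument must match the interior estimate to the exact cusp asymptotics of $l$ and $\theta$ supplied by the geometric convergence of Theorem \ref{geometric limit}, rather than integrate through $\alpha = 0$ directly.
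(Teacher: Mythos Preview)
Your setup by contradiction and the identity $\tanh(R_t)=L^2(t)\cdot l(t)/\alpha$ are fine, and you correctly identify the singular point at $\alpha=0$ as the obstacle. But the resolution you propose does not work, and the gap is exactly the one you flag in your last paragraph.

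The bound $|4\alpha^2 x|\le \tfrac12$ on $[0,t_0]$ only gives $d(\log l)/d(\log\alpha)\in[\tfrac12,\tfrac32]$, which integrates to $l(\alpha_0)/l(\alpha_\epsilon)\in[(\alpha_0/\alpha_\epsilon)^{1/2},(\alpha_0/\alpha_\epsilon)^{3/2}]$. After matching $l(\alpha_\epsilon)\sim\alpha_\epsilon/L^2$ and sending $\alpha_\epsilon\to 0$, the lower bound collapses to $0$ and the upper bound diverges. ``Matching to the cusp asymptotics'' fixes the initial condition but does nothing for the integral itself: the divergence is in $\int 4\alpha x\,d\alpha$, and your uniform bound $|4\alpha x|\le 1/(2\alpha)$ is not integrable at $0$. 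To get anything like $l(\alpha_0)/\alpha_0\ge (1-C\alpha_0^2/L^2)/L^2$ you would need $|4\alpha^2 x|$ to decay at a definite rate as $t\to 0$, and nothing in your argument supplies that rate. The same problem, only worse, afflicts your proposed control of $L^2(t)$, for which you never write down an ODE; this quantity depends on $R_t$, whose evolution you have no direct handle on.

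The paper closes this gap with an independent geometric input you have omitted: the tube--packing area bound (Theorem~4.4 of \cite{HK2}, Proposition~3.4 of \cite{B2}), which gives $\alpha\,l(t)\ge h(R_t)$ for the explicit function $h(r)=1.69785\,\tanh(r)/\cosh(2r)$. This converts the radius lower bound into an \emph{upper} bound on $1/(\alpha l)$, and with the variable $u=\alpha/l$ one gets $du/dt=-(4\alpha^2 x)/(2\alpha l)$, hence $|du/dt|\le 4$ uniformly. Since $u\to L^2$ at $t=0$, integration yields $|u-L^2|\le 4t$, so $\alpha l(t)\le t/(L^2-4t)\le \tfrac14$ when $L^2\ge 8(2\pi)^2$. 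Feeding this back through $R_t\ge h^{-1}(\alpha l(t))\ge h^{-1}(\tfrac14)>\sinh^{-1}(\sqrt2)$ gives the contradiction. The packing bound is what makes the argument close; without it the singularity at $\alpha=0$ is genuinely uncontrolled.
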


\begin{proof}
When $t = 0$, $M_0$ has a rank-$2$ cusp.  We can interpret $V_0$ as this rank-$2$ cusp and $R_0 = \infty$.  As we vary the metric, $R_t$ varies continuously.  Suppose there was some first time $t'' < t'$ such that $R_{t''}  = \sinh^{-1}(\sqrt{2})$.  Let $l(t)$ denote the length of $\gamma$ in $M_t$.  This is the same as the length of any curve on $\partial V_t$ homotopic to $\gamma$ so we can apply the bounds on $\frac{dl}{dt}$ in Proposition \ref{derivative_estimates}.   
We will find a contradiction by showing that $R_{t''}$ is bounded below by a function of $l(t'')$ and estimate $l(t'')$ using $L^2$. 

The area, $A_t$, of $\partial V_t$ satisfies
\[
A_t \geq 1.69785 \frac{\sinh^2 (R_t)}{\cosh(2R_t)}
\] by Theorem 4.4 of \cite{HK2} (see also Proposition 3.4 of \cite{B2} in the geometrically finite setting).  The value $1.69785$ is an approximation of $2 \sqrt{6} \sinh^{-1} \left(\frac{1}{2\sqrt{2}}\right)$ but we won't need this precision.   Define 
\[
h(r) = 1.69785 \frac{\tanh(r)}{\cosh(2r)}.
\]

\bigskip \noindent \textit{Remark.}  In Section 5 of \cite{HK2}, Hodgson and Kerckhoff define $h(r) = 3.3957 \frac{\tanh(r)}{\cosh(2r)}$.  Our definition differs by a factor of $2$ since we are allowing our manifold to have multiple cusps and geometrically finite ends (see Theorem 4.4 of \cite{HK2}).  This also allows us to directly apply Propositions 3.2 and 3.4 in \cite{B2}. 
\bigskip

Since $A_t = \alpha l(t) \sinh(R_t) \cosh(R_t)$ (p. 403 of \cite{HK2}), the maximal radius $R_t$ satisfies
\[
\alpha l(t) \geq h(R_t).
\]
In Lemma 5.2 of \cite{HK2}, Hodgson and Kerckhoff show that $h$ has a unique maximum, $h_{max}\approx 0.5098$, when $r \approx 0.531$ and is decreasing for all $r \geq 0.531$.  For any $0 \leq a \leq h_{max}$ we can define an inverse function $h^{-1}(a)$ to be the value of $r$ such that $r \geq 0.531$ and $h(r) = a$. One can easily see from the definition of $h(r)$ that $\lim_{r \to \infty} h(r) = 0$, so we interpret $h^{-1}(0) = \infty$.

Then we have  
\[
R_t \geq h^{-1}(\alpha l(t))
\] whenever $\alpha l(t) \leq h_{max}$ and $R_t \geq 0.531$.  We are assuming $R_t \geq \sinh^{-1}(\sqrt{2}) \approx 1.4622$ for all $0 \leq t \leq t''$ so the condition that $R_t \geq 0.531$ is immediately satisfied for all $t$ in this interval.  


If $\alpha l(t) \leq h_{max}$, set $\rho(t) = h^{-1}(\alpha l(t))$ which is clearly bounded above by $R_t$.   Note that $\alpha$ and $l(t)$ both start at zero when $t = 0$ so the condition that $\alpha l(t) \leq h_{max}$ holds in some interval around $t = 0$.


 Substituting $\rho$ for $R_t$ in the inequality (\ref{boundx}) gives us
\[
\frac{-1}{\sinh^2(\rho)} \left( \frac{2 \sinh^2 (\rho) + 1}{2 \sinh^2(\rho) + 3 } \right) \leq 4 \alpha^2 x \leq \frac{1}{\sinh^2(\rho)}.
\]   Proposition 5.5 of \cite{HK2} shows the lower bound is an increasing function of $R$ and the upper bound is a decreasing function.  Now set 
\[
u(t) = \frac{\alpha}{l(t)}.
\]  
Differentiating with respect to $t$, we find that 
\begin{align*}
\frac{du}{dt}= \frac{-1}{2\alpha l} (4\alpha^2 x).
\end{align*}
 and after the substitution $z = \tanh(\rho)$, 
\begin{align} \label{z}
- \frac{1 + z^2}{3.3956 (z^3)} \leq \frac{du}{dt}  \leq \frac{(1 + z^2)^2}{3.3956(z^3)(3 - z^2)}.
\end{align}

As long as $\alpha l \leq h_{max}$ we have $0.531 \leq \rho = h^{-1}(\alpha l)$ and therefore $0.48 \leq  z \leq 1$.  Since $\frac{1 + z^2}{3.3957 (z^3)}$ and $\frac{(1 + z^2)^2}{3.3957(z^3)(3 - z^2)}$ are both decreasing functions of $z$ over this interval, we can replace $z$ with $0.48$ to obtain the somewhat liberal bound
\begin{align} \label{boundu}
\left| \frac{du}{dt} \right| \leq 4.
\end{align}
Eq. (37) of \cite{HK2} shows that $\lim_{\alpha \to 0} u = L^2$ which implies that as long as $\alpha l(t) \leq h_{max}$ and $0 \leq t \leq t''$ we have 
\[
\left| \frac{\alpha}{l(t)} - L^2 \right| \leq 4t.
\] Since $L^2 \geq 8 (2\pi)^2$, we have that $L^2 \pm 4t$ is positive for any $t \leq (2\pi)^2$, so  
\begin{align} \label{boundl}
\frac{\alpha}{L^2 + 4t} \leq l(t) \leq \frac{\alpha}{L^2 - 4t}.
\end{align}
Multiplying by $\alpha$ and substituting $t = \alpha^2$ we get 
\begin{align} \label{boundalphal}
\alpha l(t) \leq \frac{t}{L^2 - 4t}.
\end{align} 

Since $L^2 \geq 8(2\pi)^2$ we have $L^2 - 4(2\pi)^2 \geq 4(2\pi)^2$.  Thus 
\[
\frac{(2\pi)^2}{L^2 - 4(2\pi)^2} \leq \frac{1}{4} < h_{max}.
\]   This implies that for any $0 \leq t \leq (2\pi)^2$, 
\[
\frac{t}{L^2 - 4t} \leq \frac{(2\pi)^2}{L^2 - 4(2\pi)^2}  < h_{max}
\] which in particular implies that $\alpha l(t) < h_{max}$ for all $t \in [0, t'']$. It also follows that   \[
R_{t''} \geq h^{-1}(\alpha l(t''))  \geq  h^{-1}\left( \frac{t''}{L^2 - 4t''} \right) \geq  h^{-1}\left( \frac{(2\pi)^2}{L^2 - 4(2\pi)^2} \right) \geq h^{-1}\left(\frac{1}{4} \right).
\]
  Since $\frac{1}{4} < h(\sinh^{-1}(\sqrt{2})) \approx 0.27725$, $R_{t''} > \sinh^{-1}(\sqrt{2})$. This contradicts that $R_{t''} = \sinh^{-1}(\sqrt{2})$ for any time $t'' < t'$, and so we have $R_{t} > \sinh^{-1}(\sqrt{2})$ for all $0 \leq t < t'$.  
\end{proof}

\textit{Remark.}  Although many of these estimates appear in \cite{HK2} and \cite{B2}, we will be using them to produce estimates in following subsection which do not appear in the existing literature. 
\bigskip

Using Theorems \ref{geometric limit} and \ref{open}, we can extend the one-parameter family $M_t$ to be defined for all $t \in [0, (2\pi)^2]$. The following lemma proves part $(i)$ of the filling theorem.  See Theorem 1.2 of \cite{B2} for the proof. 

\begin{lem} \label{radius lemma 2} If $L^2 \geq 8(2\pi)^2$, then the one-parameter family is defined for all $t \in [0, (2\pi)^2]$.
\end{lem}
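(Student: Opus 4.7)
The plan is to run a standard open-and-closed (continuity) argument on the set
\[
\mathcal{I} = \{ t \in [0, (2\pi)^2] : M_s \text{ is defined for all } s \in [0,t] \}.
\]
Since $M_0 = \hat{M}$ exists by hypothesis, Theorem \ref{open} (applied at the cone angle $0$, noting that the rank-$2$ cusp trivially has ``infinite tube radius'') shows that $\mathcal{I}$ contains a neighborhood of $0$. Let $t' = \sup \mathcal{I}$, and assume for contradiction that $t' < (2\pi)^2$. By the definition of $t'$, the family $M_t$ is defined on the half-open interval $[0, t')$, so Lemma \ref{radius lemma} applies: under the hypothesis $L^2 \geq 8(2\pi)^2$, we have $R_t > \sinh^{-1}(\sqrt{2})$ for every $t \in [0, t')$.

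Next I would invoke part (2) of Theorem \ref{geometric limit} to pass to the limit as $t \nearrow t'$. That theorem requires precisely the uniform lower tube-radius bound $\sinh^{-1}(\sqrt{2})$ just established, together with the fixed conformal boundary that defines our family. Its conclusion is that $M_t$ converges geometrically to a geometrically finite cone-manifold $M_{t'}$ with cone angle $\alpha' = \sqrt{t'}$ along $\Sigma$ and the same conformal boundary. The main substantive step is then to check that the lower bound on the tube radius survives the geometric limit: using the $K$-biLipschitz diffeomorphisms defining geometric convergence, any embedded tube of radius $R$ about $\Sigma$ in $M_t$ with $R > \sinh^{-1}(\sqrt{2})$ produces an embedded tube in $M_{t'}$ of radius at least $R/K$, and letting $K \to 1$ and $R \to R_{t'}$ shows that $M_{t'}$ has an embedded tube of radius $\geq \sinh^{-1}(\sqrt{2})$ as well. (This essentially repeats the continuity-of-$R_t$ remark used in the proof of Lemma \ref{radius lemma}.)

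Having placed $M_{t'}$ within the hypotheses of Theorem \ref{open}, I apply that theorem at $\alpha_0 = \sqrt{t'}$ to extend the one-parameter family of fixed-conformal-boundary cone-manifolds to an open neighborhood $W$ of $\alpha_0$ in $[0, 2\pi]$. Passing back to the $t$-parameter, this produces an open interval around $t'$ on which $M_t$ is defined, which contradicts the choice of $t'$ as the supremum of $\mathcal{I}$. Hence $t' = (2\pi)^2$, and applying Theorem \ref{geometric limit}(2) one last time at $t = (2\pi)^2$ yields the cone-manifold $M_{(2\pi)^2}$ with cone angle $2\pi$, i.e.\ the desired complete hyperbolic filling $M$.

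The main obstacle, and the only place where real analytic content is used beyond quoting the earlier theorems, is confirming that the tube-radius bound passes to the geometric limit; everything else is a bookkeeping combination of Theorems \ref{geometric limit} and \ref{open} with the uniform bound from Lemma \ref{radius lemma}. Note that the hypothesis $L^2 \geq 8(2\pi)^2$ is used exactly once, namely in the appeal to Lemma \ref{radius lemma}; once the uniform lower bound on $R_t$ is in hand, the rest of the argument is insensitive to the precise value of $L^2$.
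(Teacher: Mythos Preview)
Your argument is correct and is exactly the open--closed continuity argument the paper intends: the sentence immediately preceding the lemma says to combine Theorems \ref{geometric limit} and \ref{open}, and the paper then defers the details to Theorem 1.2 of \cite{B2}. Your identification of the one nontrivial point---that the bound $R_t > \sinh^{-1}(\sqrt{2})$ from Lemma \ref{radius lemma} survives the geometric limit so that Theorem \ref{open} can be reapplied at $t'$---is accurate, and the weak inequality $R_{t'} \geq \sinh^{-1}(\sqrt{2})$ you obtain in the limit is all that Theorem \ref{open} requires.
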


\subsection{Complex Length Estimates}

Now that we have defined the one-parameter family for all $t \in [0, (2\pi)^2]$, we can integrate the estimates we found for $\frac{dl}{d\alpha}$ and $\frac{d\theta}{d\alpha}$ in Proposition \ref{derivative_estimates}.  This will allow us to estimate the complex length of any longitude on $\partial V_t$ at any $t$.  When $t = (2\pi)^2$, this produces estimates on the complex length of $\gamma$ in $M$.

First we consider the real part of the complex length of $\gamma$.  As in the proof of Lemma \ref{radius lemma}, we consider $u(t) = \frac{\alpha}{l(t)}$, which approaches $L^2$ as $t \to 0$. 
We can integrate the bounds on $\frac{du}{dt}$ in (\ref{boundu}) to estimate the length of $\gamma$ in $M$.  In other words, the inequalities in (\ref{boundl}) hold for all $t \in [0, (2\pi)^2]$. Thus we have shown

\begin{lem} \label{length lemma} If $L^2 \geq 8 (2\pi)^2$, the length of $\gamma$ in $M$ is given by $l((2\pi)^2)$ which satisfies 
\[
\frac{2\pi}{L^2 + 4(2\pi)^2} \leq l((2\pi)^2) \leq \frac{2\pi}{L^2 - 4(2\pi)^2}.
\]
\end{lem}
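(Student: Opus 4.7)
The plan is to reuse the differential inequality for $u(t) = \alpha/l(t)$ established in the proof of Lemma \ref{radius lemma} and integrate it over the full parameter interval $[0,(2\pi)^2]$, which we may now do thanks to Lemma \ref{radius lemma 2}.

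First, I would record that by Lemma \ref{radius lemma 2} the one-parameter family $M_t$ is defined for all $t \in [0,(2\pi)^2]$, and by Lemma \ref{radius lemma} (applied on every closed subinterval $[0,t']$) the tube radius $R_t$ about $\gamma$ stays bounded below by $\sinh^{-1}(\sqrt{2})$, so in particular $R_t \geq 0.531$ throughout. Hence the hypotheses of Proposition \ref{derivative_estimates} are in force for every $t \in [0,(2\pi)^2]$, and the bound \[ \left| \frac{du}{dt} \right| \leq 4 \] obtained in (\ref{boundu}) of the proof of Lemma \ref{radius lemma} holds uniformly on this full interval.

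Next I would integrate. Using $\lim_{t \to 0} u(t) = L^2$ (which comes from Eq.\ (37) of \cite{HK2} as already cited), the fundamental theorem of calculus gives \[ | u(t) - L^2 | \leq 4t \qquad \text{for all } t \in [0,(2\pi)^2]. \] Because $L^2 \geq 8(2\pi)^2$ and $t \leq (2\pi)^2$, we have $L^2 - 4t \geq 4(2\pi)^2 > 0$, so $u(t)$ stays positive and we may invert the inequality to get \[ \frac{\alpha}{L^2 + 4t} \leq l(t) \leq \frac{\alpha}{L^2 - 4t}. \] This is exactly (\ref{boundl}), now validated on all of $[0,(2\pi)^2]$.

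Finally, evaluating at $t = (2\pi)^2$, where $\alpha = 2\pi$ so that $M_t = M$ and $l(t)$ is the length of the core geodesic $\gamma$ in $M$, yields \[ \frac{2\pi}{L^2 + 4(2\pi)^2} \leq l((2\pi)^2) \leq \frac{2\pi}{L^2 - 4(2\pi)^2}, \] as claimed. There is no real obstacle here beyond bookkeeping: the substantive work (the bound on $|du/dt|$, the persistence of the tube radius, and the extension of the family to $t = (2\pi)^2$) has already been carried out in Proposition \ref{derivative_estimates} and Lemmas \ref{radius lemma} and \ref{radius lemma 2}. The only mild subtlety is checking that $L^2 - 4t$ does not vanish on the interval of integration, which is why the hypothesis $L^2 \geq 8(2\pi)^2$ (rather than some smaller constant) is imposed.
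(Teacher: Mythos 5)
Your proof is correct and follows exactly the same route as the paper: both arguments reuse the bound $\left| \frac{du}{dt} \right| \leq 4$ on $u(t) = \alpha/l(t)$ from the proof of Lemma \ref{radius lemma}, integrate it on $[0,(2\pi)^2]$ (now available by Lemma \ref{radius lemma 2}), and then invert using $L^2 \geq 8(2\pi)^2$ to keep the denominator positive. You spell out a few of the bookkeeping steps the paper leaves implicit, but there is no substantive difference.
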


This immediately implies parts $(ii)$ and $(iii)$ of the filling theorem.  

Next we consider the imaginary part of the complex length of $\gamma$.  Again we consider any longitude on $\partial V_t$.  
Recall from Section \ref{complex length subsection} that for a fixed $t$ we can choose a shortest longitude on $\partial V_t$ and thus identify the imaginary part of the complex length of any longitude on $\partial V_t$ with a real number as opposed to modulo $\alpha$.   We begin by using the bounds from Proposition \ref{derivative_estimates} to bound the change in $\frac{\theta(t)}{\alpha}$ for any longitude on $\partial V_t$.

\begin{lem} \label{boundv} Suppose $L^2 \geq 8(2\pi)^2$.  For any fixed $t$ such that $0 < t \leq (2\pi)^2$ and any longitude on $\partial V_t$, let $\mathcal{L} = l + i \theta$ denote the complex length of that longitude.  Define $$v = \frac{\theta}{\alpha}.$$  Then 
\[
\left| \frac{dv}{d\alpha} \right| \leq  \frac{5(2\pi)}{(L^2 - 4(2\pi)^2)^2}.
\]
\end{lem}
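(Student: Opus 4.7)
The plan is to differentiate $v=\theta/\alpha$ and exploit a cancellation with the formula of Proposition \ref{derivative_estimates}. Explicitly,
\[
\frac{dv}{d\alpha} \;=\; \frac{1}{\alpha}\frac{d\theta}{d\alpha} \;-\; \frac{\theta}{\alpha^2} \;=\; \frac{1}{\alpha}\left(\frac{\theta}{\alpha} + 4\alpha\, y\, l\right) - \frac{\theta}{\alpha^2} \;=\; 4yl,
\]
so the lemma reduces to a pointwise bound on $|yl|$ in terms of $L$, valid at each $\alpha\in(0,2\pi]$.

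Next I would apply the estimate (\ref{boundy}). Discarding the harmless factor $\tfrac{2\cosh^{2}R}{2\cosh^{2}R+1}\le 1$ gives $|4\alpha^{2}y|\le 1/\sinh^{2}R$, hence
\[
\left|\frac{dv}{d\alpha}\right| \;=\; |4yl| \;\le\; \frac{l}{\alpha^{2}\sinh^{2}R}.
\]
For the factor $l$, I would use Lemma \ref{length lemma} (equivalently (\ref{boundl})), which, by Lemma \ref{radius lemma 2}, holds throughout $t\in[0,(2\pi)^{2}]$: $l\le \alpha/(L^{2}-4\alpha^{2})$.

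The main obstacle is controlling $1/\sinh^{2}R$: the crude bound $\sinh^{2}R\ge 2$ from Lemma \ref{radius lemma} contains no dependence on $L^{2}$ and so cannot produce the required $1/(L^{2}-4(2\pi)^{2})^{2}$ decay on its own. I would instead use the sharper tube-radius statement $\alpha l\ge h(R)$ with $h(r)=1.69785\,\tanh(r)/\cosh(2r)$ from the proof of Lemma \ref{radius lemma}. Set
\[
g(R) \;:=\; \frac{1}{h(R)\sinh^{2}R} \;=\; \frac{\cosh(2R)\cosh R}{1.69785\,\sinh^{3}R}.
\]
A direct computation yields $(\log g)'(R) = 2\tanh(2R)+\tanh R - 3\coth R$. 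At $R=\sinh^{-1}\sqrt{2}$ this is approximately $-0.9$, and it is monotonically increasing (its derivative $4\,\mathrm{sech}^{2}(2R)+\mathrm{sech}^{2}R+3\,\mathrm{csch}^{2}R$ is manifestly positive) with limit $0$ as $R\to\infty$. Hence $g$ is decreasing on $[\sinh^{-1}\sqrt{2},\infty)$, with maximum $g(\sinh^{-1}\sqrt{2})=5\sqrt{3}/(1.69785\cdot 2\sqrt{2})\approx 1.803$. This gives $1/\sinh^{2}R\le g(R)\cdot h(R)\le 1.803\,\alpha l$ throughout the deformation.

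Assembling everything,
\[
\left|\frac{dv}{d\alpha}\right| \;\le\; \frac{l}{\alpha^{2}\sinh^{2}R} \;\le\; \frac{1.803\,\alpha l\cdot l}{\alpha^{2}} \;=\; \frac{1.803\,l^{2}}{\alpha} \;\le\; \frac{1.803\,\alpha}{(L^{2}-4\alpha^{2})^{2}} \;\le\; \frac{1.803\,(2\pi)}{(L^{2}-4(2\pi)^{2})^{2}},
\]
where the last inequality uses $\alpha\le 2\pi$. This lies well below the stated bound $5(2\pi)/(L^{2}-4(2\pi)^{2})^{2}$; the constant $5$ in the lemma appears simply to leave slack over the explicit $g(R)$ estimate above.
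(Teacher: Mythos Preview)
Your proof is correct and uses the same ingredients as the paper: the identity $\frac{dv}{d\alpha}=4yl$, the bound (\ref{boundy}) on $y$, the length estimate (\ref{boundl}), and the tube inequality $\alpha l\ge h(R)$. The paper organizes these by writing $4yl=l\cdot\frac{1}{u}\cdot\frac{1}{\alpha l}\cdot|4\alpha^{2}y|$ and passing to the variable $z=\tanh\rho$ with $\rho=h^{-1}(\alpha l)$, bounding the resulting rational function of $z$ on $[0.48,1]$ by $5$; you instead package the $R$-dependence into the single monotone function $g(R)=1/(h(R)\sinh^{2}R)$ and work directly with the actual tube radius, which is a bit cleaner and yields the sharper constant $g(\sinh^{-1}\sqrt{2})\approx 1.803$ in place of $5$.
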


\bigskip \noindent \textit{Remark.} Because we are considering the derivative at a fixed time $t$, we use $l = l(t)$ to denote the length at time $t$ and $\theta = \theta (t)$.  Also note that the role of $v$ is similar to that of the reciprocal of $u$, rather than $u$ itself. 
\bigskip

\begin{proof}  
Because $L^2 \geq 8(2\pi)^2$, the one-parameter family $M_t$ can be defined for all $t \in [0, (2\pi)^2]$ by Lemma \ref{radius lemma 2}.   Lemma \ref{radius lemma} implies that $R_t \geq \sinh^{-1}(\sqrt{2})$ for all $t$, and so we can apply the results of Proposition \ref{derivative_estimates} with a lower bound on $R$.

Recall from (\ref{derivatives}) in the statement of Proposition \ref{derivative_estimates} that $\frac{d\theta}{d\alpha} = \frac{\theta}{\alpha}   + 4\alpha y l$, so differentiating $v$ with respect to $\alpha$ gives us 
\[
\frac{dv}{d\alpha} = \frac{\left(\frac{d\theta}{d\alpha}  \right)}{\alpha} -\frac{ \theta}{\alpha^2} = 4yl. 
\]  
In order to obtain a bound on this quantity, we rewrite $4yl$ in the following way since we can bound $|4\alpha^2 y|$ using (\ref{boundy}). 
\begin{align} \label{dvdalpha}
\left| \frac{dv}{d\alpha} \right| = \left| 4yl \right| =  \left| \frac{l^2}{\alpha^2 l} 4\alpha^2 y \right|= (l)\left( \frac{1}{u} \right) \left( \frac{1}{\alpha l}\right) \left| 4 \alpha^2 y \right|.
\end{align}  We will bound each of these four quantities separately.   First, an upper bound for $l$ at any time $t$ is given in (\ref{boundl}).  For any $t\leq  (2\pi)^2$ this upper bound satisfies the uniform bound
\begin{align} \label{bound_on_l}
l \leq \frac{\alpha}{L^2 - 4t} \leq \frac{2\pi}{L^2 - 4(2\pi)^2}.
\end{align}

  Recall that $u = \frac{\alpha}{l}$ and so the quantity $\frac{1}{u} = \frac{l}{\alpha}$ can bounded using (\ref{boundu}).  Since $u$ approaches $L^2$ as $\alpha \to 0$ and $\left| \frac{du}{dt} \right| \leq 4$, we have that $| u - L^2| \leq 4\alpha^2$.  Therefore at any $t \leq (2\pi)^2$, a lower bound for $u$ is given by $u \geq L^2 - 4(2\pi)^2$ which implies 
\begin{align} \label{bound_on_u}
\frac{1}{u} \leq \frac{1}{L^2 - 4 (2\pi)^2}. 
\end{align}

Making the same changes of variables $\rho = h^{-1}(\alpha l)$ and $z = \tanh(\rho)$ that we made in the proof of the inequality (\ref{z}) in Proposition \ref{derivative_estimates}, we see that (as in eq. (38) of \cite{HK2})
\begin{align} \label{one_over_alpha_l}
\frac{1}{\alpha l} =\frac{1+ z^2}{1.69785(z)(1-z^2)}.  
\end{align}

Finally we must bound $4\alpha^2 y$ in terms of $z$.  As in the proof of Proposition \ref{derivative_estimates}, we can replace $R$ by $\rho$ in the inequality (\ref{boundy}) to get
\[
|4\alpha^2 y|  \leq \frac{2}{ \sinh^2(\rho)}\frac{\cosh^2(\rho)}{(2\cosh^2(\rho) + 1)}.
\] Using $\sinh^2 (\rho) = \frac{z^2}{1-z^2}$ and $\cosh^2(\rho) = \frac{1}{1-z^2}$ gives us
\begin{align} \label{four_alpha_squared_y}
|4\alpha^2 y| \leq \frac{2(1-z^2)}{3z^2 - z^4}.
\end{align}

Now we combine the bounds on $l$, $\frac{1}{u}$, $\frac{1}{\alpha l}$, and $|4\alpha^2y|$ in (\ref{bound_on_l}), (\ref{bound_on_u}), (\ref{one_over_alpha_l}), and (\ref{four_alpha_squared_y}) to get an estimate replacing eq. (\ref{dvdalpha}):
\[
\left| \frac{dv}{d\alpha} \right| \leq \left(\frac{2\pi}{L^2 - 4(2\pi)^2} \right) \left( \frac{1}{L^2 - 4(2\pi)^2} \right) \left(\frac{1+ z^2}{1.69785(z)(1-z^2)} \right) \left( \frac{2(1-z^2)}{3z^2 - z^4}\right).
\]

Since  $L^2 \geq 8(2\pi)^2$ we know (as in the proof of Lemma \ref{radius lemma}) that $\alpha l$ remains bounded above by $h_{max}$ for all $t$ and therefore $\rho \geq 0.531$.  This implies $1 \geq z \geq  0.4862$ throughout the deformation so we can bound the following function of $z$ by its value when $z = 0.48$ since it is decreasing on $[0.48, 1]$.
\[
\frac{2(1+ z^2)(1-z^2)}{1.69785(z)(1-z^2)(3z^2 - z^4)} \leq \frac{2(1+ (0.48)^2)(1-(0.48)^2)}{1.69785(0.48)(1-(0.48)^2)(3(0.48)^2 - (0.48)^4)} .
\]  This upper bound is approximately $4.73191$, so for any $z \in [0.48, 1]$, 
\[
\frac{2(1+ z^2)(1-z^2)}{1.69785(z)(1-z^2)(3z^2 - z^4)}  \leq  5.
\]
Thus
\[
\left| \frac{dv}{d\alpha} \right| \leq  \frac{5(2\pi)}{(L^2 - 4(2\pi)^2)^2}.
\]
\end{proof}

The quantity $v = v(\lambda', t)$ depends on both a longitude $\lambda'$ and the parameter $t$.  The previous Lemma shows that if $L^2$ is sufficiently large, then $v$ remains roughly constant.  Thus for any longitude $\lambda'$, $v(\lambda', (2\pi)^2)$ can be approximated by $\lim_{t\to 0} v(\lambda', t)$.  We claim that since $|A^2|>2$, there is a longitude $\lambda$ such that this limit exists and is equal to $\frac{1}{A^2}$, the normalized twist of the cusp we are filling $\hat{M}$.  

Let $b(t)$ and $m(t)$ be the twist and length of the meridian on the flat torus $V_t$, as in Definition \ref{defn_norm_twist}.  Since $M_t \to M_0$ geometrically, the flat tori $\partial V_t$ converge to $\partial V_0$.  Hence $\frac{b(t)}{m(t)}$ converges to $\frac{b(0)}{m(0)} = \frac{1}{A^2}$ unless
$\frac{b}{m} = \frac{1}{2}$ (in which case $\lim \frac{b(t)}{m(t)}$ could be $\frac{1}{2}$, $-\frac{1}{2}$, or not exist).  

\begin{lem} \label{bmlimit} Suppose that $|A^2| > 2$.  Then
\[
\lim_{t \to 0} \frac{b(t)}{m(t)} \to \frac{1}{A^2}.
\]
\end{lem}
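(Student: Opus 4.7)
The plan is to apply the geometric convergence of Theorem \ref{geometric limit} to conclude that the flat conformal structures on the tori $\partial V_t$ converge, as marked tori with meridian distinguished, to the flat structure on a horotorus cross-section of the cusp of $M_0$, and then deduce continuity of the normalized twist.

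First I would compute the limiting value directly. With the normalization in the hypothesis, the rank-$2$ cusp in $M_0$ has parabolic generators $\rho_0(\lambda) \leftrightarrow 2$ and $\rho_0(\beta) \leftrightarrow w$, so any horotorus cross-section inherits a flat lattice $\langle w, 2 \rangle \subset \mathbb{C}$. Projecting the longitude onto the meridian direction and dividing by the meridian length gives $b(0)/m(0) = 2\mathrm{Re}(w)/|w|^2 = 1/A^2$, consistent with $A^2 = |w|^2/(2\mathrm{Re}(w))$.

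Next I would invoke Theorem \ref{geometric limit}: as $t \to 0$, $M_t \to M_0$ geometrically. With basepoints chosen near the cone-axis and appropriate conjugations $\Psi_t \in PSL(2, \mathbb{C})$, the biLipschitz approximations force algebraic convergence of the peripheral representations $\Psi_t \rho_t(\beta) \Psi_t^{-1} \to \rho_0(\beta)$ and $\Psi_t \rho_t(\lambda) \Psi_t^{-1} \to \rho_0(\lambda)$. The normalized twist $b(t)/m(t)$ depends only on the conjugacy class of this peripheral $\mathbb{Z}^2$-representation and is continuous in the representation as a function into $\mathbb{R}/\mathbb{Z}$ (via the principal representative in $(-1/2, 1/2]$). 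Hence $b(t)/m(t) \to 1/A^2$ in $\mathbb{R}/\mathbb{Z}$. The hypothesis $|A^2| > 2$ gives $|1/A^2| < 1/2$ strictly, so the limit $1/A^2$ lies in the open interval $(-1/2, 1/2)$, away from the point where the lift $\mathbb{R}/\mathbb{Z} \to (-1/2, 1/2]$ is discontinuous. The $\mathbb{R}/\mathbb{Z}$-convergence therefore upgrades to convergence in $\mathbb{R}$, proving the lemma.

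The main obstacle is making the peripheral representation convergence rigorous. The axes of the elliptic $\rho_t(\beta)$ and the loxodromic $\rho_t(\lambda)$ share a common geodesic in $\mathbb{H}^3$ that degenerates to the parabolic fixed point of $\rho_0(\beta), \rho_0(\lambda)$ as $t \to 0$, so the conjugations $\Psi_t$ cannot simply diagonalize the generators and must be chosen to reveal the parabolic limit. This is handled by selecting basepoints in the thick part of $M_t$ near $\partial V_t$ and using the biLipschitz approximations of Theorem \ref{geometric limit}: the induced frame-bundle maps provide the correct $\Psi_t$ up to bounded rotation, producing algebraic convergence of the peripheral representations and hence the desired continuity of the Teichm\"uller parameter.
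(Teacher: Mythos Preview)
Your proposal is correct and follows essentially the same approach as the paper. The paper's argument, given in the paragraph immediately preceding the lemma, is just: $M_t \to M_0$ geometrically by Theorem \ref{geometric limit}, so the flat tori $\partial V_t$ converge to $\partial V_0$, hence $b(t)/m(t) \to b(0)/m(0) = 1/A^2$ unless $b/m = 1/2$, which is ruled out by $|A^2| > 2$. You have unpacked this more carefully---translating ``flat tori converge'' into convergence of the peripheral $\mathbb{Z}^2$-representations and explicitly identifying the discontinuity of the lift $(-1/2,1/2] \to \mathbb{R}/\mathbb{Z}$ as the obstruction that $|A^2|>2$ removes---but the skeleton is the same.
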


When $\frac{b(t)}{m(t)} \neq \frac{1}{2}$, then there is a unique shortest longitude $\lambda$ on $\partial V_t$.  In particular, if $|A^2| >2$, there is a unique shortest longitude $\lambda$ on $\partial V_0$. 

\begin{lem} \label{longitude} If $|A^2|>2$, then there is some $\delta>0$ such that the imaginary part of the complex length of $\lambda$ lies in the interval 
$\theta(\lambda,t) \in \left( -\frac{\alpha}{2}, \frac{\alpha}{2} \right)$ for all $0 < t < \delta$.  Moreover, 
\[
\lim_{t\to 0} v(\lambda, t) = \lim_{t\to 0} \frac{\theta(\lambda,t)}{\alpha} = \frac{1}{A^2}.
\]
\end{lem}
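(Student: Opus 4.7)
The plan is to identify the quantity $v(\lambda, t) = \theta(\lambda, t)/\alpha$ with the normalized twist $b(t)/m(t)$ of the flat torus $\partial V_t$, and then apply Lemma \ref{bmlimit} to pass to the limit.

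First, I would unpack the flat geometry of $\partial V_t$ using the cone-manifold metric $dr^2 + \sinh^2(r)\, d\theta^2 + \cosh^2(r)\, dz^2$ on a collar neighborhood of $\Sigma$. At $r = R_t$, the meridian (going once around in $\theta$) has length $m(t) = \alpha \sinh(R_t)$. A longitude $\lambda'$ on $\partial V_t$, traversed once, corresponds to going once along the axis $\gamma$: this contributes $l(t) \cosh(R_t)$ in the $z$-direction and $\theta(\lambda', t)\sinh(R_t)$ in the $\theta$-direction, where $\theta(\lambda', t)$ is (a representative of) the imaginary part of the complex length of $\lambda'$. For the shortest longitude, $\theta(\lambda, t)$ is normalized to lie in $(-\alpha/2, \alpha/2]$, which makes the perpendicular projection onto the meridian lie in $(-m(t)/2, m(t)/2]$; by the very definition of $b(t)$ (with matching orientation conventions from the figure), this gives
\[
b(t) \;=\; \theta(\lambda, t)\sinh(R_t), \qquad \text{hence} \qquad \frac{b(t)}{m(t)} \;=\; \frac{\theta(\lambda, t)}{\alpha} \;=\; v(\lambda, t),
\]
whenever $\lambda$ is the shortest longitude on $\partial V_t$.

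Next, I would argue that $\lambda$, defined as the shortest longitude on $\partial V_0$, remains the shortest longitude on $\partial V_t$ for all sufficiently small $t$. Since $|A^2| > 2$ means $|b(0)/m(0)| = |1/A^2| < 1/2$, the flat torus $\partial V_0$ has a unique shortest longitude $\lambda$ and a definite gap in length from the next shortest longitude. By the geometric convergence $M_t \to M_0$ (Theorem \ref{geometric limit}), the tori $\partial V_t$ converge to $\partial V_0$ as flat tori (with marked meridian), so for all $t$ in some interval $(0, \delta)$, $\lambda$ remains the unique shortest longitude on $\partial V_t$ and $|b(t)/m(t)| < 1/2$. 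In particular, $\theta(\lambda, t) \in (-\alpha/2, \alpha/2)$ on this interval.

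Finally, I would combine the identification $v(\lambda, t) = b(t)/m(t)$ with Lemma \ref{bmlimit}, which gives $b(t)/m(t) \to 1/A^2$ as $t \to 0$. This immediately yields
\[
\lim_{t\to 0} v(\lambda, t) \;=\; \lim_{t\to 0} \frac{b(t)}{m(t)} \;=\; \frac{1}{A^2},
\]
completing the proof. The main subtlety is the sign/orientation bookkeeping in matching $b(t)$ to $\theta(\lambda, t)\sinh(R_t)$: one must verify that the orientation of $\mu$ chosen so that $\vec{\mu} \times \vec{\nu}$ points into the cusp (in Definition \ref{defn_norm_twist}) corresponds precisely to the sign convention used for $\theta$ in the complex length from Section \ref{complex length subsection}. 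Aside from this bookkeeping, the proof is essentially a geometric identification followed by an appeal to Lemma \ref{bmlimit}.
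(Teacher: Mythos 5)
Your proof takes essentially the same approach as the paper: identify $v(\lambda,t) = \theta(\lambda,t)/\alpha$ with the normalized twist $b(t)/m(t)$, use $|1/A^2| < 1/2$ together with convergence of the flat tori $\partial V_t \to \partial V_0$ (packaged in the paper as Lemma \ref{bmlimit}) to conclude that $\lambda$ remains the unique shortest longitude for small $t$ with $\theta(\lambda,t) \in (-\alpha/2, \alpha/2)$, and then pass to the limit. Your unpacking of the cone-metric to show $b(t) = \theta(\lambda,t)\sinh(R_t)$ and $m(t) = \alpha\sinh(R_t)$ fleshes out a step the paper describes as ``clear from the definitions,'' but the argument is the same.
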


\begin{proof}  Since $\left| \frac{1}{A^2}\right| < \frac{1}{2}$, there is some $\delta>0$ such that for all $t \in [0, \delta)$, $\frac{b(t)}{m(t)} \neq \frac{1}{2}$ by the previous lemma.  Hence, the longitude $\lambda$ that is shortest on $\partial V_0$ is the unique shortest longitude on $\partial V_t$ for all $t \in [0, \delta)$.  Thus, the imaginary part of the complex length satisfies $\theta(\lambda, t) \in \left(-\frac{\alpha}{2}, \frac{\alpha}{2} \right)$ for all $t \in (0, \delta)$.

For any $t \in (0,\delta)$, we have $\alpha > 0$, so it is clear from the definitions that 
\[
\frac{\theta(\lambda, t)}{\alpha} = \frac{b(t)}{m(t)}.
\]   
Thus 
\[
\lim_{t\to 0} v(\lambda, t) = \lim_{t\to 0} \frac{\theta(\lambda,t)}{\alpha}  = \lim_{t\to 0} \frac{b(t)}{m(t)} = \frac{1}{A^2}.
\] 
\end{proof}

We now use the bounds on $\frac{dv}{d\alpha}$ and the fact that $\lim_{t\to 0} v(\lambda,t) = \frac{1}{A^2}$ to estimate $v(\lambda, (2\pi)^2)$. 

\begin{lem} \label{theta lemma}
If $|A^2| > 2$ and $L^2 \geq 8(2\pi)^2$, then the complex length $l(\lambda, (2\pi)^2) + i\theta(\lambda, (2\pi)^2)$ of the longitude $\lambda$ on $\partial V_{(2\pi)^2}$ satisfies:
\[
\left| \theta(\lambda, (2\pi)^2) - \frac{2\pi}{A^2} \right| \leq \frac{5(2\pi)^3}{(L^2 - 4(2\pi)^2)^2}.
\]
\end{lem}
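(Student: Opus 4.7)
The proof is a direct application of the fundamental theorem of calculus to $v(\lambda, t) = \theta(\lambda, t)/\alpha$ with respect to the parameter $\alpha = \sqrt{t}$, combining the derivative bound from Lemma \ref{boundv} with the limiting value identified in Lemma \ref{longitude}. Since $L^2 \geq 8(2\pi)^2$, Lemma \ref{radius lemma 2} guarantees that the one-parameter family $M_t$ is defined for all $t \in [0, (2\pi)^2]$, so $\theta(\lambda, t)$ can be tracked as a smooth function of $t$ on $(0, (2\pi)^2]$ after fixing the continuous branch singled out by the small-$t$ normalization from Lemma \ref{longitude}.

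The plan is as follows. First, use the hypothesis $|A^2| > 2$ together with Lemma \ref{longitude} to pin down the base value
\[
\lim_{\alpha \to 0^+} v(\lambda, \alpha^2) = \frac{1}{A^2}.
\]
Next, extend $\theta(\lambda, t)$ continuously on all of $(0, (2\pi)^2]$ by smooth continuation of the complex length of the fixed homotopy class $\lambda$; this continuation does not require $\lambda$ to remain a shortest longitude, only that we pick the continuous branch that matches the small-$t$ normalization. Finally, integrate with respect to $\alpha$:
\[
v(\lambda, (2\pi)^2) - \frac{1}{A^2} = \int_0^{2\pi} \frac{dv}{d\alpha}(\alpha) \, d\alpha.
\]
Lemma \ref{boundv} provides the uniform bound $|dv/d\alpha| \leq 5(2\pi)/(L^2 - 4(2\pi)^2)^2$ on this whole interval, so
\[
\left| v(\lambda, (2\pi)^2) - \frac{1}{A^2} \right| \leq \frac{5(2\pi)^2}{(L^2 - 4(2\pi)^2)^2}.
\]
Multiplying both sides by $\alpha = 2\pi$ and using $\theta(\lambda, (2\pi)^2) = 2\pi \cdot v(\lambda, (2\pi)^2)$ yields the claimed estimate.

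There is no real obstacle here — this is essentially a one-step computation once Lemmas \ref{boundv} and \ref{longitude} are in place. The only careful bookkeeping concerns continuity of $v(\lambda, \cdot)$ at $\alpha = 0$, which is exactly the content of Lemma \ref{longitude}, and the fact that the hypothesis $|A^2| > 2$ (rather than $\geq 3$) suffices at this stage: the present lemma only estimates $\theta(\lambda, (2\pi)^2)$ for the continuous extension of $\theta$ along the deformation, and does not claim that $\lambda$ remains the shortest longitude on $\partial V_{(2\pi)^2}$. That stronger identification, needed to match this estimate with the imaginary part of the normalized complex length in Theorem \ref{filling}(v), is what the reinforced hypothesis $|A^2| \geq 3$ in Lemma \ref{part 5} is designed to handle.
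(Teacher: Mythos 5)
Your proof is correct and follows the same route as the paper: invoke Lemma \ref{radius lemma 2} to define $M_t$ on all of $[0,(2\pi)^2]$, integrate the derivative bound from Lemma \ref{boundv} with respect to $\alpha$ from $0$ to $2\pi$, seed the integration with the limit $v(\lambda,t) \to 1/A^2$ from Lemma \ref{longitude}, and multiply through by $\alpha = 2\pi$. Your closing remark about why $|A^2|>2$ suffices here — the estimate is for the continuously tracked branch of $\theta(\lambda,\cdot)$, while $|A^2|\geq 3$ is reserved for Lemma \ref{part 5}'s identification of that branch with the $(-\pi,\pi]$-normalized length of $\gamma$ — is also exactly the logical structure in the paper.
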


\begin{proof}  Recall that for any longitude, we obtained the bound 
\[
\left| \frac{dv}{d\alpha} \right| \leq  \frac{5(2\pi)}{(L^2 - 4(2\pi)^2)^2}
\] in Lemma \ref{boundv}.   For the longitude $\lambda$, we have $\lim_{t\to 0} v(\lambda,t) = \frac{1}{A^2}$, so we can integrate to find that, for any $t \leq (2\pi)^2$, we have
\[
\left| \frac{\theta(\lambda, t)}{\alpha} - \frac{1}{A^2} \right| \leq \frac{5(2\pi)(t)}{(L^2 - 4(2\pi)^2)^2}.
\]
Since $L^2 \geq 8(2\pi)^2$ we can define the deformation for all $t \in [0, (2\pi)^2]$, and therefore setting $t = (2\pi)^2$ gives us 
\begin{align} \label{theta inequality}
\left| \theta(\lambda, (2\pi)^2) - \frac{2\pi}{A^2} \right| \leq \frac{5(2\pi)^3}{(L^2 - 4(2\pi)^2)^2}.
\end{align}
\end{proof}

We can now prove part $(v)$ of Theorem \ref{filling}.  
\begin{lem} \label{part 5}  Let $\gamma$ be the core curve of the filling torus in $M = M_{(2\pi)^2}$.  If $L^2 \geq 8(2\pi)^2$ and $|A^2| \geq 3$, then the imaginary part of the complex length $\mathcal{L}(\gamma) = l(\gamma) + i\theta(\gamma)$, normalized so that $\theta(\gamma) \in (-\pi, \pi]$, satisfies:
\begin{align} \label{theta gamma}
\left| \theta(\gamma) - \frac{2\pi}{A^2} \right| \leq \frac{5(2\pi)^3}{(L^2 - 4(2\pi)^2)^2}.
\end{align} 
\end{lem}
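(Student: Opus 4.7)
The plan is to deduce this lemma essentially as a normalization argument on top of Lemma \ref{theta lemma}. Since $|A^2|\geq 3$ in particular satisfies $|A^2|>2$, the distinguished longitude $\lambda$ on $\partial V_0$ (the unique shortest longitude there) is well-defined and Lemma \ref{theta lemma} applies, producing a real number $\theta(\lambda,(2\pi)^2)$---obtained by continuous lifting along the family $M_t$ from the initial value $\theta(\lambda,0)=0$ at the parabolic cusp---satisfying inequality (\ref{theta inequality}). My task reduces to showing that this continuously-lifted value already lies in $(-\pi,\pi]$, and therefore coincides with the normalized imaginary part $\theta(\gamma)$ in the statement.

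First I would observe that at $t=(2\pi)^2$ the cone angle has reached $2\pi$, so the meridian $\mu$ is homotopically trivial in the filled manifold $M$, and $\lambda$ is consequently freely homotopic to the core curve $\gamma$. Hence $\rho_{(2\pi)^2}(\lambda)$ and $\rho_{(2\pi)^2}(\gamma)$ are conjugate in $PSL(2,\mathbb{C})$ and share the same squared trace. From the defining relation $tr^2(B)=4\cosh^2(\mathcal{L}/2)$ together with the fact that $\cosh^2$ has period $\pi i$ in its argument (hence $\mathcal{L}$ is determined by $tr^2$ modulo $2\pi i$), the two complex lengths agree modulo $2\pi i$. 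Thus $\theta(\lambda,(2\pi)^2)$ and $\theta(\gamma)$ differ by an integer multiple of $2\pi$, and it suffices to verify $\theta(\lambda,(2\pi)^2)\in(-\pi,\pi]$.

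For this verification I would apply the triangle inequality using Lemma \ref{theta lemma} together with the two hypotheses: $|A^2|\geq 3$ gives $|2\pi/A^2|\leq 2\pi/3$, and $L^2\geq 8(2\pi)^2$ gives $L^2-4(2\pi)^2\geq 4(2\pi)^2$, so the error term is bounded by $5(2\pi)^3/(4(2\pi)^2)^2=5/(32\pi)$. Combining,
\[
|\theta(\lambda,(2\pi)^2)| \;\leq\; \frac{2\pi}{3} + \frac{5}{32\pi} \;<\; \pi,
\]
since $2\pi/3+5/(32\pi)\approx 2.14<3.14\approx \pi$. This places $\theta(\lambda,(2\pi)^2)$ strictly inside $(-\pi,\pi)$, forcing $\theta(\gamma)=\theta(\lambda,(2\pi)^2)$, and then (\ref{theta gamma}) is identical to (\ref{theta inequality}).

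The main obstacle, such as it is, is this normalization identification: the continuous lift of $\theta$ produced by integrating along the deformation a priori yields an arbitrary real number, whereas $\theta(\gamma)$ demands a specific fundamental domain representative. The hypothesis $|A^2|\geq 3$ (strictly stronger than $|A^2|>2$, which would have sufficed for Lemma \ref{theta lemma} itself) is calibrated precisely to provide enough room, together with the smallness of the error from $L^2\geq 8(2\pi)^2$, to guarantee that the lift lands in $(-\pi,\pi)$. No further analytic work is needed beyond what was established in Lemma \ref{theta lemma}.
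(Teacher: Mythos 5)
Your proposal is correct and follows essentially the same route as the paper's proof: observe that $\lambda \simeq \gamma$ in $M_{(2\pi)^2}$ so their complex lengths agree modulo $2\pi i$, then use the triangle inequality on Lemma \ref{theta lemma} together with $|A^2|\geq 3$ and $L^2\geq 8(2\pi)^2$ to verify $|\theta(\lambda,(2\pi)^2)| < \pi$, forcing $\theta(\gamma)=\theta(\lambda,(2\pi)^2)$. Your numerical bound $\frac{2\pi}{3}+\frac{5}{32\pi}$ matches the paper's $\frac{2\pi}{3}+\frac{5}{16(2\pi)}$, and the extra discussion of the continuous lift and the role of the hypothesis $|A^2|\geq 3$ is accurate framing.
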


\begin{proof}  Since any longitude on $\partial V_{(2\pi)^2}$ is homotopic to $\gamma$, the complex length $l(\gamma) + i \theta(\gamma)$ of $\gamma$ in $M_{(2\pi)^2}$ is given by the complex length of $\lambda$ on $V_{(2\pi)^2}$; although, the imaginary part of the complex length of $\lambda$ may not lie in the interval $(-\pi, \pi]$.  The conditions $|A^2| \geq 3$ and $L^2 \geq 8(2\pi)^2$, together with the inequality in (\ref{theta inequality}) imply \[
\left| \theta(\lambda, (2\pi)^2) \right| \leq \frac{5}{16(2\pi)} + \frac{2\pi}{3} < \pi.
\]  Hence $\theta(\gamma) = \theta(\lambda, (2\pi)^2) \in (-\pi, \pi)$, and inequality (\ref{theta gamma}) holds. 
\end{proof}

We now complete the proof of the filling theorem by summarizing what we have done to prove parts $(i), (ii), (iii)$, and deriving part $(iv)$.  Part $(i)$ was proven in Lemma \ref{radius lemma 2} when we showed that one can increase the cone angle from $0$ to $2\pi$.  Parts $(ii)$ and $(iii)$ were completed in Lemma \ref{length lemma}.  Part $(iv)$ follows from parts $(i)$, $(iii)$, and the drilling theorem (Theorem \ref{drilling}) in the following way.  Part $(i)$ provides the existence of $M$ (\textit{i.e.}, the $\beta$-filling of $\hat{M}$), and by the drilling theorem, there exists some $l_0$ such that if $l(\gamma) < l_0$ then there is a $J$-biLipschitz diffeomorphism 
\[
\phi: \hat{M} - \mathbb{T}_\epsilon (T) \to M - \mathbb{T}_\epsilon (\gamma).
\] By part $(iii)$ of the filling theorem, there exists some $K$ such that if the normalized length, $L$, of $\beta$ is at least $K$ then $l(\gamma) \leq \frac{2\pi}{L^2 - 4(2\pi)^2} < l_0$. Thus we can apply the drilling theorem to reverse the filling and obtain the desired biLipschitz map.  
\end{proof}

\bigskip \noindent \textit{Remark.}  Note that in parts $(i), (ii), (iii), (v)$ of the filling theorem, we only used the uniform bounds $L^2 \geq 8 (2\pi)^2$ and $|A^2| \geq 3$.  These four parts do not depend on the condition that $L\geq K$.  The constant $K$ depends on $J$ and $\epsilon$ and is therefore only necessary to conclude that if $L \geq K$, then the map $\phi$ is $J$-biLipschitz outside a Margulis $\epsilon$-thin region about the cusp $T$.  We also remark that since the filling map $\phi$ is obtained by applying the drilling theorem, we can assume that $\phi$ is level-preserving on cusps (see the remark following Theorem \ref{drilling}). 
\bigskip

 Before moving on to the next section, we remark that one could eliminate the hypothesis that $|A^2| \geq 3$ by not requiring the normalization $\theta(\gamma) \in (-\pi, \pi]$.  For instance, if we only assume $|A^2| > 2$, then (\ref{theta inequality}) still holds. If $A^2 = 2$, then Lemma \ref{boundv} could still be used to estimate the imaginary part of the complex length of $\gamma$ using an altered definition of the normalized twist
 (see the remarks in Section 4.3 of \cite{MagidThesis}).

 \subsection{Consequences and Generalizations}

In this section, we explain some of the consequences of the drilling and filling theorems.  First, we draw the following corollary of Theorems \ref{drilling} and \ref{filling} that will allow us to fill a manifold with multiple cusps.  If $\hat{M}$ has multiple cusps, we can fill them one at a time, using the filling theorem each time to obtain bounds on the lengths of the core curves of the filling tori.  In the statement of the following corollary, we will suppose our manifold has $d$ cusps of which $n\leq d$ are being filled.  We will assume they have been ordered so that the first $n$ are filled.  We label the cusps in $\hat{M}$ by $T_i$ and we label the core curves of the solid tori by $\gamma_i$.  

\begin{cor} \label{multiple fillings}  Let $J>1$, $l_0>0$, $\epsilon_3 \geq \epsilon>0$, and $n$ be given. There exists some $K$ such that the following holds:  suppose $\hat{M}$ is a geometrically finite manifold with $d \geq n$ cusps.  Suppose $\beta_i$ is a slope on the $i$th cusp of $\hat{M}$, $1 \leq i \leq n \leq d$. If the normalized length of $\beta_i$ is at least $K$ for each $i$, then \\
 $(i)$  we can fill in the $n$ cusps with labeled meridians, obtaining a manifold $M$ such that each $\beta_i$ bounds a disk in $M$ (in other words, the $\cup \beta_i$-filling of $\hat{M}$ exists);\\ 
$(ii)$  if $\gamma_i$ is the core curve of the torus used to fill the $i$th cusp, then $\sum_{i=1}^n l(\gamma_i) < l_0$;   \\
 $(iii)$ there exists a $J$-biLipschitz diffeomorphism
 \[
\phi : \hat{M} -  \cup_{i=1}^n \mathbb{T}_\epsilon (T_i) \to M -  \cup_{i=1}^n \mathbb{T}_\epsilon (\gamma_i).
\]  
\end{cor}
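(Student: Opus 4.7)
The plan is to prove parts $(i)$ and $(ii)$ by induction on the number $n$ of cusps to be filled, applying Theorem \ref{filling} once per cusp. Part $(iii)$ is then obtained by a single application of the drilling theorem (Theorem \ref{drilling}) to $M$ along the collection of newly-created short geodesics $\gamma_1, \ldots, \gamma_n$, rather than by composing the $n$ biLipschitz maps produced during the induction.

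First I would set up the constants. Let $l_0^{\mathrm{drill}}$ be the threshold given by Theorem \ref{drilling} for the parameters $J$ and $\epsilon$, and set $l_0^\ast = \min(l_0, l_0^{\mathrm{drill}})$. Choose $J_0 > 1$ with $J_0^{2n}$ close to $1$, and let $K_0$ denote the constant produced by Theorem \ref{filling} for biLipschitz parameter $J_0$ and Margulis threshold $\epsilon$. Then take $K$ large enough that $K/J_0^{2n} \geq \max\bigl(K_0, \, \sqrt{4(2\pi)^2 + 2\pi n/l_0^\ast}\bigr)$.

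For the inductive step, assume the first $k$ cusps have been filled, yielding a geometrically finite manifold $\hat{M}_k$ with no rank-$1$ cusps, short geodesics $\gamma_1, \ldots, \gamma_k$, and unfilled rank-$2$ cusps $T_{k+1}, \ldots, T_d$. The key point is to verify that the normalized length of $\beta_{k+1}$ in $\hat{M}_k$ is still at least $K_0$. By the remark following Theorem \ref{drilling}, the biLipschitz map from Theorem \ref{filling} is level-preserving on the cusps it does not fill, so the flat metric on $\partial \mathbb{T}_\epsilon(T_{k+1})$ is distorted by at most a factor of $J_0$ at each stage. Since the normalized length is a ratio of a length and the square root of an area, it is distorted by a factor of at most $J_0^2$ per stage, so after $k \leq n$ steps the normalized length is at least $K / J_0^{2n} \geq K_0$. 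Theorem \ref{filling} therefore applies and produces $\gamma_{k+1}$ with $l(\gamma_{k+1}) \leq 2\pi/(L_{k+1}^2 - 4(2\pi)^2)$.

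Summing over the $n$ fillings and using the lower bound $L_i \geq K/J_0^{2n}$ gives $\sum l(\gamma_i) \leq 2\pi n / ((K/J_0^{2n})^2 - 4(2\pi)^2) \leq l_0^\ast$ by the choice of $K$, which establishes $(ii)$. Since $l_0^\ast \leq l_0^{\mathrm{drill}}$, Theorem \ref{drilling} applied to $M$ and the disjoint union $\gamma_1 \cup \cdots \cup \gamma_n$ produces the $J$-biLipschitz diffeomorphism required for $(iii)$; the drilled manifold agrees with $\hat{M}$ because both are geometrically finite, share the same underlying topology, and have the same conformal boundary. The principal obstacle is controlling how the normalized length of an unfilled slope degrades under the preceding fillings; this is only tractable because the remark following Theorem \ref{drilling} tells us the filling maps are level-preserving on cusps, so the intrinsic distortion of the relevant flat torus is biLipschitz-bounded by a constant we can shrink toward $1$.
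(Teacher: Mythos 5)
Your approach is essentially the same as the paper's: fill the cusps one at a time, use the level-preserving property of the filling maps to control the degradation of normalized length of the remaining meridians, conclude that the core geodesics are uniformly short, and then apply the drilling theorem once at the end to produce the $J$-biLipschitz diffeomorphism and to identify the drilled manifold with $\hat{M}$. The one substantive difference is cosmetic: the paper fixes the intermediate biLipschitz constant at $2$, getting a factor of $4$ degradation of normalized length per filling and choosing $K > 4^n K'$, whereas you choose $J_0$ close to $1$ and get $J_0^{2n}$. Both work.

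However, there is a genuine gap in your proof of part $(ii)$. The filling theorem's bound $l(\gamma_{k+1}) \leq 2\pi/(L_{k+1}^2 - 4(2\pi)^2)$ controls the length of $\gamma_{k+1}$ in the intermediate manifold $M^{k+1}$, not in the final manifold $M = M^n$. Each subsequent filling map $\phi_j$ (for $j > k+1$) is only $J_0$-biLipschitz, so it may \emph{lengthen} $\gamma_{k+1}$. Your sum $\sum l(\gamma_i) \leq l_0^\ast$ is therefore a bound on $\sum_i l_{M^i}(\gamma_i)$, which is not the quantity appearing either in the corollary's conclusion or in the hypothesis of the drilling theorem you invoke for $(iii)$. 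The paper handles this explicitly: it builds in the extra margin $\frac{\ell}{n 2^n}$ at each stage and then observes that $n-i$ further $2$-biLipschitz fillings can grow $l(\gamma_i)$ by at most $2^{n-i}$, yielding $l_{M}(\gamma_i) < \frac{\ell}{n}$. In your setup the fix is analogous and cheap: note that $l_M(\gamma_i) \leq J_0^{\,n-i}\, l_{M^i}(\gamma_i) \leq J_0^{\,n}\, l_{M^i}(\gamma_i)$, and absorb the extra factor $J_0^n$ into the choice of $K$ (e.g., replace the requirement $L_i^2 - 4(2\pi)^2 \geq 2\pi n / l_0^\ast$ by $L_i^2 - 4(2\pi)^2 \geq 2\pi n J_0^{\,n}/ l_0^\ast$). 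Without this step, the argument does not literally establish $(ii)$ or license the application of the drilling theorem in $(iii)$.
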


\begin{proof}  If the filled manifold $M$ exists, the drilling theorem says that there exists some $l_0'$, depending only on $J$ and $\epsilon$, such that if $\sum_{i=1}^n l(\gamma_i) < l_0'$, then there is 
a $J$-biLipschitz diffeomorphism
 \[
\phi : \hat{M} -  \cup_{i=1}^n \mathbb{T}_\epsilon (T_i) \to M -  \cup_{i=1}^n \mathbb{T}_\epsilon (\gamma_i).
\]  
Let $\ell= \min\{ l_0, l_0'\}$ where $l_0$ is the constant given in the statement of the Corollary.  We will show there exists a $K$ such that if the normalized length of $\beta_i$ is at least $K$ for $1 \leq i \leq n$, then the filled manifold $M$ exists, and the length of $\gamma_i$ in $M$ is less than $\frac{\ell}{n}$.  

We want to fill the cusps one at a time.  Let $M^0 = \hat{M}$, and if it exists let $M^i$ be the $\beta_i$ filling of $M^{i-1}$.  We will use the notation $l_{M^i} (\gamma_j)$ to denote the length of $\gamma_j$ in $M^i$.  In this notation, we want to show that $M^n = M$ exists and that $l_{M^n} (\gamma_i) < \frac{\ell}{n}$ for all $1 \leq i \leq n$. 

By the filling theorem, there exists some $K'$ (independent of $i$) such that if the normalized length of $\beta_i$ in $M^{i-1}$ is at least $K'$ then we have the following: 
\begin{itemize}
\item the $\beta_i$-filling of $M^{i-1}$, which we will call $M^i$, exists; 
\item the length of $\gamma_i$ in $M^i$ satisfies $l_{M^i}(\gamma_i) < \frac{\ell}{n2^{n}}$; 
\item  there is a $2$-biLipschitz map $\phi_i : M^{i-1} - \mathbb{T}_{\epsilon'} (T_{i}) \to M^i - \mathbb{T}_{\epsilon'} (\gamma_i)$ for some $\epsilon_3 \geq \epsilon' >0$.
\end{itemize}

Now let $K > 4^n K'$.  If the normalized length of $\beta_1$ is at least $K$, we can do the first filling to obtain $M^1$.  We now apply the filling theorem inductively. In the $i$th filling, the length of $\gamma_j$ (for any $1 \leq j < i$) does not increase by more than factor of $2$ since $\phi_i$ is $2$-biLipschitz. We next claim that the normalized length of $\beta_j$ (for any $i <j \leq n$) does not decrease by more than factor of $4$.

Fix a torus cross-section $T=T^2 \times \{1\}$ of the cusp $T_j \cong T^2 \times [0, \infty)$ contained in $M^{i-1} - \mathbb{T}_\epsilon (T_i)$.  Let $\mu$ be a curve on $T$ homotopic to the meridian $\beta_j$.  The normalized length of $\beta_j$ in $M^{i-1}$ is $\frac{l(\mu)}{\sqrt{area(T)}}$, where $l(\mu)$ is the length of a geodesic representative of $\mu$ on $T$ with respect to the induced Euclidean metric on $T$.  Since $\phi_i$ is $2$-biLipschitz, the length of a geodesic representative of $\phi_i(\mu)$ on $\phi_i(T)$ is bounded by $l(\phi_i(\mu)) >\frac{ l(\mu)}{2}$, and also $area(\phi_i(T)) < 4 (area(T))$. Thus 
\[
\frac{l(\phi_i(\mu))}{\sqrt{area(\phi_i(T))}}  > \frac{ l(\mu)/2}{\sqrt{ 4 (area(T))}}.
\]
By Theorem 6.12 of \cite{BB} (see the remark following Theorem \ref{drilling}), we can assume that $\phi_i(T)$ is a flat cross-section of the $j$th cusp in $M^i$.  Thus the ratio $\frac{l(\phi_i(\mu))}{\sqrt{area(\phi_i(T))}}$ in the left-hand side of the inequality above is the normalized length of $\beta_j$ in $M^i$.  This completes the proof that the normalized length of $\beta_j$ does not shrink by more than a factor of $4$ during the $i$th filling. 

Thus, for any $1 \leq i \leq n$, the normalized length of $\beta_i$ in $M^{i-1}$ is at least $4^{n-i} K'$.  So we can apply the filling theorem $n$ times to get $M$, the $\cup \beta_i$-filling of $\hat{M}$.  This completes part $(i)$.  Since the length of $\gamma_i$ in $M^i$ is less than $\frac{\ell}{n2^n}$, the length of $\gamma_i$ in $M$ is less than $\frac{\ell}{n} \frac{2^{n-i}}{2^n} \leq \frac{\ell}{n}$.  Hence $\sum_{i=1}^n l(\gamma_i) < \ell$.  Since $\ell = \min\{l_0, l_0'\}$ this completes parts $(ii)$ and $(iii)$.    
\end{proof}

Now suppose that $\hat{M}$ is the $\gamma$-drilling of $M$, and let $T$ denote the new cusp of $\hat{M}$.   Recall that if $\gamma$ is sufficiently short, the drilling theorem provides a biLipschitz diffeomorphism $\phi: \hat{M} - \mathbb{T}_{\epsilon_3}(T) \to M - \mathbb{T}_{\epsilon_3}(\gamma)$.  There is a unique slope $\beta$ on $\partial  \mathbb{T}_{\epsilon_3}(T)$ such that $\phi(\beta)$ bounds a disk in $\mathbb{T}_{\epsilon_3}(\gamma) \subset M$, but $\beta$ does not bound a disk in $\hat{M}$. 
Equivalently, the $\beta$-filling of $\hat{M}$ (if it exists) is $M$.  We say that $\beta$ is the meridian of $\hat{M}$.   If $\gamma$ is sufficiently short, then one can bound the normalized length of $\beta$ in $\hat{M}$ from below. This is stated without proof in part (2) of Theorem 2.4 of \cite{B3}.  
 
 \begin{prop} \label{normalized length corollary}  Let $K>0$.  There exists $l_0>0$ such that the following holds.  Let $M$ be a geometrically finite manifold containing a geodesic $\gamma$ of length less than $l_0$. If $\hat{M}$ is the $\gamma$-drilling of $M$ and $\beta$ is the meridian in $\hat{M}$ that bounds a disk in $M$, then the normalized length $L$ of the meridian $\beta$ in $\hat{M}$ is at least $K$.  
\end{prop}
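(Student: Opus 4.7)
The plan is to compute the normalized length of the meridian directly on $\partial \mathbb{T}_{\epsilon_3}(\gamma) \subset M$, and then transfer the estimate to $\hat{M}$ using the drilling theorem as a quasi-inverse to the filling. Fix $J = 2$ and $\epsilon = \epsilon_3$; the drilling theorem (Theorem \ref{drilling}) produces a threshold $l_0'$ such that whenever $l(\gamma) < l_0'$, there is a $2$-biLipschitz diffeomorphism
\[
\phi : \hat{M} - \mathbb{T}_{\epsilon_3}(T) \to M - \mathbb{T}_{\epsilon_3}(\gamma).
\]
By the defining property of $\beta$, the image $\phi(\beta)$ is a slope on $\partial \mathbb{T}_{\epsilon_3}(\gamma)$ that bounds a meridian disk in the solid torus $\mathbb{T}_{\epsilon_3}(\gamma)$. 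Because $\phi$ distorts lengths of geodesic representatives by at most a factor of $2$ and areas by at most a factor of $4$, the normalized length $L$ of $\beta$ in $\hat{M}$ and the normalized length $L_M$ of $\phi(\beta)$ in $M$ satisfy $L^2 \geq L_M^2/16$.

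Next I would compute $L_M$ directly using cylindrical coordinates $(r, \theta, z)$ about $\gamma$, in which the hyperbolic metric takes the form $dr^2 + \sinh^2(r)\,d\theta^2 + \cosh^2(r)\,dz^2$. Letting $R$ denote the radius of the $\epsilon_3$-Margulis tube about $\gamma$, the flat structure on $\partial \mathbb{T}_{\epsilon_3}(\gamma)$ is induced from the metric $\sinh^2(R)\,d\theta^2 + \cosh^2(R)\,dz^2$ on the universal cover, modulo the deck group generated by $\theta \mapsto \theta + 2\pi$ (the meridian generator) and a helical isometry $(\theta, z) \mapsto (\theta + \vartheta, z + l(\gamma))$ (a longitude generator). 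The meridian then has length $2\pi \sinh(R)$, and a determinant computation shows that the torus has area $2\pi\,l(\gamma) \sinh(R)\cosh(R)$, independent of the twist $\vartheta$. Therefore
\[
L_M^2 = \frac{(2\pi \sinh(R))^2}{2\pi\, l(\gamma)\sinh(R) \cosh(R)} = \frac{2\pi \tanh(R)}{l(\gamma)}.
\]

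It remains to control $\tanh(R)$ from below. I would invoke the standard consequence of the Margulis lemma that the $\epsilon_3$-Margulis tube radius about a short simple closed geodesic in a hyperbolic $3$-manifold grows without bound as the length tends to zero; hence there is some $l_R > 0$ such that $l(\gamma) < l_R$ forces $\tanh(R) \geq 1/2$. Combining with the previous estimates yields
\[
L^2 \geq \frac{L_M^2}{16} = \frac{\pi \tanh(R)}{8\, l(\gamma)} \geq \frac{\pi}{16\, l(\gamma)}
\]
whenever $l(\gamma) < \min\{l_0', l_R\}$. Given $K > 0$, setting $l_0 = \min\{l_0',\, l_R,\, \pi/(16 K^2)\}$ completes the proof.

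The only mildly non-routine step is the direct flat-metric computation: one must verify that both the meridian length $2\pi\sinh(R)$ and the torus area $2\pi\,l(\gamma)\sinh(R)\cosh(R)$ are independent of the twist $\vartheta$ of $\gamma$. The other potential obstacle is the Margulis-tube lower bound on $R$ in terms of $l(\gamma)$, which is standard but worth citing carefully.
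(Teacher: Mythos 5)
Your proposal follows essentially the same route as the paper's proof: drilling theorem for the $2$-biLipschitz diffeomorphism, the direct computation $L_M^2 = 2\pi\tanh(R)/l(\gamma)$ on $\partial\mathbb{T}_{\epsilon_3}(\gamma)$, a lower bound on the tube radius as $l(\gamma)\to 0$ (the paper cites Brooks--Matelski for this), and the factor-of-$4$ loss of normalized length under the biLipschitz map. The only cosmetic difference is that you fix $\tanh(R) \geq 1/2$ and absorb the remaining slack into the choice of $l_0$, while the paper pushes $L_M$ above $4K$ directly before applying the biLipschitz comparison; both are correct.
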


\begin{proof}  By the drilling theorem, there exists $l_1$ such that if 
$l(\gamma) < l_1$ then there exists a $2$-biLipschitz map 
\[
\phi: \hat{M} - \mathbb{T}_{\epsilon_3} (T) \to M - \mathbb{T}_{\epsilon_3}(\gamma).
\]

Suppose that $R$ is the distance between $\gamma$ and $\partial \mathbb{T}_{\epsilon_3}(\gamma)$. The area of the boundary of this Margulis tube in $M$ is equal to 
\[
A(\partial \mathbb{T}_{\epsilon_3}(\gamma)) = A= 2\pi l(\gamma) \sinh(R) \cosh(R).
\]  (For example, see p. 403 of \cite{HK2}.)  Here $2\pi \sinh(R)$ gives the length of the meridian on $\partial \mathbb{T}_{\epsilon_3}(\gamma)$ in $M$.  
\
Define the normalized length of $\beta$ in $M$ to be $L_{M}(\beta) = \frac{2\pi\sinh(R)}{\sqrt{A}}$.  This is the length of (a geodesic representative of) $\phi(\beta)$ on $\partial \mathbb{T}_{\epsilon_3}(\gamma)$ divided by the square root of the area of $\partial \mathbb{T}_{\epsilon_3}(\gamma)$.  Unlike the normalized length of $\beta$ in $\hat{M}$, which is the length of $\beta$ on $\partial \mathbb{T}_{\epsilon_3}(T)$ divided by the square root of the area of $\partial \mathbb{T}_{\epsilon_3}(T)$, the normalized length of $\beta$ in $M$ depends on the size of the Margulis tube (in this case $\epsilon_3$). 

Now $L_{M}(\beta)= \frac{2\pi\sinh(R)}{\sqrt{A}} =\frac{ \sqrt{A} }{l(\gamma) \cosh(R)}$. Thus 
\[
L_{M}(\beta) = \sqrt{\frac{2\pi  \tanh(R)}{l(\gamma)}}.
\]  

The estimates in Brooks-Matelski \cite{BM} imply that given any $R_0$, there is some $l_2'$ such that if $l(\gamma) < l_2'$ then $R > R_0$.  Hence, there is some $l_2$ such that if $l(\gamma) < l_2$, then $L_M (\beta)> 4K$.

Now let $l_0 = \min\{ l_1, l_2\}$. This implies the filling map restricts to a $2$-biLipschitz diffeomorphism on the boundary tori: $\phi^{-1} : \partial \mathbb{T}_{\epsilon_3}(\gamma) \to \partial \mathbb{T}_{\epsilon_3}(T)$.  Hence, as we saw in the proof of the previous corollary, the normalized length of $\beta$ in $\hat{M}$ is no less than $\frac{1}{4}$ times the normalized length of $\beta$ on $ \partial \mathbb{T}_{\epsilon_3}(\gamma)$.  Thus, the normalized length of the meridian in $\hat{M}$ (which we are simply denoting by $L$) is at least 
\[
L \geq \frac{1}{4} L_M(\beta) > K.
\]
\end{proof}

\bigskip \noindent \textit{Remark.}  One can also prove Proposition \ref{normalized length corollary} using the tools developed in the proofs of Proposition \ref{derivative_estimates}, Lemma \ref{radius lemma}. One defines a one-parameter family of cone-manifolds by decreasing the cone-angle about $\gamma$ from $2\pi$ to $0$, showing that the maximal radius of a neighborhood of $\gamma$ does not become too small.  There is some $l_1$ such that if the length of $\gamma$ is less than $l_1$ then the one-parameter family of cone-manifolds can be defined for all $t \in [0, (2\pi)^2]$, and 
the estimate 
\[
\left| \frac{du}{dt} \right| \leq 4
\]  can be applied for all $t$.  Recall that $u(t) \to L^2$ as $t\to 0$ where $L^2$ is the square of the normalized length of $\beta$ in $\hat{M}$, and $u((2\pi)^2) = \frac{2\pi}{l(\gamma)}$ when $t = (2\pi)^2$.  From this, one can see that given any $K$, there exists some $l_0$ such that if $l(\gamma) < l_0$ then $L^2>K^2$.

\section{Constructing a Local Model of the Deformation Space}      \label{local homeomorphism section}

Let $S$ be a closed surface of genus at least two, set $N = S \times I$, and define the paring locus $P\subset \partial N$ to be a collection of annuli forming a pants decomposition of $S \times \{1\}$.  We will define a space $\mathcal{A}$ and show that $\mathcal{A}$ locally models a dense subset of the deformation space $AH(N)$.  We do this by constructing a map $\Phi : \mathcal{A} \to MP(N) \cup MP(N,P)$, and showing that there is some open set $U\subset \mathcal{A}$ and some neighborhood $V$ of $\sigma$ in $MP(N) \cup MP(N,P)$ such that $\Phi \vert_{U} : U \to V$ is a homeomorphism. The definition of $\Phi$ and the proof that it is continuous is in Section \ref{subsection: definition of phi}.  Then, in Sections \ref{subsection: inverse to phi} and \ref{subsection: local homeomorphism}, we show that $\Phi$ is a local homeomorphism $MP(N) \cup MP(N,P) \subset AH(N)$.  In order to find a point where $MP(N) \cup MP(N,P)$ is not locally connected, we must first show there exists such a point in $\mathcal{A}$.  

We begin by studying lower dimensional analogues to $\mathcal{A}$.   Let $S_{1,1}$ and $S_{0,4}$ denote the punctured torus and four-punctured sphere respectively. In Section \ref{subsection: punctured torus}, we define spaces $\mathcal{A}_{1,1}$ and $\mathcal{A}_{0,4}$ which, by Bromberg's results \cite{B3}, locally model the deformation spaces $AH(S_{1,1} \times I, \partial S_{1,1} \times I)$ and $AH(S_{0,4} \times I, \partial S_{0,4} \times I)$ respectively.  We define $\mathcal{A}$ in Section \ref{subsection: model space} similarly, and relate $\mathcal{A}$ to the lower dimensional versions by showing there exists a continuous surjection $\Pi: \mathcal{A} \to \mathcal{A}_{1,1}$ in Section \ref{subsection: projections}.  We use this in Section \ref{A not loc conn section}, along with the fact that $\mathcal{A}_{1,1}$ is not locally connected \cite{B3}, to show that $\mathcal{A}$ is not locally connected.  In fact, in Section \ref{A not loc conn section}, we find a point of $\mathcal{A}$ with the property that any sufficiently small neighborhood of this point contains infinitely many components that are bounded apart from each other.  In Section \ref{closure not locally connected section}, we will use this description of the components of a neighborhood $U \subset \mathcal{A}$ and the filling theorem, which is used in the definition of $\Phi$, to show that there is a point $\sigma_0 \in MP(N,P)$ such that for any sufficiently small neighborhood $\sigma_0 \in V \subset MP(N) \cup MP(N,P)$, the closure of $V$ has infinitely many components. Along with the Density Theorem, this will be used to show that $AH(N)$ is not locally connected.

\subsection{The Punctured Torus and Four-Punctured Sphere}  \label{subsection: punctured torus}

Define $N_{1,1} = S_{1,1} \times I$ and  $P_{1,1} = \partial S_{1,1} \times I$.  Let $P_{1,1}'$ be the union of $P_{1,1}$ with a non-peripheral annulus in $S_{1,1} \times \{ 1\}$ about a curve $b \times \{1\}$ (see Figure \ref{punctured_surface}). Let $\hat{N}_{1,1}$ be the manifold obtained by removing an open tubular neighborhood of $b \times \{ \frac{1}{2} \}$ from $N_{1,1}$, and let $\hat{P}_{1,1}$ be the union of $P_{1,1}$ with the toroidal boundary component of $\hat{N}_{1,1}$.

\begin{figure}[htbp]
\begin{center}
\psfrag{a}[][]{\small $b$}
\psfrag{b}[][]{ \small $a$}
\psfrag{c}[][]{\small $c$}
\psfrag{T}[][]{\small $S_{1,1}$}  
\psfrag{S}[][]{$S_{0,4}$}
\includegraphics[width=3in]{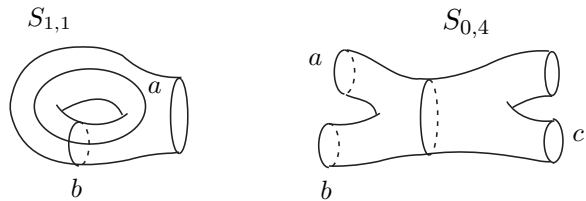} 
\caption[Punctured torus and four-punctured sphere]{\label{punctured_surface}  Orient the curves $a,b$ on $S_{1,1}$ and identify a presentation $\pi_1(S_{1,1}) = \left< a,b\right>$. Similarly, orient $a,b,c$ on $S_{0,4}$ and identify a presentation $\pi_1(S_{0,4})= \left< a,b,c\right>$.    }
\end{center}
\end{figure}

Recall from Section \ref{background deformation theory} that the components of $MP(N_{1,1}, P_{1,1}')$ are in one-to-one correspondence with the marked pared homeomorphism types of manifolds pared homotopy equivalent to $(N_{1,1}, P_{1,1}')$.  Fix an orientation on $N_{1,1}$ and let $\overline{N}_{1,1}$ denote $N_{1,1}$ with the opposite orientation.  Then there are two components of $MP(N_{1,1}, P_{1,1}')$ corresponding to $F^{-1}( [(N_{1,1}, P_{1,1}'), id] )$ and $F^{-1}( [(\overline{N}_{1,1}, P_{1,1}'), id] )$.  We denote the former by $MP_0(N_{1,1}, P_{1,1}')$. 

Given any $z \in \mathbb{C}$, one can define a representation $\sigma_z : \pi_1(N_{1,1}) \to PSL(2, \mathbb{C})$ by
\[
\sigma_z(a) = \begin{pmatrix} iz & i \\ i & 0 \end{pmatrix} \quad \sigma_z(b) = \begin{pmatrix} 1 & 2 \\ 0 & 1 \end{pmatrix}. 
\]
For any $\sigma \in MP(N_{1,1}, P_{1,1}')$ there is a unique $z$ such that $\sigma_z$ is in the conjugacy class of $\sigma$ (see Section 6.3 of \cite{Kra2} or \cite{KS}).  This defines an embedding of $MP(N_{1,1}, P_{1,1}')$ into $\mathbb{C}$.  The Maskit slice, $\mathcal{M}^+$, denotes the set of all $z \in \mathbb{C}$ such that $\sigma_z \in MP_0(N_{1,1}, P_{1,1}')$, and its mirror image in the lower half plane will be denoted by $\mathcal{M}^-$ (this corresponds to the other component of $MP(N_{1,1}, P_{1,1}')$).  Let $\mathcal{M} = \mathcal{M}^+ \cup \mathcal{M}^-$.  

\bigskip \noindent \textit{Remark.} We are following the Keen-Series convention by defining $\mathcal{M}^+$ and $\mathcal{M}^-$ to be open sets, whereas Bromberg uses $\mathcal{M}^\pm$ to denote the closures of these sets in Section 4 of \cite{B3}.  See Proposition 4.4 of \cite{B3} or \cite{KS} for more on the Maskit slice and its closure. The set $\mathcal{M}^+$ is also known as the Maskit embedding of the Teichm\"uller space of punctured tori.
\bigskip

Given $w \in \mathbb{C}$ and a conjugacy class of representations $\sigma \in MP_0(N_{1,1}, P_{1,1}')$, one can define a representation $\sigma_{z,w}$ of $\pi_1(\hat{N}_{1,1})=  \left< a,b,c \; | \; [b,c]=1\right>$ in the following way. There is a unique $z \in \mathcal{M}^+$ such that the representation $\sigma_z$ represents the conjugacy class $\sigma$.  Define $\sigma_{z,w}(a) = \sigma_z(a)$, $\sigma_{z,w}(b) = \sigma_z(b)$ and $\sigma_{z,w}(c) = \begin{pmatrix} 1 & w \\ 0 & 1 \end{pmatrix}$. Note that $\sigma_{z,w}$ is an actual representation of $\pi_1(\hat{N}_{1,1})$, but we will also use $\sigma_{z,w}$ to refer to the conjugacy class.

Define $\mathcal{A}_{1,1}$ to be 
\begin{align*}
\mathcal{A}_{1,1} = \{ (\sigma ,w) \in MP_0(N_{1,1}, P_{1,1}') \times \hat{\mathbb{C}} \; : \; &w = \infty, \;  \text{or} \; \\
&\sigma_{z,w} \in MP(\hat{N}_{1,1}, \hat{P}_{1,1}) \; \text{and} \; Im(w) > 0 \}.
\end{align*} Note that this is what Bromberg calls $\mathring{\mathcal{A}}$ in \cite{B3}.

One defines $\mathcal{A}_{0,4}$ similarly.  Let $N_{0,4} = S_{0,4} \times I$, let $P_{0,4} = \partial S_{0,4} \times I$, and define $P_{0,4}'$ to be the union of $P_{0,4}$ with a non-peripheral annulus in $S_{0,4} \times \{ 1\}$ whose core curve is $ab \times \{1\}$ (see Figure \ref{punctured_surface}). Let $\hat{N}_{0,4}$ be the manifold obtained by removing an open tubular neighborhood of $ab \times \{ \frac{1}{2} \}$ from $N_{0,4}$, and define $\hat{P}_{0,4}$ to be the union of $P_{0,4}$ with the toroidal boundary component of $\hat{N}_{0,4}$.

Given any $\sigma \in MP(N_{0,4}, P_{0,4}')$, there is a unique $z \in \mathbb{C}$ such that the representation $\sigma_z : \pi_1(N_{0,4}) \to PSL(2, \mathbb{C})$ defined by 
\[
\sigma_z(a) = \begin{pmatrix} -3 & 2 \\ -2 & 1 \end{pmatrix}, \quad \sigma_z(b) = \begin{pmatrix} 1 & 0 \\ 2 & 1 \end{pmatrix}, \quad \sigma_z (c) = \begin{pmatrix} -1 +2z & -2z^2 \\ 2 & -1-2z \end{pmatrix}
\] represents the conjugacy class of $\sigma$ (Section 6.1 of \cite{Kra2}). Note that for any $z$, one can check that $\sigma_z(ab) = \begin{pmatrix} 1 & 2 \\ 0 & 1 \end{pmatrix}$.

The Maskit slice for $S_{0,4}$, denoted $\mathcal{M}_{0,4}^+$, is the set of all $z \in \mathbb{C}$ such that $\sigma_z \in MP_0(N_{0,4}, P_{0,4}')$, and its mirror image in the lower half plane will be denoted by $\mathcal{M}_{0,4}^-$.  Again, we let $\mathcal{M}_{0,4} = \mathcal{M}_{0,4}^+ \cup \mathcal{M}_{0,4}^-$.  Kra shows that $z \in \mathcal{M}_{0,4}$ if and only if $2z \in \mathcal{M}^+$ (p. 558 of \cite{Kra2}).

Given $w \in \mathbb{C}$ and a conjugacy class of representations $\sigma \in MP_0(N_{0,4}, P_{0,4}')$, one can define a representation $\sigma_{z,w}$ of $\pi_1(\hat{N}_{0,4})=  \left< a,b,c,d \; | \; [ab, d]=1\right>$ in the following way. There is a unique $z \in \mathcal{M}_{0,4}^+$ such that the representation $\sigma_z$ represents the conjugacy class $\sigma$.  Define $\sigma_{z,w}(a) = \sigma_{z}(a)$, $\sigma_{z,w}(b) = \sigma_z(b)$, $\sigma_{z,w}(c) = \sigma_z(c)$, and $\sigma_{z,w}(d) = \begin{pmatrix} 1 & w \\ 0 & 1 \end{pmatrix}$. 

Define $\mathcal{A}_{0,4}$ to be 
\begin{align*}
\mathcal{A}_{0,4} = \{ (\sigma ,w) \in MP_0(N_{0,4}, P_{0,4}') \times \hat{\mathbb{C}} \; : \; &w = \infty, \;  \text{or} \;  \\
&\sigma_{z,w} \in MP(\hat{N}_{0,4}, \hat{P}_{0,4}) \; \text{and} \; Im(w) > 0 \}.
\end{align*}

Given $\sigma \in MP_0(N_{1,1}, P_{1,1}')$ and $w \in \mathbb{C}$, Bromberg characterizes when $(\sigma, w) \in \mathcal{A}_{1,1}$.

\begin{lem} \label{existence of n}  (i) \emph{(Bromberg \cite{B3})} Let $\sigma_z \in MP_0(N_{1,1}, P_{1,1}')$ and $w \in \mathbb{C}$ with $Im(w) > 0$.  Then $\sigma_{z,w} \in MP(\hat{N}_{1,1}, \hat{P}_{1,1})$ if and only if there exists an integer $n$ such that $z-nw \in \mathcal{M}^+$ and $z-(n+1)w \in \mathcal{M}^-$.
\\ (ii) Let $\sigma_z \in MP_0(N_{0,4}, P_{0,4}')$ and $w \in \mathbb{C}$ with $Im(w)>0$.  Then $\sigma_{z,w} \in MP(\hat{N}_{0,4}, \hat{P}_{0,4})$ if and only if there exists an integer $n$ such that $z-nw \in \mathcal{M}_{0,4}^+$ and $z-(n+1)w \in \mathcal{M}_{0,4}^-$.
\end{lem}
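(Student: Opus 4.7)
The plan is to prove part (ii); part (i) is attributed to Bromberg \cite{B3} and serves as a template. Both presentations describe an HNN extension of a free Kleinian group along a cyclic parabolic: $\pi_1(\hat{N}_{1,1})$ is an HNN extension of $\langle a,b\rangle$ along $\langle b\rangle$ with stable letter $c$, while $\pi_1(\hat{N}_{0,4})$ is an HNN extension of $\langle a,b,c\rangle$ along $\langle ab\rangle$ with stable letter $d$. In both cases the stable letter is sent under $\sigma_{z,w}$ to $T_w = \begin{pmatrix}1 & w\\0 & 1\end{pmatrix}$ and the amalgamated parabolic to $T_2 = \begin{pmatrix}1 & 2\\0 & 1\end{pmatrix}$. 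Consequently the Klein-Maskit argument Bromberg uses for (i) transfers to (ii) once the Maskit slice $\mathcal{M}^\pm$ is replaced by $\mathcal{M}_{0,4}^\pm$.

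For the \emph{``if''} direction, suppose $z-nw \in \mathcal{M}_{0,4}^+$ and $z-(n+1)w \in \mathcal{M}_{0,4}^-$. A direct matrix check shows $\sigma_{z-mw}(a)=\sigma_z(a)$, $\sigma_{z-mw}(b)=\sigma_z(b)$, and $\sigma_{z-mw}(c)=T_w^{-m}\sigma_z(c)T_w^m$ for every integer $m$; consequently the Kleinian group $T_w^m\sigma_{z-mw}(\pi_1(N_{0,4}))T_w^{-m}$ is a translate of the Maskit-slice group $\sigma_z(\pi_1(N_{0,4}))$ that fixes the parabolic $T_2=\sigma_z(ab)$. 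Because $z-nw \in \mathcal{M}_{0,4}^+$, the group $\sigma_{z-nw}(\pi_1(N_{0,4}))$ admits a precisely invariant upper half-plane $B_R^+$ for $\langle T_2\rangle$; conjugating by $T_w^n$ produces a precisely invariant half-plane lifted by $n\cdot\mathrm{Im}(w)$ in the conjugate group. The condition $z-(n+1)w \in \mathcal{M}_{0,4}^-$ similarly provides a precisely invariant lower half-plane at height $(n+1)\cdot\mathrm{Im}(w)$. These half-planes lie on opposite sides of a horizontal Jordan curve in $\hat{\mathbb{C}}$, with $T_w$ identifying them in the way required by type-II Klein-Maskit combination; the combined group is geometrically finite, discrete and faithful, minimally parabolic with pared locus $\hat{P}_{0,4}$, hence $\sigma_{z,w} \in MP(\hat{N}_{0,4},\hat{P}_{0,4})$.

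For the \emph{``only if''} direction, assume $\sigma_{z,w} \in MP(\hat{N}_{0,4},\hat{P}_{0,4})$. The rank-2 cusp $\langle T_2,T_w\rangle$ has a $\mathbb{Z}$-tiled strip as a fundamental domain modulo $\langle T_2\rangle$, and the restriction $\sigma_z = \sigma_{z,w}|_{\pi_1(N_{0,4})}$ is a discrete, geometrically finite representation whose image fits into the Maskit framework. The integer $n$ is extracted as the translation index needed to normalize the upper component of the conformal boundary of $\sigma_z$ to lie in the standard upper Maskit embedding; this translate corresponds to the Kra parameter $z-nw$, forcing $z-nw \in \mathcal{M}_{0,4}^+$, and the analogous argument for the lower component yields $z-(n+1)w \in \mathcal{M}_{0,4}^-$.

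The main obstacle is verifying the global disjointness condition $gB \cap (\hat{\mathbb{C}}-T_w(\overline{B}))=\emptyset$ for every $g$ in $\sigma_z(\pi_1(N_{0,4}))$ needed in type-II Klein-Maskit combination. This is handled exactly as in Bromberg's proof of (i): the characterization of $\mathcal{M}_{0,4}^\pm$ in terms of precisely invariant half-planes guarantees the disjointness once the two translated half-planes are separated, a condition ensured by $\mathrm{Im}(w)>0$ together with the strict inclusions into the upper and lower Maskit slices.
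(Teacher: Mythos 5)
The paper itself gives no proof of this lemma: part (i) is credited to Bromberg's Proposition 4.7 and part (ii) is referred to Lemma 5.1 of the author's thesis with the remark that it is ``nearly identical.'' So you are reconstructing a proof from scratch, and the broad outline --- Klein--Maskit type II combination, following Bromberg's punctured-torus template --- is the right approach, and your matrix identity $\sigma_{z-mw}(c) = T_w^{-m}\sigma_z(c)T_w^m$ does check out. But there is a genuine gap in the ``if'' direction.

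The claim that $T_w^m\sigma_{z-mw}(\pi_1(N_{0,4}))T_w^{-m}$ is a translate of $\sigma_z(\pi_1(N_{0,4}))$ is false. Although $\sigma_z(a)$ and $\sigma_z(b)$ are independent of $z$, they do \emph{not} commute with $T_w$ (one can check directly that $\begin{pmatrix}-3&2\\-2&1\end{pmatrix}$ and $\begin{pmatrix}1&0\\2&1\end{pmatrix}$ do not commute with $\begin{pmatrix}1&w\\0&1\end{pmatrix}$), so
\[
T_w^m\sigma_{z-mw}(\pi_1(N_{0,4}))T_w^{-m} = \bigl\langle\, T_w^m\sigma_z(a)T_w^{-m},\; T_w^m\sigma_z(b)T_w^{-m},\; \sigma_z(c)\,\bigr\rangle ,
\]
which is a conjugate of $\sigma_{z-mw}(\pi_1(N_{0,4}))$ but is not $\sigma_z(\pi_1(N_{0,4}))$. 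As a result, the two ``precisely invariant half-planes'' you produce belong to two \emph{different} Kleinian groups --- one from $\sigma_{z-nw}$ conjugated by $T_w^n$, the other from $\sigma_{z-(n+1)w}$ conjugated by $T_w^{n+1}$. Type-II Klein--Maskit combination (as recalled in Section 2.1 of the paper) requires a single base group $G$, a single disc $B$, and all three conditions verified for that one pair. You never identify which group plays the role of $G$, and the hypothesis $z-(n+1)w\in\mathcal{M}_{0,4}^-$ is never converted into the required statement about $B$ and $\hat{\mathbb{C}}-T_w(\overline{B})$ being precisely invariant for that single $G$. A workable version takes $G = \sigma_{z-nw}(\pi_1(N_{0,4}))$ (the $\sigma_{z,w}$-image of $\langle a,b,d^{-n}cd^{n}\rangle$), reduces to $n=0$ by noting that $c\mapsto d^{-1}cd$ is induced by a pared homeomorphism of $(\hat N_{0,4},\hat P_{0,4})$, and then shows directly that the two Maskit-slice containments produce the single required disc; none of this appears in your sketch. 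The ``only if'' direction as written --- ``the integer $n$ is extracted as the translation index needed to normalize the upper component of the conformal boundary of $\sigma_z$'' --- is too vague to evaluate, and in particular does not address the key point that each restriction $\sigma_{z-mw}(\pi_1(N_{0,4}))$ must be shown to be \emph{minimally} parabolic (no extra parabolics) before one can conclude $z-mw\in\mathcal{M}_{0,4}$.
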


\textit{Remark.}  Part $(i)$ is Proposition 4.7 of \cite{B3}.  The proof of $(ii)$ is nearly identical to that of $(i)$, and we refer the reader to Lemma 5.1 of \cite{MagidThesis} for the necessary adaptations.

Although in the definitions of $\mathcal{A}_{1,1}$ and $\mathcal{A}_{0,4}$ we only require $Im(w) > 0$, the following lemma allows us to give a positive lower bound.  This will be used to obtain Corollary \ref{bound imaginary part} which is used in the proof of Lemma \ref{continuity2}.

\begin{lem} \label{lower bound on w (one dimensional)}  If $(\sigma, w) \in \mathcal{A}_{1,1}$, then $w= \infty$ or $Im(w)>2$. 
  If $(\sigma, w) \in \mathcal{A}_{0,4}$ then $w = \infty$ or $Im(w) > 1$. 
  \end{lem}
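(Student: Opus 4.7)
The plan is to combine Lemma \ref{existence of n} with the known location of the Maskit slices in $\mathbb{C}$. The essential geometric input is the bound
\[
\mathcal{M}^+ \subset \{z \in \mathbb{C} : Im(z) > 1\}, \qquad \mathcal{M}^- \subset \{z \in \mathbb{C} : Im(z) < -1\}
\]
for the Maskit slice of the once-punctured torus in the normalization of this paper. I will cite this as Proposition 4.4 of \cite{B3} (see also \cite{KS}); it ultimately follows from Shimizu's lemma applied to the parabolic generator $\sigma_z(b) = \begin{pmatrix} 1 & 2 \\ 0 & 1 \end{pmatrix}$ together with the parabolicity of the boundary curve $[a,b]$ in the discrete faithful representation $\sigma_z$.

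Granting this input, the $\mathcal{A}_{1,1}$ case is immediate. Suppose $(\sigma, w) \in \mathcal{A}_{1,1}$ with $w \neq \infty$. By Lemma \ref{existence of n}(i) there exists an integer $n$ with $z - nw \in \mathcal{M}^+$ and $z - (n+1)w \in \mathcal{M}^-$, so the Maskit slice bound gives $Im(z - nw) > 1$ and $Im(z - (n+1)w) < -1$. Subtracting,
\[
Im(w) = Im\bigl( (z - nw) - (z - (n+1)w) \bigr) > 1 - (-1) = 2.
\]

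For the $\mathcal{A}_{0,4}$ case, I first transport the bound using Kra's identification $z \in \mathcal{M}_{0,4} \iff 2z \in \mathcal{M}^+$, which gives $\mathcal{M}_{0,4}^+ \subset \{z : Im(z) > 1/2\}$ and $\mathcal{M}_{0,4}^- \subset \{z : Im(z) < -1/2\}$. Applying Lemma \ref{existence of n}(ii) and the same subtraction argument then yields $Im(w) > 1$.

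The only nontrivial step is the Maskit slice bound itself, and this is the main obstacle; once that geometric input is quoted, the rest of the proof is a one-line arithmetic subtraction whose factor-of-two discrepancy between the $(1,1)$ and $(0,4)$ cases is precisely accounted for by the rescaling $z \leftrightarrow 2z$ between the two Maskit slices.
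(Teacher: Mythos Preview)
Your proof is correct and follows essentially the same approach as the paper: both use Lemma \ref{existence of n} to produce points $z-nw\in\mathcal{M}^+$ and $z-(n+1)w\in\mathcal{M}^-$, then invoke the bound $\mathcal{M}^+\subset\{Im(z)>1\}$ (the paper cites Wright \cite{Wright} rather than Proposition 4.4 of \cite{B3}) and subtract; the $(0,4)$ case is handled identically via Kra's rescaling $z\leftrightarrow 2z$.
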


\begin{proof}
If $(\sigma, w) \in \mathcal{A}_{1,1}$, then there is some $z$ such that the conjugacy class of $\sigma_z$ is $\sigma$. If $w \neq \infty$, then by Lemma \ref{existence of n}$(i)$ there is some integer $n$ such that $z-nw \in \mathcal{M}^+$ and $z-(n+1)w \in \mathcal{M}^-$.  Thus $Im(w)$ is at least as large as the vertical distance between the components of $\mathcal{M}$.  
Wright shows that if $Im(z) \leq 1$ then $z \notin \mathcal{M}^+$ \cite{Wright} (see also p. 534, 558 of \cite{Kra2} and the comments after Proposition 2.6 of \cite{KS}). Since $\zeta \in \mathcal{M}^-$ if and only if $-\zeta \in \mathcal{M}^+$, the distance between these two components of the Maskit slice is at least $2$.  It follows that if $(\sigma, w) \in \mathcal{A}_{1,1}$ and $w \neq \infty$ then $Im(w) > 2$.  

The four-puntured sphere case is similar.  Suppose $(\sigma, w) \in \mathcal{A}_{0,4}$ and $w \neq \infty$. Let $z$ be such that the conjugacy class of $\sigma_z$ is  $\sigma$.   By Lemma \ref{existence of n}$(ii)$, $Im(w)$ is at least the vertical distance between the two components of $\mathcal{M}_{0,4}$.  Since $\zeta \in \mathcal{M}_{0,4}$ if and only if $2\zeta \in \mathcal{M}$ (p. 558 \cite{Kra2}), we must have $Im(w) > 1$. 
\end{proof}

\subsection{The Model Space $\mathcal{A}$}  \label{subsection: model space}

For surfaces with higher dimensional Teichm\"uller spaces, the construction of $\mathcal{A}$ takes more bookkeeping but is otherwise similar to $\mathcal{A}_{1,1}$ and $\mathcal{A}_{0,4}$.  

Recall $N = S \times I$.  Let $\{ \gamma_i\}_{i=1}^{d}$ be a pants decomposition of $S$ (recall from Section \ref{Introduction} that we will abbreviate $d = 3g-3$).  Although fixing any pants decomposition would be acceptable, we will make some choices that make it more convenient to apply results of \cite{B3}.  We will choose $\gamma_2$ to be a curve that separates $S$ into a punctured torus and a punctured genus $g-1$ surface, and let $\gamma_1$ be a curve in the punctured torus component of $S - \gamma_2$.  Also, fix an orientation on each $\gamma_i$ to distinguish $\gamma_i$ from its inverse in $\pi_1(S)$.
\begin{figure}[htbp]
\begin{center}
\psfrag{a}[][]{\small $\gamma_1$}  \psfrag{b}[][]{ \small $\gamma_2$}  \psfrag{S}[][]{$S$}
\includegraphics[width=3in]{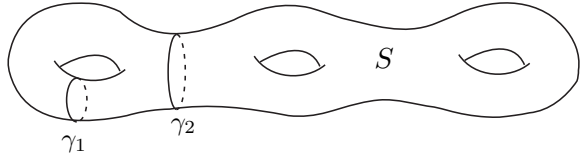} 
\caption[Pants decomposition]{\label{pants}  Part of the pants decomposition that we will fix throughout the rest of the argument.}
\end{center}
\end{figure} 

For each $i$, define a homeomorphism $G_i$ from either $S_{1,1}$ or $S_{0,4}$ to the component of $\left( S - \cup_{j\neq i} \gamma_j \right)$ containing $\gamma_i$. Moreover, using the markings of $S_{1,1}$ and $S_{0,4}$ from Figure \ref{punctured_surface}, define $G_i$ on $S_{1,1}$ so that $(G_i)_*(b) = \gamma_i$ and define $G_i$ on $S_{0,4}$ so that $(G_i)_*(ab) = \gamma_i$.

Let $P$ be a collection of $d$ disjoint annuli in $S\times \{1\}$ such that $\gamma_i \times \{1\}$ is core curve of the $i$th annulus of $P$ (see Figure \ref{pants}).  Then $MP(N,P)$ has $\binom{d}{2}$ components corresponding to whether $\gamma_i$ is parabolic to one side or the other.  For two of these components, all of the parabolics will be on the same side of $N$.  In other words, for any $\rho$ in one of these components, $M_\rho$ will have exactly one closed component of its conformal boundary homeomorphic to $S$. Using the notation from Section \ref{background deformation theory}, these two components can be identified with $F^{-1}[(N,P), id]$ and $F^{-1}[(\overline{N},P), id]$ where $\overline{N}$ denotes $N$ with the opposite orientation.   Label the former component by $MP_0(N,P)$.  These will be the marked hyperbolic manifolds with a rank-$1$ cusp associated to each $\gamma_i$ such that the cusps all occur to the ``top'' of the manifold.  

We now elaborate on what we mean by the ``top'' of a hyperbolic manifold. This discussion will be useful in distinguishing whether or not a point $\rho \in MP(N,P)$ lies in $MP_0(N,P)$.  Given $\rho \in MP(N,P)$, 
there exists an embedding $f: S \to M_\rho = \mathbb{H}^3/\rho(\pi_1(N))$ such that $f_* = \rho$.  The orientation on $S$ induces an orientation on $f(S)$.  This orientation, together with a normal direction to $f(S)$, defines an orientation on $M_\rho$, and for only one of the two normal directions will this orientation be compatible with the orientation induced on $M_\rho$ as a quotient of $\mathbb{H}^3$.  This distinguishes a top side of $f(S)$, and we say the top of the manifold $M_\rho$ with respect to $f(S)$ is the component of $M_\rho - f(S)$ that lies to the top side of $f(S)$.  If $\rho \in MP(N,P)$, then there are $d$ rank-$1$ cusps associated to each of the components of $P$.  If each of these cusps lies in the top of $M_\rho - f(S)$, then we say $\rho \in MP_0(N,P)$.  Since any two embeddings $f: S \to M_\rho$ such that $f_* = \rho$ are isotopic \cite{Waldhausen}, this notion is independent of the map $f$.  Likewise, if $\rho \in MP(N,P)$ and $X$ is a conformal boundary component of $M_\rho$, then we can distinguish whether $X$ lies on the top or bottom side of $M_\rho$.

Let $\hat{N}$ be obtained by drilling a set of $d$ curves out of $N$.  Specifically, let $\gamma_i \times \{1/2\}$ be a collection of $d$ curves in $S \times \{ 1/2\}$.  Let $U_i$ be an open collar neighborhood of $\gamma_i \times \{1/2\}$ such that the elements of the collection $\cup U_i$ are pairwise disjoint. Let \[
\hat{N} = N - \bigcup_{i=1}^{d} U_i .
\]
Define $\hat{P} = \cup \partial U_i$. 
Note that $\pi_1(\hat{N})$ is generated by $\pi_1(N)$ and $d$ new elements $\beta_i$ corresponding to meridians of $\partial U_i$.  Since the meridian of $\partial U_i$ commutes with any longitude of $\partial U_i$, there are new relations.  We will use $\gamma_i$ to denote both a curve and the element of $\pi_1(N)$ corresponding to that curve. Then $[\beta_i, \gamma_i]=1$.  We write 
\[
\pi_1(\hat{N}) = \left< \pi_1(N), \beta_1, \ldots, \beta_d \: | \: [\beta_i, \gamma_i]=1\right>
\] with the understanding that $\pi_1(N)$ has generators besides $\gamma_i$ and some of its own relations.

Given $\sigma \in MP_0(N,P)$ and $w=(w_1, \ldots, w_d) \in \mathbb{C}^d$, we describe a process for constructing a representation $\sigma_{w} : \pi_1(\hat{N}) \to PSL(2, \mathbb{C})$.  We can find a representative in the conjugacy class determined by $\sigma$ (which by an abuse of notation, we still refer to as $\sigma$) such that 
\[
\sigma(\gamma_1) = \begin{pmatrix} 1 & 2 \\ 0 & 1 \end{pmatrix}
\] and
\[
\sigma(\gamma_2) = \begin{pmatrix} -3 & -2 \\ 2 & 1 \end{pmatrix}. 
\]  
We choose these matrices to parallel the construction of $\mathcal{A}_{1,1}$ in the previous section.
Recall that we fixed some homeomorphism $G_1$ from $S_{1,1}$ to the subsurface of $S$ bounded by $\gamma_2$ that contains $\gamma_1$.  There is a unique $z \in \mathcal{M}^+$ such that $\sigma\circ (G_1)_*$ is conjugate to $\sigma_z$.  Note that for any $z$, $\sigma_z (b) = \begin{pmatrix} 1 & 2 \\ 0 & 1 \end{pmatrix} = \sigma \circ (G_1)_* (b)$ and $\sigma_z (b^{-1}a^{-1} b a) =  \begin{pmatrix} -3 & -2 \\ 2 & 1 \end{pmatrix} = \sigma \circ (G_1)_* (b^{-1} a^{-1} ba)$.  Thus, specifying $\sigma$ on $\gamma_1$ and $\gamma_2$ determines a well-defined representation in the conjugacy class of $\sigma$ that restricts to $\sigma_z$ for some $z$ on $G_1(S_{1,1})$.

  We define $\sigma_{w_1}: \pi_1(N - U_1) \to PSL(2, \mathbb{C})$ by $\sigma_{w_1}(\alpha) = \sigma(\alpha)$ for all $\alpha \in \pi_1(N)$ and \[
\sigma_{w_1}(\beta_1) = \begin{pmatrix} 1 & w_1 \\ 0 & 1 \end{pmatrix}. 
\]  We then inductively define $\sigma_{w_1, \ldots, w_i} : \pi_1(N - \cup_{j=1}^i U_j) \to PSL(2, \mathbb{C})$ by conjugating $\sigma_{w_1, \ldots, w_{i-1}}$ so that 
\[
\sigma_{w_1, \ldots, w_{i-1}}(\gamma_i) = \begin{pmatrix}1 & 2 \\ 0  & 1 \end{pmatrix}
\] (there is some ambiguity here that will be clarified below).  Then define $\sigma_{w_1, \ldots, w_{i}}$ by $\sigma_{w_1, \ldots, w_{i}}(\alpha) = \sigma_{w_1, \ldots, w_{i-1}}(\alpha)$ for all $\alpha \in \pi_1(N - \cup_{j=1}^{i-1} U_j)$, and
\[
\sigma_{w_1, \ldots, w_i}(\beta_i) = \begin{pmatrix}1 & w_i \\ 0  & 1 \end{pmatrix}.
\]  
As we indicated above, specifying that we should conjugate $\sigma_{w_1, \ldots, w_{i-1}}$ so that $\gamma_i$ is sent to $\begin{pmatrix}1 & 2 \\ 0 & 1 \end{pmatrix}$ does not determine a unique representation, but we now show how a unique representation can be specified.  Each curve $\gamma_i$ lies in either a four-punctured sphere or punctured torus component of 
\[
S - \bigcup_{j \neq i} \gamma_j
\] that we have marked by a homeomorphism $G_i$ from $S_{1,1}$ or $S_{0,4}$. 
If $\gamma_i$ lies in a punctured-torus component bounded by some curve $\gamma_j$ then we conjugate $\sigma_{w_1, \ldots, w_{i-1}}$ such that 
\[
\sigma_{w_1, \ldots, w_{i-1}} \circ (G_i)_* (b) = \begin{pmatrix} 1 & 2 \\ 0 & 1 \end{pmatrix} \quad \text{and} \quad \sigma_{w_1, \ldots, w_{i-1}} \circ (G_i)_* (b^{-1}a^{-1}ba) = \begin{pmatrix} -3 & -2 \\ 2 & 1 \end{pmatrix}.
\]
Since $G_i$ was chosen so that $(G_i)_*(b)  = \gamma_i$, this ensures $\sigma_{w_1, \ldots, w_{i-1}}(\gamma_i) = \begin{pmatrix}1 & 2 \\ 0  & 1 \end{pmatrix}$, and the condition that $\sigma_{w_1, \ldots, w_{i-1}}(\gamma_j) =  \begin{pmatrix} -3 & -2 \\ 2 & 1 \end{pmatrix}$ specifies $\sigma_{w_1, \ldots, w_{i-1}}$ uniquely. 

 If $\gamma_i$ lies in a four-punctured sphere component, then we conjugate $\sigma_{w_1, \ldots, w_{i-1}}$ so that 
 \[
\sigma_{w_1, \ldots, w_{i-1}} \circ (G_i)_* (a) = \begin{pmatrix} -3 & 2 \\ -2 & 1 \end{pmatrix} \quad \text{and} \quad \sigma_{w_1, \ldots, w_{i-1}} \circ (G_i)_* (b) = \begin{pmatrix} 1 & 0 \\ 2 & 1 \end{pmatrix}.
\]
It follows that 
\[
\sigma_{w_1, \ldots, w_{i-1}} \circ (G_i)_* (ab) = \begin{pmatrix} 1 & 2 \\ 0 & 1 \end{pmatrix}.
\]
 
After $d$ steps, we get a well-defined representation $\sigma_{w_1, \ldots, w_d}$ which we also denote by $\sigma_{w}: \pi_1(\hat{N}) \to PSL(2, \mathbb{C})$.  By construction, for each $i$ there exists some representation in the conjugacy class of $\sigma_{w}$ such that the generators $\gamma_i, \beta_i$ of $\pi_1(\partial U_i)$ are sent to $\begin{pmatrix} 1 & 2 \\ 0 & 1 \end{pmatrix}$ and $\begin{pmatrix} 1 & w_i \\ 0 & 1 \end{pmatrix}$ respectively.  

Given $\sigma \in MP_0(N,P)$, not every choice of $w= (w_1, \ldots, w_d) \in \mathbb{C}^d$ will result in $\sigma_{w} \in MP(\hat{N}, \hat{P})$.  Thus we are led to the following definition:
\begin{align*}
\mathcal{A} = \{ (\sigma, w) \in MP_0(N,P) \times \hat{\mathbb{C}}^d \; : \; & w= (\infty, \ldots, \infty), \; \text{or} \\
& Im(w_i) > 0 \; \text{and} \;  \sigma_{w} \in MP(\hat{N}, \hat{P})  \}. 
\end{align*}

\subsection{Projections of $\mathcal{A}$ to $\mathcal{A}_{1,1}$ and $\mathcal{A}_{0,4}$}  \label{subsection:  projections}

Now that we have defined the model space $\mathcal{A}$, we want to use the fact that $\mathcal{A}_{1,1}$ is not locally connected \cite{B3} to show that $\mathcal{A}$ is not locally connected.  In this section, we show there is a continuous surjection $\Pi: \mathcal{A} \to \mathcal{A}_{1,1}$, and in the sequel we explain how this can be used to relate the local connectivity of $\mathcal{A}$ and $\mathcal{A}_{1,1}$.

By our choice of pants decomposition (see Fig. \ref{pants}), the annulus $\gamma_2 \times [0,1]$ cuts $N$ into two pieces, one of which is homeomorphic to $N_{1,1}$ (and so we will refer to this component as $N_{1,1}$).  Given $\sigma : \pi_1(N) \to PSL(2, \mathbb{C})$, the restriction of $\sigma$ to this punctured torus defines a representation $\sigma\vert_{\pi_1(N_{1,1})} : \pi_1(N_{1,1}) \to PSL(2, \mathbb{C})$.  

\begin{lem} \label{restriction}
If $\sigma \in MP_0(N,P)$, then $\sigma\vert_{\pi_1(N_{1,1})} \in MP_0(N_{1,1}, P'_{1,1})$.
\end{lem}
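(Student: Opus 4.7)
The plan is to verify the four defining properties of $MP_0(N_{1,1}, P'_{1,1})$ for the restricted representation $\sigma' := \sigma|_{\pi_1(N_{1,1})}$: discreteness and faithfulness, the correct parabolic locus, geometric finiteness, and the ``top-side'' normalization. Discreteness and faithfulness are immediate since $\sigma'$ is the restriction of a discrete, faithful representation. The punctured torus $N_{1,1}$ is one component of $N$ cut along the annulus $\gamma_2 \times I$, so $\pi_1(N_{1,1})$ embeds in $\pi_1(N)$ as the subgroup generated by the classes corresponding to a one-holed torus with boundary $\gamma_2$.

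First I would check the parabolic/pared data. The peripheral annulus $P_{1,1} = \partial S_{1,1} \times I \subset \partial N_{1,1}$ has core curve $\gamma_2$, and the non-peripheral annulus of $P'_{1,1} - P_{1,1}$ has core $\gamma_1$ (corresponding to $b$ under the marking $G_1$). Both $\gamma_1$ and $\gamma_2$ lie in $\pi_1(P)$, so $\sigma(\gamma_1)$ and $\sigma(\gamma_2)$ are parabolic. Hence $\sigma'(\pi_1(P'_{1,1}))$ consists of parabolics, so $\sigma' \in \mathcal{R}(N_{1,1}, P'_{1,1})$. Conversely, if $g \in \pi_1(N_{1,1})$ with $\sigma'(g)$ parabolic, then $g$ is parabolic in $\sigma(\pi_1(N))$; since $\sigma$ is minimally parabolic with parabolic locus $P$, the element $g$ must be conjugate (in $\pi_1(N)$) into some component of $\pi_1(P)$. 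Because $\pi_1(N_{1,1}) \leq \pi_1(N)$ is the $\pi_1$ of an essential subsurface whose $\partial$-components are $\gamma_1$ and $\gamma_2$, the only components of $P$ that meet it (up to conjugation inside $\pi_1(N_{1,1})$) are those carrying $\gamma_1$ and $\gamma_2$. This gives minimal parabolicity with respect to $P'_{1,1}$.

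For geometric finiteness I would use the reverse Klein-Maskit decomposition reviewed in Section \ref{KM combination}. Since $\sigma(\gamma_2)$ is primitive parabolic and the annulus $\gamma_2 \times I$ is essential in $N$ and not parallel into any other component of $P$, the rank-$1$ cusp of $M_\sigma$ associated to $\gamma_2$ is separating, and a standard precisely-invariant-disc argument (cf.\ \cite{AM}, \cite{Maskit paper}) exhibits $\sigma(\pi_1(N)) = \sigma(\pi_1(N_{1,1})) *_{\langle \sigma(\gamma_2)\rangle} \sigma(\pi_1(N_{g-1,1}))$ with each factor geometrically finite. This is the converse direction of Type I Klein-Maskit combination; equivalently, one may intersect the convex core of $M_\sigma$ with the cover corresponding to $\pi_1(N_{1,1})$ and observe that the end structure on that cover is geometrically finite because each end is either a lift of a geometrically finite end of $M_\sigma$ or a cusp covering the rank-$1$ cusp of $\gamma_2$. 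Either argument produces $\sigma' \in MP(N_{1,1}, P'_{1,1})$.

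Finally I would verify the $MP_0$ condition. The curves $\gamma_1, \gamma_2$ of $P$ all lie in $S \times \{1\}$, and by hypothesis $\sigma \in MP_0(N,P)$, so their associated rank-$1$ cusps all sit on the top side of an embedded incompressible surface $f(S) \subset M_\sigma$ realizing the marking. Restricting $f$ to $G_1(S_{1,1}) \subset S$ gives an embedded incompressible surface in the cover $M_{\sigma'}$ marking $N_{1,1}$, and by naturality the top side in $M_\sigma$ pulls back to the top side in $M_{\sigma'}$. Hence both cusps of $\sigma'$ lie on the top, giving $\sigma' \in MP_0(N_{1,1}, P'_{1,1})$. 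The main obstacle is the geometric-finiteness step, where one must legitimately run Klein-Maskit combination in reverse; the rest is essentially bookkeeping about the pared structure and orientations.
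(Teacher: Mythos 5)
Your overall structure (discreteness/faithfulness, parabolic locus, geometric finiteness, top-side normalization) matches what the paper needs to establish, and your treatments of the parabolic locus and the $MP_0$ condition are essentially the same as the paper's, just more spelled out. Where you diverge is the geometric-finiteness step. The paper dispatches it in one line by citing Thurston's theorem (Proposition 7.1 of Morgan's ``On Thurston's uniformization theorem for three-dimensional manifolds'') that a finitely generated subgroup of a geometrically finite Kleinian group with nonempty domain of discontinuity is itself geometrically finite. You instead propose to run Klein--Maskit combination in reverse, or alternatively to inspect the ends of the cover corresponding to $\pi_1(N_{1,1})$. Both of your alternatives can be made to work, but the reverse-combination route requires you to first exhibit the precisely invariant disc pair for the restriction (which is itself a decomposition result you would have to quote or prove), and the ``examine the ends of the cover'' route is essentially a proof of the special case of the Thurston/Morgan result you are trying to avoid. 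So your argument is correct in spirit but heavier; the paper's appeal to the general finitely-generated-subgroup theorem is cleaner and sidesteps the combination machinery entirely, at the cost of invoking a deeper black box. Your handling of minimal parabolicity via the essential-subsurface argument fills in a detail the paper leaves implicit, and is a worthwhile addition.
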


\begin{proof}  If this restriction was not discrete, faithful, geometrically finite, and minimally parabolic with respect to $P_{1,1}' = P \cap N_{1,1}$ then $\sigma$ would not be in $MP(N,P)$.  Note that we are using the fact (attributed to Thurston) that finitely generated subgroups of geometrically finite Kleinian groups with nonempty domain of discontinuity are geometrically finite (Proposition 7.1 of \cite{Morgan}).  Hence, $\sigma\vert_{\pi_1(N_{1,1})} \in MP(N_{1,1}, P'_{1,1})$.

 Suppose $\sigma \vert_{\pi_1(N_{1,1})}$ is in $MP(N_{1,1}, P'_{1,1}) \backslash MP_0(N_{1,1}, P'_{1,1})$. Then for any proper embedding of $f_{1,1} : int(S_{1,1}) \to \mathbb{H}^3/\sigma \vert_{\pi_1(N_{1,1})} (\pi_1(N_{1,1}))$ inducing $\sigma \vert_{\pi_1(N_{1,1})}$, the cusp corresponding to $\gamma_1$ will lie to the bottom side of $f_{1,1}(int(S_{1,1}))$.  Since $\sigma \vert_{\pi_1(N_{1,1})}$ is defined as the restriction of $\sigma$, this implies that the cusp corresponding to $\gamma_1$ in $\mathbb{H}^3/\sigma(\pi_1(N))$ lies on the bottom with respect to any embedding $f : S \to \mathbb{H}^3/\sigma(\pi_1(N))$.  This contradicts that $\sigma \in MP_0(N,P)$.  Thus $\sigma \vert_{\pi_1(N_{1,1})} \in  MP_0(N_{1,1}, P'_{1,1})$. 
\end{proof}

Lemma \ref{restriction} allows us to define the projection map in the following lemma.

\begin{lem} \label{projection} The map $\Pi: \mathcal{A} \to MP_0(N_{1,1}, P_{1,1}') \times \hat{\mathbb{C}}$ defined by 
\[
\Pi(\sigma, w_1, \ldots, w_d) = (\sigma\vert_{\pi_1(N_{1,1})}, w_1).
\]
is a continuous map such that $\Pi(\mathcal{A}) = \mathcal{A}_{1,1}$. 
\end{lem}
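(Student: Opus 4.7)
The statement has two parts: continuity of $\Pi$ and the identification of its image with $\mathcal{A}_{1,1}$. Continuity is essentially formal. The image computation splits into two containments, of which the reverse inclusion is the real content.

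\textbf{Continuity.} The second coordinate of $\Pi$ is just the projection $(\sigma,w_1,\ldots,w_d)\mapsto w_1$, which is continuous by definition of the product topology on $MP_0(N,P)\times\hat{\mathbb{C}}^d$. For the first coordinate, the inclusion $\pi_1(N_{1,1})\hookrightarrow\pi_1(N)$ (induced by the subsurface inclusion $N_{1,1}\hookrightarrow N$) pulls back the trace functions generating the coordinate ring of $X(N_{1,1},P_{1,1}')$ to trace functions on $X(N,P)$. Hence the restriction map $X(N,P)\to X(N_{1,1},P_{1,1}')$ is a morphism of algebraic varieties, so continuous in the algebraic topology. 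I would cite Section~\ref{background deformation theory} (and the discussion of $X(N,P)$ following \cite{CS}) for the fact that this is the relevant topology on $AH(N,P)$.

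\textbf{Image contained in $\mathcal{A}_{1,1}$.} Fix $(\sigma,w_1,\ldots,w_d)\in\mathcal{A}$. If $w=(\infty,\ldots,\infty)$, then Lemma~\ref{restriction} gives $\sigma\vert_{\pi_1(N_{1,1})}\in MP_0(N_{1,1},P_{1,1}')$, so $(\sigma\vert_{\pi_1(N_{1,1})},\infty)\in\mathcal{A}_{1,1}$. Otherwise $Im(w_i)>0$ for each $i$ and $\sigma_w\in MP(\hat{N},\hat{P})$. I would then unwind the construction of $\sigma_w$ to identify $\sigma_w\vert_{\pi_1(\hat{N}_{1,1})}$, after applying the marking $G_1$, with the representation $\sigma_{z,w_1}$ appearing in the definition of $\mathcal{A}_{1,1}$: by construction $\sigma(\gamma_1)=\bigl(\begin{smallmatrix}1&2\\0&1\end{smallmatrix}\bigr)$ and $\sigma(\gamma_2)=\bigl(\begin{smallmatrix}-3&-2\\2&1\end{smallmatrix}\bigr)$ agree with $\sigma_z(b)$ and $\sigma_z(b^{-1}a^{-1}ba)$ for the unique $z\in\mathcal{M}^+$ determined by $\sigma\vert_{\pi_1(N_{1,1})}$, and $\sigma_w(\beta_1)=\bigl(\begin{smallmatrix}1&w_1\\0&1\end{smallmatrix}\bigr)$ by definition. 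To conclude $\sigma_{z,w_1}\in MP(\hat{N}_{1,1},\hat{P}_{1,1})$, note that the restriction of a discrete, faithful representation is discrete and faithful; geometric finiteness of the restriction follows from Morgan's version (Proposition~7.1 of \cite{Morgan}) of Thurston's theorem on finitely generated subgroups of geometrically finite groups, as used in Lemma~\ref{restriction}; and minimal parabolicity relative to $\hat{P}_{1,1}$ follows because the parabolic subgroups of $\pi_1(\hat{N}_{1,1})$ (namely those generated by $\gamma_1$, $\gamma_2$, and the torus subgroup $\langle\gamma_1,\beta_1\rangle$) are exactly the subgroups carried by $\hat{P}_{1,1}$.

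\textbf{Image contains $\mathcal{A}_{1,1}$.} Given $(\sigma',w_1')\in\mathcal{A}_{1,1}$, I would construct $(\sigma,w_1,\ldots,w_d)\in\mathcal{A}$ with $\Pi(\sigma,w)=(\sigma',w_1')$ by Klein--Maskit combination along the pants decomposition, as reviewed in Section~\ref{KM combination}. First, extend $\sigma'\in MP_0(N_{1,1},P_{1,1}')$ to a representation $\sigma\in MP_0(N,P)$: cut $S$ along $\gamma_2$ into the punctured torus carrying $\sigma'$ and a genus $g-1$ piece, pick any geometrically finite, minimally parabolic representation of the complementary piece with the correct cusp on $\gamma_2$, and amalgamate using type~I Klein--Maskit combination along the rank-$1$ parabolic subgroup $\langle\sigma(\gamma_2)\rangle$; iterate along the remaining pants curves $\gamma_3,\ldots,\gamma_d$. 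Then if $w_1'=\infty$ take $w_i=\infty$ for all $i$ and we are done. If $Im(w_1')>0$, we must adjoin the meridians $\beta_i$. For $i=1$ the representation $\sigma_{z,w_1'}\in MP(\hat{N}_{1,1},\hat{P}_{1,1})$ is given to us; for $i\geq 2$, after conjugating so that $\sigma(\gamma_i)=\bigl(\begin{smallmatrix}1&2\\0&1\end{smallmatrix}\bigr)$ (as in the definition of $\sigma_w$), the type~II Klein--Maskit criterion recalled at the end of Section~\ref{KM combination} produces an $R_i>0$ such that any $w_i$ with $Im(w_i)>2R_i$ yields a geometrically finite HNN extension; pick any such $w_i$. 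Performing these HNN extensions iteratively produces $\sigma_w\in MP(\hat{N},\hat{P})$, so $(\sigma,w_1',w_2,\ldots,w_d)\in\mathcal{A}$ and maps to $(\sigma',w_1')$.

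\textbf{Main obstacle.} The only real subtlety is bookkeeping in the surjection: one must verify that the iterated Klein--Maskit combinations respect all the normalization conventions baked into the definition of $\sigma_w$ (in particular the sequential conjugations that place each $\gamma_i$ in the standard form) and that minimal parabolicity is preserved at each amalgamation. Everything else is routine given Section~\ref{KM combination} and Morgan's subgroup theorem.
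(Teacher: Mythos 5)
Your treatment of continuity and of the containment $\Pi(\mathcal{A})\subset\mathcal{A}_{1,1}$ is essentially the paper's argument (modulo minor imprecision in describing the parabolic locus of $\hat{P}_{1,1}$), so I will focus on the surjectivity direction, where there is a genuine gap.

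You propose to first build $\sigma\in MP_0(N,P)$ via type~I Klein--Maskit combinations and then adjoin the meridians $\beta_1,\ldots,\beta_d$ one at a time by type~II (HNN) combinations. For $i\geq 2$ this is fine because the corresponding $w_i$ are yours to choose as large as the combination criterion demands. The problem is $i=1$: the coordinate $w_1'$ is prescribed, and your justification---``$\sigma_{z,w_1'}\in MP(\hat{N}_{1,1},\hat{P}_{1,1})$ is given to us''---does not bear on the step you actually need. The hypothesis $(\sigma',w_1')\in\mathcal{A}_{1,1}$ guarantees discreteness of the HNN extension of the \emph{punctured-torus} group $\sigma'(\pi_1(N_{1,1}))$ by $\begin{pmatrix}1&w_1'\\0&1\end{pmatrix}$. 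But once you have amalgamated $\sigma'$ with the complementary genus-$(g-1)$ piece to form $\sigma(\pi_1(N))$, the type~II criterion for adjoining $\begin{pmatrix}1&w_1'\\0&1\end{pmatrix}$ requires the half-planes $B_R^{\pm}$ at the $\gamma_1$-cusp to be precisely invariant, and the translation amount $Im(w_1')$ to exceed $2R$, where $R$ depends on the \emph{full} group $\sigma(\pi_1(N))$, not on $\sigma'(\pi_1(N_{1,1}))$. Since $\sigma(\pi_1(N))$ is a strictly larger group, precise invariance is a strictly stronger condition, and there is no reason the threshold $R$ is at most $Im(w_1')/2$. You have no control over $w_1'$ and no control over $R$, so this step cannot be carried out as stated.

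The paper avoids this by reordering the combinations so that the only type~II combination occurs with a \emph{free} parameter. Starting from the given $\rho_w\in MP(\hat{N}_{1,1},\hat{P}_{1,1})$---which already contains $\beta_1$---one normalizes it as $\hat{\rho}_1$ with precisely invariant half-planes at $\gamma_2$; separately one takes a geometrically finite representation $\hat{\rho}_3$ of $\pi_1(\hat{N}_3)$ (where the rank-2 cusps for $\gamma_3,\ldots,\gamma_d$ are already present), normalized to fit; a single type~I combination along $\gamma_2$ produces a representation of $\pi_1(N_2)$; and then a single type~II combination adjoins $\beta_2$ with a $w_2$ chosen as large as needed. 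After this, one reads off $(\sigma,w_1,\ldots,w_d)$ from the resulting group and checks $w_1=w$. The essential point you are missing is that $\beta_1$ must be present in the group \emph{before} the surface pieces are amalgamated, so that its discreteness comes for free from the hypothesis rather than from a Klein--Maskit combination that you cannot guarantee.
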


\begin{proof} We first claim $\Pi(\mathcal{A}) \subset  \mathcal{A}_{1,1}$.  Recall the definitions of $\mathcal{A}$ and $\mathcal{A}_{1,1}$.  If a point $(\sigma, w_1, \ldots, w_d) \in \mathcal{A}$ satisfies $(w_1, \ldots, w_d) \neq (\infty, \ldots, \infty)$ then the extension $\sigma_{w_1, \ldots, w_d} \in MP(\hat{N}, \hat{P})$.  There is a $\pi_1$-injective pared embedding $\iota: \hat{N}_{1,1} \to \hat{N}$, such that the representation $\sigma_{w_1, \ldots, w_d} \circ \iota_* : \pi_1(\hat{N}_{1,1}) \to PSL(2, \mathbb{C})$ is conjugate to the extension of $\sigma \vert_{\pi_1(N_{1,1})}$ by $w_1$.  So this extended representation is discrete, faithful, geometrically finite, and minimally parabolic with respect to $\hat{P}_{1,1}$. Note that we are again using that finitely generated subgroups of geometrically finite groups are geometrically finite, provided the domain of discontinuity is nonempty. Thus the extension of $\sigma\vert_{\pi_1(N_{1,1})}$ by $w_1$ lies in $MP(\hat{N}_{1,1}, \hat{P}_{1,1})$, and so by the definition of $\mathcal{A}_{1,1}$, we have $\Pi(\sigma, w_1, \ldots, w_d) =  (\sigma\vert_{\pi_1(N_{1,1})} , w_1)\in \mathcal{A}_{1,1}$.  If $(w_1, \ldots, w_d) = (\infty, \ldots, \infty)$ then it follows immediately from Lemma \ref{restriction} and the definition of $\Pi$ that  $\Pi(\sigma, \infty, \ldots, \infty) = (\sigma\vert_{\pi_1(N_{1,1})}, \infty) \in \mathcal{A}_{1,1}$.

We now use Klein-Maskit combination to show that $\mathcal{A}_{1,1} \subset \Pi(\mathcal{A})$.  We begin by defining some new pared manifolds that arise as pieces of $N$ and $\hat{N}$ (see Figure \ref{pared_manifolds}). 
The annulus $\gamma_2 \times [0,1]$ divides $N$ into two pieces.  Let $N_{1,1}$ denote the closure of the piece containing $\gamma_1$ and let $N_3$ denote the closure of the remaining piece containing $\gamma_3, \ldots, \gamma_d$.  Let $\hat{N}_{1,1} = N_{1,1} - U_1$ and $\hat{N}_3  = N_3 - \cup_{i=3}^d U_i$.  Define $\hat{P}_{1,1} = \partial U_1 \cup \left( \gamma_2 \times [0,1] \right)$ and $\hat{P}_3 = \left( \cup_{i=3}^d \partial U_i \right) \cup \left( \gamma_2 \times [0,1] \right)$.  

Next define $N_2 = N - \cup_{i\neq 2} U_i$ and set $P_2$ to be the union of the $d-1$ tori $ \cup_{i\neq 2} \partial U_i$ with the annulus in $P \subset S \times \{1\}$ whose core curve is homotopic to $\gamma_2$.  Roughly speaking, $(N_2, P_2)$ is the pared manifold we will get by gluing $\hat{N}_{1,1}$ to $\hat{N}_{3}$ along $\gamma_2 \times [0,1]$.  We then obtain $(\hat{N}, \hat{P})$ from $(N_2, P_2)$ by drilling out $\gamma_2$.  Again, refer to Figure \ref{pared_manifolds}.

\begin{figure}[htbp]
\begin{center}
\psfrag{x}[][]{\footnotesize $(N,P)$}  
\psfrag{y}[][]{ \footnotesize $(\hat{N}, \hat{P})$}  
\psfrag{z}[][]{\footnotesize $(N_{1,1}, P_{1,1})$}
\psfrag{w}[][]{\footnotesize $(N_{1,1}, P_{1,1}')$}
\psfrag{t}[][]{\footnotesize $(\hat{N}_{1,1}, \hat{P}_{1,1})$}
\psfrag{u}[][]{\footnotesize $(\hat{N}_3, \hat{P}_3)$}
\psfrag{v}[][]{\footnotesize $(N_2, P_2)$}
\includegraphics[height=3.6in]{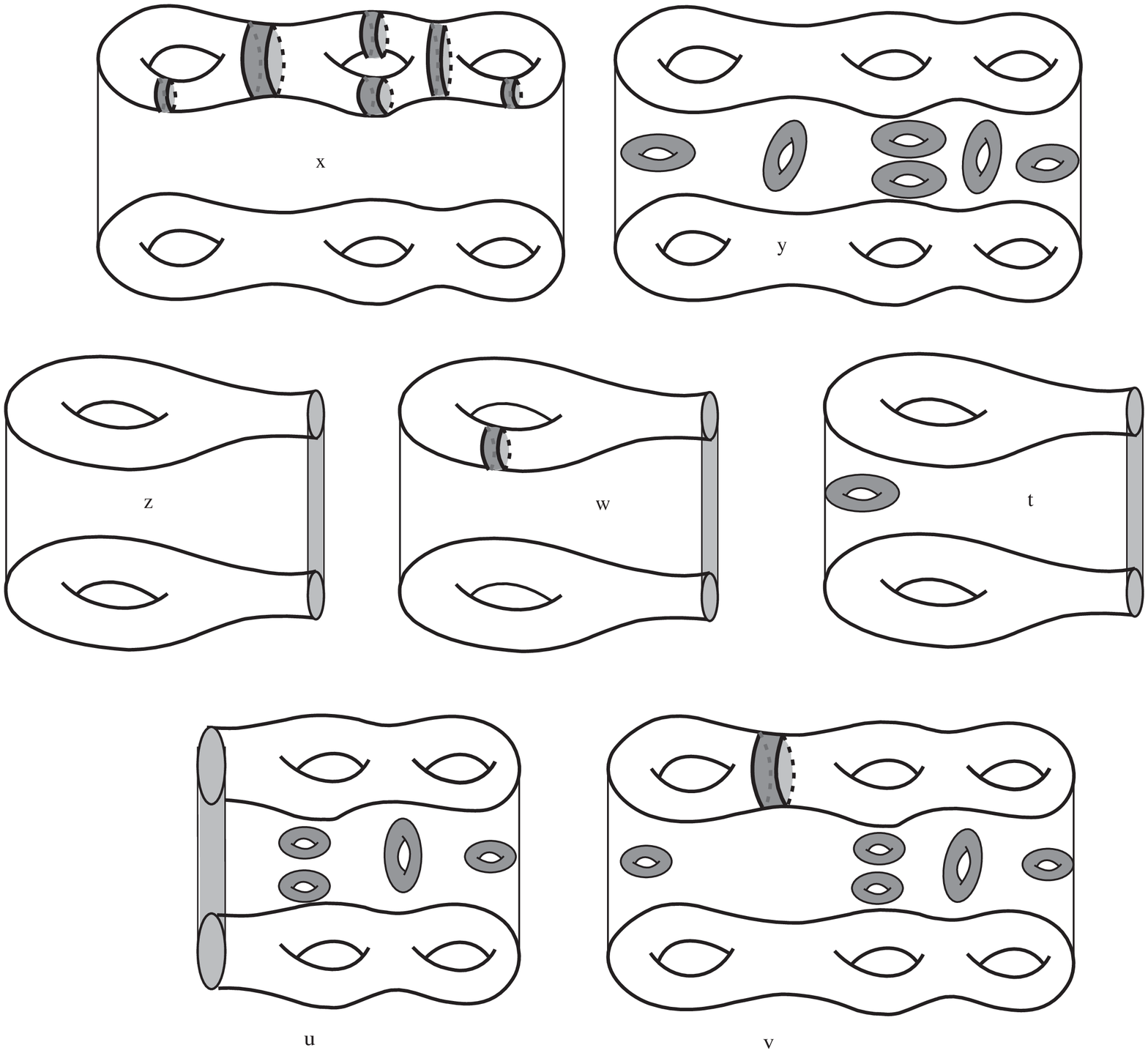} 
\caption[Pared manifolds]{\label{pared_manifolds}  Some of the pared manifolds we are using (illustrated in genus $3$).  The shaded regions indicate the paring locus.}
\end{center}
\end{figure}

For any $(\rho, w) \in \mathcal{A}_{1,1}$, $w\neq \infty$, we have $\rho_w \in MP(\hat{N}_{1,1}, \hat{P}_{1,1})$.  We can choose a representation $\hat{\rho}_1 : \pi_1(\hat{N}_{1,1}) \to PSL(2, \mathbb{C})$ in the conjugacy class $\rho_w$ such that $\hat{\rho}_1(\gamma_2) = \begin{pmatrix} 1 & 2 \\ 0 & 1 \end{pmatrix}$. Since $\rho_w$ is an extension of $\rho$, there is some representation  $\sigma_1 : \pi_1(N_{1,1}) \to \hat{\rho}_1(\pi_1(\hat{N}_{1,1}))$ representing the conjugacy class $\rho \in MP_0(N_{1,1}, P'_{1,1})$.  Note that the top and bottom of $\mathbb{H}^3/\sigma_1(\pi_1(N_{1,1}))$ are well-defined: the conformal boundary component on the top of $\mathbb{H}^3/\sigma_1(\pi_1(N_{1,1}))$ is a thrice-punctured sphere, and the bottom is a punctured torus.  For any $\hat{\rho}_1$ chosen as above, there is some $C_1$ such that $B^+ = \{ z \in \mathbb{C} \; : \; Im(z) > C_1 \}$ and $B^- = \{ z \in \mathbb{C} \; : \; Im(z) < -C_1 \}$ are precisely invariant for the subgroup $\left< \hat{\rho}_1 (\gamma_2) \right>$ in $\hat{\rho}_1(\pi_1(\hat{N}_{1,1}))$.  Without loss of generality, we can assume that $\hat{\rho}_1$ has been chosen such that $B^+ \subset \Omega(\hat{\rho}_1) \subset \Omega(\sigma_1)$ projects to the top surface of $\Omega(\sigma_1)/\sigma_1(\pi_1(\hat{N}_{1,1}))$.

Next, let $\hat{M}_3$ be a geometrically finite hyperbolic $3$-manifold homeomorphic to the interior of $\hat{N}_3$ whose only cusps are those associated to $\hat{P}_3$.  Let $h_3$ be an orientation preserving pared homeomorphism from $(\hat{N}_3, \hat{P}_3)$ to the relative compact core of $\hat{M}_3$.  The boundary $\partial \hat{N}_3 - \hat{P}_3$ has a top and bottom component, both of which are homeomorphic to a punctured genus $g-1$ surface $S_{g-1, 1}$.  We will call these $S_{g-1, 1, top}$ and $S_{g-1, 1, bot}$.  The homeomorphism $h_3$ distinguishes a top and bottom component of the relative compact core of $\hat{M}_3$ and thus distinguishes a top and bottom of the conformal boundary of $\hat{M}_3$.  Define a representation $\hat{\rho}_3 : \pi_1(\hat{N}_3) \to PSL(2, \mathbb{C})$ conjugate to $(h_3)_*$ such that \\
(a) $\hat{\rho}_3 (\gamma_2) = \begin{pmatrix} 1 & 2 \\ 0 & 1 \end{pmatrix}$, \\
(b) $B_3 = \{ z \in \mathbb{C} \; : \; Im(z) > -C_1 -1 \}$ is a precisely invariant set for the subgroup $\left< \hat{\rho}_3 (\gamma_2) \right>$ in $\hat{\rho}_3 (\pi_1(\hat{N}_3))$ \\
(c) the component of $\Omega(\hat{\rho}_3)$ containing $B_3$ projects to the bottom conformal boundary component of $\hat{M}_3$.

Let $\sigma_3 = \hat{\rho}_3 \vert_{\pi_1(S_{g-1, 1, bot})}$.  That is, $\sigma_3$ is the restriction of $\hat{\rho}_3$ to the natural inclusion of the fundamental group of the bottom surface into $\pi_1(\hat{N}_3)$.  Thus, $\mathbb{H}^3/\sigma_3(\pi_1(S_{g-1, 1}))$ has a rank-$1$ cusp associated to each of the curves $\gamma_2, \ldots, \gamma_d$, and all of these cusps are on the top since the representation $\sigma_3$ was constructed from the inclusion of the bottom surface into $\hat{M}_3$. Hence, $\sigma_3 \in MP_0(S_{g-1, 1} \times I, (S_{g-1, 1} \times\{1\}) \cap P)$.  
In other words, the paring locus is a collection of $d-1$ annuli in $S_{g-1, 1} \times \{1\}$ whose core curves are $\gamma_i \times \{1\}$ for $i = 2, \ldots, d$.

Now we can apply type I Klein-Maskit combination along the subgroup $\left< \hat{\rho}_3 (\gamma_2) \right>= \left< \hat{\rho}_1 (\gamma_2) \right>$.  See Section \ref{KM combination} for references and notation.  Note that $\pi_1(N_2) \cong \pi_1(\hat{N}_{1,1}) *_{\left< \gamma_2 \right>} \pi_1(\hat{N}_3)$.  Define a representation $\rho_2 : \pi_1(N_2) \to PSL(2, \mathbb{C})$ by setting $\rho_2(x) = \hat{\rho}_3(x)$ for all $x \in \pi_1(\hat{N}_3)$ and $\rho_2(x) = \hat{\rho}_1(x)$ for all $x \in \pi_1(\hat{N}_{1,1})$.  By construction, the representation $\rho_2$ is discrete, faithful, geometrically finite, and minimally parabolic with respect to the paring locus $P_2$.

We can also apply Klein-Maskit combination to the subgroups $\sigma_1(\pi_1(S_{1,1}))$ and $\sigma_3(\pi_1(S_{g-1, 1}))$.  Since $\pi_1(S) \cong \pi_1(S_{1,1}) *_{\left< \gamma_2 \right>} \pi_1(S_{g-1, 1})$, we can define a representation $\sigma: \pi_1(S) \to PSL(2, \mathbb{C})$ by $\sigma(x) = \sigma_1(x)$ for all $x \in \pi_1(S_{1,1})$ and $\sigma(x) = \sigma_3(x)$ for $x \in \pi_1(S_{g-1, 1})$.  This defines a discrete, faithful, geometrically finite representation whose parabolics consist precisely of the curves $\gamma_i$ in $S$.  Since $\sigma_1 \in MP_0(N_{1,1}, P_{1,1}')$ and $\sigma_3 \in MP_0(S_{g-1, 1} \times I, (S_{g-1, 1} \times \{1\}) \cap P)$, the cusps corresponding to $\gamma_1, \gamma_3, \ldots, \gamma_d$ are in the top of $\mathbb{H}^3/\sigma(\pi_1(S))$.  Moreover, since $B^-$ and $B_3$ cover the bottoms of their respective manifolds, the cusp associated to $\gamma_2$ is also in the top. It follows that $\sigma \in MP_0(N,P)$.

To summarize, we now have a representation $\sigma \in MP_0(N,P)$ such that $\sigma \vert_{\pi_1(N_{1,1})} = \sigma_1$ is conjugate to $\rho$.  The next step is to find $w_1, \ldots, w_d$ such that $\Pi(\sigma, w_1, \ldots, w_d) = (\rho, w)$.  

Since $\pi_1(\hat{N})$ is generated by $\pi_1(N_2)$ and the meridian of the second torus boundary component $\partial U_2$, we can extend the representation $\rho_2$ to a representation $\hat{\rho}: \pi_1(\hat{N}) \to PSL(2, \mathbb{C})$ by defining $\hat{\rho}$ to equal $\rho_2$ on $\pi_1(N_2)$ and sending the additional generator to $\begin{pmatrix} 1 & w_2 \\ 0 & 1\end{pmatrix}$ for $w_2 \in \mathbb{C}$.   There is some constant $C_2$ such that if $Im(w_2) > C_2$, then we can apply type II Klein-Maskit combination (see Section \ref{KM combination}).  In this case, $\hat{\rho}$ defines a geometrically finite representation of $\pi_1(\hat{N})$ which is minimally parabolic with respect to $(\hat{N}, \hat{P})$. 

On each torus boundary component of $\hat{N}$ there is a well-defined meridian.  Up to conjugation (that depends on $i$), we can assume $\hat{\rho}$ sends $\gamma_i$ to $\begin{pmatrix} 1 & 2 \\ 0 & 1 \end{pmatrix}$.  In this case, there will be some $w_i$ such that the meridian of $\partial U_i$ is sent to $\begin{pmatrix} 1 & w_i \\ 0 & 1 \end{pmatrix}$.  With this definition of $w_i$ we now have a point $(\sigma, w_1, \ldots, w_d) \in \mathcal{A}$.  

Next, we check that $w_1 = w$ and therefore $\Pi(\sigma, w_1, \ldots, w_d) = (\rho, w)$.  This follows by construction.  The extension of $\sigma$ by $(w_1, \ldots, w_d)$ is conjugate to $\hat{\rho}$.  The restriction of $\hat{\rho}$ to $\pi_1(\hat{N}_{1,1})$ is conjugate to $\rho_w$ (which is conjugate to $\hat{\rho}_1$).  Thus $w_1 = w$.  

Finally, if $(\rho, \infty) \in \mathcal{A}_{1,1}$, then we can pick any $w' \neq \infty$ such that $(\rho, w') \in \mathcal{A}_{1,1}$.  Following the same construction, we can find a point $(\sigma, w', w_2, \ldots, w_d) \in \mathcal{A}$ such that $\sigma \vert_{\pi_1(N_{1,1})}$ is conjugate to $\rho$.  Then the point $(\sigma, \infty, \ldots, \infty)$ is also in $\mathcal{A}$ and satisfies $\Pi(\sigma, \infty, \ldots, \infty) = (\rho, \infty)$. 
\end{proof}

Recall that each $\gamma_i$ was contained in a four-punctured sphere or punctured torus component of $S - \cup_{j \neq i } \gamma_j$.  Given $\sigma \in MP_0(N,P)$, one can define $\Pi_i$ similarly to $\Pi= \Pi_1$.  The first coordinate is obtained by restricting $\sigma$ to the $i$th such subsurface and the second coordinate is defined by projecting $(w_1, \ldots, w_d) \mapsto w_i$.  Lemma \ref{projection} generalizes to show that $\Pi_i (\mathcal{A}) = \mathcal{A}_{1,1}$ if $\gamma_i$ lives in a punctured torus or $\Pi_i (\mathcal{A}) = \mathcal{A}_{0,4}$ if $\gamma_i$ lives in a four-punctured sphere.  In fact, for $i >1$, we will only need the first paragraph of the proof of this Lemma which shows that $\Pi_i (\mathcal{A}) \subset \mathcal{A}_{1,1}$ or $\Pi_i (\mathcal{A}) \subset \mathcal{A}_{0,4}$.

We now get the following corollary to Lemmas \ref{lower bound on w (one dimensional)} and \ref{projection}.  

\begin{cor} \label{bound imaginary part}  For all $(\sigma, w_1, \ldots, w_d) \in \mathcal{A}$ with $(w_1, \ldots, w_d) \neq (\infty, \ldots, \infty)$, the imaginary part of $w_i$ is bounded below by
\[
Im(w_i) > 1
\] for all $i$.
\end{cor}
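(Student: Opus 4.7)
The plan is to deduce the corollary from the two ingredients it cites: the lower bounds in Lemma \ref{lower bound on w (one dimensional)}, which are one-dimensional statements, and the projections $\Pi_i$, which reduce the $d$-dimensional situation to the one-dimensional one coordinate at a time.

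First I would unpack the definition of $\mathcal{A}$: a point $(\sigma, w_1, \ldots, w_d) \in \mathcal{A}$ satisfies either $w = (\infty, \ldots, \infty)$ or $Im(w_j) > 0$ for every $j$ together with $\sigma_w \in MP(\hat N, \hat P)$. So under the hypothesis $(w_1, \ldots, w_d) \neq (\infty, \ldots, \infty)$, we are automatically in the second case, and in particular no coordinate $w_i$ equals $\infty$.

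Next, fix an index $i$. By the remarks following Lemma \ref{projection}, the projection $\Pi_i$ (defined by restricting $\sigma$ to the subsurface containing $\gamma_i$ in its complement, and keeping only the coordinate $w_i$) maps $\mathcal{A}$ into $\mathcal{A}_{1,1}$ if $\gamma_i$ lies in a punctured-torus component of $S - \cup_{j \neq i} \gamma_j$, and into $\mathcal{A}_{0,4}$ if it lies in a four-punctured-sphere component. Applied to our point, $\Pi_i(\sigma, w_1, \ldots, w_d) = (\sigma|_{\ldots}, w_i)$ with $w_i \ne \infty$. Now Lemma \ref{lower bound on w (one dimensional)} applies directly: in the punctured-torus case it gives $Im(w_i) > 2$, and in the four-punctured-sphere case it gives $Im(w_i) > 1$. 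In either case $Im(w_i) > 1$, and since this holds for every $i$, the corollary follows.

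There is no serious obstacle here; the only thing worth verifying is that we may legitimately invoke the generalized projection statement coordinate-by-coordinate, and that is precisely what the remark after Lemma \ref{projection} guarantees (the first paragraph of its proof, which establishes that $\Pi_i(\mathcal{A})$ lands in the appropriate one-dimensional model, is all that is needed — we do not need surjectivity of $\Pi_i$ here).
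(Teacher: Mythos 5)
Your proof is correct and follows exactly the route the paper intends: the paper explicitly states the corollary as a consequence of Lemma \ref{lower bound on w (one dimensional)} and (the remark after) Lemma \ref{projection}, which is precisely the argument you give. You also correctly note the small preliminary point that, by the definition of $\mathcal{A}$, no coordinate $w_i$ equals $\infty$ once $(w_1,\ldots,w_d)\neq(\infty,\ldots,\infty)$.
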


\subsection{$\mathcal{A}$ is Not Locally Connected} \label{A not loc conn section}

In Lemma 4.14 of \cite{B3}, Bromberg shows that there exists a point $(\sigma_z,\infty) \in \mathcal{A}_{1,1}$ at which $\mathcal{A}_{1,1}$ fails to be locally connected.  We will use Lemma \ref{projection} to extend this failure of local connectivity to $\mathcal{A}$.  Although our statement of Lemma \ref{box lemma} differs from Bromberg's statement in \cite{B3}, his proof implies this result (see also Lemma 5.6 in \cite{MagidThesis} for a detailed proof).

\begin{lem}[Bromberg \cite{B3}]  \label{box lemma} There exists a point $z_0 \in \mathcal{M}^+$, a closed rectangle $R$, and some $\delta > 0$ such that if $z$ lies in the $\delta$-neighborhood of $z_0$ in $\mathbb{C}$ then $z \in \mathcal{M}^+$ and the set
\[\mathcal{A}_z = \{w \in \hat{\mathbb{C}} \; : \; (\sigma_z,w) \in \mathcal{A}_{1,1} \}
\] satisfies \\
$(i)$ $\mathcal{A}_z \cap int(R) \neq \emptyset$, and \\
$(ii)$ The distance between $\overline{\mathcal{A}_z}$ and $\partial R$ is at least $\delta$.   \\
Moreover, we can choose $R$ such that its sides are parallel to the axes and its width is $<2$. 
\end{lem}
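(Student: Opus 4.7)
The plan is to treat this as a restatement of Lemma 4.14 of Bromberg \cite{B3} and mirror his argument. The starting point is Lemma \ref{existence of n}(i), which will let me decompose
\[
\mathcal{A}_z \cap \mathbb{C} \;=\; \bigsqcup_{n \in \mathbb{Z}} \mathcal{A}_z^n,
\qquad
\mathcal{A}_z^n \;:=\; \{ w \in \mathbb{C} : z - nw \in \mathcal{M}^+,\ z - (n+1)w \in \mathcal{M}^-\}.
\]
The union is disjoint: by the Wright bound $Im(\zeta) > 1$ for $\zeta \in \mathcal{M}^+$ (recalled in Lemma \ref{lower bound on w (one dimensional)}), any $w \in \mathcal{A}_z^n$ satisfies $Im(w) > 2$, and the conditions on the imaginary parts of $z - nw$ and $z - (n+1)w$ confine $n$ to the interval $\bigl( (Im(z)+1)/Im(w) - 1,\ (Im(z)-1)/Im(w) \bigr)$ of length $1 - 2/Im(w) < 1$, giving at most one valid $n$ per $w$. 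Each $\mathcal{A}_z^n$ is then an open subset of $\mathbb{C}$ obtained as the intersection of the affine preimages $n^{-1}(z-\mathcal{M}^+)$ and $(n+1)^{-1}(z-\mathcal{M}^-)$.

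The crux will be to exhibit a point $z_0 \in \mathcal{M}^+$ and an index $n$ for which $\mathcal{A}_{z_0}^n$ has a bounded component $C$ whose closure is disjoint from the rest of $\overline{\mathcal{A}_{z_0}}$. This is the essence of Bromberg's Lemma 4.14 in \cite{B3}, and it rests on Minsky's classification of punctured torus groups \cite{Minsky}. The boundary $\partial \mathcal{M}^+$ contains a countable dense collection of cusp points (accidental parabolic limits) at which $\overline{\mathcal{M}^+}$ pinches, and by choosing $z_0$ carefully relative to a matched pair of cusps on $\partial(n^{-1}(z_0-\mathcal{M}^+))$ and $\partial((n+1)^{-1}(z_0-\mathcal{M}^-))$, the intersection can be forced to split off a bounded island $C$; taking $|n|$ large compresses $C$ into a narrow region so that it may be enclosed in an axis-parallel rectangle of width less than $2$.

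With $C$ and such a rectangle $R$ in hand, compactness of $\partial R$ together with $\overline{\mathcal{A}_{z_0}} \cap \partial R = \emptyset$ will give some $\delta_0 > 0$ with $d(\overline{\mathcal{A}_{z_0}}, \partial R) \geq \delta_0$. Since $\mathcal{M}^+$ and $\mathcal{M}^-$ are open and their affine preimages vary continuously under translation in $z$, I can then pick $\delta \in (0, \delta_0/2)$ small enough that for every $z$ in the $\delta$-ball around $z_0$: (a) $z \in \mathcal{M}^+$; (b) $d(\overline{\mathcal{A}_z}, \partial R) \geq \delta$; and (c) $\mathcal{A}_z \cap int(R)$ remains nonempty, because any fixed interior point of $C$ persists as a point of $\mathcal{A}_z^n$ under small perturbations of $z$.

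The main obstacle is the second paragraph: producing the isolated bounded component $C$ uses the fine boundary structure of $\mathcal{M}^+$ from Minsky's theorem, well beyond Wright's crude bound $Im(\zeta) > 1$. This Minsky-dependent analysis is special to the punctured torus, and it is exactly why the present paper must later invoke the projection $\Pi : \mathcal{A} \to \mathcal{A}_{1,1}$ of Lemma \ref{projection} to transport the failure of local connectivity from $\mathcal{A}_{1,1}$ up to $\mathcal{A}$, rather than attempting a direct higher-genus analog of the box lemma.
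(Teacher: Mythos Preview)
The paper does not give its own proof of this lemma; it attributes the result to Bromberg \cite{B3} (his Lemma 4.14) and refers to \cite{MagidThesis} for a detailed adaptation. Your proposal is a faithful outline of Bromberg's argument: the decomposition $\mathcal{A}_z \cap \mathbb{C} = \bigsqcup_n \mathcal{A}_z^n$ via Lemma \ref{existence of n}(i), the existence of an isolated bounded component using the cusp structure of $\partial \mathcal{M}^+$ supplied by Minsky's classification, and the compactness/continuity step to pass to nearby $z$. You are right that the second paragraph is where all the content lies and that it is not self-contained---it genuinely requires the fine boundary geometry of the Maskit slice from \cite{Minsky}, which is why the paper simply cites \cite{B3} rather than reproving it. So your proposal matches the paper's treatment: both defer to Bromberg, and your sketch correctly identifies the architecture of his proof.
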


Let $O$ denote the $\delta$ neighborhood of $z_0$, and let $W$ be an open neighborhood of $\sigma_{z_0}$ in $MP_0(N_{1,1}, P_{1,1}')$ such that for all $\sigma_{z} \in \overline{W}$, $z \in O$.  In other words, if $\tau: \mathcal{M}^+ \to MP_0(N_{1,1}, P_{1,1}')$ is the homeomorphism $z \mapsto \sigma_z$, then $W$ is a neighborhood of $\sigma_{z_0}$ such that $\tau^{-1}(\overline{W}) \subset O$.  

By Lemma \ref{box lemma}, $\mathcal{A}_{z_0} \cap int(R) \neq \emptyset$, so let $u_{1} \in int(R)$ such that $(\sigma_{z_0}, u_1) \in  \mathcal{A}_{1,1}$.  Lemma \ref{projection} shows that $\Pi: \mathcal{A} \to \mathcal{A}_{1,1}$ is a surjection, thus there is some $(\sigma_0, u_1, \ldots, u_d) \in \Pi^{-1}(\sigma_{z_0}, u_1)$.  Note that since $(\sigma_0, u_1, \ldots,u_d) \in \mathcal{A}$, we have $\sigma_0 \in MP_0(N,P)$.  Thus, by the definition of $\mathcal{A}$, we also have $(\sigma_0, \infty, \ldots, \infty ) \in \mathcal{A}$.  

We now claim that $(\sigma_0, u_1 + 2n, \ldots, u_d + 2n) \in \mathcal{A}$ for all $n$.  The definition of $(\sigma_0, u_1, \ldots, u_d)$ belonging to the set $\mathcal{A}$ is that the extension of $\sigma_0$ by $(u_1, \ldots, u_d)$ to a representation of $\hat{N}$ lies in $MP(\hat{N}, \hat{P})$. The extension of $\sigma_0$ by $(u_1 + 2n, \ldots, u_d + 2n)$ (as in Section \ref{subsection: model space}) has the same image in $PSL(2, \mathbb{C})$ as the extension of $\sigma_0$ by $(u_1, \ldots, u_d)$.  Hence, $(\sigma_0,u_1 + 2n, \ldots, u_d + 2n) \in \mathcal{A}$ for all $n$.

Let $U$ be a neighborhood of $(\sigma_0, \infty, \ldots, \infty)$ in $\mathcal{A}$ such that for all $(\sigma, w_1, \ldots, w_d)$ in $U$, the first coordinate of $\Pi(\sigma, w_1, \ldots, w_d)$ lies in $W$.  That is, for all $(\sigma, w_1, \ldots, w_d) \in U$, $\sigma \vert_{\pi_1(N_{1,1})} \in W$.

For each $n$, let $C_n$ be the collection of components of $U$ defined by 
\[
C_n = U \cap \Pi^{-1} (W \times (R+2n))).
\]   
Equivalently, 
\[
C_n = \{ (\sigma, w_1, \ldots, w_d) \in U \; : \; w_1 \in (R+2n) \}. 
\]
Here, $R+2n$ denotes the box $R$ translated by $2n$.  

We have already produced a sequence of points $(\sigma_0, u_1 + 2n, \ldots, u_d + 2n) \in \Pi^{-1}(W \times (R+2n))$ converging to $(\sigma_0, \infty, \ldots, \infty)$; hence, $C_n$ is nonempty for all but finitely many $n$. Now we claim that $\overline{C_n}$ and $\overline{U-C_n}$ are uniformly bounded apart from each other.

\begin{lem} \label{lowerboundw} There exists some $\delta>0$ such that for all $n$, if $(\sigma , w_1, \ldots, w_d) \in \overline{C_n}$ and $(\sigma',w_1', \ldots, w_d') \in \overline{U-C_n}$ then \[
|w_1-w_1'| > \delta.
\] 
\end{lem}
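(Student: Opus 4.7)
The plan is to exploit Lemma \ref{box lemma}(ii) pulled back along the projection $\Pi$. The key point is that for any $(\sigma, w_1, \ldots, w_d) \in U$, the first coordinate of $\Pi(\sigma, w_1, \ldots, w_d)$ lies in $W$ (by the choice of $U$), hence the corresponding Maskit-slice parameter $z$ lies in $O$, so Lemma \ref{box lemma}(ii) applies: $\overline{\mathcal{A}_z}$ is at distance at least $\delta$ from $\partial R$, and by translation, at distance at least $\delta$ from $\partial(R+2n)$. I will use this separation to show that first coordinates $w_1$ of points in $\overline{C_n}$ sit well inside $R+2n$ while first coordinates of points in $\overline{U-C_n}$ sit well outside $R+2n$.

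First, suppose $(\sigma, w_1, \ldots, w_d) \in \overline{C_n}$. By continuity of $\Pi$ (Lemma \ref{projection}), $\Pi(\overline{C_n}) \subset \overline{\Pi(C_n)} \subset \overline{W} \times (R+2n)$, where I use that $R+2n$ is closed. Thus $\sigma|_{\pi_1(N_{1,1})}$ corresponds to a parameter $z \in \tau^{-1}(\overline{W}) \subset O$, and $w_1 \in \overline{\mathcal{A}_z} \cap (R+2n)$. By Lemma \ref{box lemma}(ii), the distance from $w_1$ to $\partial(R+2n)$ is at least $\delta$; combined with $w_1 \in R+2n$, this forces $w_1$ to lie in the $\delta$-shrunken interior of $R+2n$. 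In particular, $w_1$ is finite.

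Next, suppose $(\sigma', w_1', \ldots, w_d') \in \overline{U - C_n}$. If $w_1' = \infty$, then $|w_1 - w_1'| > \delta$ trivially (since $w_1$ is bounded). Otherwise $w_1'$ is finite, and since $U - C_n$ is the subset of $U$ where $w_1 \notin R+2n$, we have $w_1' \notin \mathrm{int}(R+2n)$ by taking the closure. Again by continuity of $\Pi$, the pair $(\sigma'|_{\pi_1(N_{1,1})}, w_1')$ lies in $\overline{\mathcal{A}_{1,1}}$, so $w_1' \in \overline{\mathcal{A}_{z'}}$ for some $z' \in O$. Lemma \ref{box lemma}(ii) then gives that $w_1'$ is at distance at least $\delta$ from $\partial(R+2n)$; combined with $w_1' \notin \mathrm{int}(R+2n)$, this forces $w_1'$ to lie outside the $\delta$-expanded $R+2n$.

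Finally, I conclude by elementary geometry: the straight segment from $w_1$ to $w_1'$ crosses $\partial(R+2n)$ at some point $p$, and both $|w_1 - p| \geq \delta$ and $|w_1' - p| \geq \delta$, so $|w_1 - w_1'| \geq 2\delta > \delta$. I expect the only subtle point to be verifying that the closure operation plays well with the projection $\Pi$ and with the condition $w_1 \in R+2n$ versus $w_1 \notin R+2n$; this is mostly a matter of being careful that $R+2n$ is closed (so the condition defining $C_n$ is closed, while that defining $U - C_n$ is open) and that limit points of $U - C_n$ live in $\mathbb{C} \setminus \mathrm{int}(R+2n)$ (or at $\infty$). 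Once this bookkeeping is in place, the uniformity in $n$ is automatic because translating $R$ by $2n$ does not change any of the relevant distances to the boundary of $R+2n$.
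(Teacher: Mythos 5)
Your proof is correct and follows essentially the same strategy as the paper's: both use the projection $\Pi$ and the choice of $U$ to place each parameter $z=\tau(\sigma\vert_{\pi_1(N_{1,1})})$ in $O$, then invoke Lemma \ref{box lemma}$(ii)$ together with the translation invariance of $\mathcal{A}_z$ under $w \mapsto w+2$ to get that $w_1$ lies at distance $\geq \delta$ inside $R+2n$ while $w_1'$ lies at distance $\geq \delta$ outside it, so $|w_1-w_1'|\geq 2\delta > \delta$. The paper packages the same reasoning more tersely by observing directly that $\overline{\mathcal{A}_z}\cap(R+2n)$ and $\overline{\mathcal{A}_z}\cap(\mathbb{C}-(R+2n))$ are at least $2\delta$ apart.
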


\begin{proof}  For any $(\sigma, w_1, \ldots, w_d) \in \overline{U}$, we have $\sigma \vert_{\pi_1(N_{1,1})} \in \overline{W}$.
Since $\overline{W} \subset \tau^{-1}(O)$, if we let $z = \tau(\sigma \vert_{\pi_1(N_{1,1})})$, then Lemma \ref{box lemma} implies that $\overline{\mathcal{A}_z}$ and $\partial R$ are at least a distance $\delta$ apart.  Since the set $\mathcal{A}_z$ is invariant under the translation $w \mapsto w+2$, points in $\overline{\mathcal{A}_z} \cap (R+2n)$ are bounded away from points in $\overline{\mathcal{A}_z} \cap (\mathbb{C} - (R+2n))$ by a distance of at least $2\delta$, which is strictly greater than $\delta$.  
\end{proof}

Since $C_n$ is nonempty for all but finitely many $n$ and the sets $C_n$ accumulate to $(\sigma_0, \infty, \ldots, \infty)$ Lemma \ref{lowerboundw} shows that any neighborhood $U' \subset U$ containing $(\sigma_0, \infty, \ldots, \infty)$, $U'$ has infinitely many components.  Thus we have shown


\begin{prop} \label{A not loc conn} There is a point $\sigma_0 \in MP_0(N,P)$ such that $\mathcal{A}$ is not locally connected at $(\sigma_0, \infty, \ldots, \infty)$.
\end{prop}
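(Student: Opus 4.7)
The plan is to synthesize the construction already built up before the proposition. The base point $(\sigma_0, \infty, \ldots, \infty) \in \mathcal{A}$, the neighborhood $U$, and the subsets $C_n \subset U$ have all been exhibited, and Lemma \ref{lowerboundw} supplies a uniform gap in the first $\hat{\mathbb{C}}$-coordinate between $\overline{C_n}$ and $\overline{U \setminus C_n}$. The remaining work is to translate this structural data into a statement about connected components of arbitrarily small neighborhoods of the base point.

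First I would observe that because each translate $u_i + 2n \to \infty$ in $\hat{\mathbb{C}}$, the sequence of points $(\sigma_0, u_1 + 2n, \ldots, u_d + 2n)$ converges in $\mathcal{A}$ to $(\sigma_0, \infty, \ldots, \infty)$. Consequently, for any open neighborhood $U' \subset U$ of the base point, the set $C_n \cap U'$ contains $(\sigma_0, u_1 + 2n, \ldots, u_d + 2n)$ for all sufficiently large $n$, and is therefore nonempty. Applying Lemma \ref{lowerboundw} to $U'$, the closure of $C_n \cap U'$ inside $U'$ and the closure of $U' \setminus C_n$ inside $U'$ are disjoint, since any two of their points differ by at least $\delta > 0$ in the $w_1$-coordinate. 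This forces $C_n \cap U'$ to be a union of connected components of $U'$. Since the rectangle $R$ has width less than $2$, the translated boxes $R + 2n$ are pairwise disjoint for distinct $n$, so the sets $C_n \cap U'$ are pairwise disjoint across $n$.

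Putting these together, any sufficiently small neighborhood $U'$ of $(\sigma_0, \infty, \ldots, \infty)$ decomposes into infinitely many pairwise disjoint nonempty clopen subsets $C_n \cap U'$, none of which contains the base point (whose first $\hat{\mathbb{C}}$-coordinate is $\infty$, not in any $R + 2n$). Hence no neighborhood of $(\sigma_0, \infty, \ldots, \infty)$ admits a connected neighborhood basis, so $\mathcal{A}$ fails to be locally connected there. The main obstacle, namely the uniform lower bound on the $w_1$-separation, has already been overcome in Lemma \ref{lowerboundw} by lifting Bromberg's box lemma through the continuous surjection $\Pi$; at this stage only a short topological harvest remains.
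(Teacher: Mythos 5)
Your proposal is correct and follows essentially the same route as the paper: exhibit the accumulating family $C_n$, invoke Lemma~\ref{lowerboundw} to get a uniform $w_1$-gap between $\overline{C_n}$ and $\overline{U - C_n}$, and conclude that any small neighborhood of $(\sigma_0, \infty, \ldots, \infty)$ must be disconnected. The paper compresses the final topological step into one sentence while you spell it out (that the $\delta$-gap forces $C_n \cap U'$ to be clopen in $U'$, hence a union of components not containing the base point); this is a harmless expansion of the same argument, and the one phrasing that the sets $C_n \cap U'$ ``decompose'' $U'$ is slightly loose since they do not cover $U'$, but the conclusion that $U'$ has infinitely many components, and hence is disconnected, is unaffected.
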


\textit{Remark.} We do not need the closures of $C_n$ and its complement to be disjoint to conclude that $\mathcal{A}$ is not locally connected, but we will need the full strength of Lemma \ref{lowerboundw} in the following section.

\subsection{Definition of $\Phi$}  \label{subsection: definition of phi}

Now that we have defined $\mathcal{A}$ and shown that it fails to be locally connected at some point $(\sigma_0, \infty, \ldots, \infty)$, we want to construct a map $\Phi$ from a subset of $\mathcal{A}$ containing this point into $AH(S \times I)$.   In this section, we show that $\Phi$ is well-defined on some subset of $\mathcal{A}$ and in the subsequent sections, we will show that $\Phi$ is a local homeomorphism from $\mathcal{A}$ to $MP(N) \cup MP_0(N,P) \subset AH(N)$ at the point $(\sigma_0, \infty, \ldots, \infty)$.  

As in Section 3 of \cite{B3}, we construct the map $\Phi$ in two steps.  Heuristically, points in $\mathcal{A}$ with $(w_1, \ldots, w_d)= (\infty, \ldots, \infty)$ parameterize $MP_0(N,P)$, and points $(\sigma, w_1, \ldots, w_d) \in \mathcal{A}$ parameterize a subset of $MP(\hat{N}, \hat{P})$. For points $(\sigma, \infty, \ldots, \infty) \in \mathcal{A}$, the representation $\sigma \in MP_0(N,P) \subset AH(N)$, so we will define $\Phi(\sigma, \infty, \ldots, \infty)  = \sigma$.  For all other points $(\sigma, w_1, \ldots, w_d) \in \mathcal{A}$, the representation $\sigma_{(w_1, \ldots, w_d)} \in MP(\hat{N}, \hat{P})$ and so $\mathbb{H}^3/\sigma_{(w_1, \ldots, w_d)} (\pi_1(\hat{N}))$ is a marked hyperbolic manifold with $d$ rank-$2$ cusps.  For these points, we will define $\Phi(\sigma, w_1, \ldots, w_d)$ to be the marked hyperbolic manifold in $MP(N)$ obtained by filling in these cusps.  We use the filling theorem to show that $\Phi$ is well-defined on some subset of $\mathcal{A}$ and that $\Phi$ is continuous.  

Let $(\sigma,w) \in \mathcal{A}$ such that $w \neq (\infty, \ldots, \infty)$.  By the definition of $\mathcal{A}$ we have that $\sigma_{w} \in MP(\hat{N}, \hat{P})$.  Let $\hat{M}_{\sigma, w} = \mathbb{H}^3/\sigma_{w}(\pi_1(\hat{N}))$ be the corresponding geometrically finite manifold with $d$ cusps. 

Recall that $\epsilon_3$ denotes the Margulis constant in dimension 3. By Corollary \ref{multiple fillings}, there is a constant $K$ such that if 
\[
\frac{|w_i|}{\sqrt{2Im(w_i)}}> K
\] for all $i$, then we can $\beta_i$-fill the $i$th cusp ($i = 1, \ldots, d$) to get a hyperbolic manifold $M_{\sigma, w}$ with the same conformal boundary as $\hat{M}_{\sigma, w}$, and there exists a biLipschitz diffeomorphism
 \[
\phi_{\sigma, w} : \hat{M}_{\sigma,w} - \mathbb{T}_{\epsilon_3}(T) \to M_{\sigma,w} - \mathbb{T}_{\epsilon_3}(\gamma).
\]  Here $T$ denotes the union of the cusps $T_i$ and $\gamma$ denotes the union of the curves $\gamma_i$.  
 
  Define 
\[
\mathcal{A}_{K} = \{ (\sigma, w) \in \mathcal{A} \; : \;  w = (\infty, \ldots, \infty), \; \text{or} \; \frac{|w_i|}{\sqrt{2Im(w_i)}}> K \; \text{for all} \; i \}.
\]

Recall that $\sigma \in MP_0(N,P)$ can be identified with a marked hyperbolic manifold $(M_\sigma, f_\sigma)$.  Without loss of generality, we can assume that $f_\sigma$ is a smooth immersion and that
$f_\sigma(N)$ does not intersect the $\epsilon_3$-parabolic thin part of $M_\sigma$ since $f_\sigma$ is only defined up to homotopy.  As $\sigma(\pi_1(N))$ is a subgroup of $\sigma_{w}(\pi_1(\hat{N}))$ we have a covering map 
\[
\pi_{\sigma,w} : M_\sigma \to \hat{M}_{\sigma,w}
\]  Now define $$f_{\sigma,w} = \phi_{\sigma,w} \circ \pi_{\sigma,w} \circ f_\sigma. $$  Since we have assumed $f_\sigma(N)$ avoids the $\epsilon_3$-parabolic thin part of $M_\sigma$, $\pi_{\sigma,w} \circ f_\sigma (N)$ is contained in $\hat{M}_{\sigma,w} - \mathbb{T}_{\epsilon_3}(T)$.  Thus, post-composition by the filling map $\phi_{\sigma, w}$ makes sense here. 

We next claim (as in Lemma 3.6 of \cite{B3}) that $(f_{\sigma,w})_*$ is an isomorphism from $\pi_1(N)$ to $\pi_1(M_{\sigma,w})$ and therefore $(M_{\sigma,w}, f_{\sigma,w}) \in AH(N)$.  First observe that $f_\sigma$ is a homotopy equivalence so we only need to show that 
$$(\phi_{\sigma, w})_* \circ (\pi_{\sigma, w})_*: \pi_1(M_\sigma) \to \pi_1(M_{\sigma, w})$$
is an isomorphism.  
Recall $\pi_1(\hat{N}) = \left< \pi_1(N), \beta_1, \ldots, \beta_d \; \vert \; [\beta_i, \gamma_i]=1 \right>$.  By the definition of the covering map $\pi_{\sigma, w}$ and the definition of the representation $\sigma_w$,  
\[
\pi_1(\hat{M}_{\sigma, w}) = \left< (\pi_{\sigma, w})_*( \pi_1(M_\sigma)), \sigma_w (\beta_1), \ldots, \sigma_w(\beta_d) \; \vert \; [\sigma_w(\beta_i),\sigma_w(\gamma_i)]=1 \right>.  
\]
Now the filling map $(\phi_{\sigma, w})_*$ kills the meridians in $\hat{M}_{\sigma, w}$ which were precisely the group elements $\sigma_w(\beta_i)$.  Thus 
\[
(\pi_{\sigma, w})_*( \pi_1(M_\sigma)) \cap Ker((\phi_{\sigma, w})_* ) = \{1\}
\] and therefore $(\phi_{\sigma, w})_* \circ (\pi_{\sigma, w})_*$ is an isomorphism from $\pi_1(M_{\sigma})$ onto its image, which is $\pi_1(M_{\sigma, w})$. 

 Moreover, as the filling preserves the conformal boundary components of $\hat{M}_{\sigma, w}$ and the filled manifold $M_{\sigma, w}$ has no cusps, $(f_{\sigma, w})_*$ is a minimally parabolic, geometrically finite representation in $AH(N)$. 

When $w = (\infty, \ldots, \infty)$, we define $\Phi(\sigma, \infty, \ldots, \infty) = \sigma \in MP_0 (N,P)$.  

So we have 
\[
\Phi(\sigma,w) = \begin{cases} (f_{\sigma,w})_* \quad &\text{if} \; w \neq (\infty, \ldots, \infty) \\ 
\sigma \quad &\text{if} \; w = (\infty, \ldots, \infty).  \end{cases}
\]
Thus we have defined $\Phi$ on some subset $\mathcal{A}_{K} \subset \mathcal{A}$ such that $\Phi(\mathcal{A}_K) \subset MP(N) \cup MP_0(N,P)$. 

\begin{lem} \label{phi continuity} The map $\Phi$ is continuous on $\mathcal{A}_K$.  
\end{lem}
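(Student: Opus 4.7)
The plan is to verify continuity of $\Phi$ separately at points $(\sigma, w) \in \mathcal{A}_K$ with $w \neq (\infty, \ldots, \infty)$ and at the points of the form $(\sigma, \infty, \ldots, \infty)$.

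For the first case, I will proceed via two applications of the Ahlfors--Bers parameterization. The extension $\sigma_w$ is built in Section \ref{subsection: model space} by locally conjugating $\sigma$ and adjoining matrices whose entries are continuous functions of the coordinates of $w$, so $(\sigma, w) \mapsto \sigma_w$ is continuous into $MP(\hat{N}, \hat{P})$. Ahlfors--Bers identifies the relevant component of $MP(\hat{N}, \hat{P})$ with a Teichm\"uller space, so the conformal boundary of $\hat{M}_{\sigma, w}$ varies continuously with $(\sigma, w)$. Since filling preserves the conformal boundary, the same conformal data parameterizes $M_{\sigma, w}$, and a second application of Ahlfors--Bers to the connected interior $MP(N)$ shows that $\Phi(\sigma, w)$ depends continuously on its conformal boundary, and hence on $(\sigma, w)$.

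For the boundary case, let $(\sigma_n, w_n) \to (\sigma_0, \infty, \ldots, \infty)$ in $\mathcal{A}_K$. If $w_n = (\infty, \ldots, \infty)$ for infinitely many $n$, then $\Phi(\sigma_n, \infty) = \sigma_n \to \sigma_0$ by definition, so I may restrict to the subsequence where $w_n \neq (\infty, \ldots, \infty)$ and each coordinate $w_{n, i} \to \infty$ in $\hat{\mathbb{C}}$. Combined with the lower bound $\mathrm{Im}(w_{n, i}) > 1$ from Corollary \ref{bound imaginary part}, this forces all the normalized lengths $L_{n, i} \to \infty$. Corollary \ref{multiple fillings}, applied with any $J_n \downarrow 1$, then produces $J_n$-biLipschitz filling maps $\phi_{\sigma_n, w_n}$. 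Composing with the local isometry $\pi_{\sigma_n, w_n}$ yields a $J_n$-biLipschitz map from $M_{\sigma_n} - \mathbb{T}^{par}_{\epsilon_3}$ to $M_{\sigma_n, w_n} - \mathbb{T}_{\epsilon_3}(\gamma)$. After fixing a baseframe $\omega_0$ in $M_{\sigma_0}$ disjoint from the thin part and choosing baseframes $\omega_n$ in $M_{\sigma_n}$ converging geometrically to $\omega_0$ (possible since $\sigma_n \to \sigma_0$ in $MP_0(N, P)$), the baseframes $\omega_n'$ produced in $M_{\sigma_n, w_n}$ by the covering-then-filling composition will realize a geometric convergence $(M_{\sigma_n, w_n}, \omega_n') \to (M_{\sigma_0}, \omega_0)$.

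The technical heart of the proof will then be to upgrade this geometric convergence of framed manifolds into algebraic convergence $\Phi(\sigma_n, w_n) \to \sigma_0$ in the character variety. The strategy is to represent each $g \in \pi_1(N)$ by a loop lying in a fixed compact piece of $M_{\sigma_0}$ disjoint from the $\epsilon_3$-parabolic thin part; the $J_n$-biLipschitz control on a neighborhood of this loop together with $J_n \to 1$ will give convergence of the holonomies $\Phi(\sigma_n, w_n)(g) \to \sigma_0(g)$ in $PSL(2, \mathbb{C})$, invoking the standard correspondence between geometric and algebraic convergence of Kleinian groups (as in Chapter E of \cite{BP} or Chapter I of \cite{CEG}).
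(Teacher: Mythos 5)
Your proposal takes essentially the same approach as the paper's proof. The interior case matches exactly (two applications of the Ahlfors--Bers parameterization), and the boundary case uses the same ingredients -- the lower bound on $\mathrm{Im}(w_i)$ from Corollary \ref{bound imaginary part}, normalized lengths tending to $\infty$, and Corollary \ref{multiple fillings} supplying $J_n$-biLipschitz filling maps with $J_n\to 1$ -- with only a cosmetic difference in whether the final step is phrased as geometric convergence of framed manifolds plus holonomy-tracking of loops or, as the paper does, as $C^\infty$-convergence of the pullback metrics on $N$ with a citation to \cite{BBES}.
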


\begin{proof}  Let $(\sigma_0, w_0)$ be a point in $\mathcal{A}_K$ with $w_0 \neq (\infty, \ldots, \infty)$. Let $B$ be the component of $\left( \mathcal{A}_K - \{ (\sigma, w)  \; : \; w= (\infty, \ldots, \infty) \} \right)$ containing $(\sigma_0,w_0)$.  Clearly the correspondence $(\sigma, w) \mapsto \sigma_w$ is a continuous map from $\left( \mathcal{A}_K - \{ (\sigma, w)  \; : \; w= (\infty, \ldots, \infty) \} \right)$ to $MP(\hat{N}, \hat{P})$ and thus takes the component $B$ into one of the components $C$ of $MP(\hat{N}, \hat{P})$.  Recall from Section \ref{background deformation theory} that  $C = F^{-1}([(\hat{N}_C, \hat{P}_C), h_C])$ for some $[(\hat{N}_C, \hat{P}_C), h_C] \in A(\hat{N}, \hat{P})$.  For any point $(\hat{M}_{\hat{\rho}}, f_{\hat{\rho}}) \in C$, the map $f_{\hat{\rho}} \circ h_C^{-1}$ is homotopic to a pared homeomorphism from $(\hat{N}_C, \hat{P}_C)$ to the relative compact core of $\hat{M}_{\hat{\rho}}$, and thus we can use $f_{\hat{\rho}} \circ h_C^{-1}$ to define a marking from $\partial \hat{N}_C - \hat{P}_C$ to the conformal boundary of $\hat{M}_{\hat{\rho}}$.  The Ahlfors-Bers parameterization $\widehat{\mathcal{AB}}_C : C \to \mathcal{T}(\partial \hat{N}_C - \hat{P}_C)$ is defined by sending $(\hat{M}_{\hat{\rho}}, f_{\hat{\rho}})$ to the conformal boundary of $\hat{M}_{\hat{\rho}}$ marked by $f_{\hat{\rho}} \circ h_C^{-1}$.  Similarly, let $\mathcal{AB}: MP(N) \to \mathcal{T}(\partial N)$ be the Ahlfors-Bers parameterization of $MP(N)$.

For any $(\sigma, w) \in B$, we showed in the definition of $\Phi$ that $(f_{\sigma, w})_*$ is an isomorphism, which implies $f_{\sigma, w}$ is homotopic to a homeomorphism \cite{Waldhausen}.  Thus the $\cup h_C(\beta_i)$-Dehn filling of $(\hat{N}_C, \hat{P}_C)$ is homeomorphic to $N$, where $\cup h_C(\beta_i)$ refers to the collection of filling slopes corresponding to $\beta_1, \ldots, \beta_d$ under the homotopy equivalence $h_C : (\hat{N},\hat{P}) \to (\hat{N}_C, \hat{P}_C)$.  This filling gives us an inclusion $i_C : (\hat{N}_C, \hat{P}_C) \to N$ which defines a homeomorphism $i_C : (\partial \hat{N}_C - \hat{P}_C) \to \partial N$.  Using this homeomorphism, we can identify $\mathcal{T}(\partial \hat{N}_C - \hat{P}_C)$ with $\mathcal{T}(\partial N) \cong \mathcal{T}(S) \times \mathcal{T}(S)$.  

With this identification of the Teichm\"uller spaces of $(\partial \hat{N}_C - \hat{P}_C)$ and $\partial N$, it follows that $\Phi(\sigma, w) = \mathcal{AB}^{-1} \circ \widehat{\mathcal{AB}}_C (\sigma_w)$ for any $(\sigma, w) \in B$ since the filling map $\phi_{\sigma, w}$ extends to a conformal map from the conformal boundary of $\hat{M}_{\sigma, w}$ to the conformal boundary of $M_{\sigma, w}$.  Since the Ahlfors-Bers maps are homeomorphisms, this shows that $\Phi$ is continuous on the component $B$ of $\left( \mathcal{A}_K - \{ (\sigma, w)  \; : \; w= (\infty, \ldots, \infty) \} \right)$ containing $(\sigma_0, w_0)$.  Since $(\sigma_0, w_0)$ was arbitrary, we have that $\Phi$ is continuous on all of $\left( \mathcal{A}_K - \{ (\sigma, w)  \; : \; w= (\infty, \ldots, \infty) \} \right)$.

Next, we show $\Phi$ is continuous at points where $w = (\infty, \ldots, \infty)$.  Suppose 
$$(\sigma_i, w_{1,i}, \ldots, w_{d,i}) \to (\sigma,\infty, \ldots, \infty).$$
We claim that $\Phi(\sigma_i, w_{1,i}, \ldots, w_{d,i}) \to \Phi(\sigma,\infty, \ldots, \infty) = \sigma$.  If $(w_{1,i}, \ldots, w_{d,i}) = (\infty, \ldots, \infty)$ for all $i$, then clearly $\Phi(\sigma_i, w_{1,i}, \ldots, w_{d,i}) = \sigma_i \to \sigma$.  

Now suppose that $(w_{1,i}, \ldots, w_{d,i}) \neq (\infty, \ldots, \infty)$ for all $i$.  Let $(M_\sigma, f_\sigma)$ be the marked hyperbolic $3$-manifold corresponding to $\sigma$.  Again, assume that $f_\sigma$ is smooth.  Since $\sigma_i \to \sigma$, there is a sequence $L_i \to 1$ and smooth homotopy equivalences $g_i : M_{\sigma} \to M_{\sigma_i}$ such that $(g_i \circ f_\sigma)_* = \sigma_i$ and $g_i$ is an $L_i$-biLipschitz local diffeomorphism on a compact core of $M_\sigma$ (\textit{i.e.}, the maps $g_i$ converge to a local isometry).  If we let $f_{\sigma_i} = g_i \circ f_\sigma$, then the pullback metrics on $N$ via $f_{\sigma_i}$ converge to the pullback metrics on $N$ via $f_{\sigma}$. See p. 154 of \cite{BBES} for this geometric definition of algebraic convergence (see also p. 43 of \cite{McMullen book}).

Recall that by definition, $\Phi(\sigma_i, w_i) = (M_{\sigma,w_i}, f_{\sigma_i,w_i})$ where
$f_{\sigma_i,w_i} = \phi_{\sigma_i,w_i} \circ   \pi_{\sigma_i,w_i} \circ f_{\sigma_i}.$ (Notation: we abbreviate the $d$-tuple $(w_{1,i}, \ldots, w_{d,i})$ by $w_i$ and use a double subscript to refer to the $j$th entry, $w_{j,i}$, of $w_i$.)

Since each $w_{j,i} \to \infty$, we can find a sequence $J_i \to 1$ such that $\phi_{\sigma_i,w_i}$ is $J_i$-biLipschitz away from the $\epsilon_3$-neighborhood of the cusps of $\hat{M}_{\sigma_i,w_i}$. In particular, $\phi_{\sigma_i,w_i}$ is $J_i$-biLipschitz on $\pi_{\sigma_i,w_i}(f_{\sigma_i} (N))$.      

It follows that the limit of the pullback metrics on $N$ via the maps $f_{\sigma_i,w_i} :N \to M_{\sigma_i,w_i}$ is the same as the limit of the pullback metrics on $N$ via $\pi_{\sigma_i, w_i} \circ f_{\sigma_i}$ since $f_{\sigma_i, w_i} = \phi_{\sigma_i, w_i} \circ   \pi_{\sigma_i, w_i} \circ f_{\sigma_i}$.  The covering map is a local isometry so this limit is the limit of the pullback metrics on $N$ under $f_{\sigma_i}$.  Since $\sigma_i \to \sigma$, the limit of the pullback metrics on $N$ via $f_{\sigma_i}$ is the pullback metric on $N$ via $f_\sigma$.  To summarize, the limit of the pullback metrics on $N$ via the maps $f_{\sigma_i,w_i} :N \to M_{\sigma_i,w_i}$ is the pullback metric on $N$ via $f_{\sigma} : N \to M_\sigma$.  This convergence of metrics implies that $(f_{\sigma_i, w_i})_*$ converges to $\sigma$ in $AH(N)$ \cite{BBES}. 
\end{proof}

\textit{Remark.} 
 Lemma \ref{phi continuity} is essentially the same as Proposition 3.7 in \cite{B3}, but when there are multiple cusps we need to use the multiple cusp version of the filling theorem (Corollary \ref{multiple fillings}) which requires all of the $w$-coordinates to go to infinity.  This is the why we have defined $\mathcal{A}$ to exclude points $(\sigma, w_1, \ldots, w_d)$ where some but not all of the $w$-coordinates are $\infty$.

\subsection{An Inverse to $\Phi$}  \label{subsection: inverse to phi}

We now construct a map $\Psi$ from a subset of $MP(N) \cup MP_0(N,P)$ to $\mathcal{A}$.  For any $\sigma \in MP_0(N,P)$ and any sufficiently small neighborhood of $\sigma$ in $MP(N) \cup MP_0(N,P)$, $\Psi$ will be an inverse to $\Phi$.

Fix a representation $\sigma_0 \in MP_0(N,P)$. For $\rho$ in some neighborhood of $\sigma_0$, the definition of $\Psi$ will have two coordinates $\Psi(\rho ) = (\xi(\rho), q(\rho)) \in MP_0(N,P) \times \hat{\mathbb{C}}^d$.  We will actually begin by defining a neighborhood $V'$ of $\sigma_0$ such that for $\rho \in V'$, $\xi(\rho) \in AH(N,P)$. We will then restrict to a smaller neighborhood $V$ such that $\xi(V) \subset MP_0(N,P)$ and $(\xi(\rho), q(\rho)) \in \mathcal{A}$.   Before defining this neighborhood of $\sigma_0$ on which $\Psi$ is defined, we set up some notation and background.


Let $\mathcal{H}(N)$ denote the space of smooth, hyperbolic metrics on $N$ with the $C^\infty$-topology (see I.1.1 of \cite{CEG} for the definition of a $(PSL(2, \mathbb{C}), \mathbb{H}^3)$-structure on a manifold with boundary, and I.1.5 for a description of the space $\mathcal{H}(N)$ which is denoted $\Omega(N)$ in \cite{CEG}).   If we let $\mathcal{D}(N)$ be the space of smooth developing maps $\tilde{N}\to \mathbb{H}^3$ with the compact-$C^\infty$ topology, then $\mathcal{H}(N)$ is the quotient of $\mathcal{D}(N)$ by $PSL(2, \mathbb{C})$ acting by postcomposition.  Note that $\mathcal{H}(N)$ is still infinite dimensional since we are not identifying developing maps that differ by the lift of an isotopy.  Let $H: \mathcal{H}(N) \to AH(N)$ be the holonomy map.  Theorem I.1.7.1 of \cite{CEG} locally describes $\mathcal{H}(N)$. See Chapter I of \cite{CEG} for more details. 

\begin{thm}[Canary-Epstein-Green \cite{CEG}]  \label{neighborhood is product} Let $N_{th}$ be a thickening of $N$ (i.e., the union of $N$ with a collar $\partial N  \times I$).  Let $D_0 : \tilde{N}_{th} \to \mathbb{H}^3$ be a fixed developing map. A small neighborhood of $D_0 \vert_{\tilde{N}}$ in $\mathcal{D}(N)$ is homeomorphic to $X \times Y$ where $X$ is a small neighborhood of the obvious inclusion $N \subset N_{th}$ in the space of locally flat embeddings, and $Y$ is a neighborhood of the holonomy map $H(D_0)$ in $Hom(\pi_1(N), PSL(2, \mathbb{C}))$.  A small neighborhood of $D_0$ in $\mathcal{H}(N)$ is homeomorphic to $X \times Z$ where $Z$ is a small neighborhood of the conjugacy class of $H(D_0)$ in $R(N)$.
\end{thm}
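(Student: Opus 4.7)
The plan is to establish both statements by explicitly constructing continuous inverses of the claimed product decomposition. For the first statement, define the forward map $\Psi$ on a small neighborhood $U$ of $D_0\vert_{\tilde{N}}$ in $\mathcal{D}(N)$ by $\Psi(D) = (f_D, H(D))$, where $H(D) = \rho$ is the holonomy representation and $f_D : N \to N_{th}$ is a locally flat embedding, close to the standard inclusion, constructed from $D$ as follows. The key technical input is a continuous section $\sigma : W \to \mathcal{D}(N_{th})$ of the holonomy map, defined on a neighborhood $W$ of $\rho_0 = H(D_0)$ in $\mathrm{Hom}(\pi_1(N), PSL(2,\mathbb{C}))$, satisfying $\sigma(\rho_0) = D_0$. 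Given such a section, set $D_\rho = \sigma(\rho)$; since $D$ and $D_\rho\vert_{\tilde{N}}$ have the same holonomy $\rho$ and are both $C^\infty$-close to $D_0\vert_{\tilde{N}}$, the composition $D_\rho^{-1} \circ D : \tilde{N} \to \tilde{N}_{th}$ is a well-defined local diffeomorphism and is $\pi_1(N)$-equivariant, so it descends to a locally flat embedding $f_D : N \to N_{th}$. The candidate inverse sends $(f, \rho) \in X \times Y$ to $D_\rho \circ \tilde{f}$, where $\tilde{f}$ is the canonical $\pi_1$-equivariant lift of $f$; this is a developing map on $\tilde{N}$ with holonomy $\rho$, and one checks directly that after shrinking $U$ and $W$ the two constructions are mutually inverse continuous maps.

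The hard part is producing the section $\sigma$. I would fix a finite smooth cell decomposition of $N_{th}$, lift it to $\tilde{N}_{th}$, and choose a closed fundamental domain $F$ built from top-dimensional cells, labeling the adjacent copies $g\cdot F$ by elements $g$ in a finite generating set of $\pi_1(N)$. Set $\sigma(\rho)$ to equal $D_0$ on the interior of $F$ and extend to $\tilde{N}_{th}$ by forced $\rho$-equivariance. This produces a map that is continuous on each translate of $F$ but may fail to be smooth across translates of $\partial F$. To repair this, interpolate across a thin collar of $\partial F$ between the two candidate values using a smooth bump function together with the exponential map of $PSL(2,\mathbb{C})$, using the fact that for $\rho$ near $\rho_0$ each element $\rho(g)\rho_0(g)^{-1}$ lies in a small neighborhood of the identity for $g$ in the finite generating set. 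The resulting $\sigma(\rho)$ is smooth, exactly $\rho$-equivariant, depends smoothly on $\rho$, and remains a local diffeomorphism after further shrinking $W$; hence it is an honest developing map on $\tilde{N}_{th}$.

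For the second statement, recall $\mathcal{H}(N) = \mathcal{D}(N)/PSL(2,\mathbb{C})$ with $PSL(2,\mathbb{C})$ acting by postcomposition, and $R(N) = \mathrm{Hom}(\pi_1(N), PSL(2,\mathbb{C}))/PSL(2,\mathbb{C})$ with the conjugation action. The forward map $\Psi$ is $PSL(2,\mathbb{C})$-equivariant: postcomposing $D$ by $g$ conjugates its holonomy by $g$ but, provided the section $\sigma$ is chosen compatibly (which can be arranged locally since $\pi_1(N)$ is non-elementary, so $\rho_0$ has trivial centralizer), leaves the embedding $f_D$ unchanged. The product decomposition $U \cong X\times Y$ therefore passes to the quotient, yielding the local homeomorphism to $X \times Z$. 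The main obstacle I anticipate is the smooth interpolation producing $\sigma$: one must ensure that the interpolated map remains a local diffeomorphism (not merely a smooth map) while varying smoothly in $\rho$, which is a routine but delicate compactness-plus-openness argument using that local diffeomorphisms form an open subset of $C^\infty(\tilde{N}_{th}, \mathbb{H}^3)$ in the compact-$C^\infty$ topology.
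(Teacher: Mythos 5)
The paper does not prove this statement; it is quoted as Theorem I.1.7.1 of Canary--Epstein--Green \cite{CEG}, and the reader is referred to Chapter I of that reference for the argument. There is therefore no ``paper's proof'' to compare against. Your proposal is, however, a reasonable reconstruction of the Ehresmann--Thurston argument that CEG give: build a continuous local section $\sigma$ of the holonomy map by extending $D_0$ equivariantly to nearby representations $\rho$, and then factor a nearby developing map on $\tilde N$ as $\sigma(\rho)\circ\tilde f$ with $f$ an embedding of $N$ into the thickening $N_{th}$.

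Two places where your sketch elides genuine content. First, ``$D_\rho^{-1}\circ D$'' is not literally well-defined, since a developing map $D_\rho$ is only a local diffeomorphism of $\tilde N_{th}$ into $\mathbb{H}^3$; $\tilde f$ must be built by patching local inverse branches of $D_\rho$ along the image of $D$, using that $D$ is $C^0$-close to $D_\rho|_{\tilde N}$ on the compact closure of a fundamental domain. In particular the equivariance $\tilde f(gx)=g\tilde f(x)$ does not follow from injectivity of $D_\rho$ (which fails globally); it follows from $D_\rho(\tilde f(gx))=D_\rho(g\tilde f(x))$ together with the fact that both points lie in a small neighborhood of $gx$ where $D_\rho$ is injective. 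Second, the interpolation producing $\sigma$ must be organized inductively over the skeleta of a triangulation of $N_{th}$, not by a single collar around $\partial F$: on faces of codimension at least two, where three or more translates of $F$ meet, several interpolations must be made mutually compatible, and handling this cleanly is exactly why the CEG set-up works cell-by-cell. These are standard fixes, not wrong turns, but they are the crux of the construction rather than details. Your observation that the centralizer of $\rho_0$ is trivial (which holds here because $\pi_1(N)$ is not virtually abelian) is the right mechanism for choosing $\sigma$ equivariantly near $\rho_0$ and thus passing the product structure to the quotient $\mathcal{H}(N)\to R(N)$; that part is fine.
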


We now let $V'$ be a neighborhood of $\sigma_0 \in V' \subset MP(N) \cup MP_0(N,P)$ that satisfies the properties (1)-(4) given below.  Roughly, $V'$ is a neighborhood on which we can define a  section $\varsigma : V' \subset AH(N) \to \mathcal{H}(N)$ and such that if $\rho \in V'$ then the length of $\rho(\gamma_i)$ is short in $M_\rho$.  The existence of such a neighborhood follows from the arguments given in Section 3.1 of \cite{B3} and Theorem \ref{neighborhood is product}, although we include justification for why we can define $V'$ with these properties after the statement of each property.


 Fix a smooth embedding $s_{\sigma_0} : N \to M_{\sigma_0}$ such that $(s_{\sigma_0})_* = \sigma_0$.   Let $g_{\sigma_0}$ be the pullback of the hyperbolic metric on $M_{\sigma_0}$.  We can choose $s_{\sigma_0}$ so that the core curves of the annuli in $P$ (\textit{i.e.}, the curves $\gamma_i \times \{1\}$) have length less than $\epsilon_3/4$ in the $g_{\sigma_0}$ metric. 

\textbf{(1)}  There exists a continuous section $\varsigma :  V' \to \mathcal{H}(N)$ to the holonomy map such that $\varsigma(\sigma_0) = g_{\sigma_0}$.  

The existence is given by Theorem \ref{neighborhood is product}.  For any $\rho \in V'$, define $g_\rho = \varsigma(\rho)$.  We emphasize that, by the definition of a section, $H(g_\rho) = \rho$.    

\textbf{(2)}   For any $\rho_1, \rho_2 \in V'$, the identity map
\[
(N, g_{\rho_1}) \xrightarrow{id} (N, g_{\rho_2})
\] is $2$-biLipschitz.  

This follows from the continuity of $\varsigma$ and the topology on $\mathcal{H}(N)$.

\textbf{(3)} For any $\rho \in V'$, there is a locally isometric immersion $s_\rho : (N, g_\rho) \to M_\rho$ where $M_\rho = \mathbb{H}^3/\rho(\pi_1(N))$ is equipped with the hyperbolic metric, such that $(s_\rho)_* = \rho$.   Moreover, there is some $\epsilon_3 > \epsilon_0 > 0$  such that $s_\rho(N)$ is contained in the $\epsilon_0$-thick part of $M_\rho$. 

The existence of $s_\rho : N \to M_\rho$ with $(s_\rho)_* = \rho$ is given by Theorem \ref{neighborhood is product}.  We now find $\epsilon_0$.  There is some $K$ such that for any point $x \in (N,g_{\sigma_0})$, there are loops $\alpha, \beta$ based at $x$ of length less than $K$ such that the group generated by $\alpha$ and $\beta$ is not virtually abelian.  For example, one can find a point $x_0$ and loops $\alpha_0$ and $\beta_0$ based at $x_0$ that generate a free group, and then let $K$ be larger than the sum of the diameter of $(N, g_{\sigma_0})$ and the lengths of $\alpha_0$ and $\beta_0$.  Since for any $\rho \in V'$, $(N, g_{\sigma_0}) \xrightarrow{id} (N, g_\rho)$ is $2$-biLipschitz by (2), at any point $x \in (N, g_\rho)$ there are loops based at $x$ of length less than $2K$ generating a free group. There exists some $\epsilon_3 > \epsilon_0 > 0$ such that for any component $\mathbb{T}_{\epsilon_0}$ of the $\epsilon_0$-thin part of any hyperbolic manifold $M$, the distance between $\partial \mathbb{T}_{\epsilon_0}$ and $\partial \mathbb{T}_{\epsilon_3}$ is at least $K$. 

Suppose $s_\rho(x) \in s_\rho(N) \cap M_\rho^{\leq \epsilon_0}$ for some $x$. Then since $s_\rho$ is a homotopy equivalence, there are loops based at $s_\rho(x)$ that generate a free group and therefore must leave the $\epsilon_3$-thin part of $M_\rho$; however, to do so they must have length greater than $2K$ contradicting that $s_\rho$ is a locally isometric immersion.  Thus there exists some $\epsilon_0$ such that $s_\rho(N)$ is contained in $\epsilon_0$-thick part of $M_\rho$ for all $\rho \in V'$.

\textbf{(4)}  Let $\epsilon_0$ be the constant in property (3).  Let $l_0$ be the constant from the drilling theorem such that the drilling map is a biLipschitz diffeomorphism outside an $\epsilon_0$-Margulis tube about the drilling. Let $l_1 = \min\{ \epsilon_0/8, l_0\}$. Then for any $\rho \in V'$ we have the length of $\gamma_i$ in $M_\rho$ is less than $l_1$, for each $i = 1, \ldots, d$.  

\bigskip \noindent \textit{Notation.} Here, the length of $\gamma_i$ in $M_\rho$ is really the length of the unique geodesic representative of $s_\rho(\gamma_i)$ in $M_\rho$. For the remainder of this section, we distinguish this geodesic representative by $s_\rho(\gamma_i)^*$.  This curve is homotopic to $s_\rho(\gamma_i \times \{t\})$ for any $t$, but its length is less than or equal to the length of $s_\rho(\gamma_i \times \{t\})$. We make this distinction since we will also be using the length of $s_\rho(\gamma_i \times \{t\})$, which is the length of $\gamma_i \times \{t\} \subset (N, g_\rho)$. 
\bigskip


Now we construct the map $\xi$ which will be the first coordinate of $\Psi$.  If $\rho \in V' \cap MP_0(N,P)$, then set $\xi(\rho) = \rho$.  Otherwise $\rho \in V' \cap MP(N)$ so let $(M_\rho, s_\rho)$ be the associated marked hyperbolic 3-manifold.  Note that by properties (1) and (3) of the neighborhood $V'$ we can use $s_\rho : N \to M_\rho$ to mark $M_\rho$.

By property (4), the length of each $s_\rho(\gamma_i)^*$ will be short in $M_\rho$ so we can drill out $s_\rho(\gamma)^* =s_\rho(\gamma_1)^* \cup \cdots \cup s_\rho(\gamma_d)^*$ and get a hyperbolic manifold $\hat{M}_\rho$. Let 
\[
\psi_\rho : M_\rho - \mathbb{T}_{\epsilon_0} (s_\rho(\gamma)^*) \to \hat{M}_\rho - \mathbb{T}_{\epsilon_0} (T)
\] be the inverse of the map $\phi$ from the drilling theorem (Theorem \ref{drilling}).  Let $\overline{M}_\rho$ be the cover of $\hat{M}_\rho$ associated to $(\psi_\rho \circ s_\rho)_* (\pi_1(N))$. Let $\overline{f}_\rho : N \to \overline{M}_\rho$ be the lift of $\psi_\rho \circ s_\rho :N \to \hat{M}_\rho$.  
Note that $s_\rho(N)$ will be contained in $M_\rho - \mathbb{T}_{\epsilon_0}(s_\rho(\gamma)^*)$ by (3), so it makes sense to compose with $\psi_\rho$.   In Lemma 3.3 of \cite{B3}, Bromberg shows that
the representation $(\overline{f}_\rho)_* : \pi_1(N) \to \pi_1(\overline{M}_\rho) \subset PSL(2, \mathbb{C})$ is in $AH(N,P)$.
See also Lemma 5.11 in \cite{MagidThesis} for a proof of the same fact with using this notation.

Thus we can define $\xi$ by 
\[
\xi(\rho) = \begin{cases} (\overline{f}_\rho)_*  \quad &\text{if} \; \rho \in MP(N) \\ \rho \quad &\text{if} \; \rho \in MP_0(N,P). \end{cases}
\]

The following Lemma is a restatement of Lemma 3.4 (see also Lemma 5.12 in \cite{MagidThesis}). 

\begin{lem}  \label{xi continuity}
The map $\xi$ is continuous at all points in $V' \cap MP_0(N,P)$.  
 \end{lem}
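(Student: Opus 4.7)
The plan is to take an arbitrary sequence $\rho_i \to \sigma$ in $V'$ with $\sigma \in V' \cap MP_0(N,P)$ and show that $\xi(\rho_i) \to \sigma = \xi(\sigma)$ in the algebraic topology. After passing to subsequences, I may assume the terms $\rho_i$ all lie either in $MP_0(N,P)$ or all in $MP(N)$. In the first case, $\xi(\rho_i) = \rho_i \to \sigma$ is immediate from the definition of $\xi$, so the content lies entirely in the second case.

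For a sequence $\rho_i \in MP(N)$ with $\rho_i \to \sigma$, I would first use algebraic convergence together with the fact that $\sigma(\gamma_j)$ is parabolic for every $j$ (this is exactly the condition $\sigma \in MP_0(N,P)$) to conclude that $\mathrm{tr}^2(\rho_i(\gamma_j)) \to \mathrm{tr}^2(\sigma(\gamma_j)) = 4$. Equivalently, the complex length of each geodesic $s_{\rho_i}(\gamma_j)^*$ in $M_{\rho_i}$ tends to zero. Combined with property (3) of $V'$, which places $s_{\rho_i}(N)$ inside the $\epsilon_0$-thick part of $M_{\rho_i}$ and hence disjoint from the Margulis tubes $\mathbb{T}_{\epsilon_0}(s_{\rho_i}(\gamma)^*)$, this allows the drilling theorem (Theorem \ref{drilling}) to be applied with arbitrarily fine biLipschitz constants: the drilling diffeomorphisms $\psi_{\rho_i}$ can be assumed $J_i$-biLipschitz with $J_i \to 1$ as $i \to \infty$.

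Next I would imitate the endgame of the proof of Lemma \ref{phi continuity}. By the geometric characterization of algebraic convergence used there (cf.\ p.~154 of \cite{BBES}), it suffices to show that the pullback metrics on $N$ under the markings of $\xi(\rho_i)$ converge to the pullback metric under the marking of $\sigma$. Since the covering $\overline{M}_{\rho_i} \to \hat{M}_{\rho_i}$ is a local isometry, the pullback via $\overline{f}_{\rho_i}$ coincides with the pullback via $\psi_{\rho_i} \circ s_{\rho_i}$. Because $s_{\rho_i}$ is itself a locally isometric immersion whose pullback is $g_{\rho_i} = \varsigma(\rho_i)$, and $\psi_{\rho_i}$ is $J_i$-biLipschitz with $J_i \to 1$, these pullback metrics differ from $g_{\rho_i}$ by factors converging to $1$ on any compact exhaustion. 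Continuity of the section $\varsigma$ from property (1) yields $g_{\rho_i} \to g_\sigma = s_\sigma^* g_{M_\sigma}$ in $\mathcal{H}(N)$, and combining these two ingredients gives the required convergence.

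The main obstacle is translating ``covering map composed with $J_i$-biLipschitz drilling'' into genuine algebraic convergence, but this is essentially packaged by the geometric criterion already invoked in Lemma \ref{phi continuity}: once one has established that $J_i \to 1$ and that $g_{\rho_i} \to g_\sigma$, the rest is bookkeeping. The only subtlety worth checking carefully is that the domain of $\psi_{\rho_i}$ genuinely contains $s_{\rho_i}(N)$, which is handled uniformly by the $\epsilon_0$-thickness in property (3), and that the Margulis tubes being drilled really correspond, via the marking, to the curves $\gamma_j$ in $N$—which is exactly the content of Bromberg's Lemma 3.3 in \cite{B3} already used to define $\xi$.
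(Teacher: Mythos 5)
Your proof is correct and follows essentially the same strategy that the paper (via Bromberg's Lemma 3.4 of \cite{B3}) and the parallel Lemma~\ref{phi continuity} use: reduce to the case $\rho_i\in MP(N)$, use algebraic convergence to force the lengths $l(\rho_i(\gamma_j))\to 0$, obtain drilling maps $\psi_{\rho_i}$ with biLipschitz constants $J_i\to 1$, and then compare pullback metrics on $N$ via $\overline{f}_{\rho_i}=\psi_{\rho_i}\circ s_{\rho_i}$ (composed with a local isometry from the covering) against $g_{\rho_i}=\varsigma(\rho_i)\to g_\sigma$, invoking the geometric criterion for algebraic convergence from \cite{BBES}. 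You have correctly flagged the two places requiring care—that $s_{\rho_i}(N)$ lies in the domain of the drilling map (handled by the $\epsilon_0$-thickness of property (3)) and that the drilled Margulis tubes correspond to the $\gamma_j$ under the marking (Bromberg's Lemma 3.3)—so there are no gaps.
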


Since $\xi: V' \to AH(N,P)$ is continuous at points in $V' \cap MP_0(N,P)$, and $MP_0(N,P)$ is an open subset of $AH(N,P)$, we can restrict $\xi$ to a smaller neighborhood $\sigma_0 \in V \subset V' \subset MP(N) \cup MP_0(N,P)$ so that its image is contained in $MP_0(N,P)$ (Corollary 3.5 of \cite{B3}, Corollary 5.13 of \cite{MagidThesis}).  This allows us to use $\xi(\rho)$ as the first coordinate of $\Psi(\rho)$ in the definition of $\Psi$.  We now consider the second coordinate $q(\rho)$.

If $\rho \in V \cap MP_0(N,P)$, then we set $q(\rho) = (\infty, \ldots, \infty)$.  Otherwise, we consider the covering $\pi_\rho : \overline{M}_\rho \to \hat{M}_\rho$ induced by the image of the injection $(\psi_\rho \circ s_\rho)_* : \pi_1(N) \to \pi_1(\hat{M}_\rho)$.  The group $\pi_1(\hat{M}_\rho)$ is obtained from $\pi_1(\overline{M}_\rho)$ by the same construction described in Section \ref{subsection: model space}. That is, $\xi(\rho) = (\overline{M}_\rho, \overline{f}_\rho)$ corresponds to some representation $\sigma \in MP_0(N,P)$ and there is a unique $(w_1, \ldots, w_d)$ such that the extension $\sigma_{w_1, \ldots, w_d} (\pi_1(\hat{N})) = \pi_1(\hat{M}_\rho)$.    We define this to be $q(\rho) = (w_1, \ldots, w_d)$.  

Equivalently, $w_i$ is defined so that if we conjugate $(\psi_\rho \circ s_\rho)_*$ so that $\gamma_i$ is mapped to $\begin{pmatrix} 1 & 2 \\ 0 & 1 \end{pmatrix}$, then the unique nontrivial element $\beta_i \in  \pi_1(\partial U_i) \subset \pi_1(\hat{M}_\rho)$ that bounds a disk in $M_\rho$ will be $\begin{pmatrix} 1 & w_i \\ 0 & 1 \end{pmatrix}$.   

Now that we have defined $q(\rho)$, we can define $\Psi : V \to MP_0(N,P) \times \hat{\mathbb{C}}^d$ by $$\Psi(\rho) = (\xi(\rho), q(\rho))$$ for any $\rho \in V$.  Note that we have defined $q(\rho)$ so that $\Psi(\rho) \in \mathcal{A}$ for all $\rho \in V$.  Unlike $\Phi$, we only show $\Psi$ is continuous for points on the boundary of $MP(N)$.  Although Lemma \ref{continuity2} is nearly identical to Proposition 3.8 of \cite{B3}, we include a proof since this is one of instances where we must keep track of multiple cusps and therefore our setup differs from Bromberg's.   

\begin{lem}  
\label{continuity2} The map $\Psi$ is continuous on $V \cap MP_0(N,P)$. 
\end{lem}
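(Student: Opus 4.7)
The plan is to decompose $\Psi = (\xi, q)$ and verify continuity of each coordinate at points of $V \cap MP_0(N,P)$. Continuity of the first coordinate $\xi$ at such points is already in hand from Lemma \ref{xi continuity} (applied to $V \subset V'$), so the task reduces to showing that $q : V \to \hat{\mathbb{C}}^d$ is continuous at points where $q = (\infty, \ldots, \infty)$.

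Fix $\rho_0 \in V \cap MP_0(N,P)$ and a sequence $\rho_n \to \rho_0$ in $V$; I would verify $q(\rho_n) \to (\infty, \ldots, \infty)$ as follows. After passing to a subsequence, we may assume either that all $\rho_n$ lie in $MP_0(N,P)$ (in which case $q(\rho_n) = (\infty, \ldots, \infty)$ by definition and nothing is to be proved), or that all $\rho_n$ lie in $MP(N) \setminus MP_0(N,P)$. In the substantive case, the first step is to observe that algebraic convergence forces the complex length of $\rho_n(\gamma_i)$ to converge to the complex length of $\rho_0(\gamma_i)$, which is zero since $\gamma_i$ is parabolic under $\rho_0$. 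Because each $\rho_n$ has no parabolics (as $\rho_n \in MP(N)$), the real part of this complex length is the translation length of $\rho_n(\gamma_i)$, i.e.\ the length of the geodesic representative $s_{\rho_n}(\gamma_i)^*$ in $M_{\rho_n}$; hence this length tends to zero for every $i$.

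The second step is to invoke Proposition \ref{normalized length corollary} cusp by cusp: given any $K > 0$, as soon as the length of $s_{\rho_n}(\gamma_i)^*$ drops below the constant $l_0$ attached to $K$, the normalized length $L_i(\rho_n)$ of the meridian $\beta_i$ in the drilled manifold $\hat{M}_{\rho_n}$ exceeds $K$. Therefore $L_i(\rho_n) \to \infty$ for each $i$. By construction of $q$ via the standard conjugation sending $\gamma_i$ to $\begin{pmatrix} 1 & 2 \\ 0 & 1 \end{pmatrix}$, the coordinate $w_i = w_i(\rho_n)$ satisfies $L_i^2 = |w_i|^2 / (2\operatorname{Im} w_i)$, while Corollary \ref{bound imaginary part} (applicable because $(\xi(\rho_n), q(\rho_n)) \in \mathcal{A}$ by the very construction of $\Psi$) furnishes $\operatorname{Im} w_i > 1$. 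Combining the two yields $|w_i|^2 \geq 2 L_i^2 \to \infty$, so $w_i(\rho_n) \to \infty$ in $\hat{\mathbb{C}}$ and hence $q(\rho_n) \to (\infty, \ldots, \infty)$ in $\hat{\mathbb{C}}^d$.

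The heart of the argument is the quantitative Proposition \ref{normalized length corollary}, which is exactly the tool needed to turn short drilled geodesics into long normalized meridians. The only point requiring care is verifying that $(\xi(\rho_n), q(\rho_n))$ really lies in $\mathcal{A}$ so that the uniform lower bound from Corollary \ref{bound imaginary part} applies; this is essentially tautological, since $\xi(\rho_n) \in MP_0(N,P)$ by our shrinking of $V$ and the extension of $\xi(\rho_n)$ by $q(\rho_n)$ is precisely the holonomy of the geometrically finite, minimally parabolic pared manifold $\hat{M}_{\rho_n}$.
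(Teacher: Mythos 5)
Your proposal is correct and follows essentially the same route as the paper: reduce to continuity of $q$ via Lemma~\ref{xi continuity}, pass to a subsequence in $MP(N)$, note the geodesic lengths $l(\rho_n(\gamma_i)) \to 0$, apply Proposition~\ref{normalized length corollary} to get normalized lengths tending to infinity, and then use Corollary~\ref{bound imaginary part} to conclude $w_i \to \infty$. Your final step ($|w_i|^2 = 2L_i^2\operatorname{Im} w_i \geq 2L_i^2 \to \infty$) is a slightly more direct rephrasing of the paper's contradiction argument, but the content is identical.
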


\begin{proof}  Lemma \ref{xi continuity} shows that $\xi$ is continuous on $V \cap MP_0(N,P)$.  Now consider a sequence $\rho_i \to \sigma$ where $\sigma \in MP_0(N,P)$.  Since $\Psi(\sigma) = (\sigma ,\infty, \ldots,  \infty)$ and we know $\xi(\rho_i) = \sigma$, it suffices to show that $q(\rho_i) \to (\infty, \ldots, \infty)$.  If $\rho_i \in MP_0(N,P)$ then $q(\rho_i) = (\infty, \ldots, \infty)$ so assume that $\rho_i \in V \cap MP(N)$.  We will use the notation $q(\rho_i) = (w_{1,i}, \ldots, w_{d,i})$ and show $w_{j,i} \to \infty$ as $i \to \infty$ for $j = 1, \ldots, d$.  

Since for each $j = 1, \ldots, d$, the length of $\rho_i(\gamma_j) \to 0$ as $i \to \infty$, Proposition \ref{normalized length corollary} shows that the normalized length of the $j$th meridian, $\beta_j$, goes to infinity as $i$ goes to infinity.  The normalized length is given by 
\[
\frac{|w_{j,i}|}{\sqrt{2Im(w_{j,i})}}
\]   If $w_{j,i}$ does not go to $\infty$, then we must have $Im(w_{j,i}) \to 0$ as $i \to \infty$.  This cannot happen by Corollary \ref{bound imaginary part}.  

It follows that $q(\rho_i) \to  (\infty, \ldots, \infty) = q(\sigma)$ proving $q$ is continuous at any point $\sigma \in V\cap  MP_0(N,P)$.  Thus, $\Psi$ is continuous on $V\cap MP_0(N,P)$. 
\end{proof}

\subsection{Local Homeomorphism}  \label{subsection: local homeomorphism}

Recall, in Section \ref{subsection: definition of phi} we defined $\Phi$ on a subset $\mathcal{A}_K \subset \mathcal{A}$ and showed $\Phi$ is continuous.  
We now claim that there is some subset of $\mathcal{A}_K$ on which 
$\Phi$ is continuous and injective.  See also Propositions 3.9 and 3.10 of \cite{B3}.

\begin{lem} \label{inverses}   Let $\sigma_0 \in MP_0(N,P)$.  There is some neighborhood $U$ of $(\sigma_0, \infty, \ldots, \infty)$ in $\mathcal{A}$ such that $\Psi \circ \Phi \vert_U = id$.  In particular, $\Phi$ is injective on $U$. 
\end{lem}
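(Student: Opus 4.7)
The plan is to show that $\Psi \circ \Phi$ acts as the identity on a sufficiently small neighborhood $U$ of $(\sigma_0,\infty,\ldots,\infty)$ by separately handling the two types of points in $\mathcal{A}$ and then observing that drilling and filling along the same slopes invert one another, both topologically and as hyperbolic structures.

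First I would shrink $U$ so that all required maps are defined. Since $\Phi$ is continuous at $(\sigma_0,\infty,\ldots,\infty)$ by Lemma \ref{phi continuity} and $\Phi(\sigma_0,\infty,\ldots,\infty)=\sigma_0\in V$, I can choose $U\subset \mathcal{A}_K$ with $\Phi(U)\subset V$ so that $\Psi\circ \Phi$ is defined on $U$. By further shrinking $U$ I may also assume the filling-theorem constants hold uniformly on $U$ and that the core curves $\gamma_i$ in $M_{\sigma,w}$ produced by $\Phi$ have length below the drilling constant $l_1$ used in defining $\Psi$.

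Next I handle the two cases. When $w=(\infty,\ldots,\infty)$, both $\Phi(\sigma,\infty,\ldots,\infty)=\sigma$ and $\Psi(\sigma)=(\sigma,\infty,\ldots,\infty)$ hold by definition, so the composition is the identity. When $w\neq(\infty,\ldots,\infty)$, the computation has three ingredients:
\begin{enumerate}
\item The drilling-filling uniqueness: $\Phi(\sigma,w)$ was obtained by $\beta_i$-filling the cusps of $\hat{M}_{\sigma,w}$ via $\phi_{\sigma,w}$. Drilling the core curves $\gamma_i$ of these filling tori in $M_{\sigma,w}$ recovers a complete hyperbolic manifold with the same conformal boundary; by Kojima's uniqueness (cited in the excerpt), this is isometric to $\hat{M}_{\sigma,w}$, and the drilling map $\psi_{\Phi(\sigma,w)}$ may be taken to be the inverse of $\phi_{\sigma,w}$ on the complement of the Margulis tubes.
\item The cover matches: by construction, the covering $\pi_{\sigma,w}\colon M_\sigma\to \hat{M}_{\sigma,w}$ is exactly the one classified by the subgroup $(\psi_{\Phi(\sigma,w)}\circ s_{\Phi(\sigma,w)})_*(\pi_1(N))\subset \pi_1(\hat{M}_{\sigma,w})$, so the cover $\overline{M}_{\Phi(\sigma,w)}$ used to define $\xi$ is $M_\sigma$, and the induced marking $\overline{f}_{\Phi(\sigma,w)}$ agrees with $f_\sigma$ up to homotopy. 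Thus $\xi(\Phi(\sigma,w))=\sigma$.
\item The $w$-coordinates match: the extension $\sigma_w$ used to define $\Phi$ sends each meridian $\beta_i$ of $\partial U_i$ to $\begin{pmatrix}1&w_i\\0&1\end{pmatrix}$ after the normalization that sends $\gamma_i$ to $\begin{pmatrix}1&2\\0&1\end{pmatrix}$. Since the normalization used in the definition of $q$ is exactly the same, and since the meridian in $\hat{M}_{\Phi(\sigma,w)}$ that kills under filling is the class $\beta_i$, we recover $q(\Phi(\sigma,w))=w$.
\end{enumerate}
Combining these three items yields $\Psi(\Phi(\sigma,w))=(\sigma,w)$. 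Injectivity of $\Phi$ on $U$ follows immediately.

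The main obstacle will be step (3), the bookkeeping needed to verify that the normalization conventions agree. One must be careful that the inductive choice of conjugation used in Section \ref{subsection: model space} to define $\sigma_w$ (cyclically putting each $\gamma_i$ into the standard parabolic form and reading off $w_i$) is the same recipe applied in the definition of $q(\rho)$, and that the lift of $s_{\Phi(\sigma,w)}$ to $\overline{M}_{\Phi(\sigma,w)}$ identifies $\pi_1(N)$ with the correct subgroup of $\pi_1(\hat{M}_{\sigma,w})$. Once this identification is checked, the equality of the $w$-coordinates is forced by uniqueness of the meridian relation $[\beta_i,\gamma_i]=1$ and the fact that each $\beta_i$ is distinguished as the meridian bounding a disk in the filling solid torus.
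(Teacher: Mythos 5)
Your proposal has the correct overall skeleton, but there is a genuine gap, and it is not the bookkeeping issue you identify at the end; it is a geometric one that your argument does not address. The problem is that $\Psi$ does not drill along the marking $f_{\sigma,w}$ that $\Phi$ produced. By definition, $\Psi$ picks its own marking $s_\rho$ via the section $\varsigma : V \to \mathcal{H}(N)$ fixed once and for all when $V$ was set up, and then drills $s_\rho(\gamma)^*$ and passes to the cover classified by $(\psi_\rho \circ s_\rho)_*(\pi_1(N))$. The markings $s_\rho$ and $f_{\sigma,w}$ are homotopic in $M_{\sigma,w}$ (they induce the same conjugacy class of holonomy), so the drilled geodesics and the drilled manifold agree, but the subgroup $(\psi_\rho \circ s_\rho)_*(\pi_1(N)) \subset \pi_1(\hat{M}_\rho)$ — and hence the cover $\overline{M}_\rho$ and the meridian coordinates $q(\rho)$ — depends on the homotopy class of $s_\rho$ inside $M_{\sigma,w} - \gamma$, because $\psi_\rho$ is only defined on the complement of the Margulis tubes. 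Two markings homotopic in $M_{\sigma,w}$ but wrapping differently around $\gamma_i$ shift $w_i$ by an even integer, so your steps (b) and (c) are simply asserted without the fact that makes them true.

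The paper's proof spends nearly all its effort on exactly this point: it establishes $f_{\sigma,w} \simeq s_{\sigma,w}$ as maps into $M_{\sigma,w} - \gamma$ by invoking the local product structure of $\mathcal{H}(N)$ from Theorem~\ref{neighborhood is product}. Concretely, one thickens $N$ to $N_{th}$, extends $f_{\sigma,w}$ to a local isometry $f_{\sigma,w,th} : N_{th} \to M_{\sigma,w}$, and finds a locally flat embedding $i : N \to N_{th}$ isotopic to the identity with $s_{\sigma,w} = f_{\sigma,w,th}\circ i$; the homotopy then takes place inside $f_{\sigma,w,th}(N_{th})$, which can be arranged to miss $\gamma$ by shrinking $U$ so the filling maps are sufficiently biLipschitz and the pullback metrics $g'_{\sigma,w}$ land in a small product neighborhood $W$ of $g_{\sigma_0}$. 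That is the missing ingredient; once it is in place, the computation $\Psi(M_{\sigma,w}, f_{\sigma,w}) = \Psi(M_{\sigma,w}, s_{\sigma,w}) = (\sigma,w)$ is as quick as you suggest.
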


\begin{proof} Let $V$ be the neighborhood of $\sigma_0$ on which $\Psi$ was defined.  By the continuity of $\Phi$, we can find a neighborhood $U'$ so that $\Phi(U')$ is contained in $V$, and for any $(\sigma, w_1, \ldots, w_d) \in U'$, $\sigma \in V$.  We now consider $\Psi \circ \Phi \vert_{U'}$.

Let $(\sigma, w_1, \ldots, w_d) \in U'$.  If $(w_1, \ldots, w_d) = (\infty,\ldots, \infty)$ then $\Phi(\sigma, \infty, \ldots, \infty) = \sigma$ and $\Psi(\sigma) = (\sigma, \infty, \ldots, \infty)$.  If $(w_1, \ldots, w_d) \neq (\infty,\ldots, \infty)$ then $\sigma_{w_1, \ldots, w_d} \in MP(\hat{N}, \hat{P})$.  Recall that the definition of $\Phi$ in this case was 
\[
\Phi(\sigma, w_1, \ldots, w_d)= (M_{\sigma, w}, f_{\sigma, w})
\] where $M_{\sigma, w}$ was the filling of $\hat{M}_{\sigma, w}= \mathbb{H}^3/\sigma_{w_1, \ldots, w_d}  (\pi_1(\hat{N}))$ and $f_{\sigma, w} = \phi_{\sigma, w} \circ \pi_{\sigma, w} \circ f_\sigma$.  Since $\sigma \in V$, the marking $f_\sigma$ is homotopic to a local isometry $s_\sigma : (N, g_\sigma) \to M_\sigma$ such that $s_\sigma(N) \subset M_{\sigma}^{\geq \epsilon_0}$ (see the four properties of the neighborhood $V'$ defined in Section \ref{subsection: inverse to phi}). Thus we can redefine the marking $f_{\sigma, w} = \phi_{\sigma, w} \circ \pi_{\sigma, w} \circ s_\sigma$ without changing the definition of $\Phi(\sigma, w)$.  Also recall that 
$\pi_{\sigma, w}$ is a covering map and therefore a local isometry, and 
   \[
   \phi_{\sigma, w} : \hat{M}_{\sigma, w} - \mathbb{T}_{\epsilon_0} (T) \to M_{\sigma, w} - \mathbb{T}_{\epsilon_0}(\gamma)
   \]
is a biLipschitz diffeomorphism. (Recall that $\Phi$ was originally defined on $\mathcal{A}_K$ so that for any $(\sigma, w) \in \mathcal{A}_K$, $\phi_{\sigma, w}$ is a biLipschitz diffeomorphism on $\hat{M}_{\sigma, w} - \mathbb{T}_{\epsilon_3} (T)$.  By possibly making $U'$ smaller, we can assume that $\phi_{\sigma, w}$ is biLipschitz on the $\hat{M}_{\sigma, w} - \mathbb{T}_{\epsilon_0} (T)$.)  Thus $f_{\sigma, w} = \phi_{\sigma, w} \circ \pi_{\sigma, w} \circ s_\sigma$ is smooth, and we let $g_{\sigma, w}'$ be the pullback metric on $N$ via $f_{\sigma, w} : N \to M_{\sigma, w}$.

By the assumption that $\Phi(U') \subset V$, we can find a homotopic marking $s_{\sigma, w} \simeq f_{\sigma, w}: N \to M_{\sigma, w}$ satisfying the properties (1)-(4) listed prior to the definition of $\Psi$.  However, we need that $f_{\sigma, w}$ is homotopic to $s_{\sigma, w}$ in $M_{\sigma, w} - \gamma$ in order to have the drilling construction in $\Psi$ be the inverse to the filling construction in $\Phi$.

    Let $W$ be a neighborhood of $g_{\sigma_0}$ in $H^{-1}(V) \subset \mathcal{H}(N)$ such that Theorem \ref{neighborhood is product} applies.  We first claim there is some $U \subset U' \subset \mathcal{A}$ such that if $(\sigma, w) \in U$, then $g_{\sigma, w}' \in W$.  There is some $J$ such that if $\phi_{\sigma, w}$ is a $J$-biLipschitz diffeomorphism and $\sigma$ is sufficiently close to $\sigma_0$, then $g_{\sigma, w}' \in W$.  So, let $U\subset U'$ be a neighborhood of $(\sigma_0, \infty, \ldots, \infty)$ in $\mathcal{A}$ such that for all $(\sigma, w_1, \ldots, w_d) \in U$, $\sigma$ is sufficiently close to $\sigma_0$, and for all $i$, $|w_i|$ is large enough so that the filling map $\phi_{\sigma, w}$ is a $J$-biLipschitz diffeomorphism. 
 
 Next we claim that there is a metric $g_{\sigma, w} \in \varsigma(V) \subset W$ and a locally isometry $s_{\sigma, w} : (N, g_{\sigma, w}) \to M_{\sigma, w}$ such that $f_{\sigma, w}$ is homotopic to $s_{\sigma, w}$ as a map into $M_{\sigma, w} -\gamma$.  This claim follows from product structure of $W$ described in Theorem \ref{neighborhood is product}.   More precisely, let $N_{th}$ be a thickening of $N$.  Then we can extend the local isometry $f_{\sigma, w} : (N, g_{\sigma, w}') \to M_{\sigma, w}$ to a local isometry $f_{\sigma, w, th} : (N_{th}, g_{\sigma, w, th}') \to M_{\sigma, w}$, where $g_{\sigma, w, th}'$ is a hyperbolic metric on $N_{th}$ that restricts to $g_{\sigma, w}$ on $N\subset N_{th}$.  Then there exists a locally flat embedding $i : N \to N_{th}$ isotopic to the identity such that $s_{\sigma, w}  = f_{\sigma, w, th} \circ i$.  Thus $s_{\sigma, w}$ and $f_{\sigma, w}$ are homotopic as maps inside $f_{\sigma, w, th} (N_{th}) \subset M_{\sigma}$.  Since $f_{\sigma, w}(N) \subset M_{\sigma, w} - \mathbb{T}_{\epsilon_0} (\gamma)$, we can assume that the neighborhood $W$ in $\mathcal{H}(N)$ is small enough so that $f_{\sigma, w, th}(N_{th}) \subset M_{\sigma, w} - \gamma$. Thus, $f_{\sigma, w}$ and $s_{\sigma, w}$ are homotopic in $M_{\sigma, w} - \gamma$.

It now follows from the definitions that 
\[
\Psi(\Phi(\sigma, w_1, \ldots, w_d)) = \Psi(M_{\sigma, w},f_{\sigma, w}) = \Psi(M_{\sigma, w},s_{\sigma, w}) = (\sigma, w_1, \ldots, w_d)
\]
 See Lemma 5.15 in \cite{MagidThesis}. 
\end{proof}

\begin{lem}  \label{inverses2} Let $\rho \in MP(N) \cup MP_0(N,P)$.  If $\Phi \circ \Psi$ is defined at $\rho$ then $\Phi \circ \Psi (\rho) = \rho$. 
\end{lem}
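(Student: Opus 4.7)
The plan is to unwind the two constructions side by side and observe that the drilling map used to define $\Psi$ and the filling map used to define $\Phi$ are literally inverses of one another. I split on whether $\rho \in MP_0(N,P)$ or $\rho \in MP(N)$.

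First I dispose of the case $\rho \in MP_0(N,P)\cap V$. By definition $\Psi(\rho) = (\rho,\infty,\ldots,\infty)$, and by definition of $\Phi$ we have $\Phi(\rho,\infty,\ldots,\infty) = \rho$, so $\Phi\circ\Psi(\rho)=\rho$. This case is purely formal.

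Next, assume $\rho \in MP(N)\cap V$ and that $\Phi\circ\Psi(\rho)$ is defined (so $\Psi(\rho)\in\mathcal{A}_K$). Write $\Psi(\rho)=(\sigma,w)$ with $\sigma=(\overline{f}_\rho)_*$ and $w=q(\rho)=(w_1,\ldots,w_d)$, where $\sigma$ is the holonomy of the cover $\overline{M}_\rho$ of the drilling $\hat{M}_\rho = (M_\rho-\mathbb{T}_{\epsilon_0}(s_\rho(\gamma)^*))$ determined by $(\psi_\rho\circ s_\rho)_*(\pi_1(N))$, and the $w_i$ record the off-diagonal entries of the meridians $\beta_i$ with respect to the normalization that sends each $\gamma_i$ to $\begin{pmatrix}1&2\\0&1\end{pmatrix}$. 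Then I check three identifications in order:
\begin{enumerate}
\item By the definition of $q(\rho)$, the extension $\sigma_w:\pi_1(\hat N)\to PSL(2,\mathbb{C})$ is conjugate to the holonomy of $\hat{M}_\rho$. Hence the hyperbolic manifold $\hat{M}_{\sigma,w}$ appearing in the definition of $\Phi$ is canonically identified with $\hat{M}_\rho$, and the covering map $\pi_{\sigma,w}:M_\sigma\to \hat{M}_{\sigma,w}$ agrees with the covering $\pi_\rho:\overline{M}_\rho\to\hat{M}_\rho$.
\item The $\beta_i$-filling $M_{\sigma,w}$ of $\hat{M}_{\sigma,w}=\hat{M}_\rho$ is, by Kojima's uniqueness of the drilled metric, canonically identified with $M_\rho$, and the filling diffeomorphism $\phi_{\sigma,w}:\hat{M}_{\sigma,w}-\mathbb{T}_\epsilon(T)\to M_{\sigma,w}-\mathbb{T}_\epsilon(\gamma)$ given by Theorem~\ref{filling}$(iv)$ is (the inverse of) the drilling diffeomorphism $\psi_\rho$ used to define $\Psi$, as both are produced by the same cone-manifold deformation applied with opposite orientations of the parameter.
\item The marking $f_{\sigma,w}=\phi_{\sigma,w}\circ\pi_{\sigma,w}\circ f_\sigma$. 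Taking $f_\sigma=\overline{f}_\rho$ (the canonical representative for the conjugacy class $\sigma$) and using that $\overline{f}_\rho$ lifts $\psi_\rho\circ s_\rho$ gives $\pi_\rho\circ\overline{f}_\rho=\psi_\rho\circ s_\rho$, so $\phi_{\sigma,w}\circ\pi_{\sigma,w}\circ\overline{f}_\rho=\psi_\rho^{-1}\circ\psi_\rho\circ s_\rho=s_\rho$, which represents $\rho$.
\end{enumerate}
Combining (1)--(3) yields $\Phi(\sigma,w) = (M_\rho,s_\rho) = \rho$ in $AH(N)$.

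The only non-trivial point is step (2), namely that the filling map from Theorem~\ref{filling} and the drilling map from Theorem~\ref{drilling} are mutual inverses on the regions where both are defined; but this is already built into the arguments proving part $(iv)$ of Theorem~\ref{filling}, since the filling theorem derives $\phi$ precisely by inverting the drilling theorem applied to $M_{\sigma,w}$ along the short geodesic $\gamma$. All other steps are bookkeeping about markings and coverings. The issue is only to be consistent about representative markings (choosing $f_\sigma=\overline{f}_\rho$ above) so that the two compositions line up on the nose rather than only up to homotopy; invoking $\pi_1$-injectivity of the various inclusions and the fact that maps of aspherical spaces are determined up to homotopy by $\pi_1$, however, makes this equality hold in the deformation space $AH(N)$ regardless of the particular representative chosen.
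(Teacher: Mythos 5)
Your proof follows the same overall route as the paper: split into the trivial $MP_0(N,P)$ case, then for $\rho\in MP(N)$ unwind the definitions to identify $\overline{M}_\rho$ with $M_\sigma$, $\hat{M}_\rho$ with $\hat{M}_{\sigma,w}$, $M_\rho$ with $M_{\sigma,w}$, and then verify the markings match up via $\phi_{\sigma,w}=\psi_\rho^{-1}$ and $\pi_{\sigma,w}\circ f_\sigma\simeq\psi_\rho\circ s_\rho$. Steps (2) and (3) of your argument are exactly what the paper does and are fine as stated.

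The gap is in step (1), which you label as a purely definitional matter. You assert that ``by the definition of $q(\rho)$, the extension $\sigma_w$ is conjugate to the holonomy of $\hat M_\rho$,'' and go on to say that the only non-trivial point of the whole lemma is step (2). In fact the paper explicitly flags step (1) as the point that does \emph{not} follow from the definitions: what one gets for free is only an inclusion $\sigma_w(\pi_1(\hat N))\subseteq\pi_1(\hat M_\rho)$, i.e.\ a covering $\hat M_{\sigma,w}\to\hat M_\rho$, and showing that this covering is trivial (equivalently, that the inductively constructed representation $\sigma_w$ has image all of $\pi_1(\hat M_\rho)$ and not a proper finite-index subgroup) is a genuine argument, deferred in the paper to Proposition 3.10 and Section 6 of Bromberg \cite{B3}. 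The issue is that the $w_i$ are read off from the meridians of $\hat M_\rho$ one cusp at a time via the inductive normalization of Section~\ref{subsection: model space}, and it is not automatic that after all the interleaved conjugations the resulting abstract representation $\sigma_w:\pi_1(\hat N)\to PSL(2,\mathbb{C})$ surjects onto $\pi_1(\hat M_\rho)$ rather than landing in a proper subgroup containing $\sigma(\pi_1(N))$ together with \emph{some} parabolics commuting with each $\sigma(\gamma_i)$. Your proof should either cite Bromberg's argument here or supply one; by contrast step (2) --- that the filling diffeomorphism is the inverse of the drilling diffeomorphism and that the filled manifold is $M_\rho$ --- is comparatively formal (cone-manifold rigidity, Theorem~\ref{open}, gives uniqueness of the $2\pi$ filling with prescribed conformal boundary), and the paper does not even comment on it.
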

\begin{proof}  If $\rho \in V \cap MP_0(N,P)$ then clearly $\Psi(\rho) = (\rho, \infty, \ldots, \infty)$ and $\Phi(\Psi(\rho)) = \rho$.  If $\rho \in V \cap MP(N)$, then recall that we can choose the marking $s_\rho : N \to M_\rho$ and define $\hat{M}_\rho$ to be the $\gamma$-drilling of $M_\rho$.  Then we let $\overline{M}_\rho$ be the cover of $\hat{M}_\rho$ associated to $(\psi_\rho \circ s_\rho)_*(\pi_1(N))$.  If $\Psi(\rho) = (\sigma,w_1, \ldots, w_d)$ then $\overline{M}_\rho = M_\sigma$, $\overline{f}_\rho  \simeq f_\sigma$, $\hat{M}_{\rho} = \hat{M}_{\sigma,w}$, and $M_\rho = M_{\sigma,w}$.  (To see why $\hat{M}_\rho = \hat{M}_{\sigma, w}$ see Proposition 3.10 and Section 6 of \cite{B3}.  It only follows from the definitions that $\hat{M}_{\sigma , w}$ covers $\hat{M}_{\rho}$; however, Bromberg proves this cover is trivial.)  Thus 
\[
\pi_{\sigma,w} \circ f_\sigma \simeq \pi_{\rho} \circ \overline{f}_\rho = \psi_\rho \circ s_\rho
\] since $\overline{f}_\rho$ was the lift of $\psi_\rho \circ s_\rho$.  But then 
\[
f_{\sigma, w} = \phi_{\sigma,w} \circ \pi_{\sigma,w} \circ f_\sigma \simeq \phi_{\sigma,w} \circ \psi_\rho \circ s_\rho = \psi_\rho^{-1} \circ \psi_\rho \circ s_\rho  =s_\rho.
\]   It follows that when we apply $\Phi$ to $(\sigma,w_1, \ldots, w_d)$ we get $(M_{\sigma,w}, f_{\sigma,w}) = (M_\rho, s_\rho) = \rho$. 
\end{proof}

\begin{thm} \label{loc homeo} Let $\sigma_0 \in MP_0(N,P)$.  
The map $\Phi$ is a local homeomorphism from $\mathcal{A}_K$ to $MP(N) \cup MP_0(N,P)$ at $(\sigma_0, \infty, \ldots, \infty)$. 
\end{thm}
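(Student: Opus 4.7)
The plan is to combine the two ``inverse'' lemmas (Lemmas~\ref{inverses} and~\ref{inverses2}) with the continuity of $\Phi$ (Lemma~\ref{phi continuity}) and the continuity of $\Psi$ at boundary points (Lemma~\ref{continuity2}), and then to argue that $\Psi$ is also continuous at interior points via the Ahlfors--Bers parameterization. Concretely, I will produce an open neighborhood $U$ of $(\sigma_0,\infty,\ldots,\infty)$ in $\mathcal{A}_K$ whose image $\Phi(U)$ is open in $MP(N)\cup MP_0(N,P)$ and on which $\Phi$ admits a continuous two-sided inverse, namely $\Psi$.

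First I would invoke Lemma~\ref{inverses} to obtain a neighborhood $U_0$ of $(\sigma_0,\infty,\ldots,\infty)$ with $\Psi\circ\Phi|_{U_0}=\mathrm{id}$; in particular $\Phi|_{U_0}$ is injective. By continuity of $\Phi$ (Lemma~\ref{phi continuity}) and the fact that $\Phi(\sigma_0,\infty,\ldots,\infty)=\sigma_0\in V$, I may shrink $U_0$ so that $\Phi(U_0)\subset V$, where $V$ is the domain of $\Psi$. Next, using continuity of $\Psi$ at $\sigma_0$ provided by Lemma~\ref{continuity2}, together with $\Psi(\sigma_0)=(\sigma_0,\infty,\ldots,\infty)\in U_0$, I pick an open neighborhood $W$ of $\sigma_0$ in $V$ with $\Psi(W)\subset U_0$. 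Setting $U:=\Phi^{-1}(W)\cap U_0$, this is open in $\mathcal{A}_K$ by continuity of $\Phi$ and contains the basepoint. By construction $\Phi(U)\subset W$; conversely, for any $\rho\in W$ we have $\Psi(\rho)\in U_0\subset\mathcal{A}_K$, so Lemma~\ref{inverses2} yields $\Phi(\Psi(\rho))=\rho$, and since $\Psi(\rho)\in\Phi^{-1}(W)\cap U_0=U$, we conclude $\rho\in\Phi(U)$. Thus $\Phi(U)=W$ is open in $MP(N)\cup MP_0(N,P)$.

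At this stage $\Phi|_U:U\to W$ is a continuous bijection whose set-theoretic inverse is $\Psi|_W$, so it remains to verify that $\Psi$ is continuous on $W$. For points of $W\cap MP_0(N,P)$ this is exactly Lemma~\ref{continuity2}. For points of $W\cap MP(N)$ I would appeal to the description of $\Phi$ given in the proof of Lemma~\ref{phi continuity}: on each connected component of $U\cap\{w_i\neq\infty\text{ for all }i\}$, the map $\Phi$ factors as $\mathcal{AB}^{-1}\circ\widehat{\mathcal{AB}}_C\circ(\sigma,w)\mapsto\sigma_w$, which is a composition of homeomorphisms (the two Ahlfors--Bers maps and the holomorphic extension $(\sigma,w)\mapsto\sigma_w$ into $MP(\hat{N},\hat{P})$). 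Hence on such a component $\Phi$ is a homeomorphism onto an open subset of $MP(N)$, so its inverse, which must coincide with $\Psi|_{W\cap MP(N)}$ by Lemma~\ref{inverses2} and injectivity of $\Phi|_U$, is continuous on this interior stratum. Combining the two cases shows $\Psi|_W$ is continuous, completing the proof that $\Phi|_U$ is a homeomorphism onto the open set $W$.

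The main obstacle will be the compatibility of the two strata: Lemma~\ref{continuity2} provides continuity of $\Psi$ only at points of $MP_0(N,P)$, and Lemmas~\ref{inverses}--\ref{inverses2} only say $\Psi\circ\Phi$ and $\Phi\circ\Psi$ are the identity \emph{where defined}, so one must carefully nest the neighborhoods $U_0,W,U$ so that both equalities apply to every relevant point and so that $\Phi(U)$ ends up being open rather than merely contained in $W$. Handling the continuity of $\Psi$ at interior points, where Lemma~\ref{continuity2} is silent, requires explicitly recognizing $\Phi$ as a composition of Ahlfors--Bers homeomorphisms on each component of $U\cap\{w\neq(\infty,\ldots,\infty)\}$; this is the step that uses the Ahlfors--Bers--Kra--Marden--Maskit--Sullivan--Thurston parameterization rather than any new cone-manifold estimate.
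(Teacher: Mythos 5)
Your overall strategy matches the paper's: combine Lemmas \ref{phi continuity}, \ref{inverses}, \ref{continuity2}, and \ref{inverses2} to show that $\Phi$ carries a neighborhood $U$ of $(\sigma_0,\infty,\ldots,\infty)$ bijectively onto a neighborhood $W$ of $\sigma_0$, then verify the set-theoretic inverse $\Psi$ is continuous on $W$. Your nesting argument (choose $W$ with $\Psi(W)\subset U_0$, set $U=\Phi^{-1}(W)\cap U_0$, and deduce $\Phi(U)=W$) is a clean substitute for the paper's sequential-contradiction argument; both are correct and equivalent. The place where you diverge and leave a gap is the continuity of $\Psi$ on $W\cap MP(N)$. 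You write that on each component $B$ of $U\cap\{w_i\neq\infty\ \forall i\}$ the map $\Phi$ is ``a composition of homeomorphisms'' because it factors as $\mathcal{AB}^{-1}\circ\widehat{\mathcal{AB}}_C\circ\bigl((\sigma,w)\mapsto\sigma_w\bigr)$, and conclude $\Phi|_B$ is a homeomorphism onto an \emph{open} subset of $MP(N)$. But the middle factor $(\sigma,w)\mapsto\sigma_w$ is not obviously a homeomorphism onto an open set merely because it is a ``holomorphic extension''; you would need to exhibit a continuous inverse (restriction to $\pi_1(N)$ followed by the normalizing conjugation that recovers each $w_i$) defined on an open subset of $C$ and check the two composites, or else appeal to invariance of domain. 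The paper does the latter: $\Phi$ restricted to $U\cap\{w_i\neq\infty\ \forall i\}$ is a continuous injection between complex manifolds of the same dimension $2(3g-3)$, hence an open map, so its inverse is continuous at points of $W\cap MP(N)$. That is exactly the ingredient your factorization implicitly needs and, as written, omits; either supply the explicit two-sided inverse to the extension map $(\sigma,w)\mapsto\sigma_w$, or simply invoke invariance of domain as the paper does.
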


\begin{proof} It follows from Lemma \ref{phi continuity} that $\Phi$ is continuous and from Lemma \ref{inverses} that $\Phi$ is injective on some neighborhood $U$ of $(\sigma_0, \infty, \ldots, \infty)$.  

Certainly $\Phi(U)$ contains $\sigma_0$.  We claim that $\Phi(U)$ contains some neighborhood $V$ of $\sigma_0$ in $MP(N) \cup MP_0(N,P)$.    Suppose no such neighborhood exists.  Then we can find a nested sequence of neighborhoods $V_i$ whose intersection is $\sigma_0$ and a sequence $\rho_i \in V_i$ such that $\rho_i \notin \Phi(U)$.  Since $\rho_i \to \sigma_0$, and Lemma \ref{continuity2} says that $\Psi$ is continuous at $\sigma_0$ we have $\Psi(\rho_i) \to \Psi(\sigma_0) = (\sigma_0, \infty, \ldots, \infty)$.  It follows that $\Psi(\rho_i) \in U$ for all sufficiently large $i$; however, this contradicts Lemma \ref{inverses2} which says that $\Phi(\Psi(\rho_i)) = \rho_i \notin \Phi(U)$ for sufficiently large $i$. 

Hence, there is some neighborhood $V$ of $\sigma_0$ contained in $\Phi(U)$.  Since $\Phi$ is continuous, $\Phi^{-1}(V)$ is a neighborhood of $(\sigma_0, \infty, \ldots, \infty)$ in $\mathcal{A}$ such that $\Phi \vert_{\Phi^{-1}(V)} : \Phi^{-1}(V) \to V$ is a continuous bijection.  The inverse map is given by $\Psi$, which is continuous on $V \cap MP_0(N,P)$ by Lemma \ref{continuity2} and on $V \cap MP(N)$ by invariance of domain. Hence $\Phi$ is a local homeomorphism at $\sigma_0$.   
\end{proof}

\bigskip \noindent \textit{Remark.}  Since the point $\sigma_0 \in MP_0(N,P)$ that we fixed in the beginning of Section \ref{subsection: inverse to phi} and used throughout Sections \ref{subsection: inverse to phi} and \ref{subsection: local homeomorphism} was arbitrary, we have actually shown that $\Phi$ is a local homeomorphism at any $\sigma \in MP_0(N,P)$. 
\bigskip

\subsection{$MP(N) \cup MP_0(N,P)$ is not locally connected} \label{not locally connected section}

In Proposition \ref{A not loc conn}, we saw that there was a point $\sigma_0 \in MP_0(N,P)$ such that $\mathcal{A}$ is not locally connected at $(\sigma_0, \infty, \ldots, \infty)$.  By Theorem \ref{loc homeo}, $\Phi$ is a local homeomorphism from $\mathcal{A}$ to $MP(N) \cup MP_0(N,P)$ at $(\sigma_0, \infty, \ldots, \infty)$.  Hence, $MP(N) \cup MP_0(N,P)$ is not locally connected at $\Phi(\sigma_0, \infty, \ldots, \infty)= \sigma_0 \in MP_0(N,P)$.   Thus we have shown

\begin{thm} \label{sigma_0} There exists $\sigma_0 \in MP_0(N,P)$ such that $MP(N) \cup MP_0(N,P)$ is not locally connected at $\sigma_0$.  
\end{thm}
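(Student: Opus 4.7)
The proof will be a near-immediate consequence of the two main results assembled in this section, namely Proposition \ref{A not loc conn} and Theorem \ref{loc homeo}. The plan is to let $\sigma_0$ be the point of $MP_0(N,P)$ produced by Proposition \ref{A not loc conn} and then transport the failure of local connectivity at $(\sigma_0,\infty,\ldots,\infty)$ across the local homeomorphism $\Phi$ given by Theorem \ref{loc homeo}.

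First I would verify that $(\sigma_0,\infty,\ldots,\infty)$ lies in the set $\mathcal{A}_K$ on which $\Phi$ is defined. This is immediate: by the very definition of $\mathcal{A}_K$, any point of $\mathcal{A}$ whose $w$-coordinates are all $\infty$ is included, so $(\sigma_0,\infty,\ldots,\infty)\in\mathcal{A}_K$ and $\Phi(\sigma_0,\infty,\ldots,\infty)=\sigma_0$. Theorem \ref{loc homeo} then produces an open neighborhood $U\subset\mathcal{A}_K$ of $(\sigma_0,\infty,\ldots,\infty)$ and an open neighborhood $V\subset MP(N)\cup MP_0(N,P)$ of $\sigma_0$ such that $\Phi|_U:U\to V$ is a homeomorphism.

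The next step is a purely topological transfer argument. Local connectivity at a point is a property of the neighborhood filter there, and hence is preserved under homeomorphism of open neighborhoods. Concretely, suppose for contradiction that $MP(N)\cup MP_0(N,P)$ were locally connected at $\sigma_0$; then inside $V$ there would be a connected open neighborhood $V'$ of $\sigma_0$, and $\Phi^{-1}(V')\subset U$ would be a connected open neighborhood of $(\sigma_0,\infty,\ldots,\infty)$ in $\mathcal{A}$, contradicting Proposition \ref{A not loc conn}. Therefore $MP(N)\cup MP_0(N,P)$ is not locally connected at $\sigma_0$, which is exactly the conclusion of the theorem.

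There is no real obstacle here; the proof is essentially a one-line deduction once Proposition \ref{A not loc conn} and Theorem \ref{loc homeo} are in hand. The only subtlety worth flagging explicitly is that $(\sigma_0,\infty,\ldots,\infty)$ belongs to the domain $\mathcal{A}_K$ of $\Phi$ and that $U$ can be chosen inside $\mathcal{A}_K$, which is immediate from how $\mathcal{A}_K$ was defined in Section \ref{subsection: definition of phi}. All the substantive work has already been carried out: the combinatorial argument producing the components $C_n$ in Section \ref{A not loc conn section} (via the projection $\Pi:\mathcal{A}\to\mathcal{A}_{1,1}$ and Bromberg's box lemma), and the analytic arguments using the filling theorem and Theorem \ref{neighborhood is product} to construct the mutually inverse maps $\Phi$ and $\Psi$ in Sections \ref{subsection: definition of phi}--\ref{subsection: local homeomorphism}.
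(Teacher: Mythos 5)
Your proof is correct and takes exactly the same approach as the paper: the paper's proof of Theorem~\ref{sigma_0} is precisely the combination of Proposition~\ref{A not loc conn} and Theorem~\ref{loc homeo}, transferring the failure of local connectivity across the local homeomorphism $\Phi$ (the paper states this in the two sentences immediately preceding the theorem). The one phrasing issue worth tightening in your contradiction step is that failure of local connectivity means \emph{some} open neighborhood of $(\sigma_0,\infty,\ldots,\infty)$ has no connected open subneighborhood, so you should first intersect $U$ with that bad neighborhood before pushing forward by $\Phi$; but since local connectivity at a point depends only on the germ of the space at that point, this is a cosmetic fix and your argument, like the paper's, is sound.
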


By the Density Theorem (Theorem \ref{density theorem}), $AH(N)$ is the closure of $MP(N) \cup MP_0(N,P)$.  Of course, it does not follow directly from this that $AH(N)$ is not locally connected at $\sigma_0$.  In order to conclude anything about the closure, we need more quantitative control over the components of a neighborhood $U$ of $(\sigma_0, \infty, \ldots, \infty)$ in $\mathcal{A}$, and what happens to these components under the map $\Phi$.  By Lemma \ref{lowerboundw}, there is lower bound to the distance between some of the components of $U$.  In the next Section, we will use the filling theorem to show that this implies $\overline{\Phi(U)}$ has infinitely many components.

\section{$AH(S \times I)$ is not locally connected} \label{closure not locally connected section}

In this section, we prove Theorem \ref{main theorem} by contradiction. If one assumes $AH(S \times I)$ is locally connected, then one may use the filling theorem (Theorem \ref{filling}) and Lemma \ref{lowerboundw} to derive a contradiction.  Recall that for a point $(\sigma, w_1, \ldots, w_d) \in \mathcal{A}$ at which $\Phi$ is defined, one obtains a hyperbolic manifold $M_{\sigma, w}$ by filling the $d$ cusps of $\hat{M}_{\sigma, w} =  \mathbb{H}^3/\sigma_{w}(\pi_1(\hat{N}))$. The manifold $M_{\sigma, w}$ does not depend on the order in which the cusps are filled so we can fill the first cusp last.  Let $M'_{\sigma, w}$ denote the manifold with a single rank-2 cusp obtained by filling all but the first cusp of $\hat{M}_{\sigma, w}$.  Equivalently, $M'_{\sigma, w}$ is the $\gamma_1$-drilling of $\Phi(\sigma, w_1, \ldots, w_d) \in MP(N)$.  Eventually we will use the $w_1$ coordinate to estimate the complex length of $\gamma_1$ in $\Phi(\sigma, w_1, \ldots, w_d)$.  As an intermediate step, Lemma \ref{first approximation} bounds the change in the geometry of the first cusp while we perform the other $d-1$ fillings.

Let $q_1$ be the first coordinate of the map $q$ in the definition of $\Psi$.  That is, 
\[
q_1: V \cap MP(N) \to \mathcal{T}(T^2) 
\] is defined so that if $\Psi(\eta) = (\sigma, w_1, \ldots, w_d)$, then $q_1(\eta) = w_1$.    This is a Teichm\"uller parameter for the first cusp in $\hat{M}_{\sigma, w}$ in the sense that $\sigma_w$ is conjugate to a representation that sends $\gamma_1$ to $\begin{pmatrix} 1& 2 \\ 0 & 1 \end{pmatrix}$ and the meridian $\beta_1$ of $\partial U_1$ to $\begin{pmatrix} 1 & q_1(\eta) \\ 0 & 1 \end{pmatrix}$. 

Now define $r_1: V \cap MP(N) \to \mathcal{T}(T^2)$ so that $r_1(\eta)$ is the Teichm\"uller parameter of the cusp of $M'_{\sigma, w}$.  That is, after $d-1$ cusps have been filled, we can conjugate $\pi_1(M'_{\sigma, w})$ so that the remaining cusp is marked by 
\[
\gamma_1 \mapsto \begin{pmatrix}1 & 2 \\ 0 & 1 \end{pmatrix} \quad \text{and} \quad \beta_1 \mapsto \begin{pmatrix} 1 & r_1(\eta) \\ 0 & 1 \end{pmatrix}. 
\] 

The drilling theorem can be used to show that $q_1$ and $r_1$ are close in the following sense.

\begin{lem} \label{first approximation}  Let $\delta > 0$, $\kappa > 0$. 
There is some $l_0>0$ such that for any $\eta \in MP(N)$ with $\min\{ Im(q_1(\eta)), Im(r_1(\eta)) \}<\kappa$ and
$$\sum_{i = 1}^d l(\eta(\gamma_i)) < l_0,$$
then
$$|q_1(\eta) - r_1(\eta) | < \frac{\delta}{4}.$$
\end{lem}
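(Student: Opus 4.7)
The plan is to exploit the fact that both $M'_{\sigma,w}$ and $\hat{M}_{\sigma,w}$ are drillings of $\eta$: one drills only $\gamma_1$, the other drills all $\gamma_i$. Equivalently, $\hat{M}_{\sigma,w}$ is obtained from $M'_{\sigma,w}$ by drilling $\gamma_2,\ldots,\gamma_d$. Given a target biLipschitz constant $J>1$ (to be chosen momentarily), the hypothesis $\sum l(\eta(\gamma_i))<l_0$ together with Theorem \ref{drilling} applied to $\eta$ with the single curve $\gamma_1$ yields, for $l_0$ small enough, a $2$-biLipschitz diffeomorphism between $\eta$ and $M'_{\sigma,w}$ away from $\epsilon$-Margulis neighborhoods. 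Hence $\sum_{i=2}^d l_{M'_{\sigma,w}}(\gamma_i)\leq 2\sum_{i=2}^d l_\eta(\gamma_i)<2l_0$. Applying Theorem \ref{drilling} again, now to $M'_{\sigma,w}$ with the collection $\gamma_2,\ldots,\gamma_d$, produces for small enough $l_0$ a $J$-biLipschitz diffeomorphism
\[
\phi:\hat{M}_{\sigma,w}-\bigcup_{i=2}^d\mathbb{T}_\epsilon(T_i)\longrightarrow M'_{\sigma,w}-\bigcup_{i=2}^d\mathbb{T}_\epsilon(\gamma_i).
\]

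The key point is that, by the remark following Theorem \ref{drilling}, $\phi$ is level-preserving on cusps. Since $\phi$ does not touch the first cusp, it restricts to a $J$-biLipschitz diffeomorphism between horospherical cross-sections of the first cusp of $\hat{M}_{\sigma,w}$ and of the lone cusp of $M'_{\sigma,w}$. Topologically, the longitude $\gamma_1$ and the meridian $\beta_1$ (the slope that bounds a disk under the terminal filling that produces $\eta$) are preserved by $\phi$, so the restriction is a $J$-biLipschitz equivalence between two marked flat tori whose Teichm\"uller parameters are $\tau_1=q_1(\eta)/2$ and $\tau_2=r_1(\eta)/2$ respectively (using the normalization $\gamma_1\mapsto\bigl(\begin{smallmatrix}1&2\\0&1\end{smallmatrix}\bigr)$).

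A standard computation shows that a $J$-biLipschitz equivalence between marked flat tori bounds the Teichm\"uller distance between their parameter points in $\mathbb{H}^2$ by a universal constant times $\log J$. Using the formula $\cosh d_{\mathbb{H}^2}(z,w)=1+|z-w|^2/(2\,\mathrm{Im}(z)\,\mathrm{Im}(w))$, this converts to
\[
|q_1(\eta)-r_1(\eta)|\ \leq\ C(\log J)\sqrt{\mathrm{Im}(q_1(\eta))\,\mathrm{Im}(r_1(\eta))}
\]
for a universal $C$. The hypothesis $\min\{\mathrm{Im}(q_1(\eta)),\mathrm{Im}(r_1(\eta))\}<\kappa$ bounds one of the two imaginary parts by $\kappa$; the biLipschitz equivalence then bounds the other by at most $J\kappa + O(\log J)$. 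Choosing $l_0$ small enough that $J$ is sufficiently close to $1$ makes the right-hand side smaller than $\delta/4$, finishing the argument.

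The main obstacle is the third step: quantifying how faithfully a $J$-biLipschitz equivalence between marked flat tori controls the marked Teichm\"uller parameter, and doing so in a form that is explicit enough to absorb the $\kappa$-dependence. If a clean Teichm\"uller-space citation is insufficient, one can instead lift $\phi$ near the first cusp to the universal cover and estimate directly: under the normalization $\gamma_1\mapsto 2$, compare the translation vectors representing $\beta_1$ in the two lattices at a common horospherical level, using the level-preserving and biLipschitz properties to see that the two vectors differ by at most $(J-1)$ times their common magnitude.
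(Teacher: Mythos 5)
Your proof is correct and follows essentially the same route as the paper's: drill $\gamma_1$ once (getting a $2$-biLipschitz bound so the remaining lengths at most double), then apply the drilling theorem to $\gamma_2,\ldots,\gamma_d$ to get a $J$-biLipschitz map, invoke the level-preserving remark to restrict to a $J$-biLipschitz equivalence of flat cross-sections of the first cusp, and convert to Teichm\"uller distance and then (via the $\kappa$ hypothesis) to Euclidean distance in $\mathbb{H}^2$. The step you flagged as a possible obstacle is handled in the paper simply by choosing $\varepsilon$ with $\varepsilon e^{\varepsilon}<\delta/(4\kappa)$ and then taking $J$ close enough to $1$ that any $J$-biLipschitz diffeomorphism of flat tori moves the Teichm\"uller parameter by hyperbolic distance $<\varepsilon$; the conversion to Euclidean distance uses $|z-w|\leq (\max \mathrm{Im})\,d_{\mathbb{H}^2}(z,w)\leq \kappa e^{\varepsilon}\varepsilon$, which is tidier than the $\cosh$ formula but equivalent, so no gap remains.
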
 

\begin{proof}  For any $\eta \in MP(N)$, let $M_\eta = \mathbb{H}^3/\eta(\pi_1(N))$, let $M_{\sigma, w}'$ denote the $\gamma_1$-drilling of $M_\eta$, and let $\hat{M}_{\sigma, w}$ denote the $\cup_{i=2}^d \gamma_i$-drilling of $M_{\sigma, w}'$. 

The drilling theorem says that there exists $l_1$ such that if the length of $l(\eta(\gamma_1)) < l_1$ then there is a $2$-biLipschitz map 
\[
M_\eta - \mathbb{T}_{\epsilon_3} (\gamma_1) \to M_{\sigma, w}' - \mathbb{T}_{\epsilon_3}(T_1).
\]  
This implies the lengths of $\gamma_2, \ldots, \gamma_d$ do not double when we drill $\gamma_1$. 

Choose some $\varepsilon > 0$ such that $\varepsilon e^\varepsilon < \frac{\delta}{4\kappa}$.   There exists some $J > 1$ such that if $X_1, X_2$ are two points in $\mathcal{T}(T^2)$ and $\phi: X_1 \to X_2$ is a $J$-biLipschitz diffeomorphism, then $d_{\mathcal{T}(T^2)} (X_1, X_2) < \varepsilon$.

By the drilling theorem, there is some $l_2$ such that if $\sum_{i=2}^d l_{M_{\sigma, w}'}(\gamma_i) < l_2$ then there exists a $J$-biLipschitz diffeomorphism
 \[
\phi: M_{\sigma, w}' - \cup_{i=2}^d \mathbb{T}_{\epsilon_3} (\gamma_i) \to \hat{M}_{\sigma, w} - \cup_{i=2}^d \mathbb{T}_{\epsilon_3}(T_i).  
\] 

Now choose any $0 < l_0 < \min\{ l_1, \frac{l_2}{2}\}$. If $\sum_{i=1}^d l(\eta(\gamma_i)) < l_0$ then $l(\eta(\gamma_1))< l_0 < l_1$.  This implies the lengths of $\gamma_2, \ldots, \gamma_d$ do not double as we do the first drilling.  Thus, 
\[
\sum_{i=2}^d l_{M_{\sigma, w}'}(\gamma_i)< \sum_{i=2}^d 2 l(\eta(\gamma_i))  < 2l_0 <  l_2.  
\]  

Now since $\sum_{i=2}^d l_{M_{\sigma, w}'}(\gamma_i)< l_2$, there exists a $J$-biLipschitz diffeomorphism
 \[
\phi: M_{\sigma, w}' - \cup_{i=2}^d \mathbb{T}_{\epsilon_3} (\gamma_i) \to \hat{M}_{\sigma, w} - \cup_{i=2}^d \mathbb{T}_{\epsilon_3}(T_i)
\]  when we drill $\gamma_2, \ldots, \gamma_d$.  As in the proof of Corollary \ref{multiple fillings} (see also the remarks following Theorem \ref{drilling}), we can assume that $\phi$ restricts to a $J$-biLipschitz diffeomorphism on $T_1$ that takes torus cross-sections of the first cusp in $M_{\sigma, w}'$ to torus cross-sections of the first cusp in $\hat{M}_{\sigma, w}$ (Theorem 6.12 of \cite{BB}).    
Since the Teichm\"uller metric for $\mathcal{T}(T^2)$ agrees with the hyperbolic metric for the upper-half plane model of $\mathbb{H}^2$, this implies 
\[
d_{\mathcal{T}(T^2)} (q_1(\eta), r_1(\eta))   = d_{\mathbb{H}^2}(q_1(\eta), r_1(\eta)) < \varepsilon.
\]    
See also Theorem 7.2 of \cite{B2}.

  Since either $Im(q_1(\eta))< \kappa$ or $Im(r_1(\eta)) < \kappa$, \[
|q_1(\eta) - r_1(\eta) | < \kappa e^\varepsilon \left( d_{\mathbb{H}^2}(q_1(\eta), r_1(\eta))  \right)  < \kappa \varepsilon e^\varepsilon  <    \frac{\delta}{4}.
\]
 \end{proof}

With Lemma \ref{first approximation} providing some control on $r_1$ based on $q_1$, we are now ready to prove Theorem \ref{main theorem}, which we restate here for convenience.

\textbf{Theorem \ref{main theorem}.}  \textit{Let $S$ be a closed surface of genus $g \geq 2$.  Then $AH( S \times I)$ is not locally connected.}

\begin{proof} Let $(\sigma_0, \infty, \ldots, \infty) \in \mathcal{A}$ be the point that we described in Section \ref{A not loc conn section} where we found $\mathcal{A}$ is not locally connected.  Recall this was a point such that $\sigma_0 \vert_{\pi_1(N_{1,1})} = \sigma_{z_0}$ where $z_0$ was the point described in Lemma \ref{box lemma}.  We will show $AH(S\times I)$ is not locally connected at $\sigma_0$.  First we claim there exists a neighborhood $U$ of $(\sigma_0, \infty, \ldots, \infty)$ with the following properties:

\textbf{(1)}  There is a neighborhood $V$ of $\sigma_0$ in $MP(N) \cup MP_0(N,P)$ such that $\Phi\vert_U : U \to V$ is a homeomorphism.  Such a neighborhood exists by Theorem \ref{loc homeo}. 

\textbf{(2)} For any $(\eta, w_1, \ldots, w_d) \in U$, $\eta \vert_{\pi_1(N_{1,1})}$ lies in the neighborhood $W$ of $\sigma_0 \vert_{\pi_1(N_{1,1})}$ that we defined in Section \ref{A not loc conn section}.
Recall $W$ is a neighborhood of $\sigma_{z_0}$ in $MP_0(N_{1,1}, P_{1,1}')$ such that for all $\sigma_z \in \overline{W}$, the coordinate $z$ lies in the $\delta$-neighborhood $O$ of $z_0$ (see Lemma \ref{box lemma}).

\textbf{(3)} Recall from Section \ref{A not loc conn section} that $C_n = \{ (\eta, w_1, \ldots, w_d) \in U \; : \; w_1 \in R+2n\}$.  Then there exists $0<\delta < 1$ such that for any $(\eta, w_1, \ldots, w_d) \in \overline{C_n}$ and any $(\eta', w_1', \ldots, w_d') \in \overline{U-C_n}$, we have $|w_1-w_1'| > \delta$ for any $n$.  This follows from property (2) and Lemma \ref{lowerboundw}.  Note that the quantity $\delta$ is the radius of the neighborhood $O$ of $z_0$ from Lemma \ref{box lemma}.  

\textbf{(4)}  Let $\delta>0$ be the constant from (3).  Let $\kappa > 80(2\pi)^2$ be some constant such that for any $(\eta, w_1, \ldots, w_d) \in \overline{C_n}$, we have $Im(w_1) < \kappa-1$.  Then for any $\eta \in \Phi(U) \cap MP(N) = V \cap MP(N)$, we have $|r_1(\eta) - q_1(\eta) | < \frac{\delta}{4}$ or $\min \{ Im(r_1(\eta)) , Im(q_1(\eta)) \} \geq \kappa$. 

We can assume $U$ satisfies property (4) for the following reason.  By Lemma \ref{first approximation}, given any $\delta, \kappa>0$ there exists some $l_0$ such that if $\sum_{i=1}^d l(\eta(\gamma_i)) < l_0$ then either $|r_1(\eta) - q_1(\eta) | < \frac{\delta}{4}$ or $\min \{ Im(r_1(\eta)) , Im(q_1(\eta)) \} \geq \kappa$.  Since $\Phi\vert_U : U \to V$ is a homeomorphism, and $V$ is a neighborhood of $\sigma_0$ where $\sigma_0(\gamma_1), \ldots, \sigma_0(\gamma_d)$ are parabolic, we can make $U$ small enough so that $\sum_{i=1}^d l(\eta(\gamma_i)) < l_0$ for any $\eta \in \Phi(U) \cap MP(N)$. One can check that shrinking $U$ does not change properties (1), (2), and (3).  

Again, since making $U$ smaller does not affect the above properties, we can assume for all $(\eta, w_1, \ldots, w_d) \in U$, $|w_1| > 81(2\pi)^2$.  Since $w_1 = q_1(\Phi(\eta, w_1, \ldots, w_d))$, it follows from (3) and (4) that $U$ satisfies:

\textbf{(5)} For any $(\eta, w_1, \ldots, w_d) \in U$, $|r_1(\Phi(\eta, w_1, \ldots, w_d))| > 80(2\pi)^2$.

Now that we have set up a neighborhood $U$ of $(\sigma_0, \infty, \ldots, \infty)$ in $\mathcal{A}$, suppose $AH(N)$ was locally connected at $\Phi(\sigma_0, \infty, \ldots, \infty) = \sigma_0$.   Then we claim that, for all but finitely many $n$, 
\[
\overline{\Phi(C_n)} \cap \overline{\Phi(U-C_n)} \neq \emptyset.
\]

To prove the claim, let $V_{AH}$ be a neighborhood (in $AH(N)$) of $\sigma_0$ contained inside $\overline{\Phi(U)}$.  Note that the closure of $\Phi(U)$  contains such a neighborhood of $\sigma_0$ in $AH(N)$ by the Density Theorem (Theorem \ref{density theorem}).  If $AH(N)$ is locally connected, then there exists a connected neighborhood $\sigma_0 \in V_{conn} \subset V_{AH}$.  Recall in Section \ref{A not loc conn section} we found a sequence $(\sigma_0, u_1 + 2n, \ldots, u_d + 2n) \in C_n$ converging to $(\sigma_0, \infty, \ldots, \infty)$. Thus $V_{conn} \cap \Phi(C_n)$ and $V_{conn} \cap \Phi(U-C_n)$ are nonempty for all sufficiently large $n$.

If the closures of $\Phi(C_n)$ and $\Phi(U-C_n)$ were disjoint then we could form a separation of $V_{conn}$.   Thus we must have 
\[
\overline{\Phi(C_n)} \cap \overline{\Phi(U-C_n)} \neq \emptyset
\] for all but finitely many $n$.    

Now let 
\[
\rho \in \overline{\Phi(C_n)} \cap \overline{\Phi(U-C_n)}. 
\] for some sufficiently large $n$.  We will determine $n$ later, but for now notice that there are only finitely many $n$ for which this intersection is empty.

Although $\rho$ is not in the image of $\Phi$, we can find sequences 
\[
\rho = \lim_{m\to \infty} \eta_{m} = \lim_{m\to\infty} \eta_{m}'
\] where $\eta_{m} \in \Phi(C_n)$ and $\eta_{m}' \in \Phi(U-C_n)$ are representations in $MP(N)$.  

Up to subsequence, we can assume that $q_1(\eta_m)$ and $q_1(\eta_m')$ converge, so we define $w_1$ and $w_1'$ by 
\[
w_1 = \lim_{m\to \infty} q_1(\eta_{m}) \] and \[  w_1' = \lim_{m\to\infty} q_1(\eta'_{m}). 
\]

Equivalently, $w_1$ and $w_1'$ are the second coordinates of $\lim_{m \to \infty} \Psi(\eta_{m}) \in \overline{C_n}$ and $\lim_{m \to \infty} \Psi(\eta_{m}') \in \overline{U-C_n}$.  Note that $w_1 \in R+2n$ since $\eta_m \in \Phi(C_n)$ for all $m$. 
Also, by passing to further subsequences if necessary, we define $\zeta_1$ and $\zeta_1'$ by
\[
\zeta_1 = \lim_{m\to \infty} r_1(\eta_{m}) \] and \[  \zeta_1' = \lim_{m\to\infty} r_1(\eta'_{m}). 
\]

By property (3) of $U$, 
 there is some $0 < \delta <1$ such that
\[
|w_1 - w_1'|> \delta.
\]  

Note that $\kappa$ was chosen so that $Im(w_1) < \kappa-1$ since $w_1 = \lim q_1(\eta_m)$ and $\eta_m \in \Phi(C_n)$.  Thus by property (4) of the neighborhood $U$, we have
\[
|\zeta_1 - w_1| \leq \frac{\delta}{4}. 
\]
If we also have $\min\{ Im(w_1'),  Im( \zeta_1') \} < \kappa$, then $|\zeta_1' - w_1'| \leq \frac{\delta}{4}$ and thus 
\begin{align}\label{lower bound r}
\left| \zeta_1- \zeta_1' \right| \geq \delta - \frac{\delta}{4} - \frac{\delta}{4} =  \frac{\delta}{2}.
\end{align}
Otherwise, we have $\min\{ Im(w_1'),   Im(\zeta_1')\}  \geq \kappa$. But since $Im(w_1)< \kappa - 1$ and $|w_1 -\zeta_1| \leq \frac{\delta}{4}$, we must have  $Im(\zeta_1) < \kappa -1 + \frac{\delta}{4}$.  Thus 
\[
\left| \zeta_1 - \zeta_1' \right|  \geq \left| Im(\zeta_1) - Im(\zeta_1') \right|  >  \kappa - \left(\kappa-1 + \frac{\delta}{4}\right) = 1 - \frac{\delta}{4}  > \frac{\delta}{2}
\] so inequality (\ref{lower bound r}) still holds.

Next we will use the complex length estimates in the filling theorem to produce a contradiction to (\ref{lower bound r}).  Consider the complex length, $\mathcal{L}(\rho(\gamma_1))$.  We can estimate the complex length of $\rho(\gamma_1)$ in two ways, corresponding to each of the two sequences $\eta_{m}$ and $\eta_{m}'$.  

For any $\eta \in V$, parts $(ii)$ and $(v)$ of the filling theorem (Theorem \ref{filling}) can be used to estimate $\mathcal{L}(\eta(\gamma_1))$. If we let 
\[
L^2_{\eta} = \frac{|r_1(\eta)|^2}{2Im(r_1(\eta))} \quad \text{and} \quad A^2_{\eta} = \frac{|r_1(\eta)|^2}{2Re(r_1(\eta))},
\] then the filling theorem gives us the following estimates on $\mathcal{L}(\eta(\gamma_1)) = l(\eta(\gamma_1))+ i \theta(\eta(\gamma_1))$.
\[
\left| l(\eta(\gamma_1)) - \frac{2\pi}{L_{\eta}^2} \right| \leq   \frac{8(2\pi)^3}{L_{\eta}^4 - (16)(2\pi)^4} \quad \text{and} \quad 
\left| \theta(\eta(\gamma_1)) - \frac{2\pi}{A_{\eta}^2} \right| \leq \frac{5(2\pi)^3}{(L_{\eta}^2 - 4(2\pi)^2)^2}.
\]  
If $\rho = \lim_{m \to \infty} \eta_m = \lim_{m\to \infty} \eta_m'$, then we get the following two sets of estimates on  $\mathcal{L}(\rho(\gamma_1)) = l(\rho(\gamma_1))+ i \theta(\rho(\gamma_1))$.  
Let 
\[
L^2 = \lim_{m\to \infty} L^2_{\eta_m}, \; A^2 = \lim_{m\to \infty} A^2_{\eta_m}, \; (L')^2 = \lim_{m\to \infty} L^2_{\eta_m'}, \; (A')^2 = \lim_{m\to \infty} A^2_{\eta_m'}
\]
Then
\[
\left| l(\rho(\gamma_1)) - \frac{2\pi}{L^2} \right| \leq   \frac{8(2\pi)^3}{L^4 - (16)(2\pi)^4} \quad \text{and} \quad 
\left| \theta(\rho(\gamma_1)) - \frac{2\pi}{A^2} \right| \leq \frac{5(2\pi)^3}{(L^2 - 4(2\pi)^2)^2},
\]  and 
\[
\left|  l(\rho(\gamma_1))- \frac{2\pi}{(L')^2} \right| \leq   \frac{8(2\pi)^3}{(L')^4 - (16)(2\pi)^4} \hspace{0.11in} \text{and} \hspace{0.11in} 
\left| \theta(\rho(\gamma_1))- \frac{2\pi}{(A')^2} \right| \leq \frac{5(2\pi)^3}{((L')^2 - 4(2\pi)^2)^2}.
\] 
Recall that by property ($5$) of the neighborhood $U$, we have $|r_1(\eta_m)|, |r_1(\eta_m')| > 80(2\pi)^2$ for all $m$. So after passing to the limit, $| \zeta_1|, | \zeta_1' |\geq  80(2\pi)^2$ and therefore $L^2, (L')^2 \geq 40(2\pi)^2$. In particular, $L^2, (L')^2 > 8(2\pi)^2$, which together with the triangle inequality implies
\begin{align} \label{L2eta}
\left| \frac{2\pi}{L^2} - \frac{2\pi}{(L')^2} \right| \leq  16(2\pi)^3 \left(  \frac{1}{L^4}  +  \frac{1}{(L')^4} \right)
\end{align} and 
\begin{align}  \label{A2eta}
\left| \frac{2\pi}{A^2} - \frac{2\pi}{(A')^2} \right| \leq 20(2\pi)^3  \left( \frac{1}{L^4} + \frac{1}{(L')^4} \right).
\end{align}
Next, we combine the inequalities (\ref{L2eta}) and (\ref{A2eta}) to show that $\zeta_1$ and $\zeta_1'$ are close.  The following lemma provides a way of doing this.  We refer the reader to Lemma 6.2 of \cite{MagidThesis} for a detailed proof of this estimate, which is an intricate, but not deep, calculation.

\begin{lem} \label{lengths close implies w close}  Let $z_1, z_2 \in \mathbb{C}$, $|z_i| \geq (80)(2\pi)^2$,  and set
\[
L_i^2= \frac{|z_i|^2}{2Im(z_i)} \quad \text{and} \quad  A_i^2 = \frac{|z_i|^2}{2Re(z_i)}.
\]
Suppose 
\[
\left| \frac{2\pi}{L_1^2} - \frac{2\pi}{L_2^2} \right| \leq  16(2\pi)^3 \left(  \frac{1}{L_1^4}  +  \frac{1}{L_2^4} \right)
\]
\[
\left| \frac{2\pi}{A_1^2} - \frac{2\pi}{A_2^2} \right| \leq 20(2\pi)^3  \left( \frac{1}{L_1^4} + \frac{1}{L_2^4} \right).
\] Then
\[
|z_1 - z_2| < 560 (2\pi)^2 \frac{Im(z_1)}{|z_1|}.
\]

\end{lem}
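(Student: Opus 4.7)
\textbf{Proof proposal for Lemma \ref{lengths close implies w close}.}

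The strategy is to package the two scalar hypotheses into a single complex inequality. Observe that $\frac{2\pi}{A_i^2} + i\frac{2\pi}{L_i^2} = \frac{4\pi\,\mathrm{Re}(z_i) + 4\pi i\,\mathrm{Im}(z_i)}{|z_i|^2} = \frac{4\pi z_i}{|z_i|^2} = \frac{4\pi}{\bar z_i}.$ Thus the hypotheses on the $L_i^2$ and $A_i^2$ control, respectively, the imaginary and real parts of $\frac{4\pi}{\bar z_1} - \frac{4\pi}{\bar z_2}$. Adding these componentwise (or taking the Euclidean sum) yields
\[
\left|\frac{1}{\bar z_1} - \frac{1}{\bar z_2}\right| \;\leq\; 9(2\pi)^{2}\!\left(\frac{1}{L_1^{4}} + \frac{1}{L_2^{4}}\right),
\]
absorbing a harmless factor from the constants $16+20=36$.

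Next, use the algebraic identity $\frac{1}{\bar z_1} - \frac{1}{\bar z_2} = \frac{\overline{z_2-z_1}}{\bar z_1\bar z_2}$ to convert this into a bound on $|z_1-z_2|$. Multiplying through by $|z_1||z_2|$ and substituting $\frac{1}{L_i^4} = \frac{4\,\mathrm{Im}(z_i)^2}{|z_i|^4}$ gives
\[
|z_1-z_2| \;\leq\; 36(2\pi)^{2}\!\left(\frac{|z_2|\,\mathrm{Im}(z_1)^{2}}{|z_1|^{3}} + \frac{|z_1|\,\mathrm{Im}(z_2)^{2}}{|z_2|^{3}}\right).
\]
This is almost the desired shape: the first summand is already $\frac{\mathrm{Im}(z_1)}{|z_1|}$ times something we need to bound, and both summands are symmetric in $z_1,z_2$.

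The remaining step is to use the size hypothesis $|z_i| \geq 80(2\pi)^2$ to show that $z_1$ and $z_2$ are comparable in absolute value and in imaginary part, so that the $z_2$-quantities in the second summand can be replaced by $z_1$-quantities up to harmless constants. Concretely, since $\frac{4\pi}{|z_i|} \leq \frac{1}{40\pi}$ is very small, the bound on $\left|\frac{1}{\bar z_1}-\frac{1}{\bar z_2}\right|$ (which itself is controlled because $L_i^2 \geq |z_i|/2 \geq 40(2\pi)^2$ when $\mathrm{Im}(z_i)\leq|z_i|$) forces $\frac{1}{\bar z_2}$ to lie in a small disc around $\frac{1}{\bar z_1}$. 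Inverting this, one obtains $\tfrac{1}{2}|z_1|\leq |z_2|\leq 2|z_1|$ and $\mathrm{Im}(z_2) \leq 2\,\mathrm{Im}(z_1)$ (or similar with small explicit constants). Substituting these comparability estimates into the displayed bound and using $\mathrm{Im}(z_1)\leq |z_1|$ once collapses it into the form $C\,(2\pi)^{2}\,\mathrm{Im}(z_1)/|z_1|$ with a universal $C$.

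The main obstacle is purely bookkeeping: pinning down the constant to be at most $560$ rather than something larger. This requires being somewhat careful about which estimates one uses where — for instance, using the $\ell^2$-combination $\sqrt{16^{2}+20^{2}}$ rather than $16+20$ at the first step, and exploiting $\mathrm{Im}(z_i)/|z_i|\leq 1$ only in the second (smaller) summand — so that the comparability factors do not blow the constant up. This is the intricate-but-shallow calculation referred to in the remark, and we refer the reader to Lemma 6.2 of \cite{MagidThesis} for the explicit arithmetic needed to land at $560$.
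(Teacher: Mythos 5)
The paper's own ``proof'' of this lemma is only the sentence that refers the reader to Lemma~6.2 of the author's thesis, so there is no in-text argument to compare against. With that caveat, your strategy is sound, and the key observation $\frac{2\pi}{A_i^2} + i\,\frac{2\pi}{L_i^2} = \frac{4\pi}{\bar z_i}$ is a genuinely clean way to package the two hypotheses. But two of your intermediate claims are off and should be repaired before the bookkeeping will land below $560$.

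First, the constant after combining: dividing $\sqrt{16^2+20^2}\,(2\pi)^3 = 4\sqrt{41}\,(2\pi)^3$ (or, more loosely, $36(2\pi)^3$) by $4\pi = 2(2\pi)$ yields $2\sqrt{41}\,(2\pi)^2 \approx 12.8\,(2\pi)^2$ (resp.\ $18(2\pi)^2$), not $9(2\pi)^2$; you dropped a factor of $2$. Second, the comparability of $|z_1|$ and $|z_2|$ that one can actually extract from $|z_i| \geq 80(2\pi)^2$ has a weaker constant than $2$ (roughly $5$), and this is not quite what you want to use anyway. The estimate that does the work is comparability of $b_i := \mathrm{Im}(z_i)/|z_i|^2$, which comes straight from the $L$-hypothesis: writing $\frac{1}{L_i^4} = 4b_i^2$, the first hypothesis gives $|b_1 - b_2| \le 32(2\pi)^2(b_1^2 + b_2^2)$, and since $b_i \leq 1/|z_i| \leq 1/(80(2\pi)^2)$ this forces $\max(b_1,b_2) \leq \tfrac{7}{3}\min(b_1,b_2)$. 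Then, writing each summand of
\[
|z_1 - z_2| \leq 2\sqrt{41}\,(2\pi)^2\cdot 4\bigl(|z_1||z_2|\,b_1^2 + |z_1||z_2|\,b_2^2\bigr)
\]
as $|z_1||z_2|\,b_i^2 = (|z_1|b_i)(|z_2|b_i)$, and using $b_i \leq \tfrac{7}{3}b_j$ together with $|z_j|b_j = \mathrm{Im}(z_j)/|z_j| \leq 1$, one gets that each summand is at most $\tfrac{7}{3}\,\mathrm{Im}(z_1)/|z_1|$. This yields $|z_1-z_2| \leq \tfrac{56}{3}\cdot 2\sqrt{41}\,(2\pi)^2\,\mathrm{Im}(z_1)/|z_1| \approx 239\,(2\pi)^2\,\mathrm{Im}(z_1)/|z_1|$, comfortably under $560$, and in fact never requires comparability of the moduli $|z_i|$ at all. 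So the architecture of your argument is correct; the correction is that you should track the quantities $\mathrm{Im}(z_i)/|z_i|^2$ rather than $|z_i|$ and $\mathrm{Im}(z_i)$ separately, and fix the factor-of-$2$ slip.
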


Setting $z_1=  \zeta_1$ and $z_2 =\zeta_1'$, the inequalities (\ref{L2eta}) and (\ref{A2eta}), together with Lemma \ref{lengths close implies w close}, imply
\begin{align} \label{upper bound r}
\left| \zeta_1 - \zeta_1' \right| < 560 (2\pi)^2 \frac{Im( \zeta_1 )}{| \zeta_1|}.
\end{align}

By combining the lower bound from (\ref{lower bound r}) and the upper bound from (\ref{upper bound r}), we find that \[
\frac{\delta}{2} <560 (2\pi)^2 \frac{Im( \zeta_1 )}{| \zeta_1|}.
\]

Recall that the constant $\kappa$ was chosen in property (4) of $U$ so that $Im(q_1(\eta_m)) < \kappa - 1$ for any $\eta_m \in \Phi(C_n)$.  Thus $|r_1(\eta_m) - q_1(\eta_m)| < \frac{\delta}{4}$ for all $m$.  It follows that $Im(\zeta_1)$ is bounded above by a quantity that is independent of $n$:
\[
Im \left( \zeta_1 \right) \leq (\kappa - 1) + \frac{\delta}{4}  < \kappa.
\]
Since the Maskit slice  $\mathcal{M}^+$ is invariant under horizontal translations by $2$, we can assume for any point $w \in R$, $Re(w) > -2$ (see Lemma \ref{box lemma}).
 Since $\zeta_1$ lies in a closed $\frac{\delta}{4}$-neighborhood of $R+2n$, we have $| \zeta_1 | \geq  2n -2 - \frac{\delta}{4} > 2n - 3$.  It follows that 
\begin{align} \label{kappa}
\frac{\delta}{2} < 560 (2\pi)^2 \frac{Im(\zeta_1)}{| \zeta_1|} < \frac{ 560 (2\pi)^2 \kappa}{2n - 3}.  
\end{align}

Since $\kappa$ is independent of $n$, there are only finitely many $n$ that satisfy (\ref{kappa}).  Hence, for any $\rho \in  \overline{\Phi(C_n)} \cap \overline{\Phi(U-C_n)}$ with $n > \frac{560(2\pi)^2 \kappa}{\delta} + \frac{3}{2}$, inequality (\ref{kappa}) produces a contradiction. However, our assumption that $AH(N)$ was locally connected implied that $\overline{\Phi(C_n)} \cap \overline{\Phi(U-C_n)}$ is non-empty for all but finitely many $n$.  

It follows that $AH(N)$ is not locally connected at the point $\sigma_0 \in MP_0(N,P)$.
\end{proof}

\footnotesize



\textsc{University of Maryland \\ Department of Mathematics \\ 1301 Mathematics Building \\ College Park, MD 20742-4015}

\end{document}